\newcommand{\sms}{\setminus}
\newcommand{\lex}{\mathrm{lex}}
\begin{document}
%%%%%%%%%%%%%%%%%%%%%%%%%%%%%%%%%%%%%%%%%%%%%%%%%%%%%%%%%%%%%%%%%%%%%
\Logo{BCRI--CGC--preprint, http://www.bcri.ucc.ie}

\begin{frontmatter}

\title{Improved decoding of affine--variety codes}

{\author{Chiara Marcolla}} {\tt{(chiara.marcolla@unitn.it)}}\\
 {{Department of Mathematics, University of Trento, Italy}}

{\author{Emmanuela Orsini}}
{\tt{(orsini@posso.dm.unipi.it) }}\\
{{Department of Mathematics,
 University of Pisa, Italy }}

{\author{Massimiliano Sala}} {\tt{(maxsalacodes@gmail.com)}}\\
{ Department of Mathematics, University of Trento, Italy}

\runauthor{C.~Marcolla, E.~Orsini, M.~Sala}

\begin{abstract}
General error locator polynomials are polynomials able to decode 
any correctable syndrome for a given linear code. Such polynomials 
are known to exist for all cyclic codes and for a large class of linear codes.
We provide some decoding techniques for affine-variety codes using some
multidimensional extensions of general error locator polynomials.
We prove the existence of such polynomials for any correctable affine-variety
code and hence for any linear code. We propose two main different approaches, that depend on the underlying geometry. We compute some interesting
cases, including Hermitian codes. To prove our coding theory results, we develop a theory for special classes of zero-dimensional ideals, that can
be considered generalizations of stratified ideals. Our improvement with respect to stratified ideals is twofold: we generalize from one variable to many variables and we introduce points with multiplicities.
\end{abstract}

\begin{keyword}
Affine--variety code, AG codes,  coding theory, commutative algebra, decoding,   general error locator polynomial,  \Gr \ basis, Hermitian codes, linear code,  zero-dimensional ideal.
\end{keyword}
\end{frontmatter}
%==================================================================

\section{Introduction}
\label{intro}
 \vspace{-.3cm}
Affine-variety codes were introduced by Fitzgerald and Lax
in \cite{CGC-cd-art-lax}
and provide a way to represent any linear code as an
evaluation code for a suitable polynomial ideal. Unsurprisingly, this rather
general description does not provide immediately any efficient
decoding algorithm. The lack of an efficient decoding algorithm is one of
the main drawbacks of this nice approach, which has unfortunately
still not received the attention it deserves, with few exceptions
(\cite{CGC-cd-art-Geil-1}, \cite{CGC-cd-art-salazar}).
Some \Gr \ basis techniques have been proposed in
\cite{CGC-cd-art-lax} to decode these codes, which may be
efficient depending on the underlying algebraic structure.

General error locator polynomials are polynomials introduced by us
in \cite{CGC-cd-art-gelp1} to decode cyclic codes. Their roots,
when specialized to a syndrome, give the error locations.
They can be used to decode any linear code, if it possesses them.
Giorgetti and Sala in \cite{CGC-cd-prep-martamax,CGC-cd-art-martamax} 
have  found a
large family of linear codes possessing such polynomials. When
the general error locator polynomial admits a sparse
representation, the decoding for the code is very fast.
Experimental evidence (and theoretical proofs for special cases)
suggests their sparsity in many interesting cases
(\cite{CGC-cd-prep-teomaxmanu}, \cite{CGC-cd-art-teomaxmanu}, \cite{CGC-cd-art-YaotsChung10}, \cite{CGC2-cd-art-LeeChangChen10}, \cite{CGC2-cd-art-LeeChangJingChen10}).\\
\indent We report several other approaches on decoding linear and cyclic codes with  \Gr\ bases:
\cite{CGC-BulPell}, \cite{CGC-augot}, \cite{CGC-cd-inbook-D1eleanna}.\\

In this paper we generalize our formerly proposed locator
polynomials to cover also the multi-dimensional case and hence the
affine-variety case. By adapting the \Gr \ techniques  in
\cite{CGC-cd-art-lax}, \cite{CGC-cd-art-gelp1},
\cite{CGC-cd-art-martamax}, we can prove their existence for any
affine-variety code. Excluding this introduction, this paper contains the following sections.

\begin{itemize}
%section 2
    \item[-] In Section \ref{prelimenaries} we recall definitions and properties
for affine-variety codes, stratified ideals (a special class of zero-dimensional ideals), general error locator polynomials and the Hasse derivative.

%section 3
    \item[-] In Section  \ref{cooper} we summarize the decoding proposed in
\cite{CGC-cd-art-lax} and we propose several alternatives, discussing 
their merits and drawbacks, especially taking into consideration 
the underlying geometric situation. In particular, we introduce the notion
of ``ghost points'', which are points added to the variety to play the role
of non-valid error locations. This way we can define a first generalization
of general error locator polynomials to the multivariate case (Definition \ref{locDebole}),
which provides a first decoding strategy. We also introduce evaluator
polynomials (Definition \ref{eval}) that permits a second strategy. While
the existence of evaluator polynomials can be proved directly using 
the theory of stratified ideals, unfortunately
in this section we lack the theoretical background to prove the existence
of these multivariate locators. 

%section 4
    \item[-] In Section \ref{zeroideal} we extend  the results in \cite{CGC-cd-art-gelp1} for stratified ideals to cover also the ``multi-dimensional case'', 
that is precisely the theoretical background that we need for any
multivariate generalization of general locators.
Unexpectedly, there is no obvious ``natural'' way to extend the core notion of stratified ideals. We present three generalizations in  Definition~\ref{weaklyideal}, \ref{strongly}. We discuss their implications and provide some preliminary results. \\
Given a zero-dimensional ideal we can consider an order on its elimination ideals with a decreasing number of variables.
By choosing two consecutive elimination ideals $I$ and $I'$, we have a natural
projection from $\mathcal{V}(I)$ to $\mathcal{V}(I')$. At this stage, we highlight the role of two natural numbers: the maximum degree of some special 
polynomials in suitable \Gr\ basis of $I'$ and the maximum number of extensions to $\mathcal{V}(I)$ for points in $\mathcal{V}(I')$. It is convenient to present these values
as functions, respectively, $\eta$ (Definition~\ref{levelf}) and $\zeta$ (\ref{zeta}).
Section \ref{zeroideal} ends with the statement of Proposition~\ref{GBstructure}, which is the  main result claimed in this section (but not proved here).
Proposition \ref{GBstructure} is, in some sense, the multivariate analogue of Proposition~\ref{kk} on stratified ideals, and shows that for our ideals
$\eta$ and $\zeta$ coincides in this setting.

%section 5
    \item[-]  Section~\ref{dim} is devoted to the long  proof of Proposition \ref{GBstructure}. This proposition describes some features of the \Gr\ basis
of (the elimination ideals of) a zero-dimensional radical ideal $J$.
The proof is constructive and relies on iterated applications of some versions of the Buchberger-M\"{o}ller algorithm.
To be more precise, we can start from the vanishing ideal of a single point.
For any monomial ordering it is trivial  
to determine its \Gr\ basis. In particular, $\zeta$ and $\eta$
coincide.
By adding more points, the shape of the \Gr\ basis becomes more complex, but we can follow what happens to the leading terms, if we are only interested in the variable involved in the projection $\mathcal{V}(I)\rightarrow \mathcal{V}(I')$.
When we have added enough points, we will get again $J$, since $J$ is radical.
To apply the Buchberger-M\"{o}ller algorithm, we need to add the points one by one. The difficult part is choosing the point in such a way
that $\eta$ and $\zeta$ grow exactly by the same amount.

%section 6
    \item[-] Unfortunately,
our result in the multidimensional case, Proposition \ref{GBstructure}, is not as strong as our result in the one-variable case, Proposition  \ref{kk}.
In Section \ref{Loc}, it does allow us to prove the existence of our first generalization of locators
in Theorem \ref{bomba}, but we show that better locators can be found,
as in Definition \ref{zeroaf}. We discuss with examples 
a new decoding strategy by applying these locators, but again 
for the moment we are unable
to prove their existence, since they use multiplicities. This will be done
in the next section.

\item[-]
 In Section \ref{hasse} we develop the theory for generalizing
stratified ideals to the multivariate case with multiplicities.\\
As usual, we are interested in suitable \Gr\ bases of elimination
ideals of some zero-dimensional ideals.
First, we introduce the notion  of \textit{stuffed ideals} 
(Definition \ref{stuffed}), which basically means that the roots of some
polynomials in these \Gr\ bases have the ``expected'' multiplicity.
We give a constructive method (``stuffing'') to obtain stuffed ideals 
from special 
classes of ideals (in particular, radical ideals will do).
Our main results here are Theorem~\ref{TeoHasse}, that ensures
that the desired shape of our \Gr\ bases is unchanged under stuffing,
and Theorem~\ref{bombastaffato}, that ensures the existence of our sought-after
locators (in our \Gr\ bases).

    \item[-] In Section \ref{families} we compute some examples from different
families of affine-variety codes. In particular, we formally determine the shape for multivariate locator polynomials in the Hermitian case, for any $q\geq 2$ and $t=2$ (Theorem \ref{evviva}), both in our weaker version and in our
stronger version. 

\item[-] In Section \ref{conc} we provide  further comments and draw some conclusions. 
\end{itemize}

%%%%%%%%%%%%%%%%%%%%%%%%%%%%%%%%%%%%%%%%%%%%%%%%%%%%%%%%%%%%%%%%%%%%%%%%%%%%%%

\section{Preliminaries}
\label{prelimenaries}
In this section we fix some notation and recall some known results.

We denote by $\FF_q$   the field with $q$ elements, where $q$ is a
power of a prime, and by  $n \geq 1$ a natural number. Let
$(\FF_q)^n$ be the vector space of dimension $n$ over $\FF_q$. Any
 vector subspace $C \subset (\FF_q)^n$ is a linear code (over
$\FF_q$).\\
From now on, we denote by  $\mathbb{K}$  any (not necessarily
finite) field and by $\overline{\mathbb{K}}$ its algebraic closure.

For any ideal $I$ in a polynomial ring $\mathbb{K}[X]$,
$X=\{x_1,\dots, x_m\}$, we denote by $\mathcal{V}(I)$ its variety
in  $\overline{\mathbb{K}}$. For any $Z \subset
\overline{\mathbb{K}}^m$ we denote by $\mathcal{I}(Z)$ the vanishing ideal of
$Z$, $\mathcal{I}(Z)\subset \mathbb{K}[X]$.

For any $f \in \mathbb{K}[X]$ and any term order $>$ on 
$\mathbb{K}[X]$, we denote by ${\bf T}(f)$ the leading 
term of $f$ with respect to $>$. We assume the reader familiar with the standard theory of \Gr\ bases, see e.g. 
\cite{CGC-alg-book-teo2},
\cite{CGC-alg-book-teo3},
\cite{CGC-cd-inbook-D1moratech}. 
When we have fixed the polynomial ring and the term order, we write $\mathrm{GB}(I)$ for the (unique) reduced \Gr\ basis of $I$.\\

\noindent We briefly recall the notion of ``block order", since it is less frequently met in literature.
Let $X=\{x_1,\ldots,x_m\}$ and $Y=\{y_1,\ldots,y_r\}$ be two variable sets. Let  $<_X$ and $<_Y$ be two orders, on the monomials of $X$ and on the monomials of $Y$, respectively.
We can define an order $<=(<_X,<_Y)$ on the monomials of $X\cup Y$ ({\em{block order}}) as follows:
$$
\begin{array}{l}
    
    x_1^{i_1}\dots x_m^{i_m}\, y_1^{j_1}\dots y_r^{j_r} <  x_1^{a_1}\dots x_m^{a_m}\, y_1^{b_1}\dots y_r^{b_r}\mbox{ if }\\
\mbox{  \texttt{either} } y_1^{j_1}\dots y_r^{j_r} <_Y  y_1^{b_1}\dots y_r^{b_r}\\ 
\mbox{  \texttt{or} } y_1^{j_1}\dots y_r^{j_r} =  y_1^{b_1}\dots y_r^{b_r}   \mbox{  and } x_1^{i_1}\dots x_m^{i_m} <_X  x_1^{a_1}\dots x_m^{a_m}

\end{array}
$$
The definition of a block order for more variable sets is a direct generalization.\\

If $f\in\mathbb{K}[X,y]$, with $y>X$, we can write $f$ as
$$f=a_{u} y^{u}+\ldots+a_1y+a_0, \quad \mbox{ with }  a_i\in\mathbb{ K}[X] \mbox{ and } a_{u}\ne 0 \,.$$
Then we say that $a_{u}$ is the \textit{leading polynomial} of $f$.
%%%%%%%%%%%%%%%%%%%%%%%%%%%%%%%%%%%%%%%%%%%%%%%%%%%%%

\subsection{Affine--variety codes}\label{2.1}

Let $m \geq 1$ and  $I \subseteq \FF_q[X]=\FF_q[x_1,\dots,x_m]$ be an
ideal such that $$\{x_1^q-x_1,x_2^q-x_2,\dots,x_m^q-x_m\} \subset
I.$$ Let  $\mathcal{V}(I)=\{P_1, P_2, \dots, P_n\}$.
Since $I$ is a zero-dimensional radical ideal,  
we have an isomorphism of
$\FF_q$ vector spaces (an evaluation):
%\begin{small}
\begin{align*}
\phi:  &\,\,R=\FF_q[x_1,\dots,x_m]/I \; \,\,\longrightarrow \,\,
\hspace{0.2cm} (\mathbb{F}_q)^n\\
 & \hspace{2.15cm} f \hspace{1.8cm}   \longmapsto
\hspace{0.3cm} (f(P_1), \dots, f(P_n)).
\end{align*}
%\end{small}

Let $L \subseteq R$ be an $\FF_q$ vector subspace of $R$ of dimension $r$.\\

\begin{definition}[\cite{CGC-cd-art-lax}]
 The {\bf
affine--variety code} $C(I,L)$ is the image $\phi(L)$ and the
affine--variety code
 $C^{\perp}(I,L)$ is its dual code.
\end{definition}

If $b_1,\dots,b_r$ is a linear basis for $L$ over $\FF_q$, then
the matrix
{\small{$$\left(
\begin{array}{cccc}
b_1(P_1) &b_1(P_2)&\dots&b_1(P_n)\\
\vdots & \vdots & \dots & \vdots\\
b_r(P_1) & b_r(P_2) & \dots & b_r(P_n)
\end{array}
\right)
$$}}
is a generator matrix for $C(I,L)$ and a parity--check matrix for
$C^{\perp}(I,L)$.
\begin{theorem}[\cite{CGC-cd-art-lax}]
Every linear code may be represented as an affine--variety code.
\end{theorem}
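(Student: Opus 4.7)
The plan is to exhibit an explicit construction: given any linear code $C\subseteq(\mathbb{F}_q)^n$, I will build an ideal $I$ and a subspace $L$ such that $\phi(L)=C$, with the evaluation map $\phi$ serving as the bridge.

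First, I would choose an integer $m\geq 1$ with $q^m\geq n$, and pick any $n$ distinct points $P_1,\ldots,P_n\in(\mathbb{F}_q)^m$. Set $I=\mathcal{I}(\{P_1,\ldots,P_n\})\subseteq\mathbb{F}_q[x_1,\ldots,x_m]$. Because each $P_j$ has coordinates in $\mathbb{F}_q$, the polynomials $x_i^q-x_i$ vanish at every $P_j$, so $\{x_1^q-x_1,\ldots,x_m^q-x_m\}\subset I$, which puts us exactly in the setup of Section~\ref{2.1}. Moreover $I$ is zero-dimensional and radical with $\mathcal{V}(I)=\{P_1,\ldots,P_n\}$, and the points lie in $(\mathbb{F}_q)^m$ rather than merely in $\overline{\mathbb{F}_q}^m$.

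Next I would invoke (or briefly verify) the fact that the evaluation map $\phi\colon R=\mathbb{F}_q[X]/I\to(\mathbb{F}_q)^n$ of Section~\ref{2.1} is an isomorphism of $\mathbb{F}_q$-vector spaces: surjectivity follows from interpolation on the distinct $\mathbb{F}_q$-rational points (for each standard basis vector $e_j$ of $(\mathbb{F}_q)^n$ one can build a polynomial taking value $1$ at $P_j$ and $0$ elsewhere, for instance by products of linear factors separating the coordinates), and injectivity is simply the definition of the vanishing ideal. Then I would simply set $L:=\phi^{-1}(C)\subseteq R$, which is an $\mathbb{F}_q$-subspace of dimension $\dim_{\mathbb{F}_q}C$, and conclude $C(I,L)=\phi(L)=C$ by construction.

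There is no real obstacle: the entire content lies in observing that the evaluation map $\phi$ attached to the vanishing ideal of any finite set of $\mathbb{F}_q$-rational points is a vector-space isomorphism, after which the representation of $C$ is tautological. The only choice with any flexibility is the ambient dimension $m$ (and the particular $n$ points chosen), and any valid choice works; one may prefer the minimal $m=\lceil\log_q n\rceil$ to keep the ambient ring small, but this is not required by the statement.
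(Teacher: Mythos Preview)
Your argument is correct. Note, however, that the paper does not supply its own proof of this statement: it is quoted as a result of Fitzgerald and Lax and simply cited from \cite{CGC-cd-art-lax}. Your construction---choosing $m$ with $q^m\ge n$, picking $n$ distinct $\mathbb{F}_q$-rational points, taking $I$ to be their vanishing ideal, and setting $L=\phi^{-1}(C)$---is exactly the standard argument given in that reference, so there is nothing to contrast.
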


From now on, $q, m, n, I$ and $L$ are understood to be defined as
above.

For any $1\leq i\leq m$, $\widehat{\pi}_i$ denotes the natural projection $\widehat{\pi}_i: (\mathbb{F}_q)^m\rightarrow \mathbb{F}_q$, such that $\widehat{\pi}_i(\bar x_1,\ldots,\bar x_m)=\bar x_i$.

%%%%%%%%%%%%%%%%%%%%%%%%%%%%%%%%%%%%%%%%%%%%%%%%%%%%%%%%%%%%%%%%%%%%%%%%%%%%%%%%%%%
\subsection{Stratified ideals}\label{2.2}

In this subsection we summarize some definitions and results  from
\cite{CGC-cd-art-martamax}.

Let  $J \subset
\mathbb{K}[\mathcal{\mathcal{S}},\mathcal{A},\mathcal{\mathcal{T}}]$
be a zero--dimensional radical ideal, with variables
$\mathcal{\mathcal{S}}=\{{\sf s}_1,\dots,$ ${\sf s}_N\}$,
$\mathcal{A}=\{{\sf a}_1,\dots,{\sf a}_L\}$, $\mathcal{T}=\{{\sf
t}_1,\dots,{\sf t}_K\}$. We fix a term ordering $<$  on
$\mathbb{K}[\mathcal{\mathcal{S}},\mathcal{A},\mathcal{\mathcal{T}}]$,
with $\mathcal{\mathcal{S}}<\mathcal{A}<\mathcal{T}$, such that
the $\mathcal{A}$ variables are   ordered by
${\sf a}_L < {\sf a}_{L -1}<\ldots < {\sf a}_1$.

Let us define the elimination ideals
$J_\mathcal{S}\,=\,J\cap\mathbb{K}[\mathcal{S}],\quad J_{\mathcal{S},{\sf
a}_L}\,=\,J\cap\mathbb{K}[\mathcal{S},{\sf a}_L],\ldots,$
$J_{\mathcal{S},{\sf a}_L,\ldots,{\sf
a}_1}=J\cap\mathbb{K}[\mathcal{S},{\sf a}_L,\,\ldots,$ $\,{\sf
a}_1]$ $=J\cap\mathbb{K}[\mathcal{S},\mathcal{A}]$.

We want to view ${\mathcal{V}}(J_{\mathcal{S}})$ as a disjoint union of some
sets. The way we define these sets is linked to the fact that any point $P$
in ${\mathcal{V}}(J_{\mathcal{S}})$ can be extended to at least one point
in ${\mathcal{V}}(J_{\mathcal{S},{\sf a}_L})$.
But the number of all possible extensions of $P$ in 
${\mathcal{V}}(J_{\mathcal{S},{\sf a}_L})$ is finite, since the ideal is 
zero-dimensional, so we can partition ${\mathcal{V}}(J_{\mathcal{S}})$ in sets 
such that all points in the same set share the same number of extensions.
We denote by $\lambda(L)$ the maximum number of extensions in
${\mathcal{V}}(J_{\mathcal{S},{\sf a}_L})$ for any $P\in {\mathcal{V}}(J_{\mathcal{S}})$.
The same principle applies when we consider the variety of another elimination 
ideal, e.g. $\mathcal{ V}(J_{\mathcal{S},{\sf a}_L,\dots,{\sf a}_h})$.
We can partition it into subsets such that all points in the same subset
share the same number of extensions in $\mathcal{ V}(J_{\mathcal{S},{\sf a}_L,\dots,{\sf {a}}_{h},{\sf {a}}_{h-1}})$. The maximum number of extensions is
denoted by $\lambda(h-1)$.

We write our partitioning in a formal way, as follows:
{\small{
$$\begin{array}{l}
\mathcal{ V}(J_\mathcal{S})= \sqcup_{l=1}^{\lambda(L)}\Sigma_l^{L},\,\,
        \mbox{with} \\
    \Sigma_l^{L}= \{ (\overline{{\sf s}}_1,\ldots,\overline{{\sf s}}_N)\in \mathcal{ V}(J_\mathcal{S}) \mid \exists
    {\mbox{\ exactly }} l {\mbox{ distinct values }}
    \bar{{\sf a}}^{(1)}_{L}, \ldots,  \bar{{\sf a}}^{(l)}_{L} \\
     \quad\,\,\,\mathrm{s.t.} \,\,(\overline{{\sf s}}_1,\ldots,\overline{{\sf s}}_N,\bar{{\sf a}}^{(\ell)}_{L})
   \in \mathcal{ V}(J_{\mathcal{S},{\sf a}_{L}}), 1 \leq \ell \leq l  \};\\
\mathcal{ V}(J_{\mathcal{S},{\sf a}_{L},\dots,{\sf a}_h})=\sqcup_{l=1}^{\lambda(h-1)}
\Sigma_l^{h-1},\; 2 \leq h \leq L, \, \mbox{with}
\end{array}$$
$$\begin{array}{l}
    \Sigma_l^{h-1}= \{ (\overline{{\sf s}}_1,\ldots,\overline{{\sf s}}_N,
                           \overline{{\sf a}}_{L},\dots,\overline{{\sf a}}_h) \in
    \mathcal{ V}(J_{\mathcal{S},{\sf a}_L,\dots,{\sf a}_h}) \mid
    \exists \,\,\,\,{\mbox{exactly}}\,\, l {\mbox{ distinct values}} \\
   \quad\quad\,\,\, \bar{{\sf a}}^{(1)}_{h-1}, \ldots,
     \bar{{\sf a}}^{(l)}_{h-1} \,\,
     \mathrm{ s.t.} \,\, (\overline{{\sf s}}_1,\ldots,\overline{{\sf s}}_N,\overline{{\sf a}}_{L},
           \dots,\overline{{\sf a}}_h,\
    \bar{{\sf a}}^{(\ell)}_{h-1}) \in \mathcal{ V}(J_{\mathcal{S},{\sf a}_{L},\dots,{\sf a}_{h-1}}),\, 1 \leq \ell \leq l  \}.
\end{array}$$}}
\noindent
For an arbitrary zero--dimensional ideal $J$, nothing can be said
about $\lambda(h)$, except that $\lambda(h)\geq 1$ for any $1\leq
h \leq L$.

\begin{definition}[\cite{CGC-cd-art-martamax}]\label{stratificato}
With the above notation, let $J$ be a zero-dimensional radical
ideal. We say that $J$ is \textbf{stratified}, with respect to the
$\mathcal{A}$ variables, if:
\begin{itemize}
\item[(a)] $\lambda(h)=h$, $1 \leq h \leq L$, and \label{stratificato1}
\item[(b)]  $\sum_l^{h}\neq \emptyset$, $1 \leq h \leq L,\, 1 \leq l \leq h$.\label{stratificato2}
\end{itemize}
\end{definition}
%Note that implicitly the definition of stratified ideal depends on
%the choice of the $\mathcal{A}$ variables. 
To explain conditions
$(a)$ and $(b)$ in the above definition, let us consider $h=L$ and
think of the projection
\begin{equation} \label{mappa}
\pi: \mathcal{V}(J_{\mathcal{S},{\sf a}_L})  \rightarrow
\mathcal{V}(J_{\mathcal{S}}).
\end{equation}
In this case, $(a)$ in Definition \ref{stratificato} is equivalent to saying that any point in
$\mathcal{V}(J_{\mathcal{S}})$ has at most $L$  pre-images in
$\mathcal{V}(J_{\mathcal{S},{\sf a}_L})$
via $\pi$, and that there is at least one point with (exactly) $L$ pre-images.
On the other hand, $(b)$ implies that, if for a point $P
\in \mathcal{V}(J_{\mathcal{S}})$ we have  $|\pi^{-1}(P)| = m \geq
2$, then there is at least another point  $Q \in
\mathcal{V}(J_{\mathcal{S}})$  such that $|\pi^{-1}(Q)| = m-1$.

\begin{example}
Let $\mathcal{S}=\{{\sf s}_1\},\,\mathcal{A}=\{{\sf a}_1,{\sf
a}_2,{\sf a}_3\}$ ($L=3$) and $\mathcal{T}=\{{\sf t}_1\}$ such
that $\mathcal{S}<\mathcal{A}<\mathcal{T}$ and ${\sf a}_3<{\sf
a}_2<{\sf a}_1$. Let us consider $\mathcal{J}=\mathcal{I}(Z)
\subset \mathbb{C}[{\sf s}_1,{\sf a}_3,{\sf a}_2,{\sf a}_1,{\sf
t}_1]$ with $Z= \{(1,2,1,0,0), (1,2,2,0,0),$ $(1,4,0,0,0),$
$(1,6,0,0,0),$ $(2,5,0,0,0),$ $(3,1,0,0,0),$ $(3,3,0,0,0),$
$(5,2,0,0,0)\}.$ Then: {\scriptsize{
\begin{align*}
\mathcal{V}(J_\mathcal{S})&=\{1,2,3,5\}\\
\mathcal{V}(J_{\mathcal{S},{\sf
a}_3})&=\{(1,2),(1,4),(1,6),(2,5),(3,1),(3,3),(5,2)\}\\
\mathcal{V}(J_{\mathcal{S},{\sf
a}_3,{\sf a}_2})&=\{(1,2,1),(1,2,2)(1,4,0),(1,6,0),(2,5,0),(3,1,0),(3,3,0),(5,2,0)\}\\
\mathcal{V}(J_{\mathcal{S},{\sf a}_3,{\sf a}_2,{\sf
a}_1})&=\{(1,2,1,0),(1,2,2,0)(1,4,0,0),(1,6,0,0),(2,5,0,0),(3,1,0,0),(3,3,0,0),(5,2,0,0)\}
\end{align*}}}
Let us consider the projection $\pi:
\mathcal{V}(\mathcal{J}_{\mathcal{S},{\sf a}_3})  \rightarrow
\mathcal{V}(\mathcal{J}_{\mathcal{S}})$. Then:
$$
|\pi^{-1}(\{5\})|=1,\,\,|\pi^{-1}(\{2\})|=1,\,\,|\pi^{-1}(\{3\})|=2,\,\,|\pi^{-1}(\{1\})|=3 \,,
$$
so $\sum_1^3=\{2,5\}$, $\sum_2^3=\{3\}$, $\sum_3^3=\{1\}$ and
$\sum_{i}^3=\emptyset,\,\, i>3$. This means that
$\lambda(L)=\lambda(3)=3$ and $\sum_l^3$ is not empty, for
$l=1,2,3$. Thus the conditions  of Definition \ref{stratificato} are
satisfied for $h=L=3$ (see Fig. \ref{fig1}). In the same way,  it
is easy to verify said conditions also for $h=1,2$, and hence the
ideal $\mathcal{J}$ is stratified with respect to the $\mathcal{A}$
variables.
\begin{figure}[h!]
\centering
\includegraphics[width=7cm]{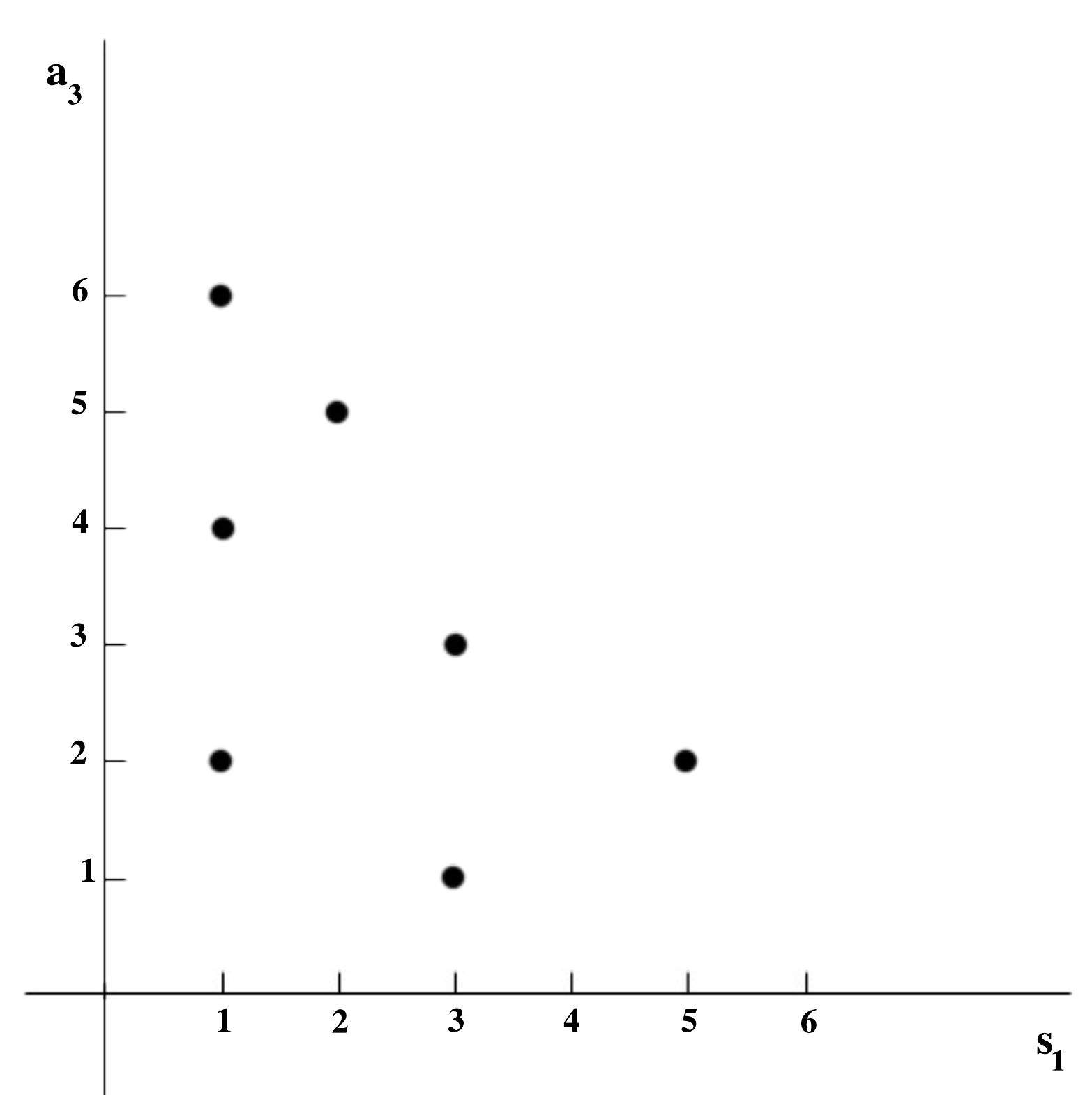}
\caption{A variety in a stratified case}\label{fig1}
\end{figure}
\end{example}

With the above notation, an immediate consequence of Theorem $3.6$
in \cite{CGC-cd-prep-martamax} (Theorem 32 in \cite{CGC-cd-art-martamax})
is the following proposition.
\begin{proposition} \label{kk}
Let $<$ be any lexicographic term order with  $\mathcal{\mathcal{S}}<\mathcal{A}<\mathcal{T}$
and ${\sf a}_L < {\sf a}_{L-1} < \dots < {\sf a}_1$.
Let $J$ be a stratified ideal with respect to the $\mathcal{A}$ variables. 
Let $G=\mathrm{GB}(J)$. Then $G$ contains one and only one polynomial $g$
such that:
$$
  g \in \KK[\mathcal{S},{\sf a}_L],\qquad {\bf T}(g)={\sf a}_L^L.
$$
\end{proposition}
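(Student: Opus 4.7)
The plan is to reduce to the reduced \Gr\ basis of the elimination ideal $J' := J \cap \mathbb{K}[\mathcal{S}, {\sf a}_L]$. By the Elimination Theorem, $G \cap \mathbb{K}[\mathcal{S}, {\sf a}_L]$ is exactly the reduced \Gr\ basis of $J'$ under the induced lex order (with ${\sf a}_L$ the largest variable and the $\mathcal{S}$-variables below). In any reduced \Gr\ basis distinct elements have distinct leading terms, so uniqueness of any $g$ with ${\bf T}(g) = {\sf a}_L^L$ is automatic; the task reduces to showing that ${\sf a}_L^L$ lies in the leading-term ideal $\langle {\bf T}(J')\rangle$, while no ${\sf a}_L^d$ with $d < L$ does.

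For the lower bound, I would exploit stratification condition (a): $\lambda(L) = L$ furnishes a point $P^* \in \mathcal{V}(J_{\mathcal{S}})$ admitting exactly $L$ distinct ${\sf a}_L$-extensions in $\mathcal{V}(J_{\mathcal{S}, {\sf a}_L})$. Any $f \in J'$ with ${\bf T}(f) = {\sf a}_L^d$ would necessarily be of the form
$$f = {\sf a}_L^d + c_{d-1}(\mathcal{S}){\sf a}_L^{d-1} + \dots + c_0(\mathcal{S}), \qquad c_i \in \mathbb{K}[\mathcal{S}],$$
and then $f(P^*, {\sf a}_L) \in \overline{\mathbb{K}}[{\sf a}_L]$ would be a nonzero monic polynomial of degree $d$ vanishing on $L > d$ distinct values, a contradiction.

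For existence, I would construct $g$ by Lagrange interpolation. Set $l_P := |\pi^{-1}(P)| \leq L$, and for each $P \in \mathcal{V}(J_{\mathcal{S}})$ fix an $L$-multiset $\alpha_P \subset \overline{\mathbb{K}}$ containing the $l_P$ extensions of $P$; then
$$g_P({\sf a}_L) := \prod_{\alpha \in \alpha_P}({\sf a}_L - \alpha) = {\sf a}_L^L + \sum_{i<L} \gamma_i^P {\sf a}_L^i$$
is monic of degree $L$. Using Lagrange idempotents $\ell_P \in \overline{\mathbb{K}}[\mathcal{S}]$ for the finite set $\mathcal{V}(J_{\mathcal{S}})$, define
$$g = {\sf a}_L^L + \sum_{i=0}^{L-1}\Bigl(\sum_{P}\gamma_i^P\,\ell_P(\mathcal{S})\Bigr){\sf a}_L^i.$$
By construction $g(P,{\sf a}_L) = g_P({\sf a}_L)$ vanishes on $\pi^{-1}(P)$, so $g$ vanishes on $\mathcal{V}(J_{\mathcal{S},{\sf a}_L})$, and radicality of $J$ forces $g \in J'$. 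Choosing the extra nodes in $\alpha_P$ Galois-invariantly (or averaging $g$ over the Galois group of $\overline{\mathbb{K}}/\mathbb{K}$) lands $g$ in $\mathbb{K}[\mathcal{S},{\sf a}_L]$ without disturbing the constant $1$ coefficient of ${\sf a}_L^L$. Since ${\sf a}_L$ is lex-larger than every $\mathcal{S}$-monomial, every remaining term is lex-smaller than ${\sf a}_L^L$, so ${\bf T}(g) = {\sf a}_L^L$. Combined with the lower bound, this yields the claim. The main obstacle is precisely the descent to $\mathbb{K}$ inside the interpolation; note that stratification condition (b) is not used, which is consistent with the proposition being an ``immediate consequence'' of the finer structure result in \cite{CGC-cd-art-martamax}.
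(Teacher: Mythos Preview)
Your argument is correct and takes a genuinely different route from the paper. The paper does not prove this proposition here; it cites it as an immediate consequence of the structure theorem for stratified ideals in \cite{CGC-cd-art-martamax}, whose proof (like the proof of the multivariate analogue, Proposition~\ref{GBstructure}, in Section~\ref{dim}) is a constructive Buchberger--M\"oller argument: one builds the variety one fiber at a time and tracks how the leading terms of the \Gr\ basis evolve. That approach uses the full stratification hypothesis, including condition~(b), because it proves a finer statement about every block $G^{h}$. Your approach bounds the minimal pure ${\sf a}_L$-power in the leading-term ideal directly from both sides using only $\lambda(L)=L$ and radicality; it is shorter, more elementary, and as you noticed does not touch condition~(b) at all. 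What the constructive route buys is an explicit description of all of ${\bf T}(G)$ at once, which is what the later sections need; what your route buys is a clean two-line proof of this particular corollary.

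One cleanup: the Galois-descent paragraph is the weakest link and is in fact unnecessary. Since the reduced \Gr\ basis of $J'$ over $\mathbb{K}$ is already a reduced \Gr\ basis of $J'\otimes_{\mathbb{K}}\overline{\mathbb{K}}$, the leading-term ideal does not change under base extension. So it is enough to produce your interpolated $g$ in $\overline{\mathbb{K}}[\mathcal{S},{\sf a}_L]$ with ${\bf T}(g)={\sf a}_L^L$ lying in the extended ideal; no averaging over $\mathrm{Gal}(\overline{\mathbb{K}}/\mathbb{K})$ is needed. (The only residual subtlety is whether $J'\otimes\overline{\mathbb{K}}$ stays radical when $\mathbb{K}$ is imperfect; the paper's own constructive proof sidesteps this by assuming $\mathcal{V}(J)\subset\mathbb{K}^{N+mL+K}$ in Proposition~\ref{GBstructure}, and in all the coding-theory applications the field equations force every point to be $\mathbb{K}$-rational anyway.)
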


%%%%%%%%%%%%%%%%%%%%%%%%%%%%%%%%%%%%%%%%%%%%%%%%%%%%%%%%%%%%%%%%%%%%%%%%%%%%%%%%%%

\subsection{Root multiplicities and Hasse derivative}\label{hassesub}

\begin{definition}
Let $g=\sum_i a_ix^i \in \mathbb{K}[x]$. Then the \textbf{$\mathbf{n}$-th Hasse derivative} of $g$ is $\varphi^{(n)}(g)$ and the $\mathbf{n}$\textbf{-th formal derivative} of $g$ is $g^{(n)}$, where

$$\varphi^{(n)}(g)=\sum_i {i\choose n} a_ix^{i-n} \mbox{ and }\,\, g^{(n)}=n ! \sum_i {i\choose n} a_ix^{i-n}.$$
\end{definition}
We can note that
$g^{(n)}=n!\varphi^{(n)}(g)$. In a field with characteristic $p$, it is more convenient to use the Hasse derivative, because $n!=0$ for all $n\geq p$.\\
\indent Note that $\varphi^{(2)}(g)\ne \varphi^{(1)}\left(\varphi^{(1)}(g)\right)$.
\begin{definition}
Let $g\in \mathbb{K}[x]$, $g\ne 0$, $P\in \mathbb{K}$ and $g(P)=0$. The \textbf{ multiplicity} of $P$ as a root of $g$ is the largest integer $r\geq 1$ such that
$$\varphi^{(k)}(g)(P)=\varphi^{(k)}(g)_{\big{|_{x=
P}}}=0, \quad\textrm{ for }\, 0\leq k\leq r-1.$$
\end{definition}
The following theorem is well-known, see e.g. \cite{CGC-cd-book-niederreiter97}.
\begin{theorem}\label{teomult}
Let $g,f\in \mathbb{K}[x]$ and let $g$ be irreducible. Then
$$g^r|f \qquad\iff\qquad g|\varphi^{(k)}(f)\,\,\textrm{ for } \, 0\leq k\leq r-1 \,.$$
\end{theorem}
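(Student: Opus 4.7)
The plan is to establish both directions using the iterated Leibniz rule for the Hasse derivative, namely
$$\varphi^{(n)}(FG) \;=\; \sum_{i+j=n} \varphi^{(i)}(F)\,\varphi^{(j)}(G),$$
which, applied repeatedly, gives
$$\varphi^{(n)}(F_1\cdots F_s) \;=\; \sum_{n_1+\cdots+n_s=n}\varphi^{(n_1)}(F_1)\cdots\varphi^{(n_s)}(F_s)$$
for any factors $F_1,\dots,F_s\in\mathbb{K}[x]$. This identity, together with the basic fact $\varphi^{(0)}(G)=G$, will do most of the work.

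For the $(\Rightarrow)$ direction, I would write $f=g^r h$ and, for each $k$ with $0\le k\le r-1$, expand $\varphi^{(k)}(f)$ by applying the iterated Leibniz rule to the product of $r$ copies of $g$ times $h$. Every summand has the form $\varphi^{(n_1)}(g)\cdots\varphi^{(n_r)}(g)\,\varphi^{(m)}(h)$ with $n_1+\cdots+n_r+m=k<r$. Since the number of indices $n_j$ is $r$ but their sum is strictly less than $r$, by pigeonhole at least one $n_j$ must vanish, contributing a factor $\varphi^{(0)}(g)=g$. Thus $g$ divides every summand, and hence $g\mid\varphi^{(k)}(f)$.

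For the $(\Leftarrow)$ direction I would proceed by induction on $r$. The base case $r=1$ is immediate since $\varphi^{(0)}(f)=f$. For the inductive step, the inductive hypothesis applied to the range $0\le k\le r-2$ yields $g^{r-1}\mid f$, so one can write $f=g^{r-1}h$ and the goal reduces to proving $g\mid h$. Expanding $\varphi^{(r-1)}(g^{r-1}h)$ by iterated Leibniz and reducing modulo $g$, every summand that contains some $\varphi^{(0)}(g)=g$ is killed; the surviving tuples $(n_1,\dots,n_{r-1},m)$ must satisfy $n_1+\cdots+n_{r-1}+m=r-1$ with all $n_j\ge 1$, which forces $n_1=\cdots=n_{r-1}=1$ and $m=0$. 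Therefore
$$\varphi^{(r-1)}(f) \;\equiv\; \varphi^{(1)}(g)^{r-1}\, h \pmod{g}.$$
Since $g$ is irreducible and $\deg\varphi^{(1)}(g)<\deg g$, we have $\gcd(g,\varphi^{(1)}(g))=1$, so $g\nmid \varphi^{(1)}(g)^{r-1}$. Combined with the hypothesis $g\mid\varphi^{(r-1)}(f)$, this forces $g\mid h$ and hence $g^r\mid f$.

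The main obstacle to watch is the possibility that $\varphi^{(1)}(g)=0$, which would break the coprimality step in the reduction modulo $g$; this can occur only in the inseparable case, i.e., when $g$ is a polynomial in $x^p$ over an imperfect field. In the setting relevant to this paper ($\mathbb{K}=\mathbb{F}_q$ or its algebraic closure, both perfect) no irreducible $g$ is inseparable, so $\varphi^{(1)}(g)\ne 0$ and the argument above goes through without modification.
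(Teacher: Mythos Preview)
The paper does not give its own proof of this theorem; it simply records it as well-known with a reference to Lidl--Niederreiter. So there is no in-paper argument to compare against.

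Your argument is correct and standard. The iterated Leibniz rule for Hasse derivatives holds without any multinomial coefficients (this is one of the pleasant features of Hasse derivatives over ordinary ones), so both directions go through exactly as you describe: pigeonhole for $(\Rightarrow)$, and for $(\Leftarrow)$ the reduction modulo $g$ that isolates the single surviving term $\varphi^{(1)}(g)^{r-1}h$. Your observation about the inseparable case is not merely a technicality to be waved away: the statement as written genuinely fails when $g$ is irreducible but inseparable. For instance, over $\mathbb{K}=\mathbb{F}_p(t)$ take $g=x^p-t$, $f=g$, and $r=2$; then $\varphi^{(0)}(f)=g$ and $\varphi^{(1)}(f)=0$ are both divisible by $g$, yet $g^2\nmid f$. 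So the theorem, stated over an arbitrary field $\mathbb{K}$ as in the paper's conventions, actually requires $g$ to be separable (equivalently $\varphi^{(1)}(g)\neq 0$). You are right that this hypothesis is automatic over perfect fields, and in particular over $\mathbb{F}_q$ and its algebraic closure, which is all the paper ever uses.
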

As a consequence of the previous theorem when $g=(x-P)$ for any $P\in \mathbb{K}$, we have 
$$(x-P)^r|f \quad\iff \quad\varphi^{(k)}(f)_{\big{|_{x=
P}}}=0\,\,\textrm{ for  } \, 0\leq k\leq r-1\,.$$

%%====================================================================================
%%====================================================================================

%%%%%%%%%%%%%%%%%%%%%%%%%%%%%%%%%%%%%%%%%%%%%%%%%%%%%%%%%%%%%%%%%%%%%%%%%%%%%%%%%%%%%%%%%%%%%%%%
\subsection{General error locator polynomials} \label{seclocator}

Let $C$ be an $[n,k,d]$ linear code over $\FF_q$ with correction
capability $t \geq 1$. Choose any  parity-check matrix with
entries in an appropriate extension field $\FF_{q^M}$ of $\FF_q$,
$M \geq 1$. 
Its  syndromes lie in $(\FF_{q^M})^{n-k}$ and form a
vector space of dimension $r=n-k$ over $\FF_q$. Let $\alpha$ be a
primitive $n$-th root of unity in $\FF_{q^M}$.
\begin{definition}\label{zero}
Let $\mathcal{ L}$ be a polynomial in $\FF_q[S,x]$, where
$S=(s_1,\dots,s_r)$. Then $\mathcal{ L}$ is a {\bf general error
locator polynomial} of $C$ if
\begin{enumerate}
\item $\mathcal{ L}(S,x)= x^t+a_{t-1}x^{t-1}+ \cdots +a_0$,
  with $a_j \in \FF_q[S]$, $0 \leq j \leq t-1$, that is, $\mathcal{L}$ is
  a monic polynomial with degree
  $t$ with respect to the variable $x$
  and its coefficients are in $\FF_q[S]$;\label{loc1}
\item given a syndrome
  ${\bf s}=(\overline{s}_1,\dots \overline{s}_r)\in (\FF_{q^M})^r$,
  corresponding to an  error vector of weight
  $\mu \leq t$ and error positions $\{k_1, \dots, k_{\mu}\}$,
  if we evaluate the $S$ variables at  ${\bf s}$, then the roots of
  $\mathcal{L}({\bf s},x)$ are exactly
  $\{\alpha^{k_1},\dots,\alpha^{k_\mu},0\}$, where the multiplicity of $0$ is $t-\mu$.\label{loc2}
\end {enumerate}
\end{definition}
Given any (correctable) linear code $C$, 
the existence of a general error locator
polynomial is not known. In \cite{CGC-cd-art-gelp1} the authors
prove its existence for any cyclic code and recently in
\cite{CGC-cd-prep-martamax,CGC-cd-art-martamax,CGC-cd-phdthesis-marta} 
its existence has been proved for a large class of linear codes.
%\pagebreak

We can extend Definition \ref{zero} to the case when there are
also erasures.
\begin{definition}
\label{zeronu} Let $\mathcal{L}$ be a polynomial  in
$\FF_q[S,W,x]$, $S=(s_1,\dots,s_r)$ and  $W=(w_1,\dots,w_{\nu})$,
where $\nu$ is the number of occurred erasures.
Let $2\tau + \nu<d$.
Then
$\mathcal{L}$ is a {\bf general error locator polynomial of type}
${\boldsymbol{ \nu}}$ of $C$ if
\begin{enumerate}
\item $ \mathcal{ L}(S,W, x)= x^{\tau}+a_{\tau-1}x^{\tau-1}+ \cdots +a_0$,
  with $a_j \in \FF_q[S,W]$, for any\\ $0 \leq j \leq \tau-1$, that
  is,
  $\mathcal{ L}$ has degree $\tau$ w.r.t. $x$
 and coefficients in $\FF_q[S,W]$;
\item for any syndrome ${\bf s}=(\overline{s}_1,\dots,
  \overline{s}_r)$ and any erasure location vector \\
       {\bf w}$=
       (\overline{w}_{1},\dots,$ $\overline{w}_{\nu})$, corresponding to an
       error vector  of weight $\mu \leq \tau$ and error locations $\{k_1, \dots,
       k_{\mu}\}$, if we evaluate the $S$ variables at ${\bf s}$ and the
       $W$ variables at {\bf w}, then the roots of $\mathcal{ L}({\bf s},{\bf w},x)$ are
       $\{\alpha^{k_1},\dots,\alpha^{k_\mu},0\}$,  where the multiplicity of $0$ is $\tau-\mu$.
\end{enumerate}
\end{definition}
%
%\pagebreak
For the benefit of readers unfamiliar with simultaneous correction of 
errors and erasures, we sketch how it works.
When some (say $\nu$) symbols  are not recognised by the receiver, the decoder
treats them as {\em erasures}. The decoder knows the positions of these erasures
$i_1,\ldots,i_\nu$, which means in our notation that the decoders knows 
the erasure locations grouped for convenience in the {\em erasure location
vector} $\mathbf{w}=(\bar w_1,\ldots,\bar w_\nu)=
(\alpha^{i_1},\ldots,\alpha^{i_\nu})$. A standard result in coding theory
is that it is possible to correct simultaneously $\nu$ erasures and
$\tau$ errors, provided that $2\tau+\nu < d$.

To be consistent with our notation, we may refer to a polynomial
in Definition~\ref{zero} also as  a
{\em general error locator polynomial of type 0}.

For a code $C$, the possession of a polynomial of each type $0\leq \nu<d$
might be a stronger condition than the possession of a polynomial of
type $0$, but in
\cite{CGC-cd-art-gelp1} the authors prove that any cyclic code
admits a polynomial of any type $\nu$,     for $0\leq \nu<d$.
In \cite{CGC-cd-art-martamax} the existence of general error locator polynomials (of any type) for a large class of linear codes was proved,
 but it is still unknown whether such a result holds for general linear codes.

%%%%%%%%%%%%%%%%%%%%%%%%%%%%%%%%%%%%%%%%%%%%%%%%%%%%%%%%%%%%%%%%%%%%%%%%%%%%%%

\section{Decoding the affine variety code with the Cooper philosophy}
\label{cooper}
\subsection{The approach by Fitzgerald and Lax}
In \cite{CGC-cd-art-lax} a decoding technique was proposed
following what is known as the ``Cooper
philosophy''.
Although this terminology has been established only
recently (\cite{CGC-cd-inbook-D1moraorsini}), 
this decoding approach has a quite wide literature, e.g.
\cite{CGC-cd-art-cooper2},\cite{CGC-cd-art-cooper1},
\cite{CGC-cd-art-caboaramora},\cite{CGC-cd-art-cooper3},
\cite{CGC-cd-art-CRHT1}.
We describe this technique for affine-variety codes, as follows (see Subsection \ref{2.1}).
Let $C^{\perp}(I,L)$ be an affine-variety code with dimension
$n-r$ and let $I=\langle g_1, \dots, g_{\gamma} \rangle$. Let $L$ be linearly
generated by $b_1,\ldots,b_r$.
Then we can denote
by $J^{C,t}_{{\mathcal{FL}}}$ the ideal (${\mathcal F}{\mathcal L}$ is for
``FitzgeraldLax'')
\vspace{-0.4cm}
{\footnotesize{$$
J^{C,t}_{\mathcal{FL}} \subset
\FF_q[s_1,\dots,s_r,x_{t,1},\dots,x_{t,m},\dots,x_{1,
1},\dots,x_{1,m}, e_1,\dots,e_t]= \FF_q[S,X_t,\dots,X_1,E]
$$}}
where\footnote{To speed up the basis computation we can add
{\scriptsize{$\left\{x_{j,\iota}^q-x_{j,\iota}\right\}_{\substack{1\leq j \leq t,\\ 1\leq \iota \leq m}}$}} to the ideal.}
{\footnotesize{
\begin{equation}\label{idealLax}
\begin{array}{ll}
J_{\mathcal{FL}}^{C,t} = \Big\langle &\left\{\sum_{j=1}^t e_j b_{\rho}
(x_{j,1},\dots,x_{j,m})-s_{\rho}\right\}_{1\leq \rho \leq r},\\
                 & \left\{e_j^{q-1}-1\right\}_{1\leq j  \leq t}, \,\left\{g_h(x_{j,1},\dots,x_{j,m})\right\}_{\substack{1 \leq h \leq \gamma,\\ 1\leq j \leq t}} \Big\rangle \quad .
\end{array}
\end{equation}}}
Let $<_S$ be any term ordering on the variables $s_1, \dots, s_r$
and $<_{\lex}$ be the lexicographic  ordering on the variables $X_t, \dots,
X_1$, such that
$$
x_{t,1}<_{\lex} \dots <_{\lex} x_{t,m} <_{\lex}  \dots<_{lex} x_{1,1} <_{\lex} \dots <_{\lex} x_{1,m}.
$$
\noindent  Let $<_E$ be any term ordering on the variables $e_1,\ldots, e_t$.\\
Then let $<$ be the block order $(<_S,<_{\lex},<_E)$. We denote by $\mathcal{G}_{\mathcal{FL}}^{C,t}$ a \GR\ basis of
$J_{\mathcal{FL}}^{C,t}$ with respect to $<$.
In \cite{CGC-cd-art-lax} we can find a method  describing how  to find the
error locations and values, by applying elimination theory to the
polynomials in $\mathcal{G}_{FL}^{C,t}$.

\begin{example}
\label{ex:hermitiano}
Let $C=C^{\perp}(I,L)$ be the Hermitian code from the curve $y^2+y=x^3$
over $\FF_4$ and with defining monomials $\{1,x,y,x^2,x y\}$.
The eight points of the variety defined by $I$ are
{\tiny{\begin{align*}
&P_1=(0,0), P_2=(0,1), P_3=(1,\alpha), P_4=(1,\alpha^2), P_5=(\alpha,\alpha), P_6=(\alpha,\alpha^2), P_7=(\alpha^2,\alpha), P_8=(\alpha^2,\alpha^2),
\end{align*}}}
where $\alpha$ is any primitive element of $\FF_4$. 
It is well--known that $C$ corrects up
to $t=2$ errors. The ideal $J_{\mathcal{FL}}^{C,2} \subset
\FF_4[s_1,\dots,s_5,x_2,y_2,x_1,y_1,e_1,e_2]$ is
{\small{\begin{align*}
J_{\mathcal{FL}}^{C,2}=\langle\{&x_1^4-x_1, x_2^4-x_2,y_1^4-y_1,y_2^4-y_2, e_1^3-1,e_2^3-1,y_1^2+y_1-x_1^3,\\
&y_2^2+y_2-x_2^3, e_1+e_2-s_1, e_1x_1+e_2x_2-s_2, e_1y_1+e_2y_2-s_3,\\
&e_1x_1^2+e_2x_2^2-s_4, e_1x_1y_1+e_2x_2y_2-s_5\}\rangle.
\end{align*}}}
\end{example}

Typically the \Gr \ basis of $J_{\mathcal{FL}}^{C,t}$ that has been obtained
using the block order $<$ contains a large number of polynomials
and  most  are not useful for  decoding purposes. We
would have to choose a polynomial in $\FF_q[S,x_{t,1}]$ that, once
specialized in the received syndrome, could be used to find
the first coordinates of all the errors.  It is important to observe
that in this situation we do not know which polynomial is the
right one, because after the specialization  we can obtain a polynomial
which vanishes identically.

 \subsection{Rationale for our decoding ideals}

The approach presented in the previous section shares the same
problem with other similar approaches
(\cite{CGC-cd-art-CRHT2},\cite{CGC-cd-art-louYork},\cite{CGC-cd-art-caboaramora}).
In the portion of the \Gr\ basis corresponding to the
elimination ideal $I_{S,x_{t,1}}$, one should choose a polynomial
$g$ in $\FF_q[S,x_{t,1}]\setminus \FF_q[S]$, specialize it to the
received syndrome, and then find its $x_{t,1}$-roots. The problem
is that it is not possible to know in advance which polynomial has
to be chosen, and there might be hundreds of ``candidate''
polynomials. Let us call ideal $J_{\mathcal{F}\mathcal{L}}^{C,t}$ the
``Cooper ideal for affine-variety codes'' (the convenience for
this historically inaccurate name will be clear in a moment) and
the ``Cooper variety'' its variety.

The same problem is present in the ideal for decoding cyclic codes
presented in \cite{CGC-cd-art-CRHT2}, which we will call the
``Cooper ideal for cyclic codes'' (although again its formal
definition was first presented in \cite{CGC-cd-art-CRHT2}), where
a huge number of polynomials can be found as soon as the code
parameters are not trivial.
In this case an improvement was proposed in
\cite{CGC-cd-art-caboaramora}. Instead of specializing the whole
polynomial, one can specialize only its leading polynomial, since 
it does not vanish identically  if and only if the whole polynomial does not
vanish (by the Gianni-Kalkbrener theorem
\cite{CGC-alg-art-gianni}, \cite{CGC-alg-art-kalkbrener}). We
could adopt exactly the same strategy for the ``Cooper ideal for
affine-variety codes'' and thus get a significant improvement on
the algorithm proposed in \cite{CGC-cd-art-lax}. This improvement
would reduce the cost of the specialization, but would still require an
evaluation (in the worst case) for any candidate polynomial.
%
%\pagebreak
%
In Section 7 of \cite{CGC-cd-art-caboaramora} a more refined
strategy has been investigated, that is, the vanishing conditions
coming from the leading polynomials were grouped and a decision
tree was formed. In the example proposed there, this resulted in a
drastic reduction of the computations required to identify the
right candidate. Unfortunately, this strategy has not
been deeply investigated in the general case, but we believe that
it is obvious how this could be done also for the Cooper ideal for
affine-variety codes, obtaining thus another improvement.

In \cite{CGC-cd-art-louYork} it was noted that the Cooper variety
for cyclic codes contains also points that do not correspond to
valid  syndrome-error location pairs and thus are useless. In
\cite{CGC-cd-art-gelp1} the authors enlarge the Cooper ideal in
order to remove exactly the non-valid pairs, which we call
``spurious solutions''. The new ideal turns out to be stratified
(although the notion of stratified ideal is established later in
\cite{CGC-cd-art-martamax}) and hence to contain the general
error locator polynomial, thanks to deep properties of some \Gr\ bases of stratified ideals, which is the {\em only} polynomial that
needs to be specialized. We are now going to explain how this
improvement can be obtained also for the Cooper variety
for affine-variety codes.

\noindent We define several modified  versions of the Cooper ideal for decoding
affine-variety codes.
We summarize what we are going to do:
\begin{itemize}
    \item[-] In  Subsection \ref{Ghostpoint} we define a decoding ideal (\ref{fineideal}) that is able to correct any correctable error, even not knowing in advance the number of errors.
    \item[-] However, in Subsection \ref{Locdebole} we show why this decoding ideal does not necessarily contain locator polynomials that play the same role of generator error locator polynomials for cyclic codes. Still, these weak forms of locators (Definition \ref{locDebole}) can be used to decode.
    \item[-] In Section  \ref{zeroideal} we develop the commutative algebra necessary to show the existence of weak locators, with Section  \ref{dim} devoted to the long proof of the main result, and then in Section  \ref{Loc} we will finally be able to define a set of multi-dimensional general error locator polynomials (see Definition~\ref{zeroaf}). We define a suitable ideal containing this set as we show in Theorem~\ref{bombastaffato}.
\end{itemize}

\subsection{Decoding with ghost points}\label{Ghostpoint} 

Note that Fitzgerald and Lax consider the possible error locations as $t$ points in $\mathcal{V}(I)$, that we call $P_{\sigma_1},\ldots, P_{\sigma_t}$, but they denote their components dropping the reference to $\sigma$, that is, $P_{\sigma_l}=(x_{l,1},\ldots,x_{l,m})$ for $1\leq l\leq t$. We adhere to this notation from now on.\\

We observe that in the Cooper ideal (\ref{idealLax}) there is not any constraint on point pairs. But we want that all error locations are distinct. We have to force this, i.e. any two locations  $P_{\sigma_j}=(x_{j,1},\ldots,x_{j,m})$ and  $P_{\sigma_k}=(x_{k,1},\ldots,x_{k,m})$ must differ in at least one component. So we add this condition:
$$\prod_{1 \leq \iota \leq m}((x_{j,\iota}-x_{k,\iota})^{q-1}-1)=0 \qquad\textrm{ for }1\leq j< k\leq t. $$
In fact, if $\alpha\in \mathbb{F}_q$, then $\alpha\not=0\iff \alpha^{q-1}=1$.
Therefore, the product $\prod_{1 \leq \iota \leq m}((x_{j,\iota}-x_{k,\iota})^{q-1}-1)$ is zero if and only if at least for one $\iota$ we have $(x_{j,\iota}-x_{k,\iota})^{q-1}=1$, i.e. $x_{j,\iota}\not=x_{k,\iota}$ and thus $P_{\sigma_j}\ne P_{\sigma_k}$.
Our ideal becomes

\begin{equation}\label{Inostro}
    \begin{array}{ll}
\widehat{J}_{\mathcal{FL}}^{\,C,t} = \Big\langle &\left\{\sum_{j=1}^t e_j b_{\rho}
(x_{j,1},\dots,x_{j,m})-s_{\rho}\right\}_{1\leq \rho \leq r},
                 \left\{e_j^{q-1}-1\right\}_{1\leq j  \leq t},\\
                 & \left\{g_h(x_{j,1},\dots,x_{j,m})\right\}_{\substack{1 \leq h \leq \gamma,\\ 1\leq j \leq t}},\\
&\left\{\prod_{1 \leq \iota \leq m}
                 ((x_{j,\iota}-x_{k,\iota})^{q-1}-1) \right\}_{\substack{1 \leq j < k \leq t}}  \Big\rangle \quad .
\end{array}
\end{equation}

\begin{remark}\label{no}
Ideal $\widehat{J}_{\mathcal{FL}}^{\,C,t} $ can be used to correct and it will work better than $J_{\mathcal{FL}}^{\,C,t} $, since its variety does not contain spurious solutions. However, we cannot expect that $\widehat{J}_{\mathcal{FL}}^{\,C,t} $ contains polynomials with a role similar to that of the generic error locator in the cyclic case, because $\widehat{J}_{\mathcal{FL}}^{\,C,t} $ still depends on the knowledge of the error number. 
\end{remark}
In the following we modify (\ref{Inostro}) to allow for different-weight syndromes.
\begin{itemize}
    \item[(a)]First, we  note that in  $\widehat{J}_{\mathcal{FL}}^{\,C,t} $ the following condition is verified
 $$e_j^{q-1}=1\textrm{ with }j=1,\ldots,t.$$ 
\noindent This is equivalent to saying that  exactly $t$ errors occurred, which are $e_1,\ldots,e_t\in \mathbb{F}_q^*$. We must allow for some $e_j$ with $j=1,\ldots,t$ to be equal to zero.
We would obtain a new ideal where the conditions $e_j^{q-1}=1$ are replaced with $e_j^q=e_j$ for any $j=1,\ldots,t$.

 \item[(b)] We recall  the changes made to the Cooper   ideal in  \cite{CGC-cd-art-gelp1} for cyclic codes.  We consider the error vector  %$(\mu\leq t)$

$$e=(\underbrace{0,\dots,0}_{k_1-1},
\underset{\underset{k_1}{\uparrow}}{e_{1}},
0,\dots,0,\underset{\underset{k_l}{\uparrow}}{e_l}, 0,\dots,0,\underset{\underset{k_{\mu}}{\uparrow}}{e_{\mu}},
\underbrace{0,\dots,0}_{n-1-k_\mu})\quad \textrm{ with } \mu\leq t,$$
where $k_1,\ldots,k_{\mu}$ are the error positions and $e_1\ldots,e_{\mu}$ are the error values. We consider the $j$-th syndrome and we obtain the following equation 
\begin{equation}\label{sin1}
    \sum_{l=1}^{\mu}e_l(\alpha^{i_j})^{k_l}=s_j.
\end{equation}
(For the n-th rooth codes in \cite{CGC-cd-prep-martamax,CGC-cd-art-martamax} the formulas are slightly more complicated). To arrive at the desired equation
\begin{equation}\label{sin2}
    \sum_{l=1}^{t}e_l(\alpha^{i_j})^{k_l}=s_j
\end{equation}
we have to add the ``virtual error position'' $\textsf{k}$ defined as $\alpha^{\textsf{k}}=0 \,\,\forall \alpha \in \mathbb{F}$. Using the location $z_l=\alpha^{k_l}$ (and so the ``virtual error location'' is $\alpha^{\textsf{k}}=0$), equation  (\ref{sin2}) becomes
$$s_j=\sum_{l=1}^{\mu}e_l(z_l)^{i_j}+\sum_{l=\mu+1}^{t}e_l(\alpha^{\textsf{k}})^{k_l}
=\sum_{l=1}^{\mu}e_l(z_l)^{i_j}+\sum_{l=\mu+1}^{t}e_l(0)^{k_l}=\sum_{l=1}^{t}e_l(z_l)^{i_j}.$$

\noindent We can rephrase what we did by saying that we are using $0$ as a \textit{ghost error location}, meaning that if we find $\nu$ zero roots in the error location polynomial, then $\mu=t-\nu$ ($\nu$ error locations are ghost locations and so they do not correspond to actual errors).

 \item[(c)] Let us come back to the affine-variety case. The error vector is
$$
e=(\underbrace{0,\dots,0}_{\sigma_1-1},
\underset{\underset{P_{\sigma_1}}{\uparrow}}{e_{1}},
0,\dots,0,\underset{\underset{P_{\sigma_l}}{\uparrow}}{e_l}, 0,\dots,0,\underset{\underset{P_{\sigma_{\mu}}}{\uparrow}}{e_{\mu}},
\underbrace{0,\dots,0}_{n-1-\sigma_\mu}).
$$
The valid error locations are the points $P_{\sigma_l}=(x_{l,1},\dots,x_{l,m})$, $1\leq l\leq \mu$. The equation corresponding to (\ref{sin1}) is
\begin{equation}\label{sin1new}
s_{\rho}=\sum_{l=1}^{\mu} e_l b_{\rho}(P_{\sigma_l})=\sum_{l=1}^{\mu} e_l b_{\rho}(x_{l,1},\dots,x_{l,m}).
\end{equation}
We want a sum like (\ref{sin2}), something like
$s_{\rho}=\sum_{l=1}^{t} e_l b_{\rho}(P_{\sigma_l}).$
In order to do that, we would need $\sum_{l=\mu+1}^{t} e_l b_{\rho}(P_{\sigma_l})=0$, for some convenient \textit{ghost points} $\{P_{\sigma_l}\}_{\mu+1\leq l\leq t}$. 
Actually, we can use just one ghost point, that we call $P_0$. But it must \textit{not} lie on the variety, otherwise it could be confused with valid locations. In particular, we cannot hope to use always the ghost point $P_0=(x_{0,1},\dots,x_{0,m})=(0\ldots,0)$, since $(0\ldots,0)$ could be a point on the variety. For example, the Hermitian curve $\chi: x^{q+1}=y^q+y$ contains $(0,0)$ for any $q$.\\
Let $P_0$ be a ghost point. Not only we need to choose $P_0$ outside the variety, but we must also force $e_j=0$ for the error values in $P_0$, since we cannot hope that $b_{\rho}(P_{0})=0$ for each $\rho$. With these assumptions, we obtain
{\small\begin{equation}\label{eq6}
\begin{array}{lll}  s_{\rho}&=&\sum_{l=1}^{\mu}e_lb_{\rho}(x_{l,1},\dots,x_{l,m})+\sum_{l=\mu+1}^{t} e_l b_{\rho}(P_0)\\
&=&\sum_{l=1}^{\mu}e_lb_{\rho}(x_{l,1},\dots,x_{l,m})+\sum_{l=\mu+1}^{t} 0 \, b_{\rho}(P_0)\\
&=&\sum_{l=1}^{\mu}e_lb_{\rho}(x_{l,1},\dots,x_{l,m}) \,.
\end{array}
\end{equation}}

\item[(d)] For us a ghost point is any point $P_0\in (\mathbb{F}_q)^m\setminus \mathcal{V}(I)$. Depending on the variety, there can be clever ways to choose $P_0$.
\begin{definition}
Let $P_0=(\overline{x}_{0,1},\ldots,\overline{x}_{0,m})\in (\mathbb{F}_q)^m\setminus \mathcal{V}(I)$. We say that $P_0$ is an \textbf{optimal ghost point} if there is a $1\leq j\leq m$ such that the hyperplane $x_j=\overline{x}_{0,j}$ does not intersect the variety. We call $j$ the \textbf{ghost component}.
\end{definition}
\noindent In other words, for any optimal ghost point there is at least a component not shared with any variety point. See Figure \ref{gp} for an example.
\begin{center}
\begin{figure}[!htbp]
\includegraphics[width=14 cm]{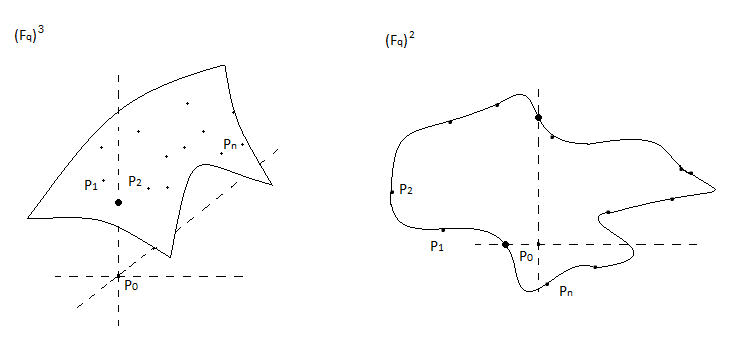}
\caption{In the first picture we have a optimal ghost point with two ghost components. In the second, a non-optimal ghost point.}\label{gp}
\end{figure} 
\end{center}
\begin{remark}
The advantage of using optimal ghost points is that it is enough to look at any ghost component in order to discard non-valid locations.
\end{remark}
\noindent If a curve is smooth and maximal (e.g., an Hermitian curve), it will probably intersect any hyperplane and so no optimal ghost point will exist in this case.

\item[(e)] We are ready to define a new ideal, summarising the above argument.\\
We start from equations (\ref{eq6}):
$$\left\{\sum_{j=1}^t e_j b_{\rho}
(x_{j,1},\dots,x_{j,m})-s_{\rho}\right\}_{1\leq \rho \leq r}
$$
\noindent We choose a ghost point $P_0=(x_{0,1},\dots,x_{0,m})\not\in \mathcal{V}(I)$. We need to find a generator set for the radical ideal $I'$ vanishing on $\mathcal{V}(I)\sqcup \{P_0\}$. The easiest way of doing this is to start from any \Gr\ basis $G$ of $I$ and to use the Buchberger-M\"{o}ller algorithm (see \cite{CGC-alg-art-buchmoeller,CGC-cd-inbook-D1morafglm}) to compute the \Gr\ basis $G'$ of $I'$. We will summarize the Buchberger-M\"{o}ller algorithm in Theorem \ref{teoBM}.  Let $G'=\{g'_h\}_{1\leq h\leq \gamma '}$. We can insert in our new ideal the following polynomials
$$
\left\{g'_h(x_{j,1},\dots,x_{j,m})\right\}_{\substack{1 \leq h \leq \gamma ',\\ 1\leq j \leq t}}
$$
\noindent In our new system we put $\{e_j^{q}=e_j\}$, because there can be zero values (corresponding to ghost locations).
We enforce $(x_{j,1},\dots,x_{j,m})\not=P_0$ for all $j$ corresponding to actual error locations. In order to do that, when $e_j\not=0$ we must have at least one component of $P_{\sigma_j}$ different from that of $P_0$, that is, $e_j\prod_{1 \leq \iota \leq m}\left((x_{j,\iota}-x_{0,\iota}\right)^{q-1}-1)=0$. 
So we can add
$$
\left\{e_j\prod_{1 \leq \iota \leq m}
                 ((x_{j,\iota}-x_{0,\iota})^{q-1}-1) \right\}_{\substack{1 \leq j \leq t}}.
$$

On the other hand, when $e_j=0$ we want  $(x_{j,1},\dots,x_{j,m})=P_0$. To enforce it, we add $$\left\{(e_j^{q-1}-1)(x_{j,\iota}-x_{0,\iota}) \right\}_{\substack{1\leq j \leq t,\\ 1\leq \iota \leq m}}.$$

Finally, if two points correspond to valid locations then they must be distinct. However, if at least one is a ghost point, then this requirement does not hold:$$\left\{e_je_k\prod_{1 \leq \iota \leq m}
                 ((x_{j,\iota}-x_{k,\iota})^{q-1}-1) \right\}_{\substack{1 \leq j < k\leq t}}.$$

We denote by $J_{*}^{C,t}$ the ideal in
$\FF_q[s_1,\dots,s_r,X_t, \dots,X_1,e_1,\dots,e_t]$,
with $X_1=\{x_{1,1},\dots,x_{1,m}\}, \dots, X_t=\{x_{t,1},\dots,x_{t,m}\}$ s.t.
\begin{center}
{\footnotesize\begin{equation}\label{fineideal}
\begin{array}{ll}
J_{*}^{C,t} = \Big\langle &\left\{\sum_{j=1}^t e_j b_{\rho}
(x_{j,1},\dots,x_{j,m})-s_{\rho}\right\}_{1\leq \rho \leq r}, \left\{e_j^{q}-e_j\right\}_{1\leq j  \leq t},\\
&    \left\{g'_h(x_{j,1},\dots,x_{j,m})\right\}_{\substack{1 \leq h \leq \gamma ',\\ 1\leq j \leq t}}, \left\{(e_j^{q-1}-1)(x_{j,\iota}-x_{0,\iota}) \right\}_{\substack{1\leq j \leq t,\\ 1\leq \iota \leq m}}, \\
&  \left\{e_j\prod_{1 \leq \iota \leq m}
                 ((x_{j,\iota}-x_{0,\iota})^{q-1}-1) \right\}_{\substack{1 \leq j \leq t}}, \\
&        
\left\{e_je_k\prod_{1 \leq \iota \leq m}
                 ((x_{j,\iota}-x_{k,\iota})^{q-1}-1) \right\}_{\substack{1 \leq j < k
                 \leq t}} \Big\rangle .
\end{array}
\end{equation}}
\end{center}
Since $I'=\langle\left\{g'_h\right\}_{\substack{1 \leq h \leq H}}\rangle$ contains the field equations, we may add them to reduce the computation of the \Gr\ basis of $J_{*}^{C,t}$.
\end{itemize}

\subsection{Weak locator polynomials}\label{Locdebole}

We would like to define some locator polynomials for affine-variety
codes that play the same role as those in Definition \ref{zero}. 
We would expect to find them in our ideal (\ref{fineideal}). These locators might look like 
\begin{equation}\label{polocSbagliato}
    {\mathcal{L}}_{i}(S,x_1,\ldots,x_i)= x_i^{t}+a_{t-1}x_i^{t-1}+\cdots +a_0,
\end{equation}
  with $a_j \in \FF_q[S,x_1,\ldots,x_{i-1}]$, $0 \leq j \leq t-1$, that is,
  ${\mathcal{L}}_{i}$ is a monic polynomial with degree $t$ with respect
  to the variable $x_i$ and its coefficients are in $\FF_q[S,x_1,\ldots,x_{i-1}]$. We would also want the following property.\\
Given a syndrome
  ${\bf s}=(\bar{s}_1,\dots \bar{s}_r)\in (\FF_{q})^{r}$,
  corresponding to an error vector of weight
  $\mu\leq t$ and $\mu$ error locations
  $(\bar{x}_{1,1},\ldots,\bar{x}_{1,m})$ $,\ldots,
   (\bar{x}_{\mu,1},\ldots,\bar{x}_{\mu,m})$, if we evaluate the $S$ variables at ${\bf s}$ and the variables $x_1,\ldots,x_{i-1}$ at $\bar{x}_{j,1},\ldots,\bar{x}_{j,i-1}$  for any $1\leq j\leq \mu$,  then the roots of
  ${\mathcal{ L}}_{i}({\bf s},\bar{x}_{j,1},\ldots,\bar{x}_{j,i-1},x_i)$ are either   $\{ \bar{x}_{1,i},\ldots,\bar{x}_{t,i}\}$, when $\mu=t$, or  $\{\bar{x}_{1,i},\ldots,\bar{x}_{\mu,i},\bar{x}_{0,i}\}$, when $\mu\leq t-1$.
Apart from the actual  location components and possibly the ghost component, polynomial  ${\mathcal{ L}}_{i}$ should not have other solutions.\\

To show that a polynomial of this kind does not necessarily  exist in $J_{*}^{C,t}$, we consider the following examples.
\begin{example}\label{Degenere1}
Let us consider an MDS code
$C=C^{\perp}(I,L)$ $[5,1,5]$ from the plane curve $\{y^5-y^4+y^3-y^2+y-x=0\} \cap \{x-1=0\}$ over $\FF_7$ and
with {\footnotesize{$$L=\{ y-3,y^2-1,y^3+3,y^4-1\},\, \mathcal{V}(I)=\{(1,1),\,(1,2),\,(1,3),\,(1,4),\,(1,5)\}.$$}}
\hspace{-0.3cm}
\noindent It is easy to see that $C$ can correct up to
$t=2$ errors. Let us consider the lex term-ordering with
$s_1<s_2<s_3<s_4<s_5<x_2<y_2<x_1<y_1<e_2<e_1$ in
$\FF_7[s_1,s_2,s_3,s_4,s_5,x_2,y_2,x_1,y_1,e_1,e_2]$ . Ideal $J_{*}^{C,t}$ is generated by
{\scriptsize
\begin{align*}
&\langle e_1^7-e_1,e_2^7-e_2,x_1-1,x_2-1,y_1^6-y_1^5+y_1^4-y_1^3+y_1^2-y_1,
y_2^6-y_2^5+y_2^4-y_2^3+y_2^2-y_2,\\
&
e_1(-y_1^4+y_1^3+y_1^2-2y_1+2)+e_2(-y_2^4+y_2^3+y_2^2-2y_2+2)-s_1, e_2((x_2-1)^6-1)(y_2^6-1),\\
&
e_1(3y_1^4-2y_1^3+3y_1^2+3y_1)+e_2(3y_2^4-2y_2^3+3y_2^2+3y_2)-s_2,
e_1(3y_1^4-y_1^2-2)+e_2(3y_2^4-y_2^2-2)-s_3,\\
&
e_1(-y_1^4+2y_1^3-y_1^2-3y_1+3)+e_2(-y_2^4+2y_2^3-y_2^2-3y_2+3)-s_4, e_1((x_1-1)^6-1)(y_1^6-1),\\
&
e_1e_2((x_1-x_2)^6-1)((y_1-y_2)^6-1),(e_2^6-1)(x_2-1),(e_2^6-1)y_2,(e_1^6-1)(x_1-1),(e_1^6-1)y_1
 \rangle,
\end{align*}
}
\hspace{-0.2cm}
\noindent where the ghost point is $P_0=(1,0)$.
The  reduced \Gr \ basis $G$ with respect to 
$s_1<s_2<s_3<s_4<s_5<x_2<y_2<x_1<y_1<e_2<e_1$
has $27$ elements and the \textit{ new locators} are $\mathcal{L}_{1}(s_1,\ldots,s_5,x_2)=\mathcal{L}_{x}$ and $\mathcal{L}_{2}(s_1,\ldots,s_5,x_2,y_2)=\mathcal{L}_{xy}$ (see Appendix for 
polynomials $a$ and $b$):
{\footnotesize{
$$
\mathcal{L}_{x}= \mathbf{x}-1\quad\textrm{ and }\quad
\mathcal{L}_{xy}=\mathbf{y}^2+\mathbf{y}a+b.
$$}}
\hspace{-0.2cm}
\noindent We can note that $\mathcal{L}_{x}$ does not play any role, because all $x$'s are equal to $1$. So to apply the decoding we evaluate only $\mathcal{L}_{xy}$ at $\overline{\textbf{s}}$ and we expect to obtain the (second) components of error locations. We show it in two cases:
\begin{itemize}
    \item[-] We suppose that two errors occur at the points {\footnotesize{$P_1=(1,1)$}} and {\footnotesize{$P_2=(1,2)$}}, both with  error values $1$, so the syndrome vector
corresponding to the error vector 
$(1, 1,0,0,0)$ is  $\overline{\mathbf{s}}=(2,1,0,0)$.\\
In order to find the error positions we evaluate  $\mathcal{L}_{xy}$ in $\overline{\mathbf{s}}$. We obtain two different solutions
$
\mathcal{L}_{xy}(\overline{\mathbf{s}},y)= y^2-3y+2=(y-2)(y-1),
$
that identify the two error locations.

\item[-] We consider ${\overline{\mathbf{s}}}=(0,4,4,0,1)$ corresponding to 
$(0,0,0,4,0)$, so only one error occurs in the point $(1,3)$. Evaluating
$\mathcal{L}_{xy}$ at $\overline{\mathbf{s}}$ we obtain
$
\mathcal{L}_{xy}(\overline{\mathbf{s}},y)=y^2-3y=y(y-3).
$
\noindent Also in this case we obtain a correct solutions ($0$ is the ghost component).  So the above choice of $\mathcal{L}_{x}$ and $\mathcal{L}_{xy}$ seems correct.
\end{itemize}

\end{example}
Now we consider the above code but with a different ghost point. Also in the following example, we take an optimal ghost point. 
\begin{example}\label{Degenere2} 
Let us consider the same MDS code
$C=C^{\perp}(I,L)$ as in Example
\ref{Degenere1}.
In this example we choose the (optimal) ghost point  $P_0=(0,0)$. The ideal $J_{*}^{C,t}$ is generated by
{\scriptsize
\begin{align*}
&\langle e_1^7-e_1,e_2^7-e_2,x_1y_1-y_1,x_2y_2-y_2,x_1^2-x_1,x_2^2-x_2,y_1^6-y_1^5+y_1^4-y_1^3+y_1^2-y_1,\\
&
y_2^6-y_2^5+y_2^4-y_2^3+y_2^2-y_2,e_1(-y_1^4+y_1^3+y_1^2-2y_1+2)+e_2(-y_2^4+y_2^3+y_2^2-2y_2+2)-s_1,\\
&
e_1(3y_1^4-2y_1^3+3y_1^2+3y_1)+e_2(3y_2^4-2y_2^3+3y_2^2+3y_2)-s_2,
e_1(3y_1^4-y_1^2-2)+e_2(3y_2^4-y_2^2-2)-s_3,\\
&
e_1(-y_1^4+2y_1^3-y_1^2-3y_1+3)+e_2(-y_2^4+2y_2^3-y_2^2-3y_2+3)-s_4, e_1(x_1^6-1)(y_1^6-1),\\
&
e_2(x_2^6-1)(y_2^6-1),e_1e_2((x_1-x_2)^6-1)((y_1-y_2)^6-1),(e_2^6-1)x_2,
(e_2^6-1)y_2,(e_1^6-1)x_1,(e_1^6-1)y_1.
 \rangle,
\end{align*}
}
\hspace{-0.25cm}
\noindent
The  reduced \Gr \ basis $G$ with respect to 
{\footnotesize{$s_1<s_2<s_3<s_4<s_5<x_2<y_2<x_1<y_1<e_2<e_1$}}
has $27$ elements and the \textit{ new locators} are {\footnotesize{$\mathcal{L}_{1}(\mathcal S,x_2)=\mathcal{L}_{x}$}} and {\footnotesize{$\mathcal{L}_{2}(\mathcal S,x_2,y_2)=\mathcal{L}_{xy}$}}, where $\mathcal S=\{s_1,\ldots,s_5\}$ (see Appendix for $c$ and $d$):
{\footnotesize{
\begin{eqnarray}\label{poldeg2a}
 \mathcal{L}_{x}= \mathbf{x}^2-\mathbf{x} \quad\textrm{ and }\quad
\mathcal{L}_{xy}= \mathbf{y}^2+\mathbf{y}c+d. 
\end{eqnarray}
}}
\noindent Also in this case $\mathcal{L}_{x}$ does not depend on any syndrome, so to apply the decoding  we just specialize $\mathcal{L}_{xy}(\overline{\textbf{s}},\overline{\textbf{x}},y)$. We would like that the solutions of $\mathcal{L}_{xy}(\overline{\textbf{s}},\overline{\textbf{x}},y)=0$  are exactly the second components of error locations, but this is not always the case.  Let us consider the same errors as in Example \ref{Degenere1}: 
\begin{itemize}
    \item[-] We suppose that two errors occur at the points $P_1=(1,1)$ and $P_2=(1,2)$, with both error values $1$, so the syndrome vector
corresponding to the error vector 
$(1, 1,0,0,0)$ is  $\overline{\mathbf{s}}=(2,1,0,0)$.
In order to find the error positions we evaluate  $\mathcal{L}_{xy}$ in $\overline{\mathbf{s}}$. We obtain three different solutions
{\footnotesize{
$$\begin{array}{l}
\mathcal{L}_{xy}(\overline{\mathbf{s}},1,y)= y^2-3y+2=(y-1)(y-2),\\
\mathcal{L}_{xy}(\overline{\mathbf{s}},0,y)= y^2-3y-3=(y+2)^2.
\end{array}$$}}
In this case, we are lucky, because $(0,5)$ is not a point coordinate and so we can discard $y=5$ finding the two error locations.
\item[-] We consider ${\overline{\mathbf{s}}}=(0,4,4,0,1)$ corresponding to 
$(0,0,0,4,0)$, so only one error occurs in the point $(1,3)$. Evaluating
$\mathcal{L}_{xy}$ in $(\overline{\mathbf{s}})$ we obtain
{\footnotesize{
$$\begin{array}{l}
\mathcal{L}_{xy}(\overline{\mathbf{s}},1,y)= y^2-y+1=(y-3)(y+2),\\
\mathcal{L}_{xy}(\overline{\mathbf{s}},0,y)= y^2-y=y(y-1).
\end{array}$$
}}
\noindent In this case we have four possible solutions $(1,3)$, $(1,5)$,$(0,0)$ and $(0,1)$. but only three are acceptable, which are $(1,3)$, $(1,5)$ and $(0,0)$.  To individuate those corresponding to the  syndrome vector $\overline{\mathbf{s}}$, we must compute the two syndromes and we will see that $(1,3)$ and $(0,0)$, are  correct. In this case, the above choice of $\mathcal{L}_{x}$ and $\mathcal{L}_{xy}$ is unfortunate.
\end{itemize}

\end{example}

One might think that the unpleasant behaviour of  \eqref{poldeg2a}  is due to the degenerate geometric situation. Unfortunately, this is not entirely true, as next example shows (we end this long example with a horizontal line).
\begin{example}\label{exhq2} 
Let us consider  the Hermitian code
$C=C^{\perp}(I,L)$ from the curve $y^2+y=x^3$ over $\FF_4$ and
with defining monomials $\{1,x,y,x^2,x y\}$, as in Example
\ref{ex:hermitiano}.
It is well-known that $C$ can correct up to
$t=2$ errors. Let us consider the lex term-ordering with
$s_1<\ldots<x_2<y_2<x_1<y_1<e_2<e_1$ in
$\FF_4[s_1,s_2,s_3,s_4,s_5,x_2,y_2,x_1,y_1,e_1,e_2]$. 
Ideal $J_{{\mathcal{FL}}}^{C,t}$ is 

\begin{scriptsize}
$$
\begin{array}{l}
\langle x_1^4-x_1,y_1^4-y_1, x_2^4-x_2,y_2^4-y_2, e_1^3-1, e_2^3-1, y_1^2+y_1-x_1^3, y_2^2+y_2-x_2^3,\\
e_1+e_2-s_1,\;\; e_1x_1+e_2x_2-s_2,\;\; e_1y_1+e_2y_2-s_3,\;\; e_1x_1^2+e_2x_2^2-s_4, \\
 e_1x_1y_1+e_2x_2y_2-s_5 \rangle, 
\end{array}
$$
\end{scriptsize}
%\vspace{-0.2 cm}

\noindent and the  reduced \Gr \ basis $G$ (with  respect to $<$) has $53$
elements.\\
\noindent The authors of \cite{CGC-cd-art-lax} report $119$
polynomials because they do not use lex but a block order, which
is faster to compute but which usually possesses larger \Gr\
bases.
In $G\cap (\FF_4[S,x_2]\setminus \FF_4[S])$ there are $5$
polynomials of degree $2$ in $x_2$ and these are our candidate
polynomials:
\vspace{-0.2 cm}
\begin{scriptsize}
$$
\begin{array}{ll}
g_5=& \mathbf{x_2}^2 s_5+\mathbf{x_2}
    (s_5s_4s_2^2+s_4^2s_3^2s_2s_1^2+s_4^2s_2s_1+s_4s_3^2s_1+s_4s_3s_2^3s_1^2+
    s_4s_3s_1^2+s_4s_1^3+s_3^2s_2^2s_1^3+s_2^2s_1^2)+\\
&   s_5^3s_3+s_5s_4^2s_3^3s_2+s_5s_4^2s_2+s_4^3s_3^3s_2^3s_1+s_4^3s_3^3s_1+
s_4^3s_3^2s_2^3s_1^2+
    s_4^3s_3s_2^3+s_4^3s_1+s_4^2s_3^3s_2^2+s_4^2s_3^2s_2^2s_1+\\
&
s_4s_3^2s_2+s_4s_2s_1^2+s_3^3s_2^3s_1+s_3s_2^3s_1^3+s_3s_2^3+s_2^3s_1 \\
g_4=& \mathbf{x_2}^2 s_4+\mathbf{x_2}
       (s_4^2s_2^2+s_2^3s_1+s_1+s_4^2s_3^2s_1^3)+s_4^2s_3^2+
        s_4^2s_2^3s_1^2+s_4^2s_1^2+s_4s_3s_2^2s_1^3+s_4s_3s_2^2+s_2s_1^3+s_2 \\
g_3=& \mathbf{x_2}^2 s_3+\mathbf{x_2}
       (s_4^2s_3s_1+s_4s_3s_2^2s_1^3+s_4s_3s_2^2+s_3s_2s_1^2)+
        s_5^2 s_3^2+s_5s_3^2s_2+s_4^2s_3^3s_2s_1+s_4^2s_3^2s_2s_1^2+s_4s_3^3s_1^3+\\
&   
s_4s_3^2s_1+s_3^3s_2^2s_1^2+s_3^2s_2^2s_1^3+s_3^2s_2^2\\
g_2=& \mathbf{x_2}^2 s_2+\mathbf{x_2}
       (s_4^2s_2s_1+s_4s_1^3+s_4+s_2^2s_1^2)+s_4^2s_2^2+s_4s_3^2s_2s_1^3+
       s_4s_3^2s_2+s_4s_2s_1^2+s_3s_2^3s_1^3+s_3s_2^3+s_2^3s_1\\
g_1=& \mathbf{x_2}^2(s_1)+\mathbf{x_2}
       (s_4^2s_1^2+s_2s_1^3)+s_4^2s_2s_1+s_4s_1^3+s_2^2s_1^2
\end{array}
$$
\end{scriptsize}
\vspace{-0.2 cm}

\noindent   Of course,
there are other similar polynomials in $J_{\mathcal{FL}}^{C,t}\cap
(\FF_4[S,x_2]\setminus \FF_4[S])$ and they may be found for
example by computing \Gr \ bases with respect to other orderings. It
is immediate that the leading polynomials are just
$\{s_1,\ldots,s_5\}$. Suppose that we receive a syndrome ${\bf
s}=(\bar{s}_1,\ldots,\bar{s}_5)$. If it is zero, then no errors
occurred. Otherwise, we might follow the most obvious way to
correct, that is, we might substitute ${\bf s}$ in all $g_i$'s,
until we find one which does not vanish identically.
\noindent The improvement introduced by Caboara and Mora translates here in
checking only the leading polynomials, i.e. checking which of the
syndrome components $ \bar{s}_i$ is non-zero. Since clearly at
least one is non-zero, with a negligible computational effort we
are able to determine the right candidate.\\
\noindent Let us now follow our proposal. Ideal $J_{*}^{C,t}$ is generated by

\vspace{-0.2 cm}
{\scriptsize{
$$
\begin{array}{l}
\{x_1^4-x_1,\,  
y_1^4-y_1, \,
x_2^4-x_2, \,
y_2^4-y_2, \,
e_1^4-e_1, \,
e_2^4-e_2, \,
y_1^2x_1+y_1^2+
y_1x_1+y_1+x_1^3+\\
 x_1, y_2^2x_2+y_2^2+y_2x_2+y_2+x_2^3+x_2,
y_1^3+y_1x_1^3+y_1+x_1^3,y_2^3+y_2x_2^3+y_2+x_2^3, e_1+\\
e_2-s_1,\,
e_1x_1+e_2x_2-s_2,\,  
e_1y_1+e_2y_2-s_3, \,
e_1x_1^2+e_2x_2^2-s_4,e_1x_1y_1+e_2x_2y_2-s_5,\\
e_1((x_1-1)^3-1)((y_1-1)^3-1), e_2((x_2-1)^3-1)((y_2-1)^3-1),
(e_1^3-1)(x_1-1),\\
(e_1^3-1)(y_1-1),  
(e_2^3-1)(x_2-1),
(e_2^3-1)(y_2-1),
e_1e_2((x_1-x_2)^3-1)((y_1-y_2)^3-1)
\}.
\end{array}
$$}}

\noindent where the ghost point is $(1,1)$ (note that $1^3\not=1^2+1$).\\
\noindent The  reduced \Gr \ basis $G$ with respect to 
$s_1<s_2<s_3<s_4<s_5<x_2<y_2<x_1<y_1<e_2<e_1$
has $32$ elements and the \textit{ new locators} are $\mathcal{L}_{1}(s_1,\ldots,s_5,x_2)=\mathcal{L}_{x}$ and $\mathcal{L}_{2}(s_1,\ldots,s_5,x_2,y_2)=\mathcal{L}_{xy}$, that are the polynomials of degree two in, respectively, $x_2$ and $y_2$:

%\vspace{-0.2 cm}
{\scriptsize{
$$
\begin{array}{ll}
\mathcal{L}_{x}=& \mathbf{x}^2+
\mathbf{x}(s_1^2s_2s_4^3+s_4^3+s_1s_2^3s_4^2+s_1^2s_2^2s_4^2+s_1s_4^2+s_2^2s_4+
s_1s_2s_4+s_2^3+s_1^2s_2+s_1^3)+\\
&s_3s_5^2+s_2s_3s_5+s_1s_2^2s_4^3+s_1^2s_2s_4^3+s_2s_3^3s_4^2+s_1s_2s_3^2s_4^2+
s_1^2s_2s_3s_4^2+s_1s_2^3s_4^2+ s_1^3s_2s_4^2+\\
&s_2s_4^2+s_1^2s_3^3s_4+s_1^3s_3^2s_4+s_1s_3s_4+s_1^2s_2^3s_4+s_1^3
s_2^2s_4+s_1^2s_4+s_1^3s_2^3s_3^3+s_2^3s_3^3+s_1s_2^2s_3^3+\\
&s_1^3s_3^3+ s_3^3+s_1^2s_2^2s_3^2+
s_1^3s_2^2s_3+s_2^2s_3+s_1^3s_2^3+s_2^3+s_1s_2^2+s_1^3+1\\
\\
\mathcal{L}_{xy}=&\mathbf{y}^2+\mathbf{y}(s_3^3+s_1s_3^2+s_1^2s_2^3s_3+s_1^2s_3+s_1^3)+
\mathbf{x}(s_2^2s_3s_4^3+s_1s_2^2s_4^3+s_1^2s_2s_3s_4^2+s_1^2s_3^3s_4+s_3^2s_4+\\
&s_1s_3s_4
+s_1^2s_2^3s_4)+s_5^3+s_2s_3^2s_4^2s_5+s_3s_4s_5+s_2^2s_5+s_3^3s_4^3+s_1s_2^3s_3^2s_4^3+
s_2^3s_4^3+s_1^2s_2^2s_3^3s_4^2+\\
&s_1^2s_2s_3^2s_4+s_1^3s_2s_3s_4+s_1s_2s_4+s_2^3s_3^3+s_3^3+s_1s_2^3s_3^2+s_1s_3^2
+s_1^2s_2^3s_3+s_1^2s_3+s_1^3s_2^3+s_1^3+1
\end{array}
$$}} 

\noindent We can apply the decoding in this way: we specialize $\mathcal{L}_x(s,x)$ to ${\bf \overline{s}}$ for any received syndrome. If the syndrome
corresponds to two errors, then we expect that  the roots of $\mathcal{L}_{x}({\bf \overline{s}},x)$ are the first components of error locations and the roots of $\mathcal{L}_{xy}(\overline{\textbf{s}},\overline{\textbf{x}},y)$ are exactly the second components of error locations. But it is not always true, we show it in three cases: 
\begin{itemize}
    \item[-] We suppose that two errors occur at the points $P_6=(\alpha,\alpha+1)$ and $P_7=(\alpha+1,\alpha)$, with both error values $1$, so the syndrome vector
corresponding to the error vector 
$(0,0,0,0,0, 1, 1, 0)$ is  $\overline{\mathbf{s}}=(0,1,1,1,0)$.\\
In order to find the error positions we evaluate 
$\mathcal{L}_{x}$ in $\overline{\mathbf{s}}$ and we obtain the correct values of $x$, in fact:
\vspace{-0.2 cm}
{\small{$$
\begin{array}{l}
\mathcal{L}_{x}(\overline{\mathbf{s}}, x)= x^2+x+1=(x-\alpha)(x-(\alpha+1)).
\end{array}
$$}}
%\vspace{-0.2 cm}
Now we have to  evaluate  $\mathcal{L}_{xy}$ in $(\overline{\mathbf{s}},\overline{\mathbf{x}})$. We obtain four different solutions
%\vspace{-0.2 cm}
{\small{$$
\begin{array}{l}
\mathcal{L}_{xy}(\overline{\mathbf{s}},\alpha,y)= y^2+y+1=(y-\alpha)(y-(\alpha+1))\\
\mathcal{L}_{xy}(\overline{\mathbf{s}},\alpha+1,y)= y^2+y+1=(y-\alpha)(y-(\alpha+1)).
\end{array}
$$}}
But this is a \textit{problem} for us, because all these solutions are curve points: $(\alpha,\alpha)$,$(\alpha,\alpha+1)$,$(\alpha+1,\alpha)$,$(\alpha+1,\alpha+1)$.
Only two are the correct locations. 
To individuate those corresponding to the  syndrome vector $\overline{\mathbf{s}}$, we must compute the two syndromes and we will see that  $(\alpha+1,\alpha), (\alpha,\alpha+1)$ are  correct. This method of try-and-see works nice because the code is small, but soon it becomes unfeasible. So the above choice of $\mathcal{L}_{x}$ and $\mathcal{L}_{xy}$ is unfortunate.
 \item[-] We suppose that the syndrome is  $(\alpha+1,0,\alpha,0,0)$,
corresponding to the error vector 
$(1,\alpha,0,0,0,0, 0, 0)$. So two errors have occurred and their values are $1$ and $\alpha$ in the point, respectively, $P_1=(0,0)$ and $P_2=(0,1)$.
In order to find the error locations we evaluate 
$\mathcal{L}_{x}$ in ${\overline{\mathbf{s}}}$ and we obtain
$\mathcal{L}_{x}(\overline{\mathbf{s}},x)=x^2+x=x(x-1)$,
then we evaluate 
$\mathcal{L}_{xy}$ in $(\overline{\mathbf{s}},0)$ and $(\overline{\mathbf{s}},1)$ and we get
$\mathcal{L}_{xy}(\overline{\mathbf{s}},0,y)=\mathcal{L}_{xy}(
\overline{\mathbf{s}},1,y)=y^2+y=y(y-1)$.
The equations 

\begin{equation}\label{lx0}
    \mathcal{L}_{x}(\overline{\mathbf{s}},x)
=\mathcal{L}_{xy}(\overline{\mathbf{s}},1,y)
=\mathcal{L}_{xy}(\overline{\mathbf{s}},0,y)=0
\end{equation} 
have four possible solutions: $(0,0),
(0,1)$, $(1,0)$ and $(1,1)$. Since the points $(1,0)$ and $(1,1)$ do not lie on the Hermitian curve, then only one solution couple is admissible: $\{(0,0), (0,1)\}$.
This situation is better than the above case, because we can immediately understand what  the correct solutions of system (\ref{lx0}) are. This happens by chance and in any case the solutions of equation $\mathcal{L}_{x}(\overline{\mathbf{s}})=0$ are not what we want.
    \item[-] Finally we consider ${\overline{\mathbf{s}}}=(\alpha+1,\alpha+1,1,\alpha+1,1)$ corresponding to 
$(0,0,\alpha+1,0,0,0,0,0)$, so only one error occurs. Evaluating
$\mathcal{L}_{x}$  and $\mathcal{L}_{xy}$, respectively, in $\overline{\mathbf{s}}$ and $(\overline{\mathbf{s}},\overline{\mathbf{x}})$, we obtain
\pagebreak
\begin{equation}\label{lxy0}
   \left\{\begin{array}{l}
 \mathcal{L}_{x}(\overline{\mathbf{s}},x)=x^2+1\\
\mathcal{L}_{xy}(\overline{\mathbf{s}},1,y)=y^2+(\alpha+1)y+\alpha=(y-1)(y-\alpha).   
\end{array}\right.
\end{equation}
In this case we are extremely lucky because the two polynomials 
$\mathcal{L}_{x}$  and $\mathcal{L}_{xy}$ answer correctly: the solutions of  system (\ref{lxy0}) are $(1,1)$, which is the ghost point, and $(1,\alpha)$, which is the error location.
\end{itemize}
\end{example}
\hrule
$\,$\\
 \begin{remark}\label{basefield}
Since, in Example \ref{exhq2}, the curve equation has all coefficients in
$\FF_2$, the ideal $J_{\mathcal{FL}}^{C,t}$ actually lies in
$\FF_2[s_1,s_2,s_3,s_4,s_5,x_2,y_2,x_1,y_1,e_1,e_2]$.
This is a special case of a more general fact: for any affine 
variety-code and any decoding ideal
that we are considering in the whole paper, 
all polynomials defining these ideals have no coefficient different
from $\{1,-1\}$, except possibly for the polynomials defining $I$.
Therefore, if it is possible
to have a basis for the ideal $I$ with all coefficients in a smaller
field, then any of its \Gr\ bases will have elements
with the same coefficient field, which means that the basis computation will be much faster.
\end{remark}

Since polynomials like $\mathcal{L}_{x}$  and $\mathcal{L}_{xy}$ in Example \ref{exhq2} contain the right solutions (together with unwelcome parasite solutions), they deserve a definition. See Section \ref{2.1} for our notation.
\begin{definition}\label{locDebole}
Let $C=C^{\perp}(I,L)$ be an affine-variety code. Let $1\leq
i\leq m$. \\  Let $P_0=(\bar x_{0,1},\ldots,\bar x_{0,m})\in (\mathbb{F}_q)^m\setminus \mathcal{V}(I)$ be a ghost point. Let 
$$
 t_i=\min\left\{t\,,\,|\{\hat{\pi}_i(P)\mid P\in \mathcal{V}(I)\cup P_0\}|\right\},
$$ 
and let ${\mathcal{P}}_{i}$ be a polynomial in
$\FF_q[S,x_1,\ldots,x_{i}]$, where $S=\{s_1,\dots,s_r\}$. 
Then $\{{\mathcal{ P}}_{i}\}_{\substack{1 \leq i \leq m}}$ is a set of  {\bf weak multi-dimensional general error locator polynomials} of $C$ if for any $i$
\begin{itemize}
\item
${\mathcal{P}}_{i}(S,x_1,\ldots,x_i)= x_i^{t_i}+a_{t_i-1}x_i^{t_i-1}+\cdots +a_0$,
  with $a_j \in \FF_q[S,x_1,\ldots,x_{i-1}]$, $0 \leq j \leq t_i-1$, that is,
  ${\mathcal{P}}_{i}$ is a monic polynomial with degree $t_i$ with respect
  to the variable $x_i$ and its coefficients are in $\FF_q[S,x_1,\ldots,x_{i-1}]$;
\item given a syndrome
  $\overline{{\bf s}}=(\bar{s}_1,\dots \bar{s}_r)\in (\FF_{q})^{r}$,
  corresponding to an error vector of weight
  $\mu\leq t$, $\mu$ error locations
  $(\bar{x}_{1,1},\ldots,\bar{x}_{1,m})$ $,\ldots,
   (\bar{x}_{\mu,1},\ldots,\bar{x}_{\mu,m})$. 
  If we evaluate the $S$ variables at $\overline{{\bf s}}$ and the variables $(x_1,\ldots,x_{i-1})$ at the truncated vectors $\overline{\mathbf{x}}^j=(\bar{x}_{j,1},\ldots,\bar{x}_{j,i-1})$ for $0\leq j\leq \mu$,  then the roots of
  ${\mathcal{ P}}_{i}(\overline{{\bf s}},\overline{\mathbf{x}}^j,x_i)$ contain:
\begin{itemize}
    \item[-] either  $\{ \bar{x}_{h,i}\mid \overline{\mathbf{x}}^h=\overline{\mathbf{x}}^j, \, 0\le h\le \mu\}$ (when $\mu< t$), 
    \item[-]  or  $\{ \bar{x}_{h,i}\mid \overline{\mathbf{x}}^h=\overline{\mathbf{x}}^j, \, 1\le h\le \mu\}$ (when $\mu=t$), 
\end{itemize}
 plus possibly  some  parasite solutions. 
\end{itemize}
\end{definition}
\noindent Note that the difference between  $\{ \bar{x}_{h,i}\mid \overline{\mathbf{x}}^h=\overline{\mathbf{x}}^j, \, 0\le h\le \mu\}$ and $\{ \bar{x}_{h,i}\mid \overline{\mathbf{x}}^h=\overline{\mathbf{x}}^j, \, 1\le h\le \mu\}$ is that the latter set does not consider the ghost point.\\

Now we consider an alternative strategy to calculate the error locations, using the weak
multi-dimensional general error locator polynomials and some other polynomials in  ideal $J_{*}^{C,t}$.

Since it is convenient to know in advance the error number and the error values, we provide the following definition for a general correctable linear code. Let $C$ be an $[n,k,d]$ linear code over $\FF_q$ with correction
capability $t \geq 1$. Choose any  parity-check matrix with
entries in an appropriate extension field $\FF_{q^M}$ of $\FF_q$,
$M \geq 1$. 
Its  syndromes lie in $(\FF_{q^M})^{n-k}$ and form a
vector space of dimension $r=n-k$ over $\FF_q$. 
\begin{definition}\label{eval}
Let $\mathcal{E} \in \FF_q[S,e]$, where $S=\{s_1,\dots,s_r\}$. Then
$\mathcal{E}$ is a  {\bf general error evaluator polynomial} of $C$ if
\begin{itemize}
\item
$\mathcal{E}(S,e)= a_te^{t}+a_{t-1}e^{t-1}+
\cdots +a_0$,
  with $a_j \in \FF_q[S]$, $0 \leq j \leq t$, that is,
  $\mathcal{E}$ is a polynomial with degree $t$ with respect
  to the variable $e$ and its coefficients are in $\FF_q[S]$;
\item Given a syndrome
  $\overline{{\bf s}}=(\bar{s}_1,\dots \bar{s}_r)\in (\FF_{q^M})^{r}$ corresponding to an error vector of weight
  $\mu \leq t$ and with $\bar{e}_1,\ldots,\bar{e}_{\mu}$ as error values, we evaluate the $S$ variables at $\overline{{\bf s}}$, then the roots of
  $\mathcal{E}$ are  the error values  $\bar{e}_1,\ldots, \bar{e}_{\mu}$ plus $0$ with multiplicity $t-\mu$.
\end{itemize}
\end{definition}
\noindent The importance of  $\mathcal{E}$ lies in fact that the error number is $\mu$ if and only if
$$e^{t-\mu}\vert\mathcal{E}(\overline{\mathbf{s}})\quad\textrm{ and }\quad e^{(t-\mu+1)}\not\vert\mathcal{E}(\overline{\mathbf{s}}).$$ 
The ideal $J_{*}^{C,t}\,\cap \,\mathbb{K}[S,e_1,\ldots,e_t]$ is easily seen to be stratified, as follows. There is
a bijective correspondence between correctable syndromes and
correctable errors (i.e., errors of weight $\tau\leq t$) and so if we
fix $1\leq l \leq t$ and $1\leq s\leq t-l$
we can always find $l$ error values $e_1,\ldots,e_l$ that have $s$
extensions at level $e_{l+1}$.
So we can apply Proposition \ref{kk} and obtain the existence of $\mathcal{E}$:
\begin{theorem}
For any affine-variety code $C=C^{\perp}(I,L)$, the general  error evaluator polynomial exists.
\begin{proof}
We apply  Proposition \ref{kk} to the stratified ideal $J_{*}^{C,t}\,\cap \,\mathbb{K}[S,e_1,\ldots,e_t]$. It is enough to take $ g $ with $ {\bf T}(g)={\sf a}_L^L$
with $\mathcal{A}=\{e_1,\ldots,e_t\}$ and $\mathcal{S}=S$.
\end{proof}
\end{theorem}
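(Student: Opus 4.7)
The plan is to reduce the theorem to Proposition \ref{kk} applied to the elimination ideal $J := J_{*}^{C,t} \cap \mathbb{K}[S, e_1, \ldots, e_t]$, taking $\mathcal{S} = S$, $\mathcal{A} = \{e_1,\ldots,e_t\}$ ordered by $e_t < \cdots < e_1$, and $\mathcal{T} = \emptyset$. The unique element of $\mathrm{GB}(J)$ with leading term $e_t^t$ produced by that proposition will be our evaluator, after relabeling $e_t$ as $e$. First I note that $J$ is radical and zero-dimensional: the field equations $e_j^q = e_j$ together with the defining polynomials of the augmented variety $\mathcal{V}(I) \cup \{P_0\}$ make $J_{*}^{C,t}$ radical and zero-dimensional, and these properties are inherited by $J$ because the retained variables still carry their field equations.

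Next I would verify the two conditions of Definition \ref{stratificato}. The correction capability of $C$ yields a bijection between correctable syndromes and error vectors of weight $\mu \leq t$; consequently the tuples $(\bar e_1,\ldots,\bar e_t) \in \mathcal{V}(J)$ lying over a syndrome $\bar{\mathbf{s}}$ are exactly the arrangements of the $\mu$ actual error values together with $t-\mu$ ghost entries equal to $0$. By choosing correctable error patterns with prescribed numbers of distinct values and ghost slots (as indicated in the paragraph preceding the theorem), one can realize, at each elimination level $J \cap \mathbb{K}[S, e_t,\ldots, e_h]$ and for every $1 \leq l \leq h$, a fibre of size exactly $l$ in the next variable. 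This simultaneously gives $\lambda(h) = h$ and the non-emptiness of every $\Sigma_l^h$, so that Proposition \ref{kk} produces the required $g \in \mathrm{GB}(J) \cap \mathbb{K}[S, e_t]$ with $\mathbf{T}(g) = e_t^t$, yielding condition~(1) of Definition \ref{eval} at once.

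For condition~(2), specializing $S \mapsto \bar{\mathbf{s}}$ preserves the degree $t$ since the leading coefficient is $1$, and radicality of $J$ forces the distinct roots of $g(\bar{\mathbf{s}},e)$ to be exactly the distinct extensions of $\bar{\mathbf{s}}$ in $\mathcal{V}(J \cap \mathbb{K}[S, e_t])$, namely the error values $\bar e_1,\ldots,\bar e_\mu$ together with $0$ whenever $\mu < t$. The main obstacle is controlling the multiplicity distribution in $g(\bar{\mathbf{s}},e)$: generically there are only $\mu$ or $\mu+1$ distinct roots, whereas the polynomial has degree $t$, so one must argue that the excess multiplicity is absorbed precisely at $e = 0$ to obtain the factorization $c \cdot e^{t-\mu} \prod_{i=1}^\mu (e - \bar e_i)$ demanded by Definition \ref{eval}. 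This ultimately rests on the distinguished role of the ghost point $P_0$ in the construction of $J_{*}^{C,t}$: the $t-\mu$ indistinguishable ghost slots $(P_0,0)$ are the sole source of coincidences in the fibre over $\bar{\mathbf{s}}$ and thereby dictate the vanishing order of $g$ at $e_t = 0$.
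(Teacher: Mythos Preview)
Your proof follows exactly the paper's approach: show that $J_{*}^{C,t}\cap\mathbb{K}[S,e_1,\ldots,e_t]$ is stratified (with $\mathcal{S}=S$, $\mathcal{A}=\{e_1,\ldots,e_t\}$, $\mathcal{T}=\emptyset$) and invoke Proposition~\ref{kk} to extract the unique $g$ with $\mathbf{T}(g)=e_t^{\,t}$. The paper's proof is literally that one line; the multiplicity-at-zero issue you carefully flag in your last paragraph is not addressed in the paper's argument at all, so your treatment is already more detailed than the original.
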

\noindent  Using  $\mathcal{E}$, we know not only $\tau$, but also the $\tau$ error values. In order to exploit this information, we can consider a straightforward generalisation of weak
multi-dimensional general error locator polynomials (see Definition \ref{locDebole}) where the locators are actually ${\mathcal{P}}_{i}^e\in\FF_q[S,e,x_1,\ldots,x_{i-1}]$. We do not give a long definition for these, since we think it is obvious.

\noindent We consider again Example \ref{exhq2} to show two alternative strategies.
\begin{example}\label{EXHerm1}
Let us consider  the Hermitian code
$C=C^{\perp}(I,L)$ from the curve $y^2+y=x^3$ over $\FF_4$ and
with defining monomials $\{1,x,y,x^2,x y\}$, as in the Example
\ref{exhq2}.  
The  reduced \Gr \ basis $G$ of $J_{*}^{C,t}$ with respect to lex with
$s_1<s_2<s_3<s_4<s_5<e_2<e_1<x_2<y_2<x_1<y_1$
 has $33$
elements and the general error evaluator polynomial ${\mathcal E}$ is
{\scriptsize{
\begin{align*}
{\mathcal E}= & \mathbf{e}^2+\mathbf{e}s_1+s_4^3s_3^2+s_4^3s_3s_1+s_4^3s_2^3s_1^2+s_4^3s_1^2+
s_4^2s_3^2s_2^2s_1^2+s_4^2s_3s_2^2s_1^3+s_4s_3^2s_2s_1+\\
 &s_4s_3s_2s_1^2+s_4s_2s_1^3+
s_4s_2+s_3^2s_2^3s_1^3+s_3^2+s_3s_2^3s_1+s_3s_1+s_2^3s_1^2
\end{align*}
}}
%
%\hspace{-0.4cm} 
\noindent In $G$ there are also these polynomials:
$$
{\mathcal{P}}_{x}^e= \mathbf{x}^2+\mathbf{x}s_4s_2^2+\mathbf{e}\,a_x+b_x\textrm{ and }
g_{x}= {\mathbf x_1}+{\mathbf x_2}+ c_x,
$$
where $a_x, b_x, c_x\in \FF_4[s_1,s_2,s_3,s_4,s_5]$ (see Appendix for the full polynomials).
Now we change the lex ordering to {\small$s_1<\dots<s_5<e_2<e_1<y_2<x_2<y_1<x_1$}.
In the  new \Gr\ basis we have other two polynomials ${\mathcal{P}}_{y}^e$ and $g_{y}$.
$$
{\mathcal{P}}_{y}^e= \mathbf{y}^2+\mathbf{y}(s_4s_3s_2+s_2^3+1)+\mathbf{e}\,a_y+b_y \textrm{ and } g_{y}= \mathbf{y_1}+\mathbf{y_2}+ c_y,
$$
where $a_y, b_y, c_y\in \FF_4[s_1,s_2,s_3,s_4,s_5]$ (see Appendix for the full polynomials).
We can decode as follows. First we evaluate $\mathcal{E}(\overline{\mathbf{s}})$ and we find two error values $\mathbf{e_1,e_2}$ (when $\tau=1$, one is zero).
\begin{itemize}
    \item[-]  If the syndrome corresponds to two errors, then the roots of ${\mathcal{P}}_{x}^e({\bf
s, e_2},x)$ are the first components of error locations,
    \item[-] else if $\overline{{\mathbf s}}$ corresponds to one error, we specialize $g_{x}(s,e,x_1,x_2)$ in $(\mathbf{s,e_2},1)$, where $1$ is the ghost component, and again the root of $ g_{x}({\bf s,e_2, 1},x_2)$ is  the first component of the  error location.
\end{itemize} 
Similarly we use ${\mathcal{P}}_{y}^e$ and $g_{y}$ to find the second location components.
Let us explain in detail the above-mentioned decoding with the help of the three cases of  Example \ref{exhq2}.
\begin{itemize}
    \item[-] $\overline{{\mathbf s}}=(0,1,1,1,0)$ is the syndrome vector
corresponding to the error vector $(0,0,0,0,0, 1, 1, 0)$.
Evaluating  ${\mathcal E}$ in $\overline{{\mathbf s}}$ we obtain:
${\mathcal E}(\overline{{\mathbf s}})=e^2+1$,
so two errors have occurred and their values is $1$.
In order to find the error positions we evaluate 
${\mathcal{P}}_{x}^e$ and ${\mathcal{P}}^e_{y}$ in $({\mathbf s}, 1)$ and we obtain
$$\begin{array}{l}
{\mathcal{P}}^e_{x}(\overline{{\mathbf s}}, 1)= x^2+x+1=(x-\alpha)(x-(\alpha+1)) \\ 
{\mathcal{P}}^e_{y}(\overline{{\mathbf s}}, 1)= y^2+y+1 =(y-\alpha)(y-(\alpha+1)) .
\end{array}$$
The system ${\mathcal{P}}^e_{x}(\overline{{\mathbf s}}, 1)={\mathcal{P}}^e_{y}(\overline{{\mathbf s}}, 1)=0$ have four possible solutions: $(\alpha,\alpha),
(\alpha+1,\alpha+1)$, $(\alpha+1,\alpha)$ and $(\alpha,\alpha+1)$. But only two solution pairs are admissible: $\{(\alpha+1,\alpha), (\alpha,\alpha+1)\}$ and  $\{(\alpha,\alpha), (\alpha+1,\alpha+1)\}$, since both $\alpha$ and $\alpha +1$ must appear as first components (and as second components).
We are in the same ambiguous situation as in Example \ref{exhq2}.
 \item[-] Now we consider the syndrome  $\overline{{\mathbf s}}=(\alpha+1,0,\alpha,0,0)$,
corresponding to $(1,\alpha,0,0,0,0, 0, 0)$.
Evaluating  ${\mathcal E}$ in $\overline{{\mathbf s}}$ we obtain $
{\mathcal E}(\overline{{\mathbf s}})=(e-1)(e-\alpha),$
so two errors have occurred and their values are $1$ and $\alpha$.
In order to find the error positions we evaluate 
${\mathcal{P}}^e_{x}$ and ${\mathcal{P}}^e_{y}$ in $({\mathbf s}, 1)$ (or in $({\mathbf s}, \alpha)$) 
%\vspace{-0.2cm}
{\footnotesize{$$
{\mathcal{P}}^e_{x}(\overline{{\mathbf s}}, 1)=f_{x}(\overline{{\mathbf s}}, \alpha)=x^2\mbox{ and }
{\mathcal{P}}^e_{y}(\overline{{\mathbf s}}, 1)=f_{y}(\overline{{\mathbf s}}, \alpha)=  y^2+ y =y(y-1).
$$}}
\noindent The solutions of the system $f_{x}(\overline{{\mathbf s}}, 1)=f_{y}(\overline{{\mathbf s}}, 1)=0$ are $\{(0,0), (1,\alpha)\}$, in this case we find the correct error positions.
Note that this case is an ambiguous situation in Example \ref{exhq2}, while here it is not.
\item[-] Vector $\overline{{\mathbf s}}=(\alpha+1,\alpha+1,1,\alpha+1,1)$ is the syndrome corresponding to $(0,0,\alpha+1,0,0,0,0,0)$. We evaluate ${\mathcal E}$ and we get 
${\mathcal E}(\overline{{\mathbf s}})=e^2+(\alpha+1) e$.
So only one error occurred and its value is $\alpha+1$. We evaluate
$g_{x}$ and $g_y$  in $(\overline{{\mathbf s}},\alpha+1, 1)$,
where $1$ is the first ghost component, and we have
%\vspace{-0.2cm}
{\footnotesize{
$$
g_{x}(\overline{{\mathbf s}},\alpha+1, 1)=x_2+1\textrm{ and }g_{y}(\overline{{\mathbf s}},\alpha+1, 1)= y_2+\alpha.
$$}}
Therefore the error location is $(1,\alpha)$.
\end{itemize}
\noindent Now we consider another type of decoding, using $\mathcal{E}$ and taking polynomials from $\widehat{J}_{\mathcal{FL}}^{\,C,t}$ as in (\ref{Inostro}).
First, we evaluate $\mathcal{E}(\overline{\mathbf{s}})$ to know the number of errors. We do not need their values. Instead, we compute the \Gr\ basis of ideal $\widehat{J}_{\mathcal{FL}}^{\,C,\tau}$, with $1\leq \tau\leq t$ and we collect polynomials in $\mathbb{F}_q[S,x]$ and $\mathbb{F}_q[S,y]$. For example,if two errors occur we use {\small{$s_1<s_2<s_3<s_4<s_5<x_2<y_2<x_1<y_1<e_2<e_1$}} and $\ldots s_5 < y_2 <\ldots$ to get, for $\tau =2$,
{\scriptsize{
$$
\begin{array}{ll}
 f_{2,x}=& \mathbf{x}^2+\mathbf{x}(s_4^2s_1+s_4s_2^2s_1^3+s_4s_2^2+s_2s_1^2)+
s_5^2s_3+s_5s_3s_2+s_4^2s_3^3s_2+s_4^2s_3^2s_2s_1+s_4^2s_3s_2s_1^2+s_4^2s_2+\\
&s_4s_3^3s_1^2+s_4s_3^2s_1^3+s_4s_3s_1+s_4s_1^2+s_3^3s_2^2s_1+s_3^2s_2^2s_1^2+
s_3s_2^2s_1^3+s_3s_2^2+s_2^2s_1\\ 
&\\        
f_{2,y}=& \mathbf{y}^2+\mathbf{y}(s_4^3+s_4s_3s_2s_1^3+s_4s_3s_2+s_4s_2s_1+s_3^2s_1+s_3s_1^2+s_2^3s_1^3+s_1^3+1)+
s_5^3+s_5s_4^2s_3^2s_2+\\
&s_5s_3^3s_2^2+s_5s_2^2+s_4^3s_3s_2^3s_1^2+s_4^3s_3s_1^2+s_4^3s_2^3+
s_4^3+s_4^2s_3^3s_2^2s_1^2+s_4^2s_3^2s_2^2+s_4^2s_3s_2^2s_1+\\
&s_4s_3^3s_2s_1+s_4s_3s_2s_1^3+s_4s_3s_2+s_3^3+s_3s_2^3s_1^2+s_2^3s_1^3+s_2^3+1   
\end{array}
$$}}
\noindent and for $\tau=1$
\vspace{-0.2cm}
{\footnotesize{$$
f_{1,x}= \mathbf{x}+s_2s_1^2\textrm{ and  }   
f_{1,y}= \mathbf{y}+s_3s_1^2.
$$}}
\noindent The decoding with $\{f_{2,x},f_{2,y},f_{1,x},f_{1,y}\}$ is obvious.\\
These polynomials are not the ideal polynomials yet, because again we may find parasite solutions (except with $\tau=1$).
\end{example}
In the previous examples we have used some polynomials as  weak multidimensional general error locator polynomials, as for example $\mathcal{L}_x$ and $\mathcal{L}_{xy}$ in Example \ref{exhq2}.
It is not obvious that such polynomials exist for any (affine-variety) code. To prove this, we need to analyse in depth the structure of the
zero-dimensional ideal $J_*^{C,t}$. 
This ideal turns out to belong to several interesting classes of zero-dimensional ideals, defined as generalizations of
stratified ideals. 
These ideal classes are rigorously studied in Section \ref{zeroideal}, where it is claimed in full generality that the sought-after polynomials can be found in a suitable \Gr\ basis. Section \ref{dim} is devoted to the proof of this claim. In Section \ref{Loc} we will come back to the coding setting.
%

%

%%%%%%%%%%%%%%%%%%%%%%%%%%%%%%%%%%%%%%%%%%%%%%%%%%%%%%%%%%%%%%%%%%%%%%%%%%%%%%

\section{Results on some zero--dimensional ideals}
\label{zeroideal}
Our aim in this section is to describe the structure of
the reduced \GR \ basis for some special classes of zero-dimensional
ideals which are generalizations of stratified ideals. We suggest that the reader have a look at \cite{CGC-cd-art-gelp1}, since now we are generalizing our argument in \cite{CGC-cd-art-gelp1}.

First we provide a generalization of the material in Section
\ref{prelimenaries}. In this section  $J \subset
\mathbb{K}[\mathcal{\mathcal{S}},\mathcal{A}_L,\dots,\mathcal{A}_1,\mathcal{\mathcal{T}}]$
is a zero--dimensional ideal, with
$\mathcal{\mathcal{S}}=\{{\sf s}_1,\dots,{\sf s}_N\}$,
$\mathcal{A}_j=\{{\sf a}_{j,1},\dots,{\sf a}_{j,m}\}$, $j=1,
\dots, L$, $\mathcal{T}=\{{\sf t}_1,\dots,{\sf t}_K\}$. We fix a
block order $<$  on
$\mathbb{K}[\mathcal{\mathcal{S}},\mathcal{A}_L,\dots,
\mathcal{A}_1,\mathcal{\mathcal{T}}]$, with
$\mathcal{\mathcal{S}}<\mathcal{A}_L< \dots <
\mathcal{A}_1<\mathcal{T}$, such that within $\mathbb{A}_{j}$ we use lex with
${\sf {a}}_{j,1} < {\sf a}_{j,2} < \dots < {\sf a}_{j,m}$
(for any $j$). Let $\mathbb{A}$ and $\mathbb{A}_{j,i}$ denote the affine spaces  $\mathbb{A}=\mathbb{K}^{N+mL+K}$ and $\mathbb{A}_{j,i}=\mathbb{K}^{N+m(L-j)+i}$.\\

With the usual notation for the  elimination ideals, we want to partition 
$\mathcal{ V}(J_\mathcal{S})$
according to the number of extensions in 
$\mathcal{ V}(J_{\mathcal{S},{\sf{a}}_{L,1}})$, similarly to what done in Section \ref{2.2} in the one-variable case, that is, when $m=1$.
The additional complication here is that the $\sf{a}$ variables are
not $L$ any more, but rather they are collected into $L$ blocks, 
each block having $m$ variables.
Since we order the $\sf{a}$ variables first according to their block
(block ${\mathcal{A}}_L$ is the least) and then within the block
from the least to the greatest, their first index denotes the block
and their second index denotes their position within the block itself.
So, the least $\sf{a}$ variable is ${\sf{a}}_{L,1}$ and
the greatest is ${\sf{a}}_{1,m}$.\\
The members of the  partition of $\mathcal{ V}(J_\mathcal{S})$ will be called $\{\Sigma_l^{L,1}\}$ 
(similarly to the previously defined $\Sigma_l^{L}$). The maximum number
of extensions will be called $\eta(L,1)$ (compare with $\lambda(L)$).
\begin{remark}\label{droprad}
It is essential to count the number of extensions in 
$\mathcal{ V}(J_{\mathcal{S},{\sf{a}}_{L,1}})$ discarding their multiplicities. In the definition of a stratified ideal we required radicality, so in that case multiplicities did not arise. However, in our following multidimensional generalisations of results and definitions from Sec.~\ref{2.2}-\ref{seclocator}, we must drop radicality and so we have to be very careful when handling multiplicities.
\end{remark}
In the general case, if we consider block $j$ and variable ${\sf{a}}_{j,i}$,
we partition $\mathcal{V}(J_{\mathcal{S},\mathcal{A}_L,\dots,
\mathcal{A}_{j+1},{\sf{a}}_{j,1},\dots,{\sf{a}}_{j,i}})$ into subsets
$\{\Sigma_l^{j,i+1}\}$ according to the number of extensions to
$\mathcal{V}(J_{\mathcal{S},\mathcal{A}_L,\dots,
\mathcal{A}_{j+1},{\sf{a}}_{j,1},\dots,{\sf{a}}_{j,i},{\sf{a}}_{j,i+1}})$,
that is, adding the next variable ${\sf{a}}_{j,i+1}$.
The maximum number of extensions will be called $\eta(j,i+1)$.
We meet a special case when we consider the last variable in a block
(i.e., $i=m$), since in that case we extend from
$\mathcal{V}(J_{\mathcal{S},\mathcal{A}_L,\dots,
\mathcal{A}_{j+1},{\sf{a}}_{j,1},\dots,{\sf{a}}_{j,m}})$
to
$\mathcal{V}(J_{\mathcal{S},\mathcal{A}_L,\dots,
\mathcal{A}_{j-1},{\sf{a}}_{j,1},\dots,{\sf{a}}_{j,m},{\sf{a}}_{j-1,1}})$.
However, no confusion will arise if we follow our convention of naming 
the partition members according to the {\em added} variable, so
they are called $\{\Sigma_l^{j-1,1}\}$ in this case, even if their
union is V=$\mathcal{V}(J_{\mathcal{S},\mathcal{A}_L,\dots,
\mathcal{A}_{j-1},{\sf{a}}_{j,1},\dots,{\sf{a}}_{j,m}})$. Coherently,
$\eta(j-1,1)$ denotes the maximum number of extensions for
points in $V$.

A formal description of the above discussion goes as follows, where 
$l,j$ and $m$ are integers such that $l\geq 1$, $1\leq j\leq L$ and $1\leq i\leq m$:
{\small{\begin{align*}
\Sigma_l^{L,1}=& \{ (\bar{{\sf s}}_1,\ldots,\bar{{\sf
    s}}_N)\in \mathcal{ V}(J_\mathcal{S}) \mid \exists
    {\mbox{\ exactly }} l {\mbox{ distinct values }}
     \bar{{\sf{a}}}_{L,1}^{(1)}, \ldots,  \bar{{\sf{a}}}_{L,1}^{(l)} \\
    & \mathrm{s.t}.\,\, (\bar{{\sf s}}_1,\ldots,\bar{{\sf s}}_N,\bar{{\sf{a}}}^{(\ell)}_{L,1})
    \in \mathcal{ V}(J_{\mathcal{S},{\sf{a}}_{L,1}})\,\, {\mbox{with}} \,\, 1 \leq \ell \leq l
    \},\\
 \Sigma_l^{j,1}=& \bigl\{ (\bar{{\sf s}}_1,\ldots,\bar{{\sf s}}_N,{\bar{\sf{a}}}_{L,1},\dots,
    \bar{{\sf{a}}}_{L,m},\dots,{\bar{\sf{a}}}_{j+1,1},\dots,
    \bar{{\sf{a}}}_{j+1,m})
    \in \mathcal{ V}(J_{\mathcal{S},\mathcal{A}_{L},\dots,\mathcal{A}_{j+1}})
    \mid\\
    &\exists {\mbox{\ exactly}}\,\, l {\mbox{ distinct values }}
    \bar{{\sf{a}}}_{j,1}^{(1)}, \ldots,  \bar{{\sf{a}}}_{j,1}^{(l)}
   \,\,
    \mathrm{s.t. \,\,for\,\, any}\,\,\,\, 1 \leq \ell \leq l\\
    &(\bar{{\sf{s}}}_1,\dots,\bar{{\sf{s}}}_{N},{\bar{\sf{a}}}_{L,1},\dots,
    \bar{{\sf{a}}}_{L,m},\dots,{\bar{\sf{a}}}_{j+1,1},\dots,
    \bar{{\sf{a}}}_{j+1,m},\bar{{\sf{a}}}_{j,1}^{(\ell)})
    \in \mathcal{
    V}(J_{\mathcal{S},\mathcal{A}_{L},\dots,\mathcal{A}_{j+1},{\sf{a}}_{j,1}})\bigr\}\\
    & j=1, \dots, L-2,\\
%    & \\
\Sigma_l^{j,i}=& \bigl\{ (\bar{{\sf
s}}_1,\ldots,\bar{{\sf s}}_N,{\bar{\sf{a}}}_{L,1},\dots,
    \bar{{\sf{a}}}_{L,m},\dots,{\bar{\sf{a}}}_{j+1,1},\dots,
    \bar{{\sf{a}}}_{j+1,m},\bar{{\sf{a}}}_{j,1},\dots,\bar{{\sf{a}}}_{j,i-1})
    \,\, \mathrm{in}\\
    &\mathcal{ V}(J_{\mathcal{S},\mathcal{A}_{L},\dots,\mathcal{A}_{j+1},{{\sf{a}}}_{j,1},\dots,{{\sf{a}}}_{j,i-1}})
    \mid \exists
    {\mbox{\ exactly}} \,\, l {\mbox{ distinct values }}
    \bar{{\sf{a}}}_{j,i}^{(1)}, \ldots,  \bar{{\sf{a}}}_{j,i}^{(l)}
    \,\,\,
    \mathrm{s.t.}\\
    &(\bar{{\sf{s}}}_1,\dots,\bar{{\sf{s}}}_{N},{\bar{\sf{a}}}_{L,1},\dots,
    \bar{{\sf{a}}}_{L,m},\dots,{\bar{\sf{a}}}_{j+1,1},\dots,
    \bar{{\sf{a}}}_{j+1,m},\bar{{\sf{a}}}_{j,1},\dots,\bar{{\sf{a}}}_{j,i-1},\bar{{\sf{a}}}_{j,i}^{(\ell)}) \rm{\ is \,\, in}\\
    & \mathcal{
    V}(J_{\mathcal{S},\mathcal{A}_{L},\dots,\mathcal{A}_{j+1},{\sf{a}}_{j,1},\dots,{\sf{a}}_{j,i}})
    \, 1 \leq \ell \leq l\bigr\},  \quad    i=2,\dots,m,\, j=1,\dots,L-1.
\end{align*}}}
\noindent The maximum number of extensions at any level, which is $\eta(j,i)$, plays an important
role for our approach and therefore deserves a precise definition.
Before defining it, we need an elementary result.
\begin{fact}\label{vecchiadef}
Given $J$, there is a set of natural numbers $\{\eta(j,i)\}_{\substack{1\leq j \leq L,\\ 1 \leq i \leq m}}$ such that
\begin{itemize}
\item[i)]$\mathcal{V}(J_{\mathcal{S}})=\sqcup_{l=1}^{\eta(L,1)}\Sigma_l^{L,1}$;
%\item[ii)] $\mathcal{
%V}(J_{\mathcal{S},{\sf{a}}_{L,1}})=\sqcup_{l=1}^{\eta(L,1)}\Sigma_l^{L,1}$;
\item[ii)] $\mathcal{
V}(J_{\mathcal{S},{\sf{a}}_{L,1},\dots,{\sf{a}}_{L,i}})=\sqcup_{l=1}^{\eta(L,i+1)}\Sigma_l^{L,i+1}$,
$ i=1,\dots,m-1$;
\item[iii)] $\mathcal{
V}(J_{\mathcal{S},\mathcal{A}_L})=\sqcup_{l=1}^{\eta(L-1,1)}\Sigma_l^{L-1,1}$;
\item[iv)] $\mathcal{
V}(J_{\mathcal{S},\mathcal{A}_L,\dots,\mathcal{A}_{j+1}})=\sqcup_{l=1}^{\eta(j,1)}\Sigma_l^{j,1}$,
$j=1,\dots, L-2$;
\item[v)] $\mathcal{
V}(J_{\mathcal{S},\mathcal{A}_L,\dots,\mathcal{A}_{j+1},{\sf{a}}_{j,1},\dots,
{\sf{a}}_{j,i}})=\sqcup_{l=1}^{\eta(j,i+1)}\Sigma_l^{j,i+1}$,
$i=1,\dots,m-1, j=1,\dots,L-1$;
\item[vi)] $\Sigma^{j,i}_{{\eta(j,i)}}\neq\emptyset$, $\forall
i=1,\dots,m, \, \forall j=1,\dots,L$. 
\end{itemize}
\end{fact}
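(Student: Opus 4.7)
The plan is to unwind everything purely from the hypothesis that $J$ is zero-dimensional, so that each elimination ideal $J_{\mathcal{S},\mathcal{A}_L,\dots,\mathcal{A}_{j+1},{\sf a}_{j,1},\dots,{\sf a}_{j,i}}$ has a finite variety in the appropriate affine space $\mathbb{A}_{j,i}$ (or simply in $\mathbb{A}$). The key structural ingredient that is already available in the excerpt is the elimination/closure statement for zero-dimensional ideals over an algebraically closed field: each natural projection
$$
\pi_{j,i}:\mathcal{V}(J_{\mathcal{S},\mathcal{A}_L,\dots,{\sf a}_{j,i}})\;\longrightarrow\;\mathcal{V}(J_{\mathcal{S},\mathcal{A}_L,\dots,{\sf a}_{j,i-1}})
$$
(and analogously at the transitions between blocks $\mathcal{A}_{j+1}\to{\sf a}_{j,1}$, and at the initial step $\mathcal{S}\to{\sf a}_{L,1}$) is surjective, because for a zero-dimensional ideal the image of the projection on the variety coincides (no closure needed, since the sets are finite) with the variety of the corresponding elimination ideal. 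This guarantees that every point in the source of such a projection has at least one pre-image, and at most finitely many since the target variety is itself finite.

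Given this, I would simply \emph{define} $\eta(j,i)$ as the maximum, taken over all points $P$ of the variety at the level that is being extended by the variable ${\sf a}_{j,i}$, of the cardinality of $\pi_{j,i}^{-1}(P)$; i.e., the maximum number of distinct extensions of $P$ to the next variable level. Because each such variety is finite and the function ``number of extensions'' takes values in $\{1,2,\dots\}$, the maximum exists and is a positive integer. With $\eta(j,i)$ in hand, the sets $\Sigma_l^{j,i}$, for $1\leq l\leq\eta(j,i)$, are exactly the level sets of this function, so items (i)--(v) of Fact \ref{vecchiadef} follow tautologically: each point lies in exactly one $\Sigma_l^{j,i}$ (namely the one indexed by its own number of extensions), and only indices $l\leq\eta(j,i)$ can contribute, since by definition of $\eta(j,i)$ no stratum with larger $l$ is populated. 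For item (vi), the non-emptiness of $\Sigma_{\eta(j,i)}^{j,i}$ is precisely the statement that the maximum $\eta(j,i)$ is attained, which is automatic for a maximum taken over a finite non-empty set.

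The only subtle point, and the one I would spell out carefully, is the surjectivity of the projections $\pi_{j,i}$; it is what prevents the existence of a ``ghost'' stratum $\Sigma_0^{j,i}$ of points with zero extensions (which would break the decomposition starting from $l=1$). I would note that this is \textbf{not} a consequence of radicality (see Remark \ref{droprad}), so I cannot invoke the Nullstellensatz directly on $J$; instead, it is the standard consequence of the elimination theorem together with the fact that varieties of zero-dimensional ideals are finite sets, hence automatically closed in the Zariski topology of $\overline{\mathbb{K}}^\bullet$. Everything else is bookkeeping: the statement is labelled ``Fact'' precisely because, once this surjectivity is on the table and $\eta(j,i)$ is introduced as a maximum over a finite set, no further argument is required.
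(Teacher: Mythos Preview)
Your proposal is correct and follows essentially the same approach as the paper: use zero-dimensionality to get finiteness of each elimination-ideal variety, then define $\eta(j,i)$ as the largest $l$ for which $\Sigma_l^{j,i}$ is non-empty (equivalently, the maximum fiber size), so that items (i)--(vi) become tautologies. Your write-up is actually more careful than the paper's, which condenses everything into ``Obviously $V$ is the union of the corresponding $\Sigma_l^{j,i}$'' and does not explicitly mention the surjectivity point you highlight.
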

\begin{proof}
Since $I$ is zero-dimensional ideal, $\mathcal{V}(I)$ is finite and so any variety projection $V=\mathcal{V}(J_{\mathcal{S},\mathcal{A}_L,\dots,\mathcal{A}_{j+1},
{\sf{a}}_{j,1},\dots,{\sf{a}}_{j,i-1}})$ has a finite number of points. Obviously $V$ is the union of the corresponding $\Sigma_l^{j,i}$, which means that there can be only a finite number of non-empty $\Sigma_l^{j,i}$  and so we use the notation $\eta(j,i)$ to denote the largest $l$ such that  $\Sigma_l^{j,i}$ is non-empty.
\end{proof}

\begin{definition}\label{levelf}
The {\bf level function} of $J$ (with respect to  the
${\mathcal{A}_L,\dots,\mathcal{A}_1}$ variables) is the function $\eta: \{1\ldots L\}\times \{1\ldots m\} \rightarrow \mathbb{N}$  satisfying Fact~\ref{vecchiadef}.
\end{definition}

We want now to generalize our previous definition of stratified ideals (Defi-nition \ref{stratificato}) to the multivariate case, but dropping radicality (see Remark \ref{droprad}).
It turns out that there are two ways of doing it: we have a weaker notion
in the next definition and two stronger notions in the subsequent definition.
%
%\newpage
\begin{definition}\label{weaklyideal}
Let $J$ be a zero-dimensional  ideal with the above
notation. We say that $J$ is a {\bf weakly stratified ideal} if
$$
\Sigma^{j,i}_{l}\neq \emptyset \qquad\textrm{ for }1 \leq l \leq\eta(j,i),\,
1\leq i\leq m, \, 1\leq j\leq L.
$$
\end{definition}
\noindent Being weakly stratified means that when considering the elimination
ideal at level $(j,i)$ (block $j$ and variable ${\sf{a}}_{j,i}$)
if there is a  variety point with $l\geq 2$ extensions then there is another point with $l-1$ extensions.

The following definition of multi-stratified ideal 
is given at variable-block level, rather than at a single-variable level. It 
contains two  conditions: there is at least
one point with exactly $j$ extensions and there are no ``gaps''  in the
number of extensions (for any integer $1\leq l\leq j$ there is at least
one point with $l$ extensions).
So it is exactly the multidimensional analogue of the definition of stratified
ideals, except that we drop the radicality.
Unfortunately, this straightforward generalization does not guarantee
the existence of polynomials playing the role of ``ideal'' locators, and so in the same definition
we provide an even stronger notion ``strongly multi-stratified ideal''.
\begin{definition}\label{strongly}
Let $J$ be a zero-dimensional ideal with the above
notation.
Let us consider the natural projections
\begin{align*}
&\pi_L:\,
\hspace{0.4cm}{\mathcal{V}}(J_{\mathcal{S},\mathcal{A}_L})
\hspace{0.3cm} \longrightarrow
{\mathcal{V}}(J_{\mathcal{S}})\\
&\pi_j:\,
{\mathcal{V}}(J_{\mathcal{S},\mathcal{A}_L,\ldots,\mathcal{A}_{j+1},\mathcal{A}_{j}})
\longrightarrow
{\mathcal{V}}(J_{\mathcal{S},\mathcal{A}_L,\ldots,\mathcal{A}_{j+1}}),
\,\, j=1,\dots, L-1 \\
&\rho_j:\,
{\mathcal{V}}(J_{\mathcal{S},\mathcal{A}_L,\ldots,\mathcal{A}_{j+1},\mathcal{A}_{j}})
\longrightarrow
{\mathcal{V}}(J_{\mathcal{A}_{j}}),
\,\, j=1,\dots, L
\end{align*}
\indent
Ideal $J$ is a {\bf multi-stratified ideal}
$($in the $\mathcal{A}_L,\ldots,\mathcal{A}_{1}$ variables$)$ if
\begin{itemize}
\item[$1)$] for any $1\leq j\leq L-1$  and for any $P \in
{\mathcal{V}}(J_{\mathcal{S},\mathcal{A}_L,\dots,\mathcal{A}_{j+1}})$
we have that $|\pi_{j}^{-1}(\{ P \})| \leq j$. \\
Moreover, for any $\bar s   \in$
${\mathcal{V}}(J_{\mathcal{S}})$ we have that $|\pi_{L}^{-1}(\{ \bar s\})|$
$\leq L$;
\item[$2)$] for any $1\leq j\leq L-1$ there is $Q \in
{\mathcal{V}}(J_{\mathcal{S},\mathcal{A}_L,\dots,\mathcal{A}_{j+1}})$
s.t. $|\pi_j^{-1}(\{Q\})|=j$.\\ Moreover, there is $\bar s \in
{\mathcal{V}}(J_{\mathcal{S}})$ s.t. 
$|\pi_L^{-1}(\{\bar s\})|=L$.
\end{itemize}

\indent
For any $1\leq j\leq L$, let $Z_j=\rho_j({\mathcal{V}}(J))$. We say that ideal $J$ is a {\bf strongly multi-stratified ideal}
(in the ${\mathcal{A}_L,\dots,\mathcal{A}_1}$ variables) if 1) holds and
\begin{itemize}
\item[$3)$]
for any $1\leq j\leq L-1$, for any $T\subset Z_j$ s.t. $1\leq |T|\leq j$ 
there is 
a $Q \in {\mathcal{V}}(J_{\mathcal{S},\mathcal{A}_L,\dots,\mathcal{A}_{j+1}})$ s.t.
$\rho_j(\pi_j^{-1}(\{ Q\}))=T$.\\
Moreover, for any $T\subset Z_j$ s.t. $1\leq |T|\leq L$ there is
an $\bar s \in {\mathcal{V}}(J_{\mathcal{S}})$ s.t.
$\rho_j(\pi_L^{-1}(\{ \bar s\}))=T$.
\end{itemize}
\end{definition}
Again, in the previous definition, we do not count multiplicities.\\
\begin{remark} For any zero-dimensional ideal $J$ with the above notation, let $Z=Z_1$. Once $\rho_{j'}({\mathcal{V}}(J))=\rho_j({\mathcal{V}}(J))$ for any $1\leq j,j'\leq L$, we obviously have
$\rho_j(\pi_L^{-1}(\{ \bar s\})) \subset Z$. Assuming this, 1) and 3) could be replaced
by saying that there is a bijection between
the sets of $\rho_j(\pi_L^{-1}(\{  Q\}))$ and all (non-empty) subsets of
$Z$ with up to $j$ elements
(and a similar condition at level $L$).
\end{remark}

We note the following obvious fact.
\begin{fact} \label{fact1} 
Let $m\geq 1$. If $J$ is a strongly multi-stratified ideal  then
  $J$ is a  multi-stratified ideal.\\
\indent Let $m=1$. If   $J$ is a  multi-stratified ideal 
then $J$ is a weakly stratified ideal. 
If $J$ is radical, then $J$ is a multi-stratified ideal  if and only
if  $J$ is a stratified ideal.
\end{fact}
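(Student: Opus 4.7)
The unifying observation for all three implications is that, for any point $Q$ in the codomain of $\pi_j$, any two distinct elements of the fiber $\pi_j^{-1}(\{Q\})$ must agree on the coordinates in $\mathcal{S},\mathcal{A}_L,\dots,\mathcal{A}_{j+1}$ and therefore must differ somewhere in the $\mathcal{A}_j$ block. Consequently $\rho_j$ restricted to a single fiber is injective, so that $|\pi_j^{-1}(\{Q\})|=|\rho_j(\pi_j^{-1}(\{Q\}))|$. This elementary remark does most of the work.

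For the first implication, condition $1)$ is shared by both definitions, so only condition $2)$ has to be produced. For each $1\leq j\leq L-1$, I would pick a subset $T\subseteq Z_j$ of maximal allowed size $j$, apply condition $3)$ of strongly multi-stratifiedness to obtain a $Q$ with $\rho_j(\pi_j^{-1}(\{Q\}))=T$, and then conclude $|\pi_j^{-1}(\{Q\})|=|T|=j$ via the injectivity just observed. The level $j=L$ case is identical, working with $\bar s\in\mathcal{V}(J_\mathcal{S})$ in place of $Q$. Thus conditions $1)$ and $3)$ of strongly multi-stratifiedness yield $1)$ and $2)$ of multi-stratifiedness.

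For the second implication one specialises to $m=1$, where each $\mathcal{A}_j=\{{\sf a}_j\}$ is a single variable and the sets $\Sigma^{j,1}_l$ partition the successive varieties $\mathcal{V}(J_{\mathcal{S},\mathcal{A}_L,\dots,\mathcal{A}_{j+1}})$. Conditions $1)$ and $2)$ of multi-stratifiedness at level $j$ together force $\eta(j,1)=j$, so what remains is to show that every intermediate value $1\leq l\leq j$ is realised as a fiber size, i.e. that $\Sigma^{j,1}_l\neq\emptyset$. This is the technical heart of the argument and the step I expect to be the main obstacle: conditions $1)$ and $2)$ a priori control only the upper bound and its attainment, so the no-gap property has to be extracted by propagating the existence of a fiber of maximal size $j$ through the successive elimination ideals, producing at each intermediate cardinality a point whose fiber realises exactly that cardinality (for example by deforming or restricting the distinguished size-$j$ fiber provided by condition $2)$).

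The third statement then follows by direct comparison. In the radical $m=1$ setting, Definition~\ref{stratificato} of a stratified ideal is precisely the conjunction of $(a)$ $\lambda(j)=\eta(j,1)=j$ at every level and $(b)$ non-emptiness of every $\Sigma^{j}_l$ for $1\leq l\leq j$. Clause $(a)$ is exactly conditions $1)$ and $2)$ of multi-stratifiedness, while $(b)$ is exactly weak stratifiedness. So ``stratified $\Rightarrow$ multi-stratified'' is trivial, and the converse follows by coupling the multi-stratified hypothesis, which supplies $(a)$, with the second implication just established, which supplies $(b)$. This completes Fact~\ref{fact1}.
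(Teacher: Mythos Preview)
The paper supplies no proof here; the statement is simply labelled an ``obvious fact.'' Your argument for the first implication (strongly multi-stratified $\Rightarrow$ multi-stratified) is correct and is presumably what the authors had in mind: condition~$1)$ is shared, and applying~$3)$ with $|T|=j$ produces the $Q$ required by~$2)$ via the injectivity of $\rho_j$ on each fiber (this tacitly uses $|Z_j|\geq j$, which holds in the paper's intended applications).

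You were right to flag the second implication as the obstacle, and in fact the argument you could not complete does not exist: the claim is false as stated. Take $m=1$, $L=2$, $\mathcal{S}=\{{\sf s}\}$, $\mathcal{A}_2=\{{\sf a}_2\}$, $\mathcal{A}_1=\{{\sf a}_1\}$, $\mathcal{T}=\emptyset$, and $J=\mathcal{I}\bigl(\{(0,0,0),(0,1,0)\}\bigr)\subset\mathbb{C}[{\sf s},{\sf a}_2,{\sf a}_1]$. Then $J$ is radical and zero-dimensional; the unique point $0\in\mathcal{V}(J_\mathcal{S})$ has exactly two extensions in $\mathcal{V}(J_{\mathcal{S},{\sf a}_2})$, and every point at the next level has a unique extension. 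Hence conditions~$1)$ and~$2)$ of Definition~\ref{strongly} hold, so $J$ is multi-stratified, yet $\Sigma^{2,1}_1=\emptyset$ while $\eta(2,1)=2$, so $J$ is neither weakly stratified nor stratified. This simultaneously refutes the second and third assertions of the Fact. Your sketch of ``propagating the size-$j$ fiber downwards'' would need extra input---several $\mathcal{S}$-points, or the full force of condition~$3)$ as in Proposition~\ref{smsw}---to succeed; the no-gap property simply does not follow from~$1)$ and~$2)$ alone. (The paper's main results are unaffected, since they rest on Proposition~\ref{smsw}, proved directly for \emph{strongly} multi-stratified ideals.)
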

The next two examples clarify (in the case $m=1$) the notions of multi-stratified ideals and of weakly stratified ideals.
\begin{example}
Let  $\mathcal{S}=\{{\sf{s}}_1\}$,
$\mathcal{A}_1=\{{\sf{a}}_{1,1}\}$,
$\mathcal{A}_2=\{{\sf{a}}_{2,1}\}$, so that $m=1$, and
$\mathcal{T}=\{{\sf{t}}_1\}$. Let  $J=\mathcal{I}(Z) \subset
\mathbb{C}[{\sf{s}}_1,{\sf a}_{2,1},{\sf a}_{1,1},{\sf t}_1]$ with
$Z=$ $\{(0,0,0,0)$, $(0,1,1,0)$, $(0,2,2,0)\}$. The order $<$ is
$s_1<a_{2,1}<a_{1,1}<t_1$ and the varieties  are
 {\scriptsize{\begin{align*}
& \mathcal{V}(J_\mathcal{S})=\{0\}, \quad
\mathcal{V}(J_{\mathcal{S},{\sf a}_{2,1}})=\{(0,0),(0,1),(0,2)\},\quad\mathcal{V}(J_{\mathcal{S},{\sf a}_{2,1},{\sf
a}_{1,1}})=\{(0,0,0),(0,1,1),(0,2,2)\}.
\end{align*}}}
Let us consider the projection $\pi_2:
\mathcal{V}(\mathcal{J}_{\mathcal{S},{\sf a}_{2,1}})
\rightarrow \mathcal{V}(\mathcal{J}_{\mathcal{S}})$. Then
$|\pi_2^{-1}(\{0\})|=3$.
We have $\sum_3^{2,1}=\{0\}$ and $\sum_1^{2,1}=\emptyset$, $\sum_2^{2,1}=\emptyset$.
So $\eta(2,1)=3$ and $J$ is not  a  weakly stratified ideal (neither a stratified ideal).
\end{example}

\begin{example}
Let  $\mathcal{S}=\{{\sf{s}}_1\}$,
$\mathcal{A}_1=\{{\sf{a}}_{1,1}\}$,
$\mathcal{A}_2=\{{\sf{a}}_{2,1}\}$, $\mathcal{A}_3=\{{\sf{a}}_{3,1}\}$, $\mathcal{T}=\{{\sf{t}}_1\}$ so that $m=1$.
Let  $J=\mathcal{I}(Z) \subset
\mathbb{C}[{\sf{s}}_1,{\sf{a}}_{3,1},{\sf a}_{2,1},{\sf a}_{1,1},{\sf t}_1]$ with
$Z=$ $\{(0,1,0,0,0),$ $(0,2,1,1,2),$ $(2,2,2,0,0)\}$. The order $<$ is
$s_1<a_{3,1}<a_{2,1}<a_{1,1}<t_1$ and the varieties  are
 {\scriptsize{$$
\begin{array}{l}
\mathcal{V}(J_\mathcal{S})=\{0,2\},\quad
\mathcal{V}(J_{\mathcal{S},{\sf a}_{3,1}})=\{(0,1),(0,2),(2,2)\},\\   
\mathcal{V}(J_{\mathcal{S},{\sf a}_{3,1},{\sf
a}_{2,1}})=\{(0,1,0),(0,2,1),(2,2,2)\},\\
\mathcal{V}(J_{\mathcal{S},{\sf a}_{3,1},{\sf a}_{2,1},{\sf a}_{1,1}})=\{(0,1,0,0),(0,2,1,1),(2,2,2,0)\}. 
\end{array}
$$}}
Let us consider the projection $\pi_3:
\mathcal{V}(\mathcal{J}_{\mathcal{S},{\sf a}_{3,1}})
\rightarrow \mathcal{V}(\mathcal{J}_{\mathcal{S}})$. Then
$|\pi_3^{-1}(\{0\})|=2$ and $|\pi_3^{-1}(\{2\})|=1$, so $\sum_2^{3,1}=\{0\}$, $\sum_1^{3,1}=\{2\}$ and $\eta(3,1)=2$, but $\sum_3^{3,1}=\emptyset.$ 
Similarly,  $\eta(2,1)=\eta(1,1)=1$.
So $J$ is a weakly stratified ideal that is
not  multi-stratified (and not stratified).
\end{example}

However, if $m\geq 2$, a weakly stratified ideal is not necessarily a multi-stratified ideal and, viceversa, a multi-stratified ideal
is not necessarily a weakly stratified ideal, as shown in the following example.

\begin{example}
Let  {\small{$\mathcal{S}=\{{\sf{s}}_1,{\sf{s}}_2,{\sf{s}}_3\}$,
$\mathcal{A}_1=\{{\sf{a}}_{1,1},{\sf{a}}_{1,2}\}$,
$\mathcal{A}_2=\{{\sf{a}}_{2,1},{\sf{a}}_{2,2}\}$, 
$\mathcal{A}_3=\{{\sf{a}}_{3,1},{\sf{a}}_{3,2}\}$, 
$\mathcal{T}=\{{\sf{t}}_1\}$}} so that $m=2$.\\
Let  {\small{$J=\mathcal{I}(Z) \subset
\mathbb{C}[{\sf{s}}_1,{\sf{s}}_2,{\sf{s}}_3,{\sf{a}}_{3,1},{\sf{a}}_{3,2},
{\sf{a}}_{2,1},{\sf a}_{2,2},{\sf a}_{1,1},{\sf a}_{1,2},{\sf t}_1,{\sf t}_2]$}}, with 
$Z=$ $\{(0,0,1,1,$ $1,1,1,3,1,1,1),$ $(0,0,1,1,2,1,3,1,2,1,1),$ $(0,0,1,1,3,0,0,2,1,1,2),$ $(1,1,2,2,$ $1,2,1,0,0,0,1),$ $(1,1,2,0,1,1,1,0,1,2,1),$ $(1,1,2,0,1,1,0,1,0,0,1),$ $(2,3,0,3,$ $3,1,0,1,1,1,2)\}$. 
The order $<$ is
{\scriptsize{$s_1<s_2<s_3<a_{3,1}<a_{3,2}<a_{2,1}<a_{2,2}<a_{1,1}<a_{1,2}<t_1$}} and the varieties  are
{\scriptsize{
\begin{align*}
\mathcal{V}(J_\mathcal{S})=&\{(0,0,1), (1,1,2), (2,3,0)\},\\
\mathcal{V}(J_{\mathcal{S},\mathcal{A}_3})=&\{(0,0,1,1,1),(0,0,1,1,2),
(0,0,1,1,3),(1,1,2,2,1),(1,1,2,0,1),(2,3,0,3,3)\},\\   
\mathcal{V}(J_{\mathcal{S},\mathcal{A}_3,\mathcal{A}_2})=&\{
(0,0,1,1,1,1,1),(0,0,1,1,2,1,2),
(0,0,1,1,3,0,0),(1,1,2,2,1,2,1),(1,1,2,0,1,1,1),\\
&(1,1,2,0,1,1,0),(2,3,0,3,3,1,0)\},\\
\mathcal{V}(J_{\mathcal{S},\mathcal{A}_3,\mathcal{A}_2,\mathcal{A}_1})=&
\{(0,0,1,1,1,1,1,3,1),(0,0,1,1,2,1,2,1,2),
(0,0,1,1,3,0,0,2,1),(1,1,2,2,1,2,1,0,0),\\
&(1,1,2,0,1,1,1,0,1),(1,1,2,0,1,1,0,1,0),(2,3,0,3,3,1,0,1,1)\}. 
\end{align*}}}
\noindent Let us consider the projection {\small{$\pi_3:
\mathcal{V}(\mathcal{J}_{\mathcal{S},\mathcal{A}_3})
\rightarrow \mathcal{V}(\mathcal{J}_{\mathcal{S}})$}}.\\   
Then {\small{$|\pi_3^{-1}(\{(0,0,1)\})|=3$}},  {\small{$|\pi_3^{-1}(\{(1,1,2)\})|=2$}} and  {\small{$|\pi_3^{-1}(\{(2,3,0)\})|=1$}}.\\ 
Similarly, if we consider $\pi_2:
\mathcal{V}(\mathcal{J}_{\mathcal{S},\mathcal{A}_3,\mathcal{A}_2})
\rightarrow \mathcal{V}(\mathcal{J}_{\mathcal{S},\mathcal{A}_3}),$ then 
{\small{$|\pi_2^{-1}(\{(1,1,2,0,1)\})|$}} is equal to $2$ and  for other  {\footnotesize{$ P\in \mathcal{V}(\mathcal{J}_{\mathcal{S},\mathcal{A}_3})$}} we have that
{\footnotesize{$|\pi_2^{-1}(\{P\})|=1$}}.\\ 
Finally, if we consider $\pi_1:
\mathcal{V}(\mathcal{J}_{\mathcal{S},\mathcal{A}_3,\mathcal{A}_2,\mathcal{A}_1})
\rightarrow \mathcal{V}(\mathcal{J}_{\mathcal{S},\mathcal{A}_3,\mathcal{A}_2}),$ then for any\\ {\small{$ P\in \mathcal{V}(\mathcal{J}_{\mathcal{S},\mathcal{A}_3,\mathcal{A}_2})$}} we have that
$|\pi_1^{-1}(\{P\})|=1$ and so  $J$ is   multi-stratified.\\ 
It is easy to see that $J$ is not weakly stratified. In fact, if we consider a projection
{\small{$\pi_{3,1}:
\mathcal{V}(\mathcal{J}_{\mathcal{S},{\sf a}_{3,1},{\sf a}_{3,2}})
\rightarrow \mathcal{V}(\mathcal{J}_{\mathcal{S},{\sf a}_{3,1}})$}}, then
{\small{$\pi_{3,2}^{-1}(\{(0,0,1,1)\})=\{(0,0,1,1,1),$ $(0,0,1,1,2),$ $(0,0,1,1,3)\}$, $\pi_{3,2}^{-1}(\{(1,1,2,2)\})=\{(1,1,2,2,1)\}$, $\pi_{3,2}^{-1}(\{(1,1,2,0)\})=\{(1,1,2,0,1)\}$, $\pi_{3,2}^{-1}(\{(2,3,0,3)\})=\{(2,3,0,3,3)\}$}}. So 
$\sum_3^{3,2}=\{(0,0,1,1)\}$, but $\sum_2^{3,2}=\emptyset$.
\end{example}
%
%
%\pagebreak
\begin{proposition} \label{smsw}
Let $J$ be a strongly multi-stratified ideal  then
 $J$ is a  weakly stratified ideal.
\begin{proof}
For any $1\le i\le m$ and for any $j=1,\dots, L-1$,  let us consider the natural projection 
{\footnotesize{$$
\pi_{j,i}:\,
{\mathcal{V}}(J_{\mathcal{S},\mathcal{A}_L,\dots,\mathcal{A}_{j+1},
\bar{{\sf{a}}}_{j,1},\dots,\bar{{\sf{a}}}_{j,i}})
\longrightarrow
{\mathcal{V}}(J_{\mathcal{S},\mathcal{A}_L,\dots,\mathcal{A}_{j+1},
\bar{{\sf{a}}}_{j,1},\dots,\bar{{\sf{a}}}_{j,i-1}})
$$}}
\noindent We will also use $\rho_j$ and $\pi_j$ as in Definition \ref{strongly}.\\
To avoid complications, we consider only the case $2\leq i\leq m-1$, being the modifications in the  $i=1$ and $i=m$  obvious.\\
\indent The first fact that we note is that $ \eta(j,i)\leq j$, because if the pre-images at block level contain at most $j$ elements, then at variable level they cannot contain more.
Let {\small{$2\le l\le \eta(j,i)$}} such that {\small{$\Sigma_l^{j,i}\not=\emptyset$}}.
It is enough to show that\\ {\small{$\Sigma_{l-1}^{j,i}\not=\emptyset$}}.

\noindent Let $\bar R, \bar P$ and $Q$ such that $Q\in\Sigma_l^{j,i}$, {\small{$Q=(\bar{\mathcal{S}},\bar{\mathcal{A}}_L,
\dots,\bar{\mathcal{A}}_{j+1},
\bar{{\sf{a}}}_{j,1},\dots,\bar{{\sf{a}}}_{j,i-1})$}},\\ {\small{$\bar P=(\bar{\mathcal{S}},\bar{\mathcal{A}}_L,
\dots,\bar{\mathcal{A}}_{j+1})$}}, {\small{$\bar R=(\bar{{\sf{a}}}_{j,1},\dots,\bar{{\sf{a}}}_{j,i-1})$}}, so  {\small{$Q= (\bar{P},\bar R)$}}. \\
Then {\small{$\pi_{j,i}^{-1}(\{Q\})=\{(Q,\lambda_1),\ldots, (Q,\lambda_l)\}$}} and all $\lambda_{\ell}$'s are distinct. \\
Let {\small{$\Gamma_1,\ldots, \Gamma_l \in \mathcal{V}(J_{{\sf{a}}_{j,i+1},\dots,{\sf{a}}_{j,m}})$}} such that {\small{$(Q,\lambda_{\ell},\Gamma_{\ell})\in \mathcal{V}(J_{\mathcal{S},\mathcal{A}_L,\dots,\mathcal{A}_{j}})$}}. The $\Gamma_{\ell}$'s do not have to be distinct. For any {\small{$1\leq \ell\leq l$}} at least one such {\small{$\Gamma_{\ell}$}} must exist. We choose  one {\small{$\Gamma_{\ell}$}} for any ${\ell}$.
So {\footnotesize{$\{(Q,\lambda_1,\Gamma_1),\ldots, (Q,\lambda_l,\Gamma_l)\}\subset \pi_j^{-1}(\bar P)$}} and \\{\footnotesize{$\{(\bar R,\lambda_{\ell},\Gamma_{\ell})\}_{1\leq \ell\leq l} \subset \rho_j(V(J_{\mathcal{S},\mathcal{A}_L,\ldots,\mathcal{A}_{j+1},\mathcal{A}_{j}}))$}}. 
Let 
{\small{
$$
T=\{(\bar R,\lambda_1,\Gamma_1),\ldots, (\bar R,\lambda_{l-1},\Gamma_{l-1})\}.
$$}}
\hspace{-0.3cm}
\noindent Then $T\subset \rho_j(V(J_{\mathcal{S},\mathcal{A}_L,\ldots,\mathcal{A}_{j+1},\mathcal{A}_{j}})) $ and $|T|=l-1\leq \eta(j,i)-1\leq j-1$.\\
Since $J$ is strongly multi-stratified, there is $\widetilde{P}\in{\mathcal{V}}(J_{\mathcal{S},\mathcal{A}_L,\dots,
\mathcal{A}_{j+1}})$ such that
{\footnotesize{$$T=\rho_j(\pi_j^{-1}(\{\widetilde{P}\}))\textrm{, so }\pi_j^{-1}(\{\widetilde{P}\})=  \{(\widetilde{P},\bar R,\lambda_1,
\Gamma_1),\ldots, (\widetilde{P},\bar R,\lambda_{l-1},\Gamma_{l-1})\}.
$$}} 

\hspace{-0.2cm}\noindent This implies that {\footnotesize{$\{(\widetilde{P},\bar R,\lambda_1),\ldots, (\widetilde{P},\bar R,\lambda_{l-1})\}= \pi_{j,i}^{-1}(\{(\widetilde{P},\bar R)\})$}}, and so  {\footnotesize{$\Sigma_{l-1}^{j,i}\not=\emptyset$}},
as all $\lambda_{\ell}$'s are distinct.
\end{proof}
\end{proposition}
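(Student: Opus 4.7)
The plan is to reduce the statement to an induction on the number of extensions at the variable level $(j,i)$, using property 3) of strong multi-stratification to engineer points with a desired pre-image at each step. Fix $j$ and $i$ (I would treat $2\leq i \leq m-1$ first; the extreme cases $i=1$ and $i=m$ should go through identically after adjusting the projection indices). Let $l=\eta(j,i)$ and pick $Q\in \Sigma_l^{j,i}$, writing $Q=(\bar P,\bar R)$ where $\bar P$ lies in the image in $\mathcal{V}(J_{\mathcal{S},\mathcal{A}_L,\dots,\mathcal{A}_{j+1}})$ and $\bar R=(\bar{\sf a}_{j,1},\dots,\bar{\sf a}_{j,i-1})$. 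My goal is to show $\Sigma_{l-1}^{j,i}\neq\emptyset$, since then iterating gives $\Sigma_{l'}^{j,i}\neq\emptyset$ for every $1\leq l'\leq l$.

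First I would record the basic inequality $\eta(j,i)\leq j$: any fiber at variable level is a projection of a fiber at block level, and condition 1) caps block-level fibers at $j$. Then from $Q\in\Sigma_l^{j,i}$ I obtain $l$ distinct scalars $\lambda_1,\dots,\lambda_l$ so that $(Q,\lambda_\ell)\in \mathcal{V}(J_{\mathcal{S},\mathcal{A}_L,\dots,\mathcal{A}_{j+1},{\sf a}_{j,1},\dots,{\sf a}_{j,i}})$. Since $J$ is zero-dimensional, each such partial point extends to a full point in $\mathcal{V}(J_{\mathcal{S},\mathcal{A}_L,\dots,\mathcal{A}_j})$; I select one such extension per $\lambda_\ell$, with tail $\Gamma_\ell$, so that $(\bar P,\bar R,\lambda_\ell,\Gamma_\ell)\in\pi_j^{-1}(\{\bar P\})$.

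Now comes the decisive step. Project the chosen points via $\rho_j$ onto the $j$-th block to produce $T:=\{(\bar R,\lambda_\ell,\Gamma_\ell)\mid 1\leq \ell\leq l-1\}\subset Z_j$ of cardinality $l-1\leq j-1$. Apply property 3) of strongly multi-stratified ideals to $T$: there exists $\widetilde P\in \mathcal{V}(J_{\mathcal{S},\mathcal{A}_L,\dots,\mathcal{A}_{j+1}})$ with $\rho_j(\pi_j^{-1}(\{\widetilde P\}))=T$. Consequently $\pi_j^{-1}(\{\widetilde P\})=\{(\widetilde P,\bar R,\lambda_\ell,\Gamma_\ell)\}_{\ell=1}^{l-1}$, and by further projecting to the variable level we get that the point $(\widetilde P,\bar R)$ at level $(j,i-1)$ has exactly $l-1$ extensions, i.e.\ $(\widetilde P,\bar R)\in\Sigma_{l-1}^{j,i}$.

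The only subtleties I expect are bookkeeping ones: (a) confirming the $\lambda_\ell$'s remain distinct after being paired with the $\Gamma_\ell$'s (which is automatic since they differ already in the $\lambda$-coordinate), and (b) handling the boundary indices $i=1$ (where ``previous variable in the block'' becomes ``last variable of the next block'') and $i=m$ (where the projection $\pi_{j,i}$ gets absorbed into $\pi_j$); in both cases the same strategy works after relabelling the relevant projections. The real content is precisely the use of condition 3): it is what allows us to \emph{prescribe} an arbitrary subset of size $\leq j$ as the image of a block-level fiber, and this prescribing power is exactly what is needed to avoid gaps at the variable level.
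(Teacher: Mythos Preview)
Your proof is correct and follows essentially the same route as the paper: bound $\eta(j,i)\le j$ via condition~1), pick $Q=(\bar P,\bar R)\in\Sigma_l^{j,i}$, extend the $l$ variable-level lifts to full block-level points, drop one to form $T\subset Z_j$ of size $l-1$, and invoke condition~3) to produce $\widetilde P$ whose block-level fiber projects to $T$, so that $(\widetilde P,\bar R)\in\Sigma_{l-1}^{j,i}$. The only cosmetic difference is that the paper states the step for an arbitrary $l$ with $\Sigma_l^{j,i}\neq\emptyset$ rather than starting at $l=\eta(j,i)$ and iterating down, but the content is identical.
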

Let $<_{\lex}$ be the lexicographic term order such that $\mathcal{A}_L <_{\lex} \ldots <_{\lex} \mathcal{A}_1$ and $a_{j,1} <_{\lex} \ldots <_{\lex} a_{j,m}$, for any $1 \leq j \leq L$. Let $<_S$ be a term order on $\mathcal{S}$ and $<_{\mathcal{T}}$ a term order on $\mathcal{T}$. Let $<$ be the block order $<=(<_S,<_{\lex},<_{\mathcal{T}})$. We are now assuming that $J$ is any zero-dimensional ideal is $\mathbb{K}[\mathcal{S},\mathcal{A}_L,\dots,\mathcal{A}_1,\mathcal{T}]$. 
Let $G=\mathrm{GB}(J)$. It is
well--known that the elements of $G \cap
(\mathbb{K}[\mathcal{S},\mathcal{A}_L,\dots,\mathcal{A}_1]\setminus\mathbb{K}[\mathcal{S}])$
can be collected into non-empty blocks $\{G^{\,j}\}_{1\leq j \leq L}$, where\footnote{In \cite{CGC-cd-art-gelp1}  we use the notation $G_i$.} 
$$G^{\,L}=G\cap(\mathbb{K}[\mathcal{S},\mathcal{A}_{L}]\sms\mathbb{K}[\mathcal{S}])$$
and, for $1\leq j\leq L-1$,
$$
G^{\,j}=G \cap (\mathbb{K}[\mathcal{S},
\mathcal{A}_L,\dots,\mathcal{A}_{j+1}, \mathcal{A}_{j}]\sms\mathbb{K}[\mathcal{S},
\mathcal{A}_L,\dots,\mathcal{A}_{j+1}]).
$$ 
Then we denote by $G^{\, L,1}$, $G^{\, L,i}$, $G^{\, j,1}$ and 
$G^{\, j,i}$, $1 < j \leq L,\, 1 < i \leq m$, respectively, the sets:
$$
\begin{array}{ll}
G^{\, L,1}=&G \cap (\mathbb{K}[\mathcal{S},
{\sf a}_{L,1}]\setminus\mathbb{K}[\mathcal{S}])\\
G^{\, L,i}=&G \cap (\mathbb{K}[\mathcal{S},{\sf a}_{L,1},\dots,{\sf a}_{L,i}]\setminus\mathbb{K}[\mathcal{S},{\sf a}_{L,1},\dots,{\sf a}_{L,i-1}])\\
G^{\, j,1}=&G \cap (\mathbb{K}[\mathcal{S},
\mathcal{A}_L,\dots,\mathcal{A}_{j+1},{\sf a}_{j,1}]\setminus\mathbb{K}[\mathcal{S},
\mathcal{A}_L,\dots,\mathcal{A}_{j+1}])\\
G^{\, j,i}=&G \cap (\mathbb{K}[\mathcal{S},
\mathcal{A}_L,\dots,\mathcal{A}_{j+1},{\sf a}_{j,1},\dots,{\sf a}_{j,i}]\setminus\mathbb{K}[\mathcal{S},
\mathcal{A}_L,\dots,\mathcal{A}_{j+1},{\sf a}_{j,1},\dots,{\sf a}_{j,i-1}]).  
\end{array}
$$
\noindent In other words, let $g$ be any polynomial in $G^{\, j,i}$.
Then: 
\begin{itemize}
\item $g$ contains the variable ${\sf a}_{j,i}$, 
\item $g$ does not contain any greater variable (i.e. no variables in blocks 
      ${\mathcal{A}}_{j+1}\ldots{\mathcal{A}}_{1}$ and none of the remaining 
      variables in the $j$-th block ${\sf a}_{j,i+1},\ldots,{\sf a}_{j,m}$),
\item $g$ \underline{may} contain lesser variables (the ${\mathcal{S}}$ variables,
      the $\sf{a}$ variables contained in blocks $L,\ldots,j-1$ and the
      lesser $\sf{a}$ variables in the same block: 
      ${\sf{a}}_{j,1},\ldots,{\sf{a}}_{j,i-1}$).
\end{itemize}
As the ideal under consideration is assumed zero-dimensional, the sets $G^{\, j,i}$ are non-empty.
The polynomials in any $G^{\, j,i}$ can be grouped according to
their degree $\delta$ with respect to  ${\sf a}_{j,i}$.\\
\noindent For us it is essential to know the \textit{ maximum value} of $\delta$ in $G^{\, j,i}$, that we call 
\begin{eqnarray}\label{zeta}
\qquad\qquad\qquad\qquad\qquad\qquad\quad\zeta(j,i)
\end{eqnarray}
\noindent So we can write:
{\small{$$
G^{\, j,i}=\sqcup_{\delta=1}^{{\zeta(j,i)}} G_{\delta}^{\, j,i},
\quad j=1,\dots,L, \,\, i=1,\dots,m, \,\,\mathrm{with}\,\,\,
G_{\zeta(j,i)}^{\, j,i} \neq \emptyset,
$$}}
but some
$G_{\delta}^{\, j,i}$ could be empty. 
In this way, if $g \in G_{\delta}^{\, j,i}$ we have:
\begin{itemize}
\item {\small{$g \in \mathbb{K}[\mathcal{S},
\mathcal{A}_L,\dots,\mathcal{A}_{j+1},{\sf a}_{j,1},\dots, {\sf
a}_{j,i-1}][{\sf a}_{j,i}]\setminus\mathbb{K}[\mathcal{S},
\mathcal{A}_L,\dots,\mathcal{A}_{j+1},{\sf a}_{j,1},\dots, {\sf
a}_{j,i-1}]$}}
\item $\deg_{{\sf{a}}_{j,i}}(g)=\delta$.
\end{itemize}
Note that we can view $\zeta$ as a function $\zeta: \{1\ldots L\}\times \{1\ldots m\} \rightarrow \NN$, that is, as a function
with exactly the same range of $\eta$.

If $g \in G_{\delta}^{\, j,i}$, then we can write uniquely $g$ as
$$ g= a_{\delta} {\sf a}_{j,i}^{\delta}+a_{\delta-1}{\sf a}_{j,i}^{\delta-1}+ \cdots +a_0,$$
  with $a_j \in \mathbb{K}[\mathcal{S},\mathcal{A}_L,\dots,\mathcal{A}_{j+1},{\sf a}_{j,1},\dots, {\sf a}_{j,i-1}]$.
and $a_{\delta}$ is the leading polynomial of $g$.

\noindent We name the elements of $G_{\delta}^{\, j,i}$ according to the term
order of their leading terms, $i.e.$
$G_{\delta}^{\, j,i}=\{g^{(i)}_{j,\delta,1},
\dots,g^{(i)}_{j,\delta,|G_{\delta}^{\, j,i}|}\}$, with ${\bf
T}(g_{j,\delta,h}^{(i)}) <  {\bf T}(g_{j,\delta,h+1}^{(i)})$ for
any $h$.

\noindent  We note the following lemma.
\begin{lemma}\label{lemgradmax}
For any $ j=1,\dots,L$ and  $i=1,\dots, m$,
$G_{\zeta(j,i)}^{\, j,i}=\{g^{(i)}_{j,\zeta(j,i),1}\}$, i.e. there
exists only one polynomial in $G_{\zeta(j,i)}^{\, j,i}$ such that
$\deg_{{\sf{a}}_{j,i}}=\zeta(j,i)$.

\begin{proof}
From elementary properties of \Gr\ bases of zero-dimensional ideals, for any variable ${\sf a}_{j,i}$, $G$ must contain a polynomial $g$ with leading term ${\sf a}_{j,i}^k$, for some $k\geq 1$. Note that $g\in G^{\, j,i}$, because variable ${\sf a}_{j,i}^k$ is present in $g$ and any greater variable cannot be present. If there is a $\overline{g}\in G^{\, j,i}$ with $\deg_{{\sf a}_{j,i}} \overline{g} \geq k$, then ${\sf a}_{j,i} | \textbf{T}(\overline{g})$ and so $\overline{g}$ can be removed (recall that $G$ is reduced).
As a consequence, $g$ has the highest possible degree in ${\sf a}_{j,i} $, i.e. $k=\zeta(j,i)$, and so $g=g^{(i)}_{j,\zeta(j,i),1}$.
\end{proof}
\end{lemma}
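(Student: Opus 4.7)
The plan is to exploit zero-dimensionality to produce a canonical element of $G^{\,j,i}$ with leading term a pure power of ${\sf a}_{j,i}$, and then use the reducedness of $G$ to rule out any competitor of degree $\geq \zeta(j,i)$ in ${\sf a}_{j,i}$.

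First I would recall the standard fact that, since $J$ is zero-dimensional, the initial ideal contains a pure power of every variable; in particular there is some $g \in G$ and some integer $k\geq 1$ with $\textbf{T}(g)={\sf a}_{j,i}^{k}$. Such a $g$ can involve only ${\sf a}_{j,i}$ and variables smaller than ${\sf a}_{j,i}$ in the block order (otherwise its leading term would involve a larger variable), and it must contain ${\sf a}_{j,i}$ itself. So $g \in G^{\,j,i}$ with $\deg_{{\sf a}_{j,i}}(g)=k$. This shows $\zeta(j,i)\geq k$.

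Next I would argue the reverse inequality and uniqueness simultaneously. Take any $\overline{g}\in G^{\,j,i}$. By the definition of $G^{\,j,i}$, the polynomial $\overline{g}$ contains no variable strictly greater than ${\sf a}_{j,i}$, hence every monomial appearing in $\overline{g}$ has exponent $0$ in ${\sf a}_{j,i+1},\dots,{\sf a}_{j,m}$ and in all variables from blocks $\mathcal{A}_{j-1},\dots,\mathcal{A}_{1},\mathcal{T}$. Under the block order (which compares $\mathcal{T}$-parts first, then $\mathcal{A}_1$-parts, \dots, then within $\mathcal{A}_j$ by lex with ${\sf a}_{j,m}$ largest), the leading monomial of $\overline{g}$ is therefore determined among its monomials by maximizing the exponent of ${\sf a}_{j,i}$; in particular the ${\sf a}_{j,i}$-exponent of $\textbf{T}(\overline{g})$ equals $\deg_{{\sf a}_{j,i}}(\overline{g})$. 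If $\deg_{{\sf a}_{j,i}}(\overline{g})\geq k$, then ${\sf a}_{j,i}^{k}=\textbf{T}(g)$ divides $\textbf{T}(\overline{g})$, and since $G$ is reduced this forces $\overline{g}=g$.

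Combining the two steps, $\zeta(j,i)=k$ and the only element of $G_{\zeta(j,i)}^{\,j,i}$ is $g$, namely $g^{(i)}_{j,\zeta(j,i),1}$. The only delicate point is making explicit that the block order indeed selects a leading monomial with maximal ${\sf a}_{j,i}$-exponent when all larger-indexed variables are absent; once this is noted, the argument is a one-line application of reducedness and the standard presence of pure-power leading terms for zero-dimensional ideals, so I do not expect any real obstacle.
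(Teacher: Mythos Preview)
Your proof is correct and follows essentially the same approach as the paper: produce the pure-power generator from zero-dimensionality, then use reducedness to exclude any other basis element of degree $\geq k$ in ${\sf a}_{j,i}$. You are in fact slightly more explicit than the paper at the one nontrivial point, namely justifying why $\deg_{{\sf a}_{j,i}}(\overline{g})\geq k$ forces ${\sf a}_{j,i}^k\mid \textbf{T}(\overline{g})$ via the block/lex structure; the paper glosses over this.
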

We are ready for the main result of this section (whose proof is given in Section \ref{dim}). Compare with Theorem 32 in \cite{CGC-cd-art-martamax}.
\begin{proposition}\label{GBstructure}
Let $G$ be a reduced \Gr \ basis of a radical weakly stratified
ideal $J$ with respect to $<$ as previously described. Let $\mathcal{V}(J)\subset \mathbb{A}$.  Then for any $ j=1,\dots,L$ and  $i=1,\dots, m$,
{\small{$$
G^{\, j,i}= \sqcup_{\delta=1}^{{\zeta(j,i)}} G_{\delta}^{\, j,i},\mbox{with}
$$}}
\vspace{-0.2cm}
\begin{itemize}
\item[1.]$\zeta(j,i)=\eta(j,i)$, i.e. $\zeta$ is the level function of
$J$;
\item[2.] 
$G_{\delta}^{\, j,i}\not=\emptyset$  for any $1\leq\delta\leq \zeta(j,i)$;
\item[3.] 
$G_{\zeta(j,i)}^{\, j,i}=\{g^{(i)}_{j,\zeta(j,i),1}\}$, i.e. there
exists only one polynomial in $G_{\zeta(j,i)}^{\, j,i}$ such that
$\deg_{{\sf{a}}_{j,i}}=\zeta(j,i)$;
\item[4.]  we have that
{\small{$${\bf T}(g_{j,\zeta(j,i),1}^{(i)})={\sf{a}}_{j,i}^{\zeta(j,i)}.$$}}
\end{itemize}
\end{proposition}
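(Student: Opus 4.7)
The plan is to obtain items 3 and 4 essentially for free from Lemma~\ref{lemgradmax} combined with an elementary structural fact about reduced Gr\"obner bases of zero-dimensional ideals, and then to devote the bulk of the work to items 1 and 2, namely the simultaneous equality $\zeta(j,i)=\eta(j,i)$ and the non-emptiness of every intermediate $G_\delta^{\,j,i}$. These will be proved jointly by building $\mathcal{V}(J)$ up point by point and tracking the reduced Gr\"obner basis via a Buchberger--M\"oller-style update.

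For items 3 and 4, fix $(j,i)$. Since $J$ is zero-dimensional, $G$ must contain a polynomial $g$ whose leading term is a pure power ${\sf a}_{j,i}^k$ for some $k\ge 1$. This $g$ lies in $G^{\,j,i}$, and reducedness of $G$ forces every \emph{other} element of $G^{\,j,i}$ to have degree strictly less than $k$ in ${\sf a}_{j,i}$, else its leading term would be divisible by ${\sf a}_{j,i}^k$. Hence $k=\zeta(j,i)$, Lemma~\ref{lemgradmax} identifies $g$ as the unique element of $G_{\zeta(j,i)}^{\,j,i}$, and $\mathbf{T}(g)={\sf a}_{j,i}^{\zeta(j,i)}$.

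For items 1 and 2, I would argue by induction on $|\mathcal{V}(J)|$. When $\mathcal{V}(J)=\{P\}$ is a single point, the reduced Gr\"obner basis is the obvious set of linear polynomials ${\sf s}_k-\bar{\sf s}_k$, ${\sf a}_{j,i}-\bar{\sf a}_{j,i}$, ${\sf t}_h-\bar{\sf t}_h$, whence $\eta(j,i)=\zeta(j,i)=1$ and $G^{\,j,i}=G^{\,j,i}_1$ is a singleton for every $(j,i)$. For the inductive step I would choose an ordering $\mathcal{V}(J)=\{P_1,\dots,P_n\}$ such that every partial vanishing ideal $J_k=\mathcal{I}(\{P_1,\dots,P_k\})$ is again weakly stratified, and then pass from $\mathrm{GB}(J_k)$ to $\mathrm{GB}(J_{k+1})$ through the Buchberger--M\"oller algorithm (Theorem~\ref{teoBM}). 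The core technical claim, slot by slot $(j,i)$, is that adjoining a single point either leaves $(\eta(j,i),\zeta(j,i))$ unchanged or raises both coordinates by exactly one, and that in the latter case a new polynomial of precisely the new maximum degree is inserted into $G^{\,j,i}$, so that every degree slot $G_\delta^{\,j,i}$ from $1$ up to $\zeta(j,i)$ stays non-empty throughout the construction.

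The principal obstacle, I expect, is orchestrating this point-by-point build-up across \emph{all} pairs $(j,i)$ at once: an insertion order that keeps the ideal weakly stratified at one level may force $\zeta$ to jump without $\eta$ doing so at another level. The weak stratification of the target $J$ --- which supplies, at every projection level, the witness point with exactly $l-1$ extensions whenever one with $l$ exists --- is precisely the hypothesis that guarantees an admissible next point always exists. The complementary bookkeeping showing that the Buchberger--M\"oller step realises the claimed matching $+1$/$+1$ increments on $(\eta,\zeta)$ reduces to a careful analysis of the interpolation polynomial produced when adjoining a single new point and of the $S$-pair reductions it triggers; this is the delicate content deferred to Section~\ref{dim}.
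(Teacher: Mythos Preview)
Your treatment of items~3 and~4 via Lemma~\ref{lemgradmax} is fine and matches the paper.

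For items~1 and~2, you have the right toolbox (Buchberger--M\"oller, point-by-point build-up) but your strategy departs from the paper's in a way that creates exactly the obstacle you describe. The paper does \emph{not} try to keep every intermediate vanishing ideal weakly stratified across all $(j,i)$ simultaneously. Instead it fixes a single pair $(j,i)$ once and for all and works entirely inside the elimination ideal
\[
I=J\cap\mathbb{K}[\mathcal{S},\mathcal{A}_L,\dots,\mathcal{A}_{j+1},{\sf a}_{j,1},\dots,{\sf a}_{j,i}],
\]
so the different levels are completely decoupled and your ``principal obstacle'' never arises. Moreover it does not add arbitrary single points; it adds whole fibres $\pi^{-1}(P_h)$ (one point at a time via Theorem~\ref{teoBM}) in a very specific order: first one fibre from each $\Sigma_h^{j,i}$ in increasing order of $h=1,\dots,\Delta=\eta(j,i)$ (weak stratification of $J$ guarantees each $\Sigma_h^{j,i}\neq\emptyset$), and only afterwards all remaining fibres. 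The invariant tracked is not weak stratification of the partial ideal but the leading-term pattern
\[
\{\tau,\dots,\tau,\ \tau{\sf a}_{j,i},\dots,\tau{\sf a}_{j,i},\ \dots,\ \tau{\sf a}_{j,i}^{\Delta-1},\dots,\tau{\sf a}_{j,i}^{\Delta-1},\ {\sf a}_{j,i}^{\Delta}\},
\]
written in the $\tau$ shorthand of Remark~\ref{remB}. Lemma~\ref{lemma1} shows this pattern is reached after the first $\Delta$ fibres, and Lemma~\ref{lemma3} shows that adding any further fibre (necessarily of size $\le\Delta$) preserves it. From this shape, items~1,~2,~3,~4 are immediate.

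Your proposed invariant --- that each partial $J_k$ stays weakly stratified and that $(\eta,\zeta)$ move in lockstep --- may be salvageable, but it is strictly harder than what is needed, and you have not indicated how to choose the global insertion order to maintain it.
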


Note that it is  the radicality that ensures $1.$, but in later situations we will have $1.$ also without radicality.

%%%%%%%%%%%%%%%%%%%%%%%%%%%%%%%%%%%%%%%%%%%%%%%%%%%%%%%%%%%%%%%%%%%%%%%%%%%%%%

\section{Proof of Proposition \ref{GBstructure}}
\label{dim}
\subsection{Preliminaries of proof}

To prove Proposition \ref{GBstructure} we  use this classical theorem:
\begin{theorem}[Buchberger-M\"{o}ller,
\cite{CGC-alg-art-buchmoeller,CGC-cd-inbook-D1morafglm}]\label{teoBM}
Let $H'\subset H$ be ideals in $\mathbb{K}[V_1,\ldots,V_{\mathcal{N}}]$ such that:
\begin{itemize}
    \item[(i)] there is a $\mathbb{K}$-linear map $\theta:H\longmapsto \mathbb{K}$ s.t. $\ker(\theta)=H'$,
    \item[(ii)] there are $\mathcal{N}$ field elements $\{\beta_k\}_{1\leq k\leq N}\subset\mathbb{K}$ s.t. $(V_k-\beta_k)H\subset H'$ for $1\leq k\leq \mathcal{N}$, that is, $\theta((V_k-\beta_k)f)=0$ for all $f\in H$.
\end{itemize}
Let $W$ be a strictly ordered \Gr\ basis of $H$ relative to a term order $<$, then a \Gr\ basis $W'$ of $H'$ w.r.t $<$ can be constructed as follows:
\begin{enumerate}
    \item compute $\alpha_g=\theta(g)$ for all $g\in W$.
    \item if $\alpha_g=0$ for all $g$, then $W=W'$, which happens if and only if  $H=H'$ and $\theta=0$ in $Hom_{\mathbb{K}}(H,\mathbb{K})$.
    \item otherwise, let $g^*$ be the least $g$ such that $\alpha_g\not=0$.
\end{enumerate}
We  have $W'=W_1\cup W_2\cup W_3$, with
\begin{itemize}
    \item $W_1=\{g\mid g<g^*\}$,
\item $W_2=\{(V_k-\beta_k)g^*\mid 1\leq k\leq \mathcal{N}\}$,
\item $W_3=\{g-\frac{\alpha_g}{\alpha_{g^*}}g^*\mid g>g^*\}$.
\end{itemize}
\end{theorem}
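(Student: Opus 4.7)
The plan is to verify two things: (a) $W' \subset H'$, and (b) $\mathrm{LT}(W')$ generates $\mathrm{LT}(H')$ as an ideal. Together with standard Gr\"obner basis theory these imply that $W'$ is the sought Gr\"obner basis of $H'$. Part (a) is routine: for $g \in W_1$ the inequality $g < g^*$ forces $\alpha_g = 0$ by the minimality of $g^*$, so $g \in \ker\theta = H'$; for $g \in W_3$ a direct computation gives $\theta(g - (\alpha_g/\alpha_{g^*})g^*) = \alpha_g - (\alpha_g/\alpha_{g^*})\alpha_{g^*} = 0$; and the elements of $W_2$ lie in $H'$ by hypothesis (ii).

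For part (b), I first read off the leading terms of $W'$. For $g \in W_3$ the condition $g > g^*$ gives $T(g) > T(g^*)$, so no cancellation occurs in $g - (\alpha_g/\alpha_{g^*})g^*$, leaving $T(g)$ as leading term. For $(V_k-\beta_k)g^* \in W_2$, the leading term is $V_k T(g^*)$. Hence
\[
  \mathrm{LT}(W') = \{T(g) : g \in W,\, g \neq g^*\} \;\cup\; \{V_k T(g^*) : 1 \leq k \leq \mathcal{N}\}.
\]
The inclusion $(\mathrm{LT}(W')) \subset \mathrm{LT}(H')$ is immediate from (a). For the reverse inclusion, take a nonzero $f \in H'$; since $f \in H$ and $W$ is a Gr\"obner basis of $H$, some $T(g) \mid T(f)$ with $g \in W$. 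If $g \neq g^*$, we are done. Otherwise $T(f) = M\,T(g^*)$ for a monomial $M$, and if some $V_k \mid M$ then $V_k T(g^*)$ divides $T(f)$ and lies in $\mathrm{LT}(W')$. The only remaining case is $T(f) = T(g^*)$, which must be ruled out by contradiction.

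To exclude it, I exploit hypothesis (ii), which reads $\theta((V_k-\beta_k)h) = 0$ for all $h \in H$, i.e.\ $\theta(V_k h) = \beta_k\theta(h)$. Iterating this identity on monomials and extending by $\mathbb{K}$-linearity yields $\theta(qh) = q(P)\theta(h)$ for every polynomial $q$ and every $h \in H$, where $P := (\beta_1,\ldots,\beta_{\mathcal{N}})$. Now assume for contradiction $f \in H'$ has $T(f) = T(g^*)$; set $f_1 := f - (c_f/c_*)g^*$ with matched leading coefficients, so $f_1 \in H$, $T(f_1) < T(g^*)$, and $\theta(f_1) = -(c_f/c_*)\alpha_{g^*} \neq 0$. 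Reducing $f_1$ against $W$ produces a standard Gr\"obner representation $f_1 = \sum_i q_i g_i$ with $T(q_i g_i) \leq T(f_1) < T(g^*)$. In this sum, the terms with $g_i < g^*$ contribute $0$ to $\theta(f_1)$ because $\theta(g_i)=0$, and the terms with $g_i \geq g^*$ are forced to have $q_i = 0$ (else $T(q_i g_i) \geq T(g^*)$, violating the bound). Hence $\theta(f_1) = \sum_i q_i(P)\theta(g_i) = 0$, the desired contradiction.

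The main obstacle is precisely this final subcase $T(f) = T(g^*)$: it is the only place where hypothesis (ii) plays an essential role, and without the induced multiplicative identity $\theta(qh)=q(P)\theta(h)$ the termination of the reduction argument collapses. Everything else is bookkeeping on leading terms combined with the defining property of $g^*$ as the least element of $W$ outside $\ker\theta$.
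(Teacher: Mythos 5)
Your argument is correct as a proof of Theorem \ref{teoBM} under hypotheses (i) and (ii); note that the paper does not prove this theorem (it is quoted from the Buchberger--M\"oller literature), so the only thing to compare against is Remark \ref{wow}, and there your proof takes a genuinely different route. You close the crucial subcase $T(f)=T(g^*)$ by upgrading (ii) to the multiplicative identity $\theta(qh)=q(P)\theta(h)$ and evaluating a Gr\"obner representation of $f_1=f-(c_f/c_*)g^*$; that works, but your concluding claim that (ii) is ``essential'' there, and that the argument collapses without it, is not right. In your representation $f_1=\sum_i q_i g_i$ with $T(q_ig_i)\le T(f_1)<T(g^*)$, every $g_i$ occurring with $q_i\neq 0$ satisfies $g_i<g^*$ and hence lies in $\ker\theta=H'$; since $H'$ is an \emph{ideal} by hypothesis, each $q_ig_i\in H'$ and therefore $f_1\in H'$, contradicting $\theta(f_1)\neq 0$ with no appeal to (ii) at all. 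So hypothesis (ii) is needed only to place $W_2$ inside $H'$, exactly as Remark \ref{wow} asserts, and this is not a cosmetic point for the paper: in Section \ref{hasse} the theorem is applied with the weakened hypothesis (iii) (only $(V_k-\beta_k)g^*\in H'$), where the functionals $\vartheta_n$ built from Hasse derivatives do not satisfy full (ii) and your multiplicative identity is unavailable, whereas the simpler closing argument above still applies verbatim. Two minor remarks: the same ideal-membership observation disposes of the degenerate item 2 of the statement ($\alpha_g=0$ for all $g\in W$ forces $H\subset H'$), which your write-up skips, and your case analysis tacitly uses that ``strictly ordered'' means the elements of $W$ are ordered by strictly increasing leading terms, which is the intended reading.
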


\begin{remark}\label{wow}
In the proof of Theorem \ref{teoBM}, the hypothesis ($ii$) is used only to prove that $W_2\subset H'$. Therefore, Theorem \ref{teoBM} still  holds if  we replace ($ii$) with a much weaker hypothesis, that is,
\begin{itemize}
    \item[$(iii)$] there are $\mathcal{N}$ field elements $\{\beta_k\}_{1\leq k\leq N}\subset\mathbb{K}$ s.t. $(V_k-\beta_k)g^*\in H'$, $1\leq k\leq \mathcal{N}$, where $g^*$ is as in (3). 
\end{itemize}
\end{remark}

\begin{remark}\label{remA}
Let $G$ be a \Gr\ basis of an ideal $I$ with respect to a term ordering $>$ and let $g_1,g_2\in G$ be  such that ${\bf T}(g_1)|{\bf T}(g_2)$. Then $G\backslash \{g_2\}$ is  again a \Gr\ basis of $I$. Therefore, any time there is a redundant basis element, we can remove it.
\end{remark}

From the remainder of this section, we fix $1\leq i\leq m$ and $1\leq j\leq L$, and we extend the projection
\begin{equation}\label{pi}
    \pi:\mathcal{ V}(J_{\mathcal{S},\mathcal{A}_{L},\dots,\mathcal{A}_{j+1},
{\sf{a}}_{j,1},\dots,{\sf{a}}_{j,i-1},
{\sf{a}}_{j,i}})\rightarrow\mathcal{ V}(J_{\mathcal{S},\mathcal{A}_{L},\dots,\mathcal{A}_{j+1},{\sf{a}}_{j,1},\dots,
{\sf{a}}_{j,i-1}})
\end{equation}
to 
$$\pi:\overline{\mathbb{K}}^{N+(L-j)m+i}\rightarrow\overline{\mathbb{K}}^{N+(L-j)m+i-1}$$
Coherently, we consider only the variable ${\sf{a}}_{j,i}$ in the block $A_j$.\\ %And we call $W=G^{i,j}$.

\begin{remark}\label{remB}
To simplify the notation in the proof, we use $\tau$ as a symbol with a special meaning, as follows. We introduce $\tau$ to single out the contribution of variable ${\sf{a}}_{j,i}$. Any non-zero element of $\mathbb{K}[S,\mathcal{A}_{L},\dots,\mathcal{A}_{j+1},{\sf{a}}_{j,1},\dots,
{\sf{a}}_{j,i-1}]$ may be written as $\tau$ and we use $\approxeq$ to express this unconventional identification. For example, ${\sf{a}}_{L,1}\approxeq\tau$ and $1\approxeq\tau$ but also $\tau {\sf{a}}_{L,1}\approxeq\tau$ and   ${\sf{a}}_{L,1}{\sf{a}}_{j,i}\approxeq \tau {\sf{a}}_{j,i} \not\approxeq \tau$ and  $s_1{\sf{a}}_{j,i}^2\approxeq\tau {\sf{a}}_{j,i}^2 \approxeq {\sf{a}}_{L,2} {\sf{a}}_{j,i}^2$.
\end{remark}

$\,$\\
Let $H$ be a zero-dimensional ideal in $\mathbb{K}[S,\mathcal{A}_{L},\dots,\mathcal{A}_{j+1},{\sf{a}}_{j,1},\dots,
{\sf{a}}_{j,i}]$.\\ Let $W$ be its \Gr\ basis. Denote with\\ {\small$\overline{W}=W\cap(\mathbb{K}[S,\mathcal{A}_{L},\dots,\mathcal{A}_{j+1},
{\sf{a}}_{j,1},\dots,{\sf{a}}_{j,i}]\backslash\mathbb{K}[S,\mathcal{A}_{L},\dots,
\mathcal{A}_{j+1},{\sf{a}}_{j,1},\dots,{\sf{a}}_{j,i-1}])$} and  $\widehat{W}=W\cap(\mathbb{K}[S,\mathcal{A}_{L},\dots,\mathcal{A}_{j+1},
{\sf{a}}_{j,1},\dots,{\sf{a}}_{j,i-1}]),$ so that $W=\overline{W}\sqcup \widehat{W}$.
With the $\tau$ notation, we have $$\widehat{W}\approxeq\{\tau,\ldots,\tau\}\textrm{ and  }\overline{W}\subset\{\tau  {\sf{a}}_{j,i}+\tau,\ldots,\tau  {\sf{a}}_{j,i}+\tau,\tau  {\sf{a}}_{j,i}^2+\tau  {\sf{a}}_{j,i}+\tau,\ldots\}.$$ In the same way we can denote $$\overline{H}=H\cap(\mathbb{K}[S,\mathcal{A}_{L},\dots,\mathcal{A}_{j+1},{\sf{a}}_{j,1},\dots,
{\sf{a}}_{j,i}]\backslash\mathbb{K}[S,\mathcal{A}_{L},\dots,\mathcal{A}_{j+1},{\sf{a}}_{j,1},\dots,
{\sf{a}}_{j,i-1}])$$ and  $\widehat{H}=H\cap(\mathbb{K}[S,\mathcal{A}_{L},\dots,\mathcal{A}_{j+1},
{\sf{a}}_{j,1},\dots,
{\sf{a}}_{j,i-1}])$.\\

\begin{remark}\label{remC}
Suppose we want to compute the ideal $H'$ from $H$ by adding a point $Q=(P,\overline{{\sf{a}}}_{j,i})$, with $P=(\overline{s}_1,\ldots,\overline{s}_N,\overline{{\sf{a}}}_{L,1},\ldots,
\overline{{\sf{a}}}_{j,i-1})$. We apply Theorem~\ref{teoBM} to compute $W'$ from $W$ using the point evaluation $\theta(g)=g(Q)$. In this case it is easy to see that we can take as $\beta_i$ the $i$-th component of $Q$. There are two distinct cases:
\begin{enumerate}
    \item \textit{either} for all $g\in \widehat{W}$, $g(Q)=g(P)=0$,
    \item \textit{or} there exists $g\in \widehat{W}$ such that $g(Q)=g(P)\not=0$.
\end{enumerate}
The first case implies $g^*\in\overline{W}$, the second case implies $g^*\in\widehat{W}$. Since these are logically distinct, we can conclude that there are only two (distinct) cases:
\begin{enumerate}
    \item \textit{either} for all $g\in \widehat{W}$, $g(Q)=g(P)=0$, and this happens if and only if $g^*\in\overline{W}$,
    \item \textit{or} there exists $g\in \widehat{W}$ such that $g(Q)=g(P)\not=0$, this happens if and only if  $g^*\in\widehat{W}$. 
\end{enumerate}
\end{remark}

\subsection{Sketch of proof}
Let us consider $g=g_{j,\zeta(j,i),1}^{(i)}$ and
$\Delta=\eta(j,i)$.\\
Let 
$I=J\cap(\mathbb{K}[S,\mathcal{A}_{L},\dots,\mathcal{A}_{j+1},{\sf{a}}_{j,1},\dots,
{\sf{a}}_{j,i}])$. Since  $\mathcal{V}(I)\subset \mathbb{A}_{j,i}$ and $I$ is radical and zero-dimensional, $I=\mathcal{I}(\mathcal{V}(I))=\mathcal{I}
(\Sigma_1^{j,i}\sqcup\ldots\sqcup\Sigma_{\Delta}^{j,i})$. Since $J$ is  weakly-stratified, we will have $\Sigma_h^{j,i}\neq \emptyset$ for all $1\leq k\leq \Delta$.\\
Our proof needs several steps:
\begin{itemize}
    \item \textbf{Step \texttt{I}}.\\ 
We consider $P_1\in \Sigma_1^{j,i}$, $P_2\in \Sigma_2^{j,i}$ and $\pi$ as in (\ref{pi}). We are interested in the leading terms of the \Gr\ basis
    of $\mathcal{I}(\pi^{-1}(P_1))$ and of $\mathcal{I}(\pi^{-1}(P_1)\cup \pi^{-1}(P_2))$. However, the exact knowledge of these leading terms is unnecessary and it is sufficient for us to determine their expression in the $\tau$ notation. We perform this step in Subsection \ref{sub1}.
\item \textbf{Step \texttt{II}}.\\
Generalising the previous argument, in Subsection \ref{sub2} (Lemma \ref{lemma1}) we take any $2\leq t\leq \Delta$ and consider any point $P_h$ in $\Sigma_h^{j,i}$ for all $1\leq h\leq t$. We describe the leading terms of the \Gr\ basis of $\mathcal{I}(\pi^{-1}(P_1)\cup\ldots\cup \pi^{-1}(P_t))$. Since we need an induction on the number of points to prove Lemma \ref{lemma1}, we give an intermediate lemma: Lemma \ref{lemma2}.
\item \textbf{Step \texttt{III}}.\\
As the leading terms of the \Gr\ basis of $\mathcal{I}(\pi^{-1}(P_1)\cup\ldots\cup \pi^{-1}(P_{\Delta}))$ are already in the desired shape, in Lemma \ref{lemma3} we show that adding more points does not change the shape of the leading terms of the \Gr\ basis, as long as the points come from some  $\Sigma_h^{j,i}$ with $h\leq \Delta$.
\end{itemize}

\subsection{First part of the proof}

\label{sub1}
We use the approach of Remark \ref{remC}.
\begin{itemize}
    \item Let $P_1=(\overline{s}_1,\ldots,\overline{s}_N,\overline{{\sf{a}}}_{L,1},\ldots,
\overline{{\sf{a}}}_{j,i-1})\in \Sigma_1^{j,i}$ and $H=\mathcal{I}(\pi^{-1}(P_1))$ be the vanishing ideal  of  $\pi^{-1}(P_1)$.\\ Then $\pi^{-1}(P_1)=\{(\overline{s}_1,\ldots,\overline{s}_N,
\overline{{\sf{a}}}_{L,1},\ldots,\overline{{\sf{a}}}_{j,i-1},
\overline{{\sf{a}}}_{j,i})\}.$
The basis $W=\textrm{GB}(H)$ is $W=\{s_1-\overline{s}_1,\ldots,s_N-\overline{s}_N,{\sf{a}}_{L,1}-\overline{{\sf{a}}}_{L,1}
\ldots,{\sf{a}}_{j,i}-\overline{{\sf{a}}}_{j,i}\}$.
Using our notation we have
\begin{equation}\label{t1} 
{\bf T}(W)=\{\tau,\ldots,\tau,{\sf{a}}_{j,i}\}.
\end{equation} 

\item We consider a point  $P_2\in \Sigma_2^{j,i}$ that, with abuse of notation\footnote{  Where we do not imply that the components of $P_2$ are the same as those of $P_1$, although we use the same symbols.}, we write $P_2=(\overline{s}_1,\ldots,\overline{s}_N,\overline{{\sf{a}}}_{L,1},\ldots,
\overline{{\sf{a}}}_{j,i-1})$.  We  can write
$$\pi^{-1}(P_2)=\left\{\begin{array}{l}
Q_1=(\overline{s}_1,\ldots,\overline{s}_N,\overline{{\sf{a}}}_{L,1},\ldots,
\overline{{\sf{a}}}_{j,i-1},\overline{{\sf{a}}}^{(1)}_{j,i})\\

Q_2=(\overline{s}_1,\ldots,\overline{s}_N,\overline{{\sf{a}}}_{L,1},\ldots,
\overline{{\sf{a}}}_{j,i-1},\overline{{\sf{a}}}^{(2)}_{j,i})
\end{array}\right.$$

\begin{itemize}
    \item[$*$]\underline{ We add the point $Q_1$}.\\
Using Theorem \ref{teoBM} we can build $W'$ from $W$ in (\ref{t1}). If $\forall g\in \widehat{W}$, $g(Q_1)=0$, then $\pi(Q_1)\in \mathcal{V}(\widehat{H})$. But  $\pi(Q_1)=P_2$ and  $\mathcal{V}(\widehat{H})=\{P_{1}\}$, so $P_1=P_2$ and $|\pi^{-1}(P_2)|=3$, which is impossible because $P_2\in \Sigma_2^{j,i}$. Therefore, for Remark \ref{remC}, $g^*\in \widehat{W}$.\\
So the \Gr\ basis $W'=W_1\sqcup W_2\sqcup W_3$,  where
\begin{itemize}
    \item[-] {\scriptsize{$W_1=\{g\in \widehat{W}\mid g<g^*\}$}} because {\scriptsize{$g^*\in \widehat{W}$}}, so we have {\scriptsize{$W_1\approxeq\{\tau,\ldots,\tau\}$}} and {\scriptsize{$\textbf{T}(W_1)\approxeq\{\tau,\ldots,\tau\}$}}.
 \item[-] $W_2$ is composed by the following polynomials
\begin{itemize}
\item[] {\scriptsize{$g^*(s_1-\overline{s}_1),\ldots,g^*(s_N-\overline{s}_N)$}}
\item[] {\scriptsize{$g^*({\sf{a}}_{L,1}-\overline{{\sf{a}}}_{L,1}),\ldots,g^*({\sf{a}}_{j,i-1}-
\overline{{\sf{a}}}_{j,i-1})$}}
\item[] {\scriptsize{$g^*({\sf{a}}_{j,i}-\overline{{\sf{a}}}^{(1)}_{j,i})$}}
\end{itemize}
\item[-] {\scriptsize{$W_3=\{g-\frac{g(Q_1)}{g^*(Q_1)}g^*\mid g>g^*\}$}}.
\end{itemize}
We have {\scriptsize{${\bf T}(W_2)\approxeq\{\tau,\ldots,\tau,\tau {\sf{a}}_{j,i}\}$}} and {\scriptsize{${\bf T}(W_3)\subseteq\{\tau,\ldots,\tau, {\sf{a}}_{j,i}\}$}} and ${\sf{a}}_{j,i}\in {\bf T}(W_3)$. With {\scriptsize{${\bf T}(W_3)\subseteq\{\tau,\ldots,\tau, {\sf{a}}_{j,i}\}$}} we actually mean that ${\bf T}(W_3)$ is a subset of a set $S$ such that {\scriptsize{$S\approx\{\tau,\ldots,\tau, {\sf{a}}_{j,i}\}$}}. We will write similarly from now on without any further comment.
Observe that {\scriptsize{${\bf T}(W')\approxeq\{\tau,\ldots,\tau,\tau {\sf{a}}_{j,i},{\sf{a}}_{j,i}\}$}}. By Remark \ref{remA}, we have
{\scriptsize{${\bf T}(W')\approxeq\{\tau, \ldots, \tau, {\sf{a}}_{j,i}\}$}}.

\item[$*$] \underline{We add the point $Q_2$}.\\ Let\footnote{With ''let $W:=W'$'' we mean that in this proof step we remove all elements in set $W$ and instead we insert into $W$ all elements from $W'$. After that, we remove all elements from $W'$. We also forget the values of $g^*$ and $W_1,W_2,W_3$.} $W:=W'$ and let us use again  Theorem \ref{teoBM}.\\
We have to find a polynomial $g^*\in W$ such that {\small{$g^*(Q_2)\not=0$}}. Of course {\small{$g^*\not\in \widehat{W}$}}, because {\small{$\pi(Q_1)=\pi(Q_2)=P_2$}}. Thus {\small{$g^* \in \overline{W}$}} and {\small{$g^*={\sf{a}}_{j,i}+\tau$}}.\\
$W'$ is formed by $W'=W_1\sqcup W_2\sqcup W_3$,  where

\begin{itemize}
    \item[-]{\scriptsize{$W_1\approxeq\{\tau,\ldots,\tau\}$}},
 \item[-] {\scriptsize{$W_2=W_{2,1}\cup W_{2,2}$}} where\\
{\scriptsize{$W_{2,1}=\{g^*(s_1-\overline{s}_1),\ldots,g^*(s_N-\overline{s}_N),
g^*({\sf{a}}_{L,1}-\overline{{\sf{a}}}_{L,1}),\ldots,g^*({\sf{a}}_{j,i-1}-
\overline{{\sf{a}}}_{j,i-1})$}}, so\\ {\scriptsize{$ \textbf{ T}(W_{2,1})\approxeq\{\tau {\sf{a}}_{j,i},\ldots,\tau {\sf{a}}_{j,i}\}$}}.\\ 
{\scriptsize{$W_{2,2}=\{g^*({\sf{a}}_{j,i}-\overline{{\sf{a}}}^{(2)}_{j,i})\}\implies \textbf{T}(W_{2,2})=\{{\sf{a}}_{j,i}^2\}$}}.
\item[-] {\scriptsize{$W_3=\emptyset$}}.
\end{itemize}
\end{itemize}
So \begin{equation}\label{t2} 
{\bf T}(W')=\{\tau,\ldots,\tau,\tau {\sf{a}}_{j,i},\ldots,\tau {\sf{a}}_{j,i},{\sf{a}}_{j,i}^2\}\end{equation}
\end{itemize}

\subsection{Second part of proof}\label{sub2}
If $\Delta\leq 2$, we have finished our proof. Otherwise, i.e. $\Delta\geq 3$, we want to prove, using induction on $t$ with $1\leq t\leq \Delta$, the following
\begin{lemma}\label{lemma1}
The \Gr\  basis $W$ of $H=\mathcal{I}(\pi^{-1}(P_1)\cup \ldots \cup \pi^{-1}(P_t))$, where $1\leq t\leq \Delta$ and $P_h$ is any point in $\Sigma^{j,i}_h$ for $1\leq h\leq t$, is such that
\begin{equation}\label{W} 
{\bf T}(W)\approxeq\{\tau,\ldots,\tau,\tau {\sf{a}}_{j,i},\ldots,\tau {\sf{a}}_{j,i},\ldots,\tau {\sf{a}}_{j,i}^{t-1},\ldots,\tau {\sf{a}}_{j,i}^{t-1},{\sf{a}}_{j,i}^{t}\}
\end{equation}

\begin{proof}
The \Gr\ basis with $t=1$  and $t=2$ were just shown in (\ref{t1}) and (\ref{t2}) respectively.\\
By induction  we suppose to have $t-1$ points $\{P_1,\ldots,P_{t-1}\}$ and to have a \Gr\ basis $W$ such that:
\begin{equation}\label{G2}
{\bf T}(W)\approxeq\{\tau,\ldots,\tau,\tau {\sf{a}}_{j,i},\ldots,\tau {\sf{a}}_{j,i},\ldots,\tau {\sf{a}}_{j,i}^{t-2},\ldots,\tau {\sf{a}}_{j,i}^{t-2},{\sf{a}}_{j,i}^{t-1}\}
\end{equation}

Now we can prove the $t$-th step. In order to do it, we prove the following lemma, with its long proof between horizontal lines.\\

\hrule
\begin{lemma}\label{lemma2}
Let $3\leq t\leq \Delta$ and  $P_t\in \Sigma_t^{j,i}$ with $\pi^{-1}(P_t)=\{Q_1,\ldots,Q_t\}$.  For any  $1\leq u\leq t-1$, let $H^u$ be the vanishing ideal
$$H^u= \mathcal{I}(\pi^{-1}(P_1)\cup \ldots \cup \pi^{-1}(P_{t-1})\cup \{Q_1,\ldots,Q_{u}\})\textrm{ and}$$
$$H^0= \mathcal{I}(\pi^{-1}(P_1)\cup \ldots \cup \pi^{-1}(P_{t-1})).$$ 
Let $W^u$ be its reduced \Gr\ basis, then
\begin{center}
${\bf T}(W^u)$ has the same structure as ${\bf T}(W)$ in $(\ref{G2})$.
\end{center} 

\begin{proof} Let $P_t=(\overline{s}_1,\ldots,\overline{s}_N,\overline{{\sf{a}}}_{L,1},\ldots,
\overline{{\sf{a}}}_{j,i-1})$. Since $P_t\in \Sigma_t^{j,i}$, then
{\small$$\pi^{-1}(P_t)=\left\{\begin{array}{l}
Q_1=(\overline{s}_1,\ldots,\overline{s}_N,\overline{{\sf{a}}}_{L,1},\ldots,
\overline{{\sf{a}}}_{j,i-1},\overline{{\sf{a}}}^{(1)}_{j,i})\\
Q_2=(\overline{s}_1,\ldots,\overline{s}_N,\overline{{\sf{a}}}_{L,1},\ldots,
\overline{{\sf{a}}}_{j,i-1},\overline{{\sf{a}}}^{(2)}_{j,i})\\
\quad\,\,\,\,\,\vdots \\
Q_t=(\overline{s}_1,\ldots,\overline{s}_N,\overline{{\sf{a}}}_{L,1},\ldots,
\overline{{\sf{a}}}_{j,i-1},\overline{{\sf{a}}}^{(t)}_{j,i})
\end{array}\right.$$}
We prove the lemma by induction on $u$.
\begin{itemize}
    \item[$(a)$] We know that $W^0$ is as in (\ref{G2}). We add  point $Q_1$ to $H^0$.\\
Using  Theorem \ref{teoBM} we can build $W^1$ from $W^0$ as usual. We adopt the ''$W,W'$'' notation. If $\forall g\in \widehat{W}$, $g(Q_1)=0$, then $\pi(Q_1)\in \mathcal{V}(\widehat{H})$. But  $\pi(Q_1)=P_t$ and  $\mathcal{V}(\widehat{H})=\{P_1,\ldots,P_{t-1}\}$, so $P_t=P_k$ for some $1\leq k\leq t-1$, and $ |\pi^{-1}(P_{k})|=k+1$ which is impossible because $P_k\in \Sigma_k^{j,i}$. Therefore, for Remark \ref{remC}, $g^*\in \widehat{W}$.\\
So the \Gr\ basis $W'$ is
formed by the union of these sets:

\begin{itemize}
    \item[-]{\scriptsize{$W_1=\{g\in \widehat{W}\mid g<g^*\}$}}. Since {\scriptsize{$g^*\in \widehat{W}$}} then {\scriptsize{$ {\bf T}(W_1)=\{\tau,\ldots,\tau\}$}},
 \item[-] {\scriptsize{$W_2=W_{2,1}\cup W_{2,2}$}} where\\
{\scriptsize{$W_{2,1}=\{g^*(s_1-\overline{s}_1),\ldots,g^*(s_N-\overline{s}_N),
g^*({\sf{a}}_{L,1}-\overline{{\sf{a}}}_{L,1}),\ldots,g^*({\sf{a}}_{j,i-1}-
\overline{{\sf{a}}}_{j,i-1})$}},\\ so {\scriptsize{$  {\bf T}(W_{2,1})=\{\tau ,\ldots,\tau\}$}}. \\
{\scriptsize{$W_{2,2}=\{g^*({\sf{a}}_{j,i}-\overline{{\sf{a}}}^{(1)}_{j,i})\}\implies {\bf T}(W_{2,2})=\{\tau {\sf{a}}_{j,i}\}$}}.
\item[-] {\scriptsize{$W_3=\{g-\frac{g(Q_1)}{g^*(Q_1)}g^*\mid g>g^*\}$}} and so the leading terms of $W_3$ are those in {\scriptsize{${\bf T}(W)$}}, except possibly for $\tau$.
\end{itemize}

\noindent Therefore $W^1=W'$ has the same structure of $W^0=W$ in (\ref{G2}) (because $\tau {\sf{a}}_{j,i}$ is already present in (\ref{G2})).

 \item[$(b)$] We add the point $Q_2$ to $H^1$ and we compute $W^2$.\\ Let $W:=W'$ and we use again Theorem \ref{teoBM}.\\
We find $g^*\in W$ such that $g^*(Q_2)\not=0$. We are sure that $g^*\not\in \widehat{G}$, because $\pi(Q_1)=\pi(Q_2)=P_t$, and so $g^*\in \overline{G}$. We can claim:\\ 
\textbf{Claim}\label{claim} ${\bf T}(g^*)\approxeq \tau {\sf{a}}_{j,i}$. 
\begin{proof}
The Gianni-Kalkbrener theorem (\cite{CGC-alg-art-gianni}, \cite{CGC-alg-art-kalkbrener}) says that there exists a polynomial $g\in \overline{W}$ such that $$g(P_t,{\sf{a}}_{j,i})=g(\overline{s}_1,\ldots,\overline{s}_N,\overline{{\sf{a}}}_{L,1},
\ldots,\overline{{\sf{a}}}_{j,i-1}
,{\sf{a}}_{j,i})\not= 0\textrm{ in }\mathbb{K}[{\sf{a}}_{j,i}]$$ and the solutions of $g(P_t,{\sf{a}}_{j,i})$ are exactly the extensions of $P_t$. In $\mathcal{V}(H)$ we have only one extension of $P_t$ (which is $Q_1$), so the degree of  $g$ w.r.t. ${\sf{a}}_{j,i}$ must be $1$  and so $g\approxeq\tau {\sf{a}}_{j,i}+\tau.$\\
Let $g$ be the smallest polynomial of this kind. We have that {\small{$g^*=g$}}, because {\small{$g(Q_1)=0$, $g(Q_2)\not=0$}} and all smaller polynomials  vanish at $Q_2$. 
\end{proof}
So $W'$ is the union of
\begin{itemize}
    \item[-]{\scriptsize{$W_1=\{g\in \widehat{W}\mid g<g^*\}$}}.\\ Since {\scriptsize{$g^*=\tau {\sf{a}}_{j,i}+\tau$}} then {\scriptsize{$ {\bf T}(W_1)=\{\tau,\ldots,\tau\}$}} or {\scriptsize{$ {\bf T}(W_1)=\{\tau,\ldots,\tau,\tau {\sf{a}}_{j,i},\ldots,\tau {\sf{a}}_{j,i}\}$}},
 \item[-] {\scriptsize{$W_2=W_{2,1}\cup W_{2,2}$}} where\\
 {\scriptsize{$W_{2,1}=\{g^*(s_1-\overline{s}_1),\ldots,g^*(s_N-\overline{s}_N),
g^*({\sf{a}}_{L,1}-\overline{{\sf{a}}}_{L,1}),\ldots,g^*({\sf{a}}_{j,i-1}-
\overline{{\sf{a}}}_{j,i-1})\}$}},\\ so {\scriptsize{$  {\bf T}(W_{2,1})=\{\tau {\sf{a}}_{j,i},\ldots,\tau {\sf{a}}_{j,i}\}$}}. 
{\scriptsize{$W_{2,2}=\{g^*({\sf{a}}_{j,i}-\overline{{\sf{a}}}^{(2)}_{j,i})\}\implies {\bf T}(W_{2,2})=\{\tau {\sf{a}}_{j,i}^2\}$}}.
\item[-] {\scriptsize{$W_3=\{g-\frac{g(Q_2)}{g^*(Q_2)}g^*\mid g>g^*\}$}} so the leading terms of {\small{$W_3$}} are those in {\small{${\bf T}(W)$}}, except possibly for $\tau$ and $\tau {\sf{a}} _{j,i}$.
\end{itemize}
If $t=3$, we have that  {\small{${\sf{a}}_{j,i}^2\in{\bf T}(W)$}}, and so any leading term $\tau {\sf{a}}_{j,i}^2$ can be removed (by Remark \ref{remA}) and we obtain again that the structure of $W'=W^2$ is as in (\ref{G2}). Otherwise ($t\geq 4$), the leading term $\tau {\sf{a}}_{j,i}^2$ remains and we still have  the structure of (\ref{G2}). 

\item[$(c)$]We proceed inductively on $u$ until we are left to add the point $Q_{t-1}$.

\item[$(d)$] We add $Q_{t-1}$.\\ 
In this case {\small{$H=\mathcal{I}(\pi^{-1}(P_1)\cup \ldots \cup \pi^{-1}(P_{t-1})\cup\{Q_1,\ldots,Q_{t-2 }\})$}} and $W^{t-2 }$ has (by induction on $u$) the structure of (\ref{G2}). Let {\small{$W=W^{t-2 }$}} and {\small{$W'=W^{t-1}$}}. We apply Theorem \ref{teoBM}.\\
We have to find $g^*\in W$ such that {\small{$g^*(Q_{t-1})\not=0$}}. Exactly as before, {\small{$g^*\not\in \widehat{W}$}}. We know that {\small{${\bf T}(g^*)\approxeq\tau {\sf{a}}_{j,i}^{t-2}.$}} To prove it we might use the Gianni-Kalkbrener theorem (\cite{CGC-alg-art-gianni}, \cite{CGC-alg-art-kalkbrener}) repeating the reasoning of our Claim on page \pageref{claim}.
%\begin{proof} Per Gianni KB deve esistere  almeno un polinomio $g\in \overline{G}$ che non sia identicamente nullo su $P_t$, cioè che $g(\overline{s}_1,\ldots,\overline{s}_N,\overline{a}_{L,1},\ldots,\overline{a}_{j,i-1}
%,a_{j,i})\not\equiv 0$  in $\mathbb{K}[a_{j,i}]$ e che abbia come soluzione esattamente le estensioni di $P_t$. Poichè, per il momento, in $\mathcal{V}(J)$ ci sono esattamente $Q_1,\ldots,Q_r$ estensioni di $P_t$ , dunque $g$ deve avere grado $r$ in $a_{j,i}$, ovvero $g=\tau a_{j,i}^r+\tau$.\end{proof}  

So $W'$ is the union of the following sets:

\begin{itemize}
    \item[-] {\scriptsize{$W_1=\{g\in W\mid g<g^*\}$}}. Since {\scriptsize{${\bf T}(g^*)\approxeq\tau {\sf{a}}_{j,i}^{t-2 }$,\\ ${\bf T}(W_1)=\{\tau,\ldots,\tau,\tau {\sf{a}}_{j,i},
\ldots,
\tau {\sf{a}}_{j,i},
\ldots,
\tau {\sf{a}}_{j,i}^{t-3},
\ldots,
\tau {\sf{a}}_{j,i}^{t-3}\}$}} or possibly also {\scriptsize{$\tau {\sf{a}}_{j,i}^{t-2}\in{\bf T}(W_1)$}},
 \item[-] {\scriptsize{$W_2=W_{2,1}\cup W_{2,2}$}} where\\
{\scriptsize{$W_{2,1}=\{g^*(s_1-\overline{s}_1),\ldots,g^*(s_N-\overline{s}_N), g^*({\sf{a}}_{L,1}-\overline{{\sf{a}}}_{L,1}),\ldots,g^*({\sf{a}}_{j,i-1}-
\overline{{\sf{a}}}_{j,i-1})\}$}}\\
{\scriptsize{$W_{2,2}=\{g^*({\sf{a}}_{j,i}-\overline{{\sf{a}}}^{(r+1)}_{j,i})\}$}}
\item[-]{\scriptsize{$W_3=\{g-\frac{g(Q_{r+1})}{g^*(Q_{r+1})}g^*\mid g>g^*\}$}}.
\end{itemize}
Since  {\small{$g^*=\tau {\sf{a}}_{j,i}^{t-2}+\ldots$}}, we have {\scriptsize{
${\bf T}(W_{2,1})=\{\tau {\sf{a}}_{j,i}^{t-2},\ldots,\tau {\sf{a}}_{j,i}^{t-2}\}$}} and {\scriptsize{${\bf T}(W_{2,2})=\{\tau {\sf{a}}_{j,i}^{t-1}\}$}}.\\
But in the \Gr\ basis $W$ in  (\ref{G2}) there exists a polynomial $\overline{g}$ such that {\small{${\bf T}(\overline{g})={\sf{a}}_{j,i}^{t-1}$}}.
So {\small{${\bf T}(\overline{g})|\tau {\sf{a}}_{j,i}^{t-1}$}} and we can remove the new term. Hence {\small{${\bf T}(W')$}} does not change and it remains as in (\ref{G2}).
\end{itemize}

Lemma \ref{lemma2} is proved. $\quad\Box$
\end{proof}
\end{lemma}

\hrule
$\,$\\
Now we know  {\small{${\bf T}(W^{t-1})$}}, which are the leading term  for the basis of {\small{$H=\mathcal{I}(\pi^{-1}(P_1)\cup \ldots \cup \pi^{-1}(P_{t-1})\cup \{Q_1,\ldots,Q_{t-1}\})$}}. We can add the point $Q_t$ and we use our ''$W,W'$'' notation. Using  Gianni-Kalkbrener's theorem we may prove as usual that {\small{ ${\bf T}(g^*)={\sf{a}}_{j,i}^{t-1}.$}}
So the leading terms of 
$$
g^*(s_1-\overline{s}_1),\ldots,g^*(s_N-\overline{s}_N),
g^*({\sf{a}}_{L,1}-\overline{{\sf{a}}}_{L,1}),\ldots,g^*({\sf{a}}_{j,i-1}-
\overline{{\sf{a}}}_{j,i-1})
$$
are all of the type $\tau {\sf{a}}_{j,i}^{t-1}$, while {\small{$g^*({\sf{a}}_{j,i}-\overline{{\sf{a}}}^{(t)}_{j,i})=
{\sf{a}}_{j,i}^{t}+\ldots$}}, so its leading term is {\small{${\sf{a}}_{j,i}^{t}$}}. The new leading terms are  {\small{$\{\tau {\sf{a}}_{j,i}^{t-1},\ldots,\tau {\sf{a}}_{j,i}^{t-1},{\sf{a}}_{j,i}^{t}\}$}}.
Therefore (Remark \ref{remA}),  the structure of $W'$ becomes the same as in (\ref{W}), because there are no other new terms, since {\small{$\{g>g^*\}=\emptyset$.}}\\
This concludes the proof of Lemma \ref{lemma1}.$\quad\Box$ \end{proof}

\end{lemma}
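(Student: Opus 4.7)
My plan is an induction on $t$, with the base cases $t=1$ and $t=2$ already completed by the explicit computations culminating in (\ref{t1}) and (\ref{t2}). For the inductive step I assume that the reduced \Gr\ basis of $H^0 = \mathcal{I}(\pi^{-1}(P_1)\cup\ldots\cup\pi^{-1}(P_{t-1}))$ has leading-term shape (\ref{G2}), and I must show that after adjoining the $t$ preimages $\pi^{-1}(P_t)=\{Q_1,\ldots,Q_t\}$ of $P_t\in\Sigma_t^{j,i}$ the resulting basis matches (\ref{W}).

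The strategy is to insert $Q_1,\ldots,Q_t$ one at a time, applying Theorem~\ref{teoBM} at each insertion. This splits into two phases: during the first $t-1$ insertions the shape of the leading terms should remain (\ref{G2}) --- precisely the content of an auxiliary Lemma~\ref{lemma2} that I would prove by a secondary induction on the insertion index $u$ --- while the final insertion brings in the new pure-power leading term ${\sf a}_{j,i}^{t}$ and transitions the shape to (\ref{W}).

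At every Buchberger-M\"{o}ller step the decisive task is to pin down ${\bf T}(g^*)$, where $g^*$ is the smallest basis element not vanishing at the newly inserted point. Two observations drive this. First, $g^*\notin\widehat{W}$: for $u\geq 2$ this is automatic because every polynomial in $\widehat{W}$ already vanishes on $\pi(Q_u)=P_t$ after the first insertion, and for $u=1$ it follows because if all of $\widehat{W}$ vanished at $P_t$ then $P_t=P_k$ for some $k<t$, and adjoining $Q_1$ would give $P_k$ an extra extension, contradicting $P_k\in\Sigma_k^{j,i}$ (compare Remark~\ref{remC}). Second, the Gianni-Kalkbrener theorem (\cite{CGC-alg-art-gianni,CGC-alg-art-kalkbrener}) supplies a polynomial in $\overline{W}$ whose specialization at $P_t$ is a nonzero univariate polynomial in ${\sf a}_{j,i}$ whose roots are exactly the currently recorded extensions of $P_t$; hence after $u-1$ prior insertions this polynomial has degree $u-1$ in ${\sf a}_{j,i}$, and the smallest such polynomial must be $g^*$ with ${\bf T}(g^*)\approxeq\tau{\sf a}_{j,i}^{u-1}$. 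When $Q_t$ is finally inserted, the degree jumps to $t-1$ and uniqueness of the pure power in the basis forces ${\bf T}(g^*)={\sf a}_{j,i}^{t-1}$, so $g^*({\sf a}_{j,i}-\overline{{\sf a}}_{j,i}^{(t)})$ produces the new leading term ${\sf a}_{j,i}^{t}$, while $W_3=\emptyset$ since $g^*$ is then the maximum of the current basis.

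With ${\bf T}(g^*)$ identified, the Buchberger-M\"{o}ller output $W_1\cup W_2\cup W_3$ is tracked term by term in the $\tau$-notation: $W_2$ contributes leading terms $\tau{\sf a}_{j,i}^{u-1}$ (from the shifts $g^*(s_k-\overline{s}_k)$ and $g^*({\sf a}_{k,l}-\overline{{\sf a}}_{k,l})$) together with $\tau{\sf a}_{j,i}^{u}$ (from $g^*({\sf a}_{j,i}-\overline{{\sf a}}_{j,i}^{(u)})$), while $W_3$ preserves the existing leading terms. Each new term either already occupies a $\tau{\sf a}_{j,i}^{h}$ slot of (\ref{G2}) or is divisible by the surviving pure power ${\sf a}_{j,i}^{t-1}$ and hence removable via Remark~\ref{remA}. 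The hard part is precisely this bookkeeping inside the secondary induction on $u$: controlling the leading term of $g^*$ at every insertion via Gianni-Kalkbrener, justifying $g^*\notin\widehat{W}$ from the stratification hypothesis $P_t\in\Sigma_t^{j,i}$, and checking that every newly created leading term either fits the template of (\ref{G2}) or is redundant. The $\tau$-notation of Remark~\ref{remB} is essential because the exact coefficients of $g^*$ are out of reach --- only which variables appear in the leading terms matters.
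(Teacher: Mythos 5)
Your proposal reproduces the paper's own argument: induction on $t$ with base cases (\ref{t1})--(\ref{t2}), insertion of the points of $\pi^{-1}(P_t)$ one at a time via Theorem \ref{teoBM}, a secondary induction on the insertion index $u$ (the paper's Lemma \ref{lemma2}), identification of ${\bf T}(g^*)$ through Gianni--Kalkbrener, and removal of redundant leading terms by Remark \ref{remA}, with $W_3=\emptyset$ at the last insertion. The one point to correct is the first insertion $u=1$: there your membership claim is reversed. Since $P_t\notin\{P_1,\ldots,P_{t-1}\}$, the stratification argument you give shows precisely that \emph{not} every element of $\widehat{W}$ vanishes at $Q_1$, and then Remark \ref{remC} forces $g^*\in\widehat{W}$, with ${\bf T}(g^*)\approxeq\tau$ (this is what the paper uses in step $(a)$, and it is the only reading compatible with your own formula ${\bf T}(g^*)\approxeq\tau\,{\sf a}_{j,i}^{\,u-1}$ at $u=1$, since every element of $\overline{W}$ has a leading term divisible by ${\sf a}_{j,i}$). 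As written, ``$g^*\notin\widehat{W}$ for $u=1$'' contradicts both that formula and the conclusion of your own contradiction argument; the Gianni--Kalkbrener reasoning only takes over from $u\geq 2$ onward, once $P_t$ already has recorded extensions. With that membership corrected, your bookkeeping at $u=1$ (new leading terms $\tau,\ldots,\tau,\tau{\sf a}_{j,i}$) is exactly the paper's, and the rest of the proof goes through as you describe.
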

\begin{corollary}
With the above notation, if {\small{$H=\mathcal{I}(\pi^{-1}(P_1)\cup \ldots \cup \pi^{-1}(P_{\Delta}))$}}, then
\begin{equation}\label{4a} 
{\bf T}(W)\approxeq\{\tau,\ldots,\tau,\tau {\sf{a}}_{j,i},\ldots,\tau {\sf{a}}_{j,i},\ldots,\tau {\sf{a}}_{j,i}^{\Delta-1},\ldots,\tau {\sf{a}}_{j,i}^{\Delta-1},{\sf{a}}_{j,i}^{\Delta}\}
\end{equation}
\begin{proof}
Apply Lemma \ref{lemma1} with $t=\Delta$.
\end{proof}
\end{corollary}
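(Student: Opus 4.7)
The plan is essentially a one-line specialization of Lemma~\ref{lemma1}, since the corollary is exactly the $t=\Delta$ case of that lemma. First I would verify that the hypotheses of Lemma~\ref{lemma1} are met. The lemma requires $1\leq t\leq \Delta$ and, for each $1\leq h\leq t$, a choice of a point $P_h\in\Sigma^{j,i}_h$. Taking $t=\Delta=\eta(j,i)$ trivially satisfies the range condition. For the existence of each $P_h$, I would invoke the hypothesis that $J$ is weakly stratified (Definition~\ref{weaklyideal}): this ensures $\Sigma^{j,i}_l\neq\emptyset$ for every $1\leq l\leq \eta(j,i)$, so in particular $\Sigma^{j,i}_h\neq\emptyset$ for each $1\leq h\leq \Delta$ and a valid choice of $P_1,\ldots,P_\Delta$ exists.

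With the hypotheses verified, the second step is merely to read off the conclusion. Applying Lemma~\ref{lemma1} with $t=\Delta$ to the ideal $H=\mathcal{I}(\pi^{-1}(P_1)\cup\cdots\cup\pi^{-1}(P_\Delta))$, its reduced \Gr\ basis $W$ satisfies
\[
{\bf T}(W)\approxeq\{\tau,\ldots,\tau,\tau{\sf a}_{j,i},\ldots,\tau{\sf a}_{j,i},\ldots,\tau{\sf a}_{j,i}^{\Delta-1},\ldots,\tau{\sf a}_{j,i}^{\Delta-1},{\sf a}_{j,i}^{\Delta}\},
\]
which is exactly the displayed equation~(\ref{4a}).

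There is no real obstacle here: all the heavy lifting (the iterated Buchberger--M\"oller computation together with the Gianni--Kalkbrener argument controlling $\textbf{T}(g^*)$ at each step) has already been carried out in the proofs of Lemma~\ref{lemma1} and its internal Lemma~\ref{lemma2}. The only conceptual point worth highlighting in the write-up is that the chosen $P_h$ need not be unique, but the shape of ${\bf T}(W)$ in~(\ref{W}) does not depend on which representatives are picked from the non-empty sets $\Sigma^{j,i}_h$, so the corollary is well-posed regardless of the particular choice.
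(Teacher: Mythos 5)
Your proof is correct and is exactly the paper's argument: the corollary is obtained by applying Lemma \ref{lemma1} with $t=\Delta$. Your extra remarks (non-emptiness of each $\Sigma^{j,i}_h$ from weak stratification and independence of the choice of representatives) are sound but only make explicit what the paper leaves implicit.
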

\subsection{Third part of the proof}\label{sub3}

\begin{lemma}\label{lemma3}
Let {\small{$\mathcal{I}(\pi^{-1}(P_1)\cup\ldots\cup \pi^{-1}(P_{\Delta}))\supset H \supset J$}} be a radical zero-dimensional ideal. Suppose that the leading terms of its reduced \Gr\ basis satisfy (\ref{4a}). Let  {\small{$\dot{P_h}\in \Sigma_h^{j,i}$, $1\leq h\le \Delta$}} and let $H'=\mathcal{I}(\mathcal{V}(H)\cup \pi^{-1}(\dot{P_h}))$.\\ Then  {\small{$\mathcal{I}(\pi^{-1}(P_1)\cup\ldots\cup \pi^{-1}(P_{\Delta}))\supset H\supset H' \supset J$}} and the leading terms of its reduced \Gr\ basis satisfy (\ref{4a}).
\end{lemma}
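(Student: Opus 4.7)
The plan is to verify the three inclusions by elementary variety arguments, and then to establish the \Gr\ basis claim by iterating the Buchberger--M\"{o}ller step of Theorem \ref{teoBM} on the missing preimages of $\dot{P_h}$, mirroring the induction in the proof of Lemma \ref{lemma2}.

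First, for the chain $\mathcal{I}(\pi^{-1}(P_1)\cup\ldots\cup\pi^{-1}(P_\Delta))\supset H\supset H'\supset J$: since $\mathcal{V}(H')=\mathcal{V}(H)\cup\pi^{-1}(\dot{P_h})$ is finite, $H'$ is zero-dimensional and radical. The inclusion $H'\supset J$ follows from $\pi^{-1}(\dot{P_h})\subset\mathcal{V}(J)$ (because $\dot{P_h}\in\Sigma_h^{j,i}$) combined with the hypothesis $\mathcal{V}(H)\subset\mathcal{V}(J)$. The inclusion $H\supset H'$ is immediate from $\mathcal{V}(H)\subset\mathcal{V}(H')$, and $\mathcal{I}(\pi^{-1}(P_1)\cup\ldots\cup\pi^{-1}(P_\Delta))\supset H'$ follows from $\pi^{-1}(P_1)\cup\ldots\cup\pi^{-1}(P_\Delta)\subset\mathcal{V}(H)\subset\mathcal{V}(H')$.

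For the \Gr\ basis, write $\pi^{-1}(\dot{P_h})=\{\dot{Q}_1,\ldots,\dot{Q}_h\}$ and let $r$ be the number of these already lying in $\mathcal{V}(H)$; if $r=h$ then $H'=H$ and the claim is vacuous. Otherwise, relabel so that $\dot{Q}_1,\ldots,\dot{Q}_r\in\mathcal{V}(H)$, set $H_r=H$, and define $H_k=\mathcal{I}(\mathcal{V}(H_{k-1})\cup\{\dot{Q}_k\})$ for $k=r+1,\ldots,h$, so that $H_h=H'$. Starting from $W_r=W$, whose leading terms satisfy (\ref{4a}) by hypothesis, compute each reduced \Gr\ basis $W_k$ from $W_{k-1}$ via Theorem \ref{teoBM} with the evaluation $\theta(g)=g(\dot{Q}_k)$, and argue inductively on $k$ that the leading-term pattern (\ref{4a}) is preserved at every step.

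The case analysis at each such step is a direct transcription of the one in Lemma \ref{lemma2}. When $k=1$ (possible only if $r=0$), $\dot{P_h}\notin\pi(\mathcal{V}(H))$, so some $g\in\widehat{W}_{k-1}$ is non-zero at $\pi(\dot{Q}_k)$, forcing $g^*\in\widehat{W}_{k-1}$ by Remark \ref{remC}; the new leading terms contributed by $W_2$ are of the form $\tau$ together with exactly one $\tau\,{\sf a}_{j,i}$, both compatible with (\ref{4a}). When $k\geq 2$, $\dot{P_h}$ already has $k-1\geq 1$ extensions in $\mathcal{V}(H_{k-1})$, every element of $\widehat{W}_{k-1}$ vanishes at $\pi(\dot{Q}_k)$, and Remark \ref{remC} forces $g^*\in\overline{W}_{k-1}$; invoking the Gianni--Kalkbrener theorem exactly as in the Claim in the proof of Lemma \ref{lemma2}, the minimal such $g^*$ has $\deg_{{\sf a}_{j,i}}g^*=k-1$, so the new leading terms from $W_2$ are of the form $\tau\,{\sf a}_{j,i}^{k-1}$ plus one $\tau\,{\sf a}_{j,i}^k$. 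Since $k\leq h\leq\Delta$, these fit the pattern of (\ref{4a}); in the extremal case $k=\Delta$ the spurious term $\tau\,{\sf a}_{j,i}^\Delta$ is divisible by the ${\sf a}_{j,i}^\Delta$ already present in (\ref{4a}) and is removed via Remark \ref{remA}. The sets $W_1$ and $W_3$ only recycle leading terms already visible in $W_{k-1}$.

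The main technical point is ensuring that the Gianni--Kalkbrener identification of $g^*$ remains available at every iteration: one must verify that the minimum-degree polynomial in $\overline{W}_{k-1}$ whose leading polynomial does not vanish at $\dot{P_h}$ has ${\sf a}_{j,i}$-degree equal to the current number of extensions of $\dot{P_h}$. Since by induction each $H_{k-1}$ remains zero-dimensional and radical with reduced \Gr\ basis having leading terms fitting (\ref{4a}), the argument of the Claim in the proof of Lemma \ref{lemma2} transfers verbatim, closing the induction and yielding $W_h=\mathrm{GB}(H')$ with leading terms satisfying (\ref{4a}).
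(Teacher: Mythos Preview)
Your proof is correct and follows essentially the same route as the paper: iterate the Buchberger--M\"{o}ller step over the preimages of $\dot{P_h}$, use Remark \ref{remC} to locate $g^*$ in $\widehat{W}$ at the first step and in $\overline{W}$ thereafter, and invoke Gianni--Kalkbrener to pin down $\deg_{{\sf a}_{j,i}}g^*$. You are slightly more careful than the paper in two places: you explicitly verify the ideal chain, and you allow for some preimages of $\dot{P_h}$ to already lie in $\mathcal{V}(H)$ (your parameter $r$), whereas the paper tacitly assumes $r=0$ since in its intended application $\mathcal{V}(H)$ is always a union of full fibres.
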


\begin{proof}
We use our ''{\small{$W,W'$}}'' notation, so that {\small{$W=\textrm{GB}(H)$}} and  {\small{$W'=\textrm{GB}(H')$}}. 
Let us take a point\footnote{ With our usual abuse of notation.} $\dot{P_k}=(\overline{s}_1,\ldots,\overline{s}_N,\overline{{\sf{a}}}_{L,1},\ldots,
\overline{{\sf{a}}}_{j,i-1})\in\Sigma_k^{j,i}$ with $1\leq k\leq \Delta$.
{\small$$\pi^{-1}(\dot{P_k})=\left\{\begin{array}{l}
Q_1=(\overline{s}_1,\ldots,\overline{s}_N,\overline{{\sf{a}}}_{L,1},\ldots,
\overline{{\sf{a}}}_{j,i-1},\overline{{\sf{a}}}^{(1)}_{j,i})\\
\quad\,\,\,\,\,\vdots \\
Q_k=(\overline{s}_1,\ldots,\overline{s}_N,\overline{{\sf{a}}}_{L,1},\ldots,
\overline{{\sf{a}}}_{j,i-1},\overline{{\sf{a}}}^{(k)}_{j,i})
\end{array}\right.$$}
\begin{itemize}
    \item[$*$]We add the point  $Q_1$.\\
We build  $W'$ using Theorem \ref{teoBM}. We know that $g^*\in \widehat{W}$ (as in $(a)$ of Lemma \ref{lemma2}). So $W'=W_1\sqcup W_2\sqcup W_3$ where
\begin{itemize}
    \item[-] {\scriptsize{$W_1\approxeq\{\tau,\ldots,\tau\}$}}, because {\scriptsize{$g^*\in \widehat{W}$}}. So {\scriptsize{${\bf T}(W_1)\approxeq\{\tau,\ldots,\tau\}$}},
 \item[-] {\scriptsize{$W_2=W_{2,1}\cup W_{2,2}$}} where\\
{\scriptsize{ $W_{2,1}=\{g^*(s_1-\overline{s}_1),\ldots,g^*(s_N-\overline{s}_N)
,g^*({\sf{a}}_{L,1}-\overline{{\sf{a}}}_{L,1}),\ldots,g^*({\sf{a}}_{j,i-1}-
\overline{{\sf{a}}}_{j,i-1})$}},\\ so {\scriptsize{$  {\bf T}(W_{2,1})=\{\tau ,\ldots,\tau\}$}}. \\
{\scriptsize{$W_{2,2}=\{g^*({\sf{a}}_{j,i}-\overline{{\sf{a}}}^{(1)}_{j,i})\}$}} so {\scriptsize{$ {\bf T}(W_{2,2})=\{\tau {\sf{a}}_{j,i}\}$}}.
\item[-] {\scriptsize{$W_3=\{g-\frac{g(Q_1)}{g^*(Q_1)}g^*\mid g>g^*\}$}} and so the leading terms of $W_3$ are those in {\small{${\bf T}(W)$}}, except possibly for new $\tau$'s.
\end{itemize}
Therefore the structure of $W'$ is the same as that of $W$.
\item[$*$] We add $Q_{r+1}$ with $2\leq r+1\leq k$. We assume, using  induction on $r$, that $W$ verifies (\ref{4a}).\\
Let $W:=W'$ and we use again  Theorem \ref{teoBM}.\\
To construct $W'$ we have to find $g^*\in W$ such that $g^*(Q_{r+1})\not=0$. Exactly as in case $(d)$ of Lemma \ref{lemma2}, ${\bf T}(g^*)=\tau {\sf{a}}_{j,i}^r.$
So $W'=W_{1}\sqcup W_{2}\sqcup W_3$, where

\begin{itemize}
    \item[-] {\scriptsize{$W_1=\{g\in \widehat{W}\mid g<g^*\}$}} where {\scriptsize{${\bf T}(g^*)=\tau {\sf{a}}_{j,i}^r$}}.\\ So {\scriptsize{${\bf T}(W_1)=\{\tau,\ldots,\tau,\tau {\sf{a}}_{j,i},
\ldots,
\tau {\sf{a}}_{j,i},
\ldots,
\tau {\sf{a}}_{j,i}^{r-1},
\ldots,
\tau {\sf{a}}_{j,i}^{r-1}\}$}} or possibly also\\ {\scriptsize{$
\tau {\sf{a}}_{j,i}^{r}\in {\bf T}(W_1)$}},
 \item[-] {\scriptsize{$W_2=W_{2,1}\cup W_{2,2}$}} where\\
{\scriptsize{$W_{2,1}=\{g^*(s_1-\overline{s}_1),\ldots,g^*(s_N-\overline{s}_N), g^*({\sf{a}}_{L,1}-\overline{{\sf{a}}}_{L,1}),\ldots,g^*({\sf{a}}_{j,i-1}-
\overline{{\sf{a}}}_{j,i-1})\}$}},\\
{\scriptsize{$W_{2,2}=\{g^*({\sf{a}}_{j,i}-\overline{{\sf{a}}}^{(r+1)}_{j,i})\}$}}

\item[-]{\scriptsize{$W_3=\{g-\frac{g(Q_{r+1})}{g^*(Q_{r+1})}g^*\mid g>g^*\}$}}.
\end{itemize}
Now
\begin{itemize}
    \item[-] If {\small{$r+1\leq k\leq \Delta-1$}}, then the structure of {\small{$\textbf{T}(W')$}} does not change. In fact
{\scriptsize{$\textbf{T}(W')=\mathbf{T}(W_{1})\cup \mathbf{T}(W_{2})\cup \mathbf{T}(W_3)$}}, where

\begin{itemize}
    \item[-]{\scriptsize{${\bf T}(W_1)=\{\tau,\ldots,\tau,\tau {\sf{a}}_{j,i},
\ldots,
\tau {\sf{a}}_{j,i},
\ldots,
\tau {\sf{a}}_{j,i}^{r-1},
\ldots,
\tau {\sf{a}}_{j,i}^{r-1}\}$}} or possibly also\\ {\scriptsize{$\tau {\sf{a}}_{j,i}^{r}\in {\bf T}(W_1)$}},
 \item[-] {\scriptsize{$\mathbf{T}(W_2)=\mathbf{T}(W_{2,1})\cup \mathbf{T}(W_{2,2})$}} where
{\scriptsize{${\bf T}(W_{2,1})=\{\tau {\sf{a}}_{j,i}^{r},
\ldots,
\tau {\sf{a}}_{j,i}^{r}\}$}} and\\
{\scriptsize{${\bf T}(W_{2,2})=\{\tau {\sf{a}}_{j,i}^{r+1}\}$}}.
\item[-] The leading terms of $W_3$ are those in {\small{${\bf T}(W)$}} with degree (in {\small{${\sf{a}}_{j,i}$}}) at least {\small{$r+1$}}, 
plus possibly some  terms in {\small{${\bf T}(W)$}} of degree {\small{$r$}}
, that is, those greater than ${\bf T}(g^*)$.
\end{itemize}

\item[-] If $r+1=\Delta$ then ${\bf T}(g^*)\approxeq\tau {\sf{a}}_{j,i}^{\Delta-1}$, so the leading terms of
{\small{$$
g^*(s_1-\overline{s}_1),\ldots,g^*(s_N-\overline{s}_N),
g^*({\sf{a}}_{L,1}-\overline{{\sf{a}}}_{L,1}),\ldots,g^*({\sf{a}}_{j,i-1}-
\overline{{\sf{a}}}_{j,i-1})$$}} remain $\tau {\sf{a}}_{j,i}^{\Delta-1}$, but
$$g^*({\sf{a}}_{j,i}-\overline{{\sf{a}}}^{(\Delta)}_{j,i})\approxeq\tau {\sf{a}}_{j,i}^{\Delta}+\ldots \,. $$ 
Since in  $W$ there is a $\overline{g}$ such that ${\bf T}(\overline{g})={\sf{a}}_{j,i}^{\Delta}$ and ${\bf T}(\overline{g})|\tau {\sf{a}}_{j,i}^{\Delta}$, then (Remark \ref{remA}) the structure of $W'$ does not change and verifies (\ref{4a}).
\end{itemize}
\end{itemize}\end{proof}
We reiterate Lemma \ref{lemma3} starting from  $H=\mathcal{I}(\pi^{-1}(P_1)\cup\ldots\cup \pi^{-1}(P_{\Delta}))$ and adding all the sets $ \pi^{-1}(\dot{P_h})$ until all points in $\mathcal{V}(J)$ have been added. When we obtain $J$, we will have that its leading terms satisfy  (\ref{4a}), so point $(1)$ and $(2)$ of Proposition \ref{GBstructure} are proved. In particular, (\ref{4a}) proves also   $(3)$ and $(4)$.\\
The proof of Proposition \ref{GBstructure} is complete.

%%%%%%%%%%%%%%%%%%%%%%%%%%%%%%%%%%%%%%%%%%%%%%%%%%%%%%%%%%%%%%%%%%%%%%%%%%%%%%

\section{Multi-dimensional general error locator polynomials}
\label{Loc}
The following theorem  ensures that our  weak
multi-dimensional general error locator polynomials (see Definition \ref{locDebole}) exist for any code.
\begin{theorem} \label{bomba}
Let $C=C^{\perp}(I,L)$ be an  affine--variety code with $d \geq
3$.  Then
\begin{itemize}
\item[i)]$J_{*}^{C,t}$ is a radical strongly multi-stratified
ideal w.r.t. the $X$ variables.
\item[ii)] A \Gr\ basis of $J_{*}^{C,t}$ contains a set of weak
multi-dimensional general error locator polynomials for $C$.
\end{itemize} 
\end{theorem}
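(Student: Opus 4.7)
The plan is to first nail down the geometry of $\mathcal{V}(J_{*}^{C,t})$, use that to verify (i), and then deduce (ii) as a clean corollary of Propositions~\ref{smsw} and~\ref{GBstructure}.

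A direct inspection of the generators in (\ref{fineideal}) shows that every point of $\mathcal{V}(J_{*}^{C,t})$ has the form $(\overline{\mathbf{s}},\overline{P}_{\sigma_1},\ldots,\overline{P}_{\sigma_t},\overline{e}_1,\ldots,\overline{e}_t)$ with $\overline{P}_{\sigma_l}\in\mathcal{V}(I)\cup\{P_0\}$, $\overline{e}_l\in\mathbb{F}_q$, the equivalence $\overline{e}_l=0\iff\overline{P}_{\sigma_l}=P_0$, pairwise distinctness of non-ghost locations, and $\overline{\mathbf{s}}$ equal to the syndrome of the resulting error vector. Using $d\geq 2t+1$ (so syndrome-to-error is injective on correctable vectors) this gives a clean combinatorial parametrisation of the variety by correctable error configurations together with a choice of ordering of the $\mu\leq t$ actual errors across the $t$ positional slots, padded with $(P_0,0)$ pairs. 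Radicality then follows from Seidenberg's lemma, since $J_{*}^{C,t}$ contains $e_j^q-e_j$ for every $j$ and, through the Buchberger--M\"oller basis $\{g'_h\}$ of the vanishing ideal $I'$ of $\mathcal{V}(I)\cup\{P_0\}$, a square-free univariate polynomial in every $x_{j,\iota}$.

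To establish strong multi-stratification in the $X$ variables I take $\mathcal{A}_j=X_j$ and $L=t$. Condition 1) of Definition~\ref{strongly} is a counting argument: a point of $\mathcal{V}(J_{\mathcal{S},\mathcal{A}_L,\ldots,\mathcal{A}_{j+1}})$ fixes a correctable error set and an assignment of some of its locations (or $P_0$) to the slots $t,\ldots,j+1$; the remaining $j$ slots must accommodate the unused errors plus ghost pads, so the extension at $X_j$ takes at most $j$ distinct values. For condition 3), given $T\subseteq Z_j=\mathcal{V}(I)\cup\{P_0\}$ with $1\leq|T|\leq j$, I would construct the desired $Q$ by selecting a syndrome whose underlying error set is $T\setminus\{P_0\}$ and by assigning the locations in positions $t,\ldots,j+1$ so that exactly the elements of $T$ remain attainable at $X_j$. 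This step is where I expect the bulk of the technical work: in particular, ruling out $P_0$ as an $X_j$-extension when $P_0\notin T$ and $|T|<j$ forces a careful case split on whether $P_0\in T$ and on the relative sizes of $|T|$, $\mu$ and $t$, so as to leave no ``extra slack'' that would produce spurious extensions.

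Once (i) is proved, (ii) is almost immediate. Proposition~\ref{smsw} gives that $J_{*}^{C,t}$ is weakly stratified, so Proposition~\ref{GBstructure} applies and produces, for each $i=1,\ldots,m$, a unique polynomial $g^{(i)}_{L,\zeta(L,i),1}\in G^{L,i}\cap\mathbb{F}_q[S,x_{L,1},\ldots,x_{L,i}]$ with leading term $x_{L,i}^{\zeta(L,i)}$ and $\zeta(L,i)=\eta(L,i)$. After relabelling $x_{L,j}$ as the abstract $x_j$ of Definition~\ref{locDebole}, the bound $\eta(L,i)\leq t_i$ comes from the combinatorial description of the variety, and the specialisation property demanded of a weak locator (roots at given syndrome and truncated location contain the admissible $i$-th coordinates of the error, up to parasites) is immediate from the fact that $g^{(i)}_{L,\zeta(L,i),1}$ vanishes on $\mathcal{V}(J_{*}^{C,t})$; padding by a power of $x_i$ to reach exact degree $t_i$ (introducing only parasite zeros) then yields the required set $\{\mathcal{P}_i\}_{i=1}^m$.
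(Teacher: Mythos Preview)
Your plan coincides with the paper's proof: radicality from the field equations, strong multi-stratification via the permutation symmetry of the error slots (the paper records this as (\ref{pperm})) together with a case split on whether $P_0\in T$, and then Propositions~\ref{smsw} and~\ref{GBstructure} for part~(ii). The paper's construction for condition~3) is exactly what you sketch: choose an error of suitable weight, assign non-ghost locations to slots $t,\ldots,j+1$, and let slots $j,\ldots,1$ carry the elements of $T$ padded with copies of $P_0$; so your caution about the case $P_0\notin T$ with $|T|<j$ is not misplaced, but the argument you anticipate is the same one the paper gives.

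One place where your proposal genuinely diverges, and where there is a gap: for (ii), you suggest multiplying $g^{(i)}_{L,\zeta(L,i),1}$ by a power of $x_i$ to force degree exactly $t_i$. That is incompatible with the theorem as stated, which asserts that a Gr\"obner basis of $J_{*}^{C,t}$ \emph{contains} the locators; a padded polynomial is no longer an element of the reduced basis $G$. The paper takes $\mathcal{P}_i=g^{(i)}_{t,\zeta(t,i),1}$ unmodified and argues that $\zeta(t,i)=\eta(t,i)$ already equals $t_i$, using that the number of possible extensions at level $(t,i)$ is bounded simultaneously by $t$ and by $|\{\hat\pi_i(P):P\in\mathcal{V}(I)\cup\{P_0\}\}|$. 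You should replace the padding step by this combinatorial identification of $\eta(L,i)$ with $t_i$.
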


\begin{proof}
\begin{itemize}
    \item[$i)$] We recall that $J_{*}^{C,t}$ is the ideal in {\small{$\FF_q[s_1,\dots,s_r,X_t, \dots, X_1,e_1,\dots,e_t]$}}
as defined in \eqref{fineideal}. We set  $H=J_{*}^{C,t}$.
We want to show that $H$  is a radical strongly  multi-stratified ideal with respect to the $X$ variables.
The radicality of $H$ is obvious since it contains the field equations for all variables.\\
Let us consider $\pi_j$ and $\rho_i$, $1 \leq j \leq t$ as in Definition \ref{strongly}
$$\begin{array}{c}
\pi_t : \mathcal{V}(H_{S,X_t}) \rightarrow  \mathcal{V}(H_{S}),
\quad 
\pi_j : \mathcal{V}(H_{S,X_t,\dots,X_{j}}) \rightarrow  \mathcal{V}(H_{S,X_t,\dots,X_{j+1}})\\
\rho_j:\,
{\mathcal{V}}(H_{\mathcal{S},X_t,\dots,X_{j+1},X_{j}})
\longrightarrow
{\mathcal{V}}(H_{X_{j}}),
\,\, j=1,\dots, L.
\end{array}
$$
By Definition \ref{strongly}, $H$ is a strongly multi-stratified ideal with respect to the $X$ variables if:
\begin{itemize}
\item[$a0)$] Let $Z_j=\rho_j(\mathcal{V}(H_{\mathcal{S},X_t,\dots,X_{j+1},X_{j}}))$, then $Z_j=Z_{\bar j}$ for any $1\le j\ne \bar j\le t$. In this case we use $Z=Z_j$.
Since the locations are only $\mathcal{V}(I)\cup \{P_0\}$, then $Z=\mathcal{V}(I)\cup \{P_0\}$.
\item[$a1)$] Let $1 \leq j \leq t-1$. For any $T\subset Z$  with $1\leq |T|\leq j$,  there is $\widetilde{v} \in 
\mathcal{V}(H_{S,X_t,\dots,X_{j+1}})$ such that $|\rho_j(\pi_j^{-1}\{\widetilde{v}\})| =T$. 
\item[$a2)$] Moreover,  for any $T\subset Z$,  $1\leq |T|\leq t$ there is $\bar s \in \mathcal{V}(H_S)$ such that $\rho_t(\pi_t^{-1}\{\bar s\}) = T$.
\item[$b1)$] For any $1\leq j\leq t-1$  and for any $u \in
{\mathcal{V}}(H_{\mathcal{S},X_t,\dots,X_{j+1},X_{j}})$
we have that $|\pi_{j}^{-1}(\{ u \})| \leq j$. 
\item[$b2)$] Moreover, for any $\bar s   \in$
${\mathcal{V}}(H_{\mathcal{S}})$ we have that $|\pi_{t}^{-1}(\{ \bar s\})|$
$\leq t$.
\end{itemize}

Let ${\bf s}=({\bar s}_1,\dots,{\bar s}_r)$ be a correctable syndrome corresponding
to an error $e$ of weight $\mu \leq t$. Let $Q$ be a point in $\mathcal{V}(H)$ corresponding to ${\bf s}$. 
We have 
$$
Q=(\bar{s}_1,\ldots,\bar{s}_r,\bar{A}_t,\dots, \bar{A}_1,  \bar{e}_1,\ldots,\bar{e}_t).
$$
We note that for any permutation $\sigma\in S_t$, there is $\widetilde Q\in \mathcal{V}(H)$,
\begin{eqnarray}\label{pperm}
  \widetilde Q=(\tilde{s}_1,\ldots,\tilde{s}_r,\bar{A}_{\sigma(t)},\dots, \bar{A}_{\sigma(1)},  \bar{e}_{\sigma(1)},\ldots,\bar{e}_{\sigma(t)}) .  
\end{eqnarray}
So (\ref{pperm}) gives immediately $a0)$.

We want to prove $a1)$ and $a2)$. Let $1\leq j\leq t-1$ and let $T \subset Z$,  $1\leq |T|\leq j$. Let $k=|T|$. There are two cases to consider: either $P_0\in T$ or $P_0\not\in T$.
\begin{itemize}
    \item[-] $P_0\in T$. Let $Q\in \mathcal{V}(H)$ corresponding to an error with weight\\ $\mu=t-j+k-1$. Thanks to (\ref{pperm}) we can assume that 
$$
Q=(\bar{s}_1,\ldots,\bar{s}_r,\bar{A}_t,\dots,\bar{A}_{j+1}, \bar{A}_{j},\dots, \bar{A}_1,  \bar{e}_1,\ldots,\bar{e}_t)
$$ 
where $\{\bar{A}_t,\dots,\bar{A}_{j+1}\}$ are $t-j$ elements in $Z$ that are different from $P_0$, $\{\bar{A}_j,\dots,\bar{A}_{1}\}$ are $(j-k+1)$  $P_0$'s and $(k-1)$ is the number of the elements of $T$ different from $P_0$. Let $u=(\bar{s}_1,\ldots,\bar{s}_r,\bar{A}_t,\dots,\bar{A}_{j+1})$.
At this point, we will obviously have
$\rho_j(\pi_j^{-1}(u))=T.$
 \item[-] $P_0\not\in T$. Let $Q\in \mathcal{V}(H)$ corresponding to an error with weight $\mu=t-j+k-1$. Similarly to the previous case, thanks to (\ref{pperm}), we can assume that $Q=(\bar{s}_1,\ldots,\bar{s}_r,\bar{A}_t,\dots,\bar{A}_{j+1}, \bar{A}_{j},\dots, \bar{A}_1,  \bar{e}_1,\ldots,\bar{e}_t)$, where $\{\bar{A}_t,\dots,\bar{A}_{j+1}\}$ are  $(t-j)$ elements of $\mathcal{V}(I)=Z\setminus\{P_0\}$, $\{\bar{A}_j,\dots,\bar{A}_{1}\}$ contains $(j-k)$ points equal to $P_0$ and $k$ points forming $T$. Let $u=(\bar{s}_1,\ldots,\bar{s}_r,\bar{A}_t,\dots,\bar{A}_{j+1})$, then we have
$\rho_j(\pi_j^{-1}(u))=T.$
\end{itemize}
The proof of $a2)$ is similar and is omitted.

\noindent To prove $b1)$ and $b2)$ it is enough to observe that if $t-j$ locations (including possibly the ghost point) are fixed, then at most $j$ distinct locations can exist for that error.
\item[$ii)$] Since $H$ is strongly multi-stratified, $H$ is weakly stratified (for Proposition~\ref{smsw}), and so we can apply Proposition~\ref{GBstructure}. As weak locators, we take $\mathcal{P}_{i}=g_{t,\zeta(t,i),1}^{(i)}$, where $\zeta(t,i)=\eta(t,i)\leq t$ and
${\bf T}(\mathcal{P}_{i})={x}_{i}^{t_i}$. 
In fact, the number of possible extensions is bounded by both $t_i$ and $|\{\hat{\pi}_i(P)\mid P\in \mathcal{V}(I)\cup P_0\}|$.
The first condition of Def. \ref{locDebole} is satisfied.\\
In order to prove the second condition we note that 
$\mathcal{P}_{i}({\bf s},{\bar x}_1,\dots,{\bar x}_{i-1},x_i)$ has among its solutions the ${\bar x}_i$'s such that $({\bar x}_1,\dots,{\bar x}_{i})$ are
the first $i$ components of an error location corresponding to ${\bf s}$ (or  $P_{0,i}$ value).

\end{itemize}
\end{proof}

We can summarize our findings so far.

Using weak locators does not work because ${\mathcal{P}}_{i}(S,x_1,\ldots,x_i)$ depends also on $i-1$ $x$-variables. Thus, the point $(S,x_1,\ldots,x_{i-1})\in \mathcal{V}(I)$ has the right multiplicity if and only if $t_i=1$. If this fail, it is very likely to have parasite solutions.

On the other hand, if we use the general error evaluator polynomial $\mathcal{E}$, we can proceed in two ways (see Example \ref{EXHerm1}), but both require an additional choice to discover parasite solutions. With non-trivial codes, this choice is very computationally expensive.

The strategy  we propose here is to force point $(S,x_1,\ldots,x_{i-1})\in \mathcal{V}(I)$ to have the right multiplicity.
See Definition \ref{locDebole} for the $t_i$'s.

\begin{definition}\label{zeroaf}
Let $C=C^{\perp}(I,L)$ be an affine-variety code.\\ 
Let $P_0=(\bar x_{0,1},\ldots, \bar x_{0,m})\in (\mathbb{F}_q)^m\setminus \mathcal{V}(I)$ be a ghost point. For any $1\leq
i\leq m$, let ${\mathcal{L}}_{i}$ be a polynomial in
$\FF_q[S,x_{1},\ldots, x_i]$, where $S=\{s_1,\dots,s_r\}$. 
%For any $1\leq i\leq m$, let $$t_i=\min\{t,|\{\hat{\pi}_i(P)| P\in \mathcal{V}(I)\cup P_0\}|\}.$$ 
Then
$\{{\mathcal{ L}}_{i}\}_{1 \leq i \leq m}$ is a set of  {\bf
multi-dimensional general error locator polynomials} for $C$ if for any $i$ 
\begin{itemize}
\item
${\mathcal{L}}_{i}(S,x_1,\ldots,x_i)= x_i^{t_i}+a_{t_i-1}x_i^{t_i-1}+\ldots +a_0$, $a_j \in \FF_q[S,x_1,\ldots,x_{i-1}]$ for  $0 \leq j \leq t_i-1$. In other words,
  ${\mathcal{L}}_{i}$ is a monic polynomial with degree $t_i$ with respect
  to the variable $x_i$ and its coefficients are in $\FF_q[S,x_1,\ldots,x_{i-1}]$.%\label{loc1af}
\item Given a syndrome
  {\small{ ${\bf \bar s}=(\bar{s}_1,\dots \bar{s}_r)\in (\FF_{q})^{r}$}},
  corresponding to an error vector of weight
  $\mu\leq t$ and $\mu$ error locations
  $(\bar{x}_{1,1},\ldots,\bar{x}_{1,m})$ $,\ldots,
   (\bar{x}_{\mu,1},\ldots,\bar{x}_{\mu,m})$.
  %let $\bar{\mu}=|\{\bar{x}_{1,i},\dots,\bar{x}_{\mu,i}\}|$. 
% We can assume wlog $\{\bar{x}_{1,i},\dots,\bar{x}_{\bar{\mu},i}\}$ to be distinct.\\
  If we evaluate the $S$ variables at ${\bf \bar s}$ and the variables $(x_1,\ldots,x_{i-1})$ at the truncated locations $\mathbf{\bar x}^j=(\bar{x}_{j,1},\ldots,\bar{x}_{j,i-1})$ for any $0\leq j\leq \mu$,  then the roots of
  ${\mathcal{ L}}_{i}({\bf \bar s},{\bf \bar x}^j,x_i)$ are
 $\{ \bar{x}_{h,i}\mid \overline{\mathbf{x}}^h=\overline{\mathbf{x}}^j, \, 1\le h\le \mu\}$ when $\mu=t$, and  $\{ \bar{x}_{h,i}\mid \overline{\mathbf{x}}^h=\overline{\mathbf{x}}^j, \, 0\le h\le \mu\}$ when $\mu\leq t-1$. That is, the polynomial  ${\mathcal{ L}}_{i}({\bf \bar s},\mathbf{\bar x}^j,x_i)$ does not have parasite solutions. 
\label{loc2af}
\end {itemize}
\end{definition}
\noindent Note that  the number of distinct first
components of error locations could be lower than $\mu$ and $t_i$.

To show how multi-dimensional general error locator polynomials can be applied, we redo the example on p. 155 of \cite{CGC-cd-art-lax}.
We postpone for the moment the problem of the existence of these polynomials and  of the method to compute them.
\begin{example}\label{EXHerm2}
 Let us consider  the Hermitian code
$C=C^{\perp}(I,L)$ from the curve $y^2+y=x^3$ over $\FF_4$ and
with defining monomials $\{1,x,y,x^2,x y\}$, as in the Example
\ref{exhq2}. Let us consider the lex term-ordering with
$s_1<s_2<s_3<s_4<s_5<x_2<y_2<x_1<y_1<e_2<e_1$ in
$\FF_4[s_1,s_2,s_3,s_4,s_5,x_2,y_2,x_1,y_1,e_1,e_2]$.\\ 

\noindent We consider the ideal $J_{*}^{C,t}$. In this ideal we are lucky enough to find the two
multi-dimensional general error locator polynomials that are {\small{$
\mathcal{L}_{2,1}(s_1,\ldots,s_5,x_2)$}}  and {\small{$\mathcal{L}_{2,2}(s_1,\ldots,s_5,x_2,y_2)$}},
which are respectively the polynomials $\mathcal{L}_{x}$ and $\mathcal{L}_{xy}$ of degree two in $x_2$ and $y_2$. In this case $t_1=t_2=t=2$ ($a_x,b_x,a_y,b_y,c_y$ are in the Appendix).
$$
\mathcal{L}_{x}= \mathbf{x}^2+\mathbf{x}\,a_x+b_x\textrm{ and }
\mathcal{L}_{xy}=\mathbf{y}^2+\mathbf{y}\,a_y+\mathbf{x}\,b_y+c_y
$$
Also in this example, we consider the three cases of Example \ref{exhq2}.
\begin{itemize}
    \item[-] We suppose that two errors occurred in the points $P_6=(\alpha,\alpha+1)$ and $P_7=(\alpha+1,\alpha)$, so the syndrome vector
corresponding to $(0,0,0,0,0, 1, 1, 0)$ is  ${\mathbf s}=(0,1,1,1,0)$.
In order to find the error positions we evaluate 
$\mathcal{L}_{x}$ in $\overline{{\mathbf s}}$ and we obtain the correct values of $x$, in fact $\mathcal{L}_{x}(\overline{{\mathbf s}}, x)= x^2+x+1=(x-\alpha)(x-(\alpha+1))$.
Now we have to  evaluate  $\mathcal{L}_{y}$ in $(\overline{{\mathbf s}},\alpha)$ and in $(\overline{{\mathbf s}},\alpha+1)$. Also in this case we obtain the correct solutions (with the highest possible multiplicity)
$$\begin{array}{l}
\mathcal{L}_{xy}(\overline{{\mathbf s}},\alpha,y)= y^2+\alpha=(y-(\alpha+1))^2\\
\mathcal{L}_{xy}(\overline{{\mathbf s}},\alpha+1,y)= y^2+\alpha+1=(y-\alpha)^2.
\end{array}$$

\item[-] We consider the syndrome $(\alpha+1,0,\alpha,0,0)$,
corresponding to the error vector 
$(1,\alpha,0,0,0,0, 0, 0)$, we obtain
$$
\mathcal{L}_{x}(\overline{{\mathbf s}},x)=x^2\textrm{ and }
\mathcal{L}_{xy}(\overline{{\mathbf s}},0,y)=y^2+y=y(y-1).
$$
The solutions of the above system are $(0,0), (0,1)$. Also in this case  the solutions of the equation $\mathcal{L}_{x}(\overline{{\mathbf s}})=0$ are correct.
\item[-] Again, when there is only one error of value $\alpha+1$ in the third point, we have the correct answers, in fact
$$
\begin{array}{l}
 \mathcal{L}_{x}(\alpha+1,\alpha+1,1,\alpha+1,1,x)=x^2+1=(x+1)^2\\
\mathcal{L}_{xy}(\alpha+1,\alpha+1,1,\alpha+1,1,1,y)=y^2+(\alpha+1)y+\alpha=(y-1)(y-\alpha).   
\end{array}
$$
so the solutions are $(1,1)$, which is the ghost point, and $(1,\alpha)$ i.e. the coordinates of the right location.
\end{itemize}
\noindent 
The main difference between $\mathcal{L}_{x}$,$\mathcal{L}_{xy}$ of Example \ref{exhq2} and $\mathcal{L}_{x}$, $\mathcal{L}_{xy}$ of
this example is that now we do not have spurious solutions, that is,
now the roots of our locators are exactly the error locations and no
more ambiguity exists.
\end{example}

\vspace{0.5cm}

As evident from the previous example, multidimensional general error locator polynomials are very convenient for decoding.
However, to prove their existence we cannot use the theoretical methods developed so far, because these methods do not deal
with multiplicities. In the next section we will develop more advanced theoretical methods, that will permit to construct ideals where these
polynomials lie and can be easily spotted.

%%%%%%%%%%%%%%%%%%%%%%%%%%%%%%%%%%%%%%%%%%%%%%%%%%%%%%%%%%%%%%%%%%%%%%%%%%%%%%

\section{Stuffed ideals}
\label{hasse}
Let $G$ be a reduced \Gr \ basis of a radical weakly stratified
ideal $J$ as in Proposition \ref{GBstructure}. From now on we consider the ordering as in Proposition~\ref{GBstructure}.
In this section we fix $1\leq i\leq m$ and $1\leq j\leq L$ and we consider the projection
$$\pi:\mathcal{ V}(J_{\mathcal{S},\mathcal{A}_{L},\dots,\mathcal{A}_{j+1},
{\sf{a}}_{j,1},\dots,{\sf{a}}_{j,i-1},{\sf{a}}_{j,i}})\rightarrow\mathcal{ V}(J_{\mathcal{S},\mathcal{A}_{L},\dots,\mathcal{A}_{j+1},{\sf{a}}_{j,1},\dots,
{\sf{a}}_{j,i-1}})$$

%to (recall that $\mathbb{K}=\overline{\mathbb{K}}$)
%$$\pi:\mathbb{K}^{N+(L-j)m+i}\rightarrow\mathbb{K}^{N+(L-j)m+i-1}$$
We consider the variable ${\sf{a}}_{j,i}$ in block $A_j$.

Let $\mathcal{R}=\mathbb{K}[\mathcal{S},\mathcal{A}_{L},\dots,\mathcal{A}_{j+1},
{\sf{a}}_{j,1},\dots,{\sf{a}}_{j,i-1}]$. Let $g$ be a polynomial in $G^{\, j,i}$ such that the degree in ${\sf{a}}_{j,i}$ of $g$ is $\Delta=\zeta(j,i)=\eta(j,i)$. By Proposition \ref{GBstructure}, we know that this polynomial exists and it can be assumed to be monic in $\mathcal{R}[{\sf{a}}_{j,i}]$.
Let $P_h\in \Sigma_h^{j,i}$ where $1\leq h\le\Delta-1$, then
$$g(P_h,{\sf{a}}_{j,i})={\sf{a}}_{j,i}^{\Delta}+\alpha_{\Delta-1} {\sf{a}}_{j,i}^{\Delta-1}+\ldots+\alpha_0 \in \mathbb{K}[{\sf{a}}_{j,i}]\textrm{ where }\alpha_i\in \mathbb{K}.$$
We are interested in solutions of the equation
\begin{equation}\label{g0}
    g(P_h,{\sf{a}}_{j,i})=0.
\end{equation}
Since  {\small{$P_h\in \Sigma_h^{j,i}$}}, there exist distinct {\small{$Q_1,\ldots,Q_h$}}  such that {\small{$\pi^{-1}(P_h)=\{Q_1,\ldots,Q_h\}$}}, with $Q_l=(P_h,\lambda_l)$ for any $1\le l  \le h$. So $\lambda_1,\ldots,\lambda_h$ are some solutions of (\ref{g0}). But there exist other $\Delta-h$ solutions (counting multiplicities) of (\ref{g0}), say $\lambda_{h+1},\ldots,\lambda_{\Delta}$. There are two cases:
\begin{enumerate}
    \item[(a)] It may be that $\lambda_{h+l}\not\in\{\lambda_1,\ldots,\lambda_h\}$ for some $l$. In this case, point $(P_h,\lambda_{h+l})$ is not an extension of $P_h$, because  {\small{$(P_h,\lambda_{h+l})\not\in \mathcal{ V}(J_{\mathcal{S},\mathcal{A}_{L},\dots,\mathcal{A}_{j+1},
{\sf{a}}_{j,1},\dots,{\sf{a}}_{j,i-1},{\sf{a}}_{j,i}})$}}, and so $\lambda_{h+l}$ is a parasite solution.
\item[(b)] But it may  also be that  $\{\lambda_{h+1},\ldots,\lambda_{\Delta}\}\subset\{\lambda_1,\ldots,\lambda_h\}$, depending on the multiplicities of the $\{\lambda_1,\ldots,\lambda_h\}$. In this case, if we solve (\ref{g0}), we have exactly the extensions and we are not confused by parasite solutions.
\end{enumerate} 
We want to change slightly our variety in order to force case (b).
To do that, we need that the sum of multiplicities of $\{\lambda_{l}\}_{1\leq l\leq h}$ is equal to $\Delta$. To increase the multiplicity of any $\lambda_l$, we can use the Hasse derivative (see Sec. \ref{hassesub} and in particular Theorem \ref{teomult}). 

\begin{definition}\label{stuffed}
Let $K\subset\mathcal{R}[{\sf{a}}_{j,i}]$ be a zero-dimensional ideal such that $\mathcal{V}(K)\subset \mathbb{A}_{j,i}$. Let $\Delta=\eta(j,i)$. Let $G=\mathrm{GB}(K)$ and $g=g_{\Delta}^{(i)}$.  We say that $K$ is \textbf{stuffed} if for any $1\leq h\leq \Delta-1$ and  for any  $P_h\in\Sigma_h^{j,i}$, the equation (\ref{g0}) has $h$ distinct solutions in $\mathbb{K}$.
\end{definition}

%Note that the equation (\ref{g0}) may have solution in $\overline{\mathbb{K}}\setminus \mathbb{K}$ even when $K$ is stuffed. However, we do not worry about then, because if $\lambda_h\not\in \mathbb{K}$ it is obvious that it is a parasite solution and so it can be discarded in the decoding. 

\begin{definition}\label{deftheta}
Let {\footnotesize{$H\subset\mathbb{K}[V_1,\ldots,V_{\mathcal{N}}]$}} be a zero-dimensional ideal. Let {\footnotesize{$n\geq 1$}}. Let $f\in H$ and $Q\in \mathcal{V}(H)$ where $Q=(P,\overline{V}_{\mathcal{N}})$. Let
$\vartheta_1: H\longrightarrow \mathbb{K}$ such that 
$$\vartheta_1(f)=\varphi^{(1)}(f(P,V_{\mathcal{N}}))_{\big{|_{V_{\mathcal{N}}=
\overline{V}_{\mathcal{N}}}}}$$
and let $H^{[Q,1]}=\ker\vartheta_1$. We define inductively
$\vartheta_n: H^{[Q,n-1]}\longrightarrow \mathbb{K}$ such that 
$$\vartheta_n(f)=\varphi^{(n)}(f(P,V_{\mathcal{N}}))_{\big{|_{V_{\mathcal{N}}=
\overline{V}_{\mathcal{N}}}}}$$
and we write $H^{[Q,n]}=\ker\vartheta_n$.
\end{definition}

%\pagebreak

\noindent We note that $H^{[Q,1]}$ is an ideal. In fact, if $f\in H^{[Q,1]}$, $g\in \mathbb{K}[V_{\mathcal{N}}]$ and $\overline{V}_{\mathcal{N}}\in \mathcal{V}(f)$ then we claim that
$$
\varphi^{(1)}(fg)(\overline{V}_{\mathcal{N}})=
\varphi^{(1)}(f)(\overline{V}_{\mathcal{N}})
g(\overline{V}_{\mathcal{N}})+
\varphi^{(1)}(g)(\overline{V}_{\mathcal{N}})f(\overline{V}_{\mathcal{N}})=0.
$$

\noindent In fact, $\varphi^{(1)}(f)(\overline{V}_{\mathcal{N}})=0$, since $f\in \ker\vartheta_1$ and  $f(\overline{V}_{\mathcal{N}})=0$, since $\overline{V}_{\mathcal{N}}\in \mathcal{V}(f)$.
Inductively, we can similarly prove that $H^{[Q,n]}$ is an ideal.

Let us consider a zero-dimensional ideal $K\subset\mathcal{R}[{\sf{a}}_{j,i}]$. It is convenient to call our variables also as $\{V_1,\ldots,V_{\mathcal{N}}\}=\mathcal{\mathcal{S}}\cup\mathcal{A}_L,
\cup\dots,\mathcal{A}_{j+1}\cup\{{\sf a}_{j,1},\dots,{\sf a}_{j,i}\}$, in such a way that $V_1<\ldots<V_{\mathcal{N}}$ and $V_{\mathcal{N}}={\sf a}_{j,i}$.\\
We suppose that {\small{$G=\mathrm{GB}(K)$}} satisfies (\ref{4a}), that is 
$$
{\bf T}(G)\approxeq\{\tau,\ldots,\tau,\tau V_{\mathcal{N}},\ldots,\tau V_{\mathcal{N}},\ldots,\tau V_{\mathcal{N}}^{\Delta-1},\ldots,\tau V_{\mathcal{N}}^{\Delta-1},V_{\mathcal{N}}^{\Delta}\}
$$
and $\tau$ is any elements in $V_1<\ldots<V_{\mathcal{N}-1}$. In particular there is a polynomial {\small{$g\in G^{\, j,i}$}} s.t. {\small{$\textbf{T}(g)={{\sf a}_{j,i}}^{\Delta}={V_{\mathcal{N}}}^{\Delta}$}}.\\
For each $1\leq h\leq \Delta-1$ we perform the following operations:
\begin{itemize}
    \item[(a)] If for any $P_h\in \Sigma_h^{j,i}$  equation (\ref{g0}) has $h$ distinct solutions in $\mathbb{K}$, we do nothing. Otherwise, we take any $P_h\in \Sigma_h^{j,i}$ such that (\ref{g0}) has more than $h$ solutions.
    \item[(b)] We consider $Q=(P_h,\overline{V}_{\mathcal{N}})$ which is any extension of $P_h$. We want to compute  $H^{[Q,\Delta-h]}$. In order to do that, we iteratively compute $\ker\vartheta_{n}$ (see Definition \ref{deftheta}) from $n=1$ to $n=\Delta-h$.
    \item[(c)] For any such $n$, we apply  Theorem \ref{teoBM} to $H=H^{[Q,n-1]}$ and $H'=H^{[Q,n]}$, so that $H'=\ker \vartheta_n$. The hypotheses of Theorem \ref{teoBM}  are trivially satisfied, because $\vartheta_{n}$ is $\mathbb{K}-$linear and $\ker \vartheta_n$ is an ideal. In the subsequent step (d), we get ready to apply  Theorem \ref{teoBM}.
    \item[(d)] We consider the point {\small{$Q=(\overline{V}_1,\ldots,\overline{V}_{\mathcal{N}})=(P,\overline{{\sf{a}}}_{j,i})$}} with {\small{$P=(\overline{V}_1,\ldots,\overline{V}_{\mathcal{N}-1})$}} $
=(\overline{s}_1,\ldots,\overline{s}_N,\overline{{\sf{a}}}_{L,1},\ldots,
\overline{{\sf{a}}}_{j,i-1})$, $Q$ is a solution of ideal $H$.
To apply  Theorem \ref{teoBM}, we consider the smallest  polynomial $g^*\in W$, with $W=\mathrm{GB}(H)$, such that
$$
\vartheta_{n}(g^*)\ne 0\textrm{, that is, }\varphi^{(n)}(g^*((P,V_{\mathcal{N}}))_{\big{|_{V_{\mathcal{N}}=
\overline{V}_{\mathcal{N}}}}}\neq 0.
$$
\noindent We compute $W'$ from $W$.
To apply Theorem \ref{teoBM}, we need to identify $\beta_k$'s such that
$$
\vartheta_{n}((V_k-\beta_k)g^*)=0\textrm{ where }1\leq k\leq \mathcal{N},
$$
where we consider the weaker form $(iii)$ in Remark \ref{wow}.\\
We solve previous equation as follows 
$$
\begin{array}{l}
\vartheta_{n}((V_k-\beta_k)g^*)=(\overline{V}_k-\beta_k)\vartheta_{n}(g^*)=0\quad(1\leq k\leq \mathcal{N}-1)\\
\implies \beta_k\textrm{ is the }k\textrm{-th component of }Q.\\
\end{array}
$$
$$
\begin{array}{l}
\vartheta_{n}((V_{\mathcal{N}}-\beta_{\mathcal{N}})g^*)=
g^*(Q)+\overline{V}_{\mathcal{N}}\vartheta_{n}(g^*)-\beta_{\mathcal{N}}
\vartheta_{n}(g^*)=0\\
\implies \beta_{\mathcal{N}}=\frac{g^*(Q)+\overline{V}_{\mathcal{N}}\vartheta_{n}(g^*)}
{\vartheta_{n}(g^*)}=\frac{g^*(Q)}{\vartheta_{n}(g^*)}+
\overline{V}_{\mathcal{N}}.\\  
\end{array}
$$

\begin{lemma}\label{rem2} We claim that
$$
g^*\in G_{r}^{\, j,i}\textrm{, i.e. }\textbf{T}(g^*)\approxeq\tau {\sf{a}}_{j,i}^{r}\textrm{ where }n-1\leq r \leq \Delta-1.
$$

\begin{proof}
Recall that we use $\approxeq$ to express a unconventional identification (see Remark \ref{remB}).
If $\textbf{T}(g^*)\approxeq\tau {\sf{a}}_{j,i}^{r}$ with $r<n-1$, then $\vartheta_{n}(g^*)=0$.
So  $\textbf{T}(g^*)\approxeq\tau {\sf{a}}_{j,i}^{r}$ with $r\geq n-1$. However, $r\ne\Delta$, otherwise we have already finished. So $n-1\leq r \leq \Delta-1$.
\end{proof}
\end{lemma}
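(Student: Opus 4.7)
The plan is to pin down ${\bf T}(g^*)$ using the shape description of $W=\mathrm{GB}(H)$ supplied by (\ref{4a}), and then to trap the exponent of ${\sf a}_{j,i}$ inside the interval $[n-1,\Delta-1]$. The first observation is that by (\ref{4a}) every element of $W$ has leading term of the form $\tau\,{\sf a}_{j,i}^r$ for some $0\le r\le\Delta$; hence ${\bf T}(g^*)\approxeq\tau\,{\sf a}_{j,i}^r$ for such an $r$, and the job is to confine $r$ to $[n-1,\Delta-1]$.

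For the lower bound I would argue by contradiction. Suppose $r<n-1$. Viewing $g^*$ as a polynomial in ${\sf a}_{j,i}$ over $\mathcal{R}$, its degree is at most $r$, and the specialisation $g^*(P,{\sf a}_{j,i})\in\mathbb{K}[{\sf a}_{j,i}]$ has degree at most $r<n$. Since $\varphi^{(n)}({\sf a}_{j,i}^i)=\binom{i}{n}{\sf a}_{j,i}^{i-n}=0$ for every $i<n$, the $n$-th Hasse derivative of $g^*(P,{\sf a}_{j,i})$ vanishes identically, so $\vartheta_n(g^*)=0$, contradicting the defining property of $g^*$ as the smallest element of $W$ on which $\vartheta_n$ does not vanish. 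Therefore $r\ge n-1$.

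For the upper bound $r\le\Delta-1$, I would invoke item $3$ of Proposition \ref{GBstructure}: there is a unique monic element $g=g^{(i)}_{j,\Delta,1}\in W$ with ${\bf T}(g)={\sf a}_{j,i}^\Delta$, and it is the largest element of $W$ under the block order. If $g^*$ coincided with this $g$, the Buchberger--M\"oller selection rule would say that $\vartheta_n$ kills every strictly smaller element of $W$, in particular every element with leading term $\tau\,{\sf a}_{j,i}^r$ for $r<\Delta$; this means that the multiplicity which the current iteration is supposed to add at $Q$ has already been realised at the previous step, so the $n$-th step is vacuous and can be skipped. Thus without loss of generality $g^*\neq g$ and $r\le\Delta-1$. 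I expect this second bound to be the main obstacle to making the argument fully rigorous, since it implicitly relies on the preservation of the Gr\"obner shape (\ref{4a}) and of $\Delta=\eta(j,i)$ under each stuffing step, which is precisely the content of Theorem \ref{TeoHasse}; by contrast, the lower bound is an elementary computation with Hasse derivatives.
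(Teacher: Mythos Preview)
Your argument is essentially the same as the paper's. For the lower bound you both observe that if $\deg_{{\sf a}_{j,i}}g^*=r<n-1$ then the $n$-th Hasse derivative of the specialisation $g^*(P,{\sf a}_{j,i})$ vanishes identically, contradicting $\vartheta_n(g^*)\neq 0$. For the upper bound the paper is equally informal: it simply writes ``$r\neq\Delta$, otherwise we have already finished'', which is exactly your observation that if $g^*$ were the top element $g$ then the stuffing step at level $n$ is vacuous and one proceeds; your self-critical remark about this bound relying on the preserved shape (\ref{4a}) is fair, and the paper does not add anything beyond what you wrote.
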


    \item[(e)]
We build  $W'$ using Theorem \ref{teoBM}. By Lemma \ref{rem2} we have that $$\textbf{T}(g^*)\approxeq\tau {\sf{a}}^{r}_{j,i} \quad n-1\leq r \leq \Delta-1.$$ 
So {\scriptsize{$W'=W_1\cup W_2\cup W_3$}} where
\begin{itemize}
    \item[-]{\scriptsize{$W_1=\{g\mid g<g^*\}$}}. \\
 So {\scriptsize{${\bf T}(W_1)\approxeq\{\tau,\ldots,\tau{\sf{a}}_{j,i},\ldots,
\tau {\sf{a}}^{r-1}_{j,i}\}$}}, or possibly also {\scriptsize{$\tau {\sf{a}}_{j,i}^{r}\in{\bf T}(W_1)$}}.
 \item[-] {\scriptsize{$W_2=W_{2,1}\cup W_{2,2}$}} where
{\scriptsize{$W_{2,1}=\{g^*(s_1-\overline{s}_1),\ldots,g^*({\sf{a}}_{j,i-1}-
\overline{{\sf{a}}}_{j,i-1})$}},\\ so {\scriptsize{$ {\bf T}(W_{2,1})=\{\tau  {\sf{a}}^{r}_{j,i},\ldots,\tau  {\sf{a}}^{r}_{j,i}\}$}}.\\ 
{\scriptsize{$W_{2,2}=\{g^*({\sf{a}}_{j,i}-\frac{g^*}{\vartheta_{r}(g^*)}-
\overline{{\sf{a}}}_{j,i})\}$}}. Then {\scriptsize{$ {\bf T}(W_{2,2})=\{\tau {\sf{a}}_{j,i}^{r+1}\}$}}.
\item[-]{\scriptsize{$W_3=\{g-\frac{\vartheta_{r}(g)}{\vartheta_{r}(g^*)}g^*\mid g>g^*\}$}} and hence the leading terms of $W_3$ are those in ${\bf T}(W)$, except  for {\scriptsize{$\tau,\ldots,\tau {\sf{a}}_{j,i}^{r-1}$}} and possibly {\scriptsize{$\tau {\sf{a}}_{j,i}^{r}$}}.
\end{itemize}
Therefore, the structure of $W'$ is the same as that of $W$, except possibly if $r+1=\Delta$. In that case
\begin{itemize}
    \item[-] {\scriptsize{${\bf T}(W_1)\approxeq\{\tau,\ldots,\tau{\sf{a}}_{j,i},\ldots,\tau {\sf{a}}_{j,i},\tau{\sf{a}}_{j,i}^{\Delta-2},\ldots,\tau {\sf{a}}^{\Delta-2}_{j,i}\}$}}, or possibly also {\scriptsize{$\tau {\sf{a}}_{j,i}^{\Delta-1}\in{\bf T}(W_1)$}},
 \item[-] {\scriptsize{$\textbf{T}(W_2)=\textbf{T}(W_{2,1})\cup \textbf{T}(W_{2,2})$}} where
{\scriptsize{$  {\bf T}(W_{2,1})=\{\tau {\sf{a}}^{\Delta-1} ,\ldots,\tau {\sf{a}}^{\Delta-1}\}$}}\\ and 
{\scriptsize{ $ {\bf T}(W_{2,2})=\{\tau {\sf{a}}_{j,i}^{\Delta}\}$}}.
\item[-] {\scriptsize{$\textbf{T}(W_3)=\emptyset$}}.
\end{itemize}
In the \Gr\ basis $W$ in  (\ref{4a}) there exists a polynomial $\overline{g}$ such that ${\bf T}(\overline{g})={\sf{a}}_{j,i}^{\Delta}$.
So ${\bf T}(\overline{g})|\tau {\sf{a}}_{j,i}^{\Delta}$ and we can remove the new term. Thus ${\bf T}(W')$ does not change and it remains as in (\ref{4a}).

  \item[(f)] Once all the  above operations have been concluded,
 for any $1\leq h\leq \Delta-1$ and for any $P_h\in \Sigma_h^{j,i}$, (\ref{g0}) will have exactly $h$ distinct solutions and  the resulting ideal will be stuffed.

\end{itemize}

\noindent We have thus proved the following theorem:

\begin{theorem} \label{TeoHasse}
Let $K\subset\mathcal{R}[{\sf{a}}_{j,i}]$ be a zero-dimensional ideal such that $G=\mathrm{GB}(K)$ verifies (\ref{4a}). Let $g$ be the polynomial in $G$ such that $\textbf{T}(g)={\sf{a}}_{j,i}^{\Delta}$, with $\Delta=\eta(j,i)$. We can obtain an ideal $\widetilde{K}\subset\mathcal{R}[{\sf{a}}_{j,i}]$ such that
\begin{itemize}
    \item[1.]  $\widetilde{K}$ is stuffed.
\item[2.]  $\mathrm{GB}(\widetilde{K})$ verifies (\ref{4a}).
\item[3.] $\mathcal{V}(\widetilde{K})=\mathcal{V}(K)$.
\end{itemize} 
\end{theorem}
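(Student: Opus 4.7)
The proof plan is to formalize the stuffing procedure outlined in steps (a)--(f) above and verify that it has all the claimed properties. Concretely, I will proceed in three stages: construction of $\widetilde{K}$, shape-preservation of its reduced \Gr\ basis, and variety-preservation.

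First, I would define $\widetilde{K}$ by iterating the single-step construction. For each level $h$ with $1 \leq h \leq \Delta-1$ and each $P_h \in \Sigma_h^{j,i}$ such that $g(P_h,{\sf{a}}_{j,i})=0$ admits more than $h$ distinct roots in $\mathbb{K}$, pick one extension $Q=(P_h,\overline{V}_{\mathcal{N}})$ of $P_h$ and replace the current ideal $H$ by $H^{[Q,n]} = \ker \vartheta_n$ for $n = 1,2,\ldots,\Delta-h$, in order. Each $\ker\vartheta_n$ is an ideal (as remarked after Definition~\ref{deftheta}), and the $\mathbb{K}$-linearity of $\vartheta_n$ together with Remark~\ref{wow} justifies applying the Buchberger--M\"oller algorithm (Theorem~\ref{teoBM}) to pass from a \Gr\ basis of $H^{[Q,n-1]}$ to one of $H^{[Q,n]}$. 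After exhausting all $(h,P_h)$ pairs, call the resulting ideal $\widetilde{K}$.

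Next, I would verify property~2, that $\mathrm{GB}(\widetilde{K})$ still satisfies \eqref{4a}. This is the essential computational content, and it reduces to the single-step analysis in part~(e) above. The crux is Lemma~\ref{rem2}: the polynomial $g^*$ selected by Theorem~\ref{teoBM} must have leading term $\tau{\sf{a}}_{j,i}^{r}$ with $n-1\leq r\leq \Delta-1$, because $\vartheta_n$ kills everything of lower ${\sf{a}}_{j,i}$-degree and everything of degree~$\Delta$ already lies in the next ideal. Granted this, one checks that the three blocks $W_1,W_2,W_3$ produced by Theorem~\ref{teoBM} introduce only leading terms of the forms already permitted by \eqref{4a}; the only borderline case is $r+1=\Delta$, where the new term $\tau{\sf{a}}_{j,i}^\Delta$ is divisible by the existing ${\sf{a}}_{j,i}^\Delta$ and can be dropped using Remark~\ref{remA}. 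A straightforward induction on the number of steps then gives~2. Since $g$ stays monic of degree $\Delta$ in ${\sf{a}}_{j,i}$ throughout the process (its leading term is unchanged), it is still unambiguously defined in $\mathrm{GB}(\widetilde{K})$.

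For property~3, I would show that at each stuffing step $\mathcal{V}(H^{[Q,n]})=\mathcal{V}(H^{[Q,n-1]})$. The inclusion $\subseteq$ is clear, and for the reverse, any $f\in H^{[Q,n-1]}$ vanishes on $\mathcal{V}(H^{[Q,n-1]})$, while the new generators of $H^{[Q,n]}$ (produced in $W_2$) are all of the form $(V_k-\beta_k)g^*$ with $g^*\in H^{[Q,n-1]}$, hence vanish on the same variety. Thus $\mathcal{V}(\widetilde{K})=\mathcal{V}(K)$. Finally, for property~1, observe that the stuffing of the pair $(h,P_h)$ forces the residue class of $g$ at $P_h$, viewed as a polynomial in $\mathbb{K}[{\sf{a}}_{j,i}]$, to vanish to order $\Delta-h$ at the chosen extension $\overline{V}_{\mathcal{N}}$ (via Theorem~\ref{teomult} applied to each linear factor): each $\vartheta_n$ enforces one extra derivative condition on $g(P_h,{\sf{a}}_{j,i})$. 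Since $g$ already vanishes at the other $h-1$ extensions of $P_h$, and the sum of multiplicities cannot exceed $\Delta$, the resulting $g(P_h,{\sf{a}}_{j,i})$ must be exactly $\prod_{l=1}^{h}({\sf{a}}_{j,i}-\lambda_l)^{r_l}$ with $\sum r_l=\Delta$ and $r_l\geq 1$, so it has exactly $h$ distinct roots. The main obstacle I anticipate is the bookkeeping in property~2: one must keep careful track that simultaneously processing many pairs $(h,P_h)$ does not accidentally raise $\eta(j,i)$ or disturb the leading term of some lower-degree generator, which is why the procedure is phrased one $(h,P_h)$ at a time and one derivative order at a time.
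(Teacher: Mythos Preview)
Your proposal follows exactly the paper's approach: the paper's proof is literally the procedure (a)--(f) preceding the theorem statement, and you have correctly identified the role of Lemma~\ref{rem2}, the case analysis in step~(e), and the use of Remark~\ref{remA} to discard the redundant $\tau\,{\sf a}_{j,i}^{\Delta}$ term in the borderline case $r+1=\Delta$. Your treatment of properties~1 and~2 is faithful to (and in places more explicit than) the paper.

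There is, however, a small but genuine gap in your argument for property~3. Since $H^{[Q,n]}=\ker\vartheta_n\subseteq H^{[Q,n-1]}$, the trivial inclusion on varieties is $\mathcal{V}(H^{[Q,n]})\supseteq\mathcal{V}(H^{[Q,n-1]})$, not $\subseteq$; your $W_2$ argument reproves this easy direction. The inclusion that actually needs work is $\mathcal{V}(H^{[Q,n]})\subseteq\mathcal{V}(H^{[Q,n-1]})$, i.e.\ that passing to the kernel of $\vartheta_n$ creates no new zeros. This is not automatic, but it follows quickly from the shape of $W'=W_1\cup W_2\cup W_3$: if $P\in\mathcal{V}(H^{[Q,n]})$ and $g^*(P)\neq 0$, then the vanishing of every $(V_k-\beta_k)g^*$ at $P$ forces $P=(\beta_1,\dots,\beta_{\mathcal N})$; but $\beta_k$ is the $k$-th coordinate of $Q$ for $k<\mathcal N$, and $\beta_{\mathcal N}=g^*(Q)/\vartheta_n(g^*)+\overline V_{\mathcal N}=\overline V_{\mathcal N}$ because $Q\in\mathcal{V}(H^{[Q,n-1]})$ (by induction) gives $g^*(Q)=0$. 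Hence $P=Q\in\mathcal{V}(H^{[Q,n-1]})$. If instead $g^*(P)=0$, then from $W_1$ and $W_3$ all of $W$ vanishes at $P$, so again $P\in\mathcal{V}(H^{[Q,n-1]})$. The paper does not spell this out either, but you should be aware that the direction you called ``clear'' is the one that requires this check.
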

Although, in Theorem \ref{TeoHasse} we obtain $\widetilde{K}$ as in the procedure above, there are other ways to obtain $\widetilde{K}$, for example by simultaneously increasing the multiplicity of more $\lambda_h$'s.

Note that, generally speaking, $\widetilde{K}$ will lose the radicality, but its \Gr\ basis will retain (\ref{4a}), which is what we need.

\begin{theorem} \label{TeoHassesms}
If $K$ and $\widetilde{K}$ are as in Theorem \ref{TeoHasse}, then if $K$ is, respectively, strongly multi-stratified, multi-stratified and weakly stratified, then $\widetilde{K}$ is so.

\begin{proof}
The stuffing procedure does not change the number of pre-images at any level.
\end{proof}

\end{theorem}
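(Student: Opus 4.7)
The plan is to reduce the preservation of all three stratification conditions under stuffing to a single set-theoretic observation: the stuffing procedure leaves the varieties of every relevant elimination ideal unchanged, and every condition in Definitions \ref{weaklyideal} and \ref{strongly} is phrased purely in terms of those varieties (with multiplicities explicitly discarded, as emphasised in Remark \ref{droprad}).

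First I would invoke Theorem \ref{TeoHasse}$(3)$, which gives $\mathcal{V}(\widetilde{K}) = \mathcal{V}(K)$, and promote it to the statement that for every elimination subring $R \subseteq \mathbb{K}[\mathcal{S},\mathcal{A}_L,\dots,\mathcal{A}_1,\mathcal{T}]$ appearing in the stratification set-up we have $\mathcal{V}(\widetilde{K}\cap R) = \mathcal{V}(K \cap R)$. This follows from the Closure Theorem combined with zero-dimensionality: the variety of each elimination ideal is the Zariski closure of the projection of the ambient variety onto the corresponding coordinates, and projections of finite sets are automatically closed, so $\mathcal{V}(\widetilde{K}\cap R) = \pi(\mathcal{V}(\widetilde{K})) = \pi(\mathcal{V}(K)) = \mathcal{V}(K\cap R)$.

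Next I would note that each ingredient appearing in the definitions of weakly, multi-, and strongly multi-stratified -- the partition members $\Sigma_l^{j,i}$, the level values $\eta(j,i)$, the images $Z_j$, and the fibres $\pi_j^{-1}(Q)$ and $\rho_j(\pi_j^{-1}(Q))$ -- is a purely set-theoretic datum extracted from the varieties of elimination ideals and from cardinalities of extensions (no multiplicity is counted). By the previous step, these data are literally identical for $K$ and for $\widetilde{K}$. Consequently the three implications reduce to tautologies: the non-emptiness conditions $\Sigma_l^{j,i}\neq\emptyset$ (weakly stratified), the size bounds $|\pi_j^{-1}(Q)|\leq j$ together with the attainment of the maximum (multi-stratified), and the realisation of every $T\subseteq Z_j$ with $|T|\leq j$ as some $\rho_j(\pi_j^{-1}(Q))$ (strongly multi-stratified) are assertions about the very same sets in both ideals, and hence each of them transfers verbatim from $K$ to $\widetilde{K}$.

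I do not anticipate a genuine obstacle; the only point that deserves some care is the step promoting $\mathcal{V}(\widetilde{K}) = \mathcal{V}(K)$ to the equality of all elimination-ideal varieties, and this is precisely where zero-dimensionality (making every projection of the variety automatically Zariski-closed) is used in an essential way. This is exactly what the one-line remark that \emph{the stuffing procedure does not change the number of pre-images at any level} encodes.
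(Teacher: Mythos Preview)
Your proposal is correct and is essentially a detailed unpacking of the paper's one-sentence proof, which simply observes that the stuffing procedure does not change the number of pre-images at any level. You have carefully justified exactly why that observation suffices---via $\mathcal{V}(\widetilde{K})=\mathcal{V}(K)$, zero-dimensionality, and the fact that all three stratification notions are phrased purely in terms of cardinalities of fibres with multiplicities discarded---so there is nothing to add.
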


Now, we are finally able to prove the existence of our multi-dimensional general error locator polynomials for any code.
Note that this is another constructive proof, since it tells us how to compute our polynomials, that is, simply by computing a suitable \Gr\ bases
of the corresponding stuffed ideal.

\begin{theorem} \label{bombastaffato}
Let $C=C^{\perp}(I,L)$ be an  affine--variety code with $d \geq
3$.  Let  $\widetilde{J}_{*}^{C,t}$ be a stuffed ideal of  $J_{*}^{C,t}$. Then
\begin{itemize}
\item[i)] $\widetilde{J}_{*}^{C,t}$ is a  strongly multi-stratified
ideal with respect to the $X$ variables.
\item[ii)] A \Gr\ basis of $\widetilde{J}_{*}^{C,t}$ contains a set of 
multi-dimensional general error locator polynomials for $C$.
\end{itemize} 
\end{theorem}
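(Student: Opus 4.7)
The plan is to derive part~(i) directly from the preceding results. By Theorem~\ref{bomba}(i) the ideal $J_{*}^{C,t}$ is a radical strongly multi-stratified ideal with respect to the $X$ variables, and Theorem~\ref{TeoHassesms} ensures that the stuffing construction preserves the property of being strongly multi-stratified. Applied to $\widetilde{J}_{*}^{C,t}$, these two facts settle~(i) without further work.

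For part~(ii), I would first translate the notation of Section~\ref{zeroideal} into the coding setting: the block $X_t$ plays the role of $\mathcal{A}_L$, and within it the coordinate $x_{t,i}$ is what Definition~\ref{zeroaf} calls $x_i$. For each $1 \leq i \leq m$, take as candidate $\mathcal{L}_i := g_{t,\zeta(t,i),1}^{(i)}$, the unique polynomial in the reduced \Gr\ basis of $\widetilde{J}_{*}^{C,t}$ with leading term $x_i^{\zeta(t,i)}$; its existence and uniqueness are guaranteed by Theorem~\ref{TeoHasse}(2) combined with Proposition~\ref{GBstructure}(3)--(4). The block ordering forces the coefficients of $\mathcal{L}_i$ viewed as a polynomial in $x_i$ to lie in $\FF_q[S,x_1,\ldots,x_{i-1}]$, and the argument in the proof of Theorem~\ref{bomba}(ii) shows $\zeta(t,i)=\eta(t,i)=t_i$. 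This yields condition~1 of Definition~\ref{zeroaf}.

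The crucial step is condition~2. Fix a correctable syndrome $\overline{\mathbf{s}}$ of error weight $\mu$ and a truncated prefix $\overline{\mathbf{x}}^j$, and set $P := (\overline{\mathbf{s}},\overline{\mathbf{x}}^j)$. Then $P$ lies in some stratum $\Sigma_h^{t,i}$, where $h$ is by definition the number of distinct extensions of $P$ in the $x_i$-direction inside $\mathcal{V}(\widetilde{J}_{*}^{C,t}) = \mathcal{V}(J_{*}^{C,t})$. By the stuffing property (Definition~\ref{stuffed}) the specialized univariate polynomial $\mathcal{L}_i(P,x_i) \in \mathbb{K}[x_i]$ has exactly $h$ distinct roots in $\overline{\mathbb{K}}$, and since $\mathcal{L}_i$ vanishes on the variety these roots must coincide with the valid extensions of $P$. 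Translating via the strongly multi-stratified structure established in part~(i), those valid extensions are precisely the $x_i$-components of the actual error locations sharing the prefix $\overline{\mathbf{x}}^j$ when $\mu=t$, together with the ghost component $\bar x_{0,i}$ in the under-corrected regime $\mu < t$; this is exactly the content of condition~2, so no parasite solutions occur.

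The main obstacle I expect is the bookkeeping needed to identify abstract extensions of $P$ inside $\mathcal{V}(\widetilde{J}_{*}^{C,t})$ with valid coding-theoretic completions. One must verify that the relations $\{e_j\prod_\iota((x_{j,\iota}-x_{0,\iota})^{q-1}-1)\}$ and $\{(e_j^{q-1}-1)(x_{j,\iota}-x_{0,\iota})\}$ introduced in ideal~\eqref{fineideal} force each $X_j$-block to be either an actual point of $\mathcal{V}(I)$ (when $e_j \neq 0$) or exactly $P_0$ (when $e_j = 0$), so that the extensions of $P$ do correspond bijectively to the valid $x_i$-completions in the sense of Definition~\ref{zeroaf}. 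Once this correspondence is in place, the degree count is automatic since $\mathcal{L}_i$ has total degree $t_i$ and the $h$ distinct roots given by stuffing absorb the remaining $t_i - h$ units of degree as multiplicity, exactly as witnessed in Example~\ref{EXHerm2}.
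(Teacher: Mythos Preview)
Your proposal is correct and follows essentially the same approach as the paper: part~(i) is obtained by combining Theorem~\ref{bomba}(i) with Theorem~\ref{TeoHassesms}, and for part~(ii) you take $\mathcal{L}_i=g_{t,\zeta(t,i),1}^{(i)}$, check the shape condition via Theorem~\ref{TeoHasse}, and use the stuffing property to rule out parasite roots upon specialization. The paper's proof is considerably terser on the second condition (it does not spell out the $\Sigma_h^{t,i}$ bookkeeping or the distinction between the $\mu=t$ and $\mu<t$ cases), but the underlying argument is the same.
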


\begin{proof}
\begin{itemize}
\item[$i)$] We can use Th.\ref{TeoHassesms} and so {\small$\widetilde{J}_{*}^{C,t}$} is a  strongly multi-stratified ideal.
\item[$ii)$] As locators we can take for any $i$
$$
\mathcal{L}_{i}=g_{t,\zeta(t,i),1}^{(i)},
$$
where $\zeta(t,i)=\eta(t,i)$ and
${\bf T}(\mathcal{L}_{i})={x}_{i}^{t_i}, \quad t_i =\zeta(t,i)=\eta(t,i)$, thanks to Theorem \ref{TeoHasse}.
So the first condition of Definition \ref{zeroaf}
is satisfied.\\
 Let  $H=J_{*}^{C,t}$ and $\widetilde{H}=\widetilde{J}_{*}^{C,t}$. In order to prove the second condition we note that, 
since $\mathcal{L}_{i}$ is a polynomial of $\widetilde{H}_{S,x_{t,1},\dots,x_{t,i}}$ it will vanish at $({\bf s},{\bar x}_{1},\dots,{\bar x}_i)$, where
${\bf s}=({\bar s}_1,\dots,{\bar s}_r)$ and $({\bf s},{\bar x}_1,\dots,{\bar x}_i)$ can
be extended to a point in ${\mathcal{V}}(H)={\mathcal{V}}(\widetilde{H})$. 
Since $\widetilde{H}$ is stuffed,  $\mathcal{L}_{i}({\bf s},{\bar x}_1,\dots,{\bar x}_{i-1},x_i)$ has as solutions only the
${\bar x}_i$'s such that $({\bar x}_1,\dots,{\bar x}_{i})$ are
the first $i$ components of an error location corresponding to ${\bf s}$ (or $P_{0,i}$).

\end{itemize}
\end{proof}

%%%%%%%%%%%%%%%%%%%%%%%%%%%%%%%%%%%%%%%%%%%%%%%%%%%%%%%%%%%%%%%%%%%%%%%%%%%%%%

\section{Families of affine--variety codes}
\label{families}

In this section we consider some families of affine-variety codes.

\subsection{SDG curves}
We discuss codes from some curves introduced in \cite{CGC-cd-art-salazar}.
\begin{definition}[\cite{CGC-cd-art-salazar}]
Let $\FF_s$ be a subfield of $\FF_q$. A polynomial $f$ in
$\FF_q[x]$ is called an $(\FF_q,\FF_s)$--polynomial if for each
$\gamma \in \FF_q$ we have $f(\gamma) \in \FF_s$.
\end{definition}
%
%\pagebreak
\begin{proposition}[\cite{CGC-cd-art-salazar}] \
%\vspace{-0.2cm}
\begin{itemize}
\item[1.] The polynomial $f(x)=b_3x^3+b_2x^2+b_1x+b_0 \in
\FF_4[x]$ is an $(\FF_4,\FF_2)$--polynomial if and only if $b_0,
b_3 \in \FF_2$ and $b_2=b_1^2$.
\item[2.] The polynomial $g(x)=b_7x^7+\dots+b_1x+b_0 \in
\FF_8[x]$ is an $(\FF_8,\FF_2)$--polynomial if and only if $b_0,
b_7 \in \FF_2$, $b_2=b_1^2$,  $b_4=b_2^2$, $b_6=b_3^2$ and
$b_3=b_5^2$.
\end{itemize}
\end{proposition}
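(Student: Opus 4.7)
The plan is to use the fact that $\FF_2$ is precisely the fixed field of the Frobenius map $\alpha\mapsto\alpha^2$ inside both $\FF_4$ and $\FF_8$, so that a polynomial $f\in\FF_q[x]$ is an $(\FF_q,\FF_2)$-polynomial if and only if $f(\gamma)^2=f(\gamma)$ for every $\gamma\in\FF_q$, equivalently
\[
f(x)^2-f(x)\equiv 0\pmod{x^q-x}.
\]
From here the whole statement reduces to a direct coefficient comparison, exploiting the freshman's dream $f(x)^2=\sum b_i^2 x^{2i}$ available in characteristic $2$.

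For Part~1, I would square $f(x)=b_3x^3+b_2x^2+b_1x+b_0$, obtain $b_3^2x^6+b_2^2x^4+b_1^2x^2+b_0^2$, and then reduce modulo $x^4-x$ using $x^4\equiv x$ and $x^6\equiv x^3$. Subtracting $f(x)$ and collecting monomials yields the four scalar conditions $b_3^2=b_3$, $b_1^2=b_2$, $b_2^2=b_1$, $b_0^2=b_0$. The first and fourth say $b_3,b_0\in\FF_2$; the second says $b_2=b_1^2$; and the third is then automatic in $\FF_4$, since $b_1=b_2^2=b_1^4$ holds for every element of $\FF_4$. This proves both directions simultaneously, because each step is an equivalence.

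For Part~2, the same recipe applies to $g(x)=\sum_{i=0}^{7}b_ix^i\in\FF_8[x]$, working modulo $x^8-x$. Using $x^{2i}\bmod(x^8-x)$ for $i=0,\dots,7$ gives the exponent relabeling $0,2,4,6,1,3,5,7$, so setting the coefficients of $g(x)^2-g(x)$ to zero produces
\[
b_0^2=b_0,\ b_7^2=b_7,\ b_1^2=b_2,\ b_2^2=b_4,\ b_3^2=b_6,\ b_4^2=b_1,\ b_5^2=b_3,\ b_6^2=b_5.
\]
The first two give $b_0,b_7\in\FF_2$; the next four give $b_2=b_1^2$, $b_4=b_2^2$, $b_6=b_3^2$, $b_3=b_5^2$, exactly the stated conditions. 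The remaining two equations, $b_1=b_4^2$ and $b_5=b_6^2$, simplify to $b_1=b_1^8$ and $b_5=b_5^8$, which are automatic in $\FF_8$; hence they impose no new constraint.

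There is essentially no main obstacle: the only subtlety is bookkeeping, namely recognising that after the mod-$(x^q-x)$ reduction the ``extra'' equations that appear to impose further conditions on $b_1$ and (in the $\FF_8$ case) on $b_5$ are trivially satisfied because the ground field already satisfies $\alpha^q=\alpha$. Once this is observed, both equivalences reduce to transparent coefficient matching, so the proof is complete.
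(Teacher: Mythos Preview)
Your proof is correct. The paper does not actually prove this proposition; it is quoted as a result from \cite{CGC-cd-art-salazar} and stated without proof. Your argument via the Frobenius-fixed-field characterisation $f(\gamma)\in\FF_2\iff f(\gamma)^2=f(\gamma)$, followed by reduction modulo $x^q-x$ and coefficient comparison, is the natural direct verification and carries through exactly as you describe.
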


Let {\small$\mathcal{F}=\{f(x)+g(y) \mid f,g {\mbox{ are }} (\FF_8,\FF_2){\mbox{-polynomials }}, \deg(f)=4, \deg(g)=6\}$}.\\ 
In \cite{CGC-cd-art-salazar} it is shown that the
family $\mathcal{F}$ has $784$ members and that each member of
this family has $32$ roots in $(\FF_8)^2$. Let us consider the
polynomial ${\mathcal{G}}=f(x)+g(y)$, with $f(x)=x^4+x^2+x$ and
$g(y)=y^6+y^5+y^3+1$, so that $\mathcal{G} \in \mathcal{F}$. Let
$I=\langle \mathcal{G}\rangle$ and  $J_{*}^{C,t}$ be the ideal associated
to the $C=C^{\perp}(I,L)$ code over $\FF_8$ that can correct up
to $t=1$ errors and with defining monomials $L=\{1,y,x,y^2\}$.\\
Ideal $J_{*}^{C,t}$ is generated by:
{\small{
$$
\{x_1^8-x_1,y_1^8-y_1,e_1^7-1,x_1^4+x_1^2+x_1+y_1^6+y_1^5+y_1^3+1,e_1-s_1,e_1y_1-s_2,e_1x_1-s_3,e_1y_1^2-s_4
\}
$$}}
and  the reduced \GR \ basis  $G$ with respect to the lex ordering with\\ $s_1<s_2<s_3<s_4<x_1<y_1<e_1$ is
{\small
\begin{align*}
&\{s_1^7+1, s_2^8+s_2,s_3^4+s_3^2s_1^2+s_3s_1^3+s_2^6s_1^5+s_2^5s_1^6+s_2^3s_1+s_1^4,
s_4+s_2^2s_1^6,\\
&{\bf{x_1}}+s_3s_1^6,{\bf{y_1}}+s_2s_1^6, e_1+s_1\}
\end{align*}
}
and then
$$
  \mathcal{L}_{2}={\bf y_1}+s_3s_1^6,   \qquad
  \mathcal{L}_{1}={\bf x_1}+s_2s_1^6.
$$

\subsection{SDG surfaces I}
We discuss codes from some surfaces introduced in \cite{CGC-cd-art-salazar}.

Let $\mathcal{F}=\{f(x)+g(y)+h(z) \mid f,g,h$ are
$(\FF_4,\FF_2){\mbox{-polynomials }}, \deg(f)=\deg(h)=3,
\deg(g)=2\}$. In \cite{CGC-cd-art-salazar} it is shown that the
family $\mathcal{F}$ has $96$ members and that each member of this
family has $32$ roots in $(\FF_4)^3$. Let us consider the
polynomial ${\mathcal{G}}=f(x)+g(y)+h(z)$, with $f(x)=x^3$,
$g(y)=y^2+y+1$ and $h(z)=z^3+1$, so that $\mathcal{G} \in
\mathcal{F}$.
Let  $I=\langle \mathcal{G}\rangle$ and
$J_{*}^{C,t}$ be  the ideal associated to the code
$C=C^{\perp}(I,L)$ over $\FF_4$  that can correct up
to $t=1$ error and with defining monomials $L=\{1,x,z,y\}$.\\
The ideal $J_{*}^{C,t} \subset \FF_4[s_1,s_2,s_3,s_4,x_1,y_1,z_1,e_1]$
is generated by $ \{x_1^4-x_1, y_1^4-y_1, z_1^4-z_1, e_1^3-1,
g+f+h, e_1-s_1,e_1z_1-s_3,e_1x_1-s_2, e_1y_1-s_4 \} $ and  the
reduced \GR \ basis $G$ with respect to  the lex ordering with $s_1<s_2<s_3<s_4<x_1<y_1<z_1<e_1$
is 
{\small
$$\{s_1^3+1, \: s_2^4+s_2,  \: s_3^4+s_3, \: s_4^2+s_4s_1+s_3^3s_1^2+s_2^3s_1^2,
\: {\bf y_1}+s_4s_1^2, \: {\bf x_1}+s_2s_1^2, \: {\bf z_1}+s_3s_1^2, \:
e_1+s_1\} \,,
$$
}
then
$$\mathcal{L}_{1}= {\bf x_1}+s_2s_1^2,   \quad
  \mathcal{L}_{2}= {\bf y_1}+s_4s_1^2,   \quad
  \mathcal{L}_{3}= {\bf z_1}+s_3s_1^2 \,.
$$

\subsection{SDG surfaces II}

We discuss codes from another family of surfaces introduced in \cite{CGC-cd-art-salazar}.

Let $\mathcal{F}=\{\beta x^2z+\beta^2xz^2+f(x)+g(y)+h(z)$ $\mid
\beta \neq 0, f,g,h {\mbox{ are }}
(\FF_4,\FF_2)$-polynomials, $\deg(f) \leq 2, \deg(h)
\leq 3, \deg(g)=2\}$. 
In \cite{CGC-cd-art-salazar} it is shown
that the family $\mathcal{F}$ has $576$ members and that each
member of this family has $32$ roots in $(\FF_4)^3$. Let us
consider the polynomial ${\mathcal{G}}=x^2z+xz^2+f(x)+g(y)+h(z)$,
with $\beta=1$, $f(x)=1$, $g(y)=y^2+y+1$ and $h(z)=z^3+1$, so that
$\mathcal{G} \in \mathcal{F}$.
Let  $I=\langle \mathcal{G}\rangle$
and $J_{*}^{C,t} $ be the ideal associated to the  code
$C=C^{\perp}(I,L)$ over $\FF_4$  that can correct one
error and with defining monomials $L=\{1,z, z^2, z^3, x, y\}$.\\
The ideal $J_{*}^{C,t} \subset
\FF_4[s_1,s_2,s_3,s_4,s_5,s_6,x_1,y_1,z_1,e_1]$ is generated by $
\{x_1^4-x_1,y_1^4-y_1,z_1^4-z_1,e_1^3-1, x_1^2z_1+x_1z_1^2+f+g+h,
e_1-s_1, e_1z_1-s_2,
e_1z_1^2-s_3,e_1z_1^3-s_4,e_1x_1-s_5,e_1y_1-s_6, \} $
and the reduced \GR \ basis $G$  with respect to  the lex ordering with
$s_1<s_2<s_3<s_4<s_5<s_6<x_1<y_1<z_1<e_1$ is
%
%\vspace{-0.1cm}
{\small
\begin{align*}
&  \{s_1^3+1,  s_2^4+s_2, s_3+s_2^2s_1^2, s_4+s_2^3s_1,s_5^4+s_5,
s_6^2+s_6s_1+s_5^2s_2s_1^2+s_5s_2^2s_1^2+s_2^3s_1^2+s_1^2, \\
& {\bf x_1}+s_5s_1^2, {\bf{y_1}}+s_6s_1^2,
\; {\bf{z_1}}+s_2s_1^2,\;
e_1+s_1\}
\end{align*}
}
and then
$$
  \mathcal{L}_{1}= {\bf x_1}+s_5s_1^2,    \quad
  \mathcal{L}_{2}= {\bf{y_1}}+s_6s_1^2,   \quad
  \mathcal{L}_{3}= {\bf{z_1}}+s_2s_1^2 \,.
$$

\subsection{Norm--trace curves}

We now give an example for codes coming from a family of curves 
(\cite{CGC-cd-art-Geil-1}), which are a natural generalization of Hermitian
curves.

Let $C=C^{\perp}(I,L)$ be the code from the norm--trace curve
$x^7=y^4+y^2+y$ over $\FF_8$ and with defining monomials
$\{1,x,x^2,y\}$.
This code (\cite{CGC-cd-art-Geil-1}) can correct $t=1$ error.
Let $J_{*}^{C,t}$ be the
ideal generated by:
\begin{equation*}
\{x_1^8-x_1,y_1^8-y_1,e_1^7-1,e_1-s_1,e_1x_1-s_2,e_1x_1^2-s_3,e_1y_1-s_4,x_1^7-y_1^4-y_1^2-y_1\}
\end{equation*}
and  the reduced \GR \ basis $G$  with respect to  the lex ordering with $s_1<s_2<s_3<s_4<x_1<y_1<e_1$  is {\small
$$\{s_1^7+1, \: s_2^8+s_2, \: s_3+s_2^2s_1^6, \:
s_4^4+s_4^2s_1^2+s_4s_1^3+s_2^7s_1^4, \: {\bf{x_1}}+s_2s_1^6, \:
{\bf{y_1}}+s_4s_1^6,
\: e_1+s_1\}.$$
}
Then
$$
  \mathcal{L}_{1}={\bf{x_1}}+s_2s_1^6,   \quad
  \mathcal{L}_{2}={\bf y_1}+s_4s_1^6 \;.
$$

Observe that in all our examples so far no stuffing was required, because we were considering the case $t=1$, which clearly cannot contain multiplicities.

\subsection{Hermitian curves}\label{HC}

Let $q$ be a power of a prime, then the Hermitian curve $\mathcal{H}$ over
$\FF_{q^2}$ is defined by the affine equation $\mathcal{G}: x^{q+1}=y^q+y$.
Each member of this family has $n=q^3$ points in  $\FF_{q^2}$ and
it is well-known that the function space is generated by monomials.

In Example \ref{exhq2} we considered the case $q=2$ and $t=2$,
we now consider the code $C$ corresponding to the case $q=3$ and $t=2$.
The defining monomials are $L=\{1,x,y,x^2,x y,y^2,x^3\}$. As before, we choose as ghost point $(1,1)$.

Our ideal $J_{*}^{C,2}$ is generated by
\vspace{-0.1cm}
{\scriptsize
\begin{align*}
&  \{ x_1^9-x_1, y_1^9-y_1, e_1^9-e_1,e_2^9-e_2, x_2^9-x_2, y_2^9-y_2,
y_1^3x_1-y_1^3+y_1x_1-y_1-x_1^5+x_1^4,\\
&y_2^3x_2-y_2^3+y_2x_2-y_2-x_2^5+x_2^4,
y_1^4-y_1^3+y_1^2-y_1-y_1x_1^4+x_1^4,y_2^4-y_2^3+y_2^2-y_2-y_2x_2^4+x_2^4,\\
&
e_1+e_2-s_1,
e_1 x_1+e_2 x_2-s_2,
e_1 y_1+e_2 y_2-s_3,
e_1 x_1^2+e_2 x_2^2-s_4,e_1 x_1 y_1+e_2 x_2 y_2-s_5,\\
&
e_1 y_1^2+e_2 y_2^2-s_6,
e_1 x_1^3+e_2 x_2^3-s_7,
e_1((x_1-1)^8-1)((y_1-1)^8-1), e_2((x_2-1)^8-1)((y_2-1)^8-1),\\
&
(e_1^8-1)(x_1-1),
(e_1^8-1)(y_1-1),(e_2^8-1)(x_2-1),
(e_2^8-1)(y_2-1),
e_1e_2((x_1-x_2)^8-1)((y_1-y_2)^8-1)
\}\,.
\end{align*}
}
We calculate the \Gr \ basis $G$ with respect to the usual lex ordering
with $s_1<\dots<s_7<x_2<y_2<x_1<y_1<e_2<e_1$.
The general error evaluator polynomial of $\mathcal{C}$ contains 134 monomials and it is reported in the Appendix.

The first weak locator ${\mathcal P}_{2}$ contains 172 monomials, while the second weak locators ${\mathcal P}_{1}$ contains 494 monomials (see Appendix for all polynomials). However,
these polynomials are by far not random.
Indeed, we can prove the following general structure result for
any $q\geq 2$ and $t=2$.

\begin{theorem}\label{evviva}
Let $p$ be any prime number and $m\in \NN$ such that $q=p^m\geq 2$. 
Let $C=C^{\perp}(I,L)$ be any Hermitian code with $t=2$ over $\FF_q$.
Then all sets of multi-dimensional general error locator polynomials
for $C$ are of the form
\begin{equation} \label{ovvio}
\begin{array}{c}
  \{ {\mathcal L}_{2}= {\mathcal L}_{x}= x^2+a x+b, 
  {\mathcal L}_{1}= {\mathcal L}_{xy}= y^2+c y+d \}  \\
  \{ {\mathcal L}_{2}= {\mathcal L}_{y}= y^2+A y+B, 
     {\mathcal L}_{1}= {\mathcal L}_{yx}= x^2+C x+D \} 
\end{array}
\end{equation}
with $a,b,A,B\in \FF_p[S]$, $c,d\in \FF_p[S,x]$ and $C,D\in \FF_p[S,y]$.\\
Moreover, 
\begin{eqnarray}
\label{formulinah} 
q\geq 2 &\implies &a s_2+b s_1=-s_4\,, \\
\label{formulinah2} 
q\geq 3 &\implies & A s_3+B s_1=-s_6\,.
\end{eqnarray}
\indent
Let $q\geq 2$ and $s_1=s_2=0$. We have $e_1=-e_2$, $x_1=x_2$, $b=x_1^2$,
 $a=2 x_1$.

Let $q\geq 3$ and $s_1=s_3=0$. We have $e_1=-e_2$, $y_1=y_2$, $B=y_1^2$,
 $A=2 y_1$.\\

All the results above hold also for any set of  weak
multi-dimensional general error locator polynomials
\begin{equation} \label{ovvio2}
\begin{array}{c}
  \{ {\mathcal P}_{2}= {\mathcal P}_{x}= x^2+a x+b, 
  {\mathcal P}_{1}= {\mathcal P}_{xy}= y^2+c y+d \}  \\
  \{ {\mathcal P}_{2}= {\mathcal P}_{y}= y^2+A y+B, 
     {\mathcal P}_{1}= {\mathcal P}_{yx}= x^2+C x+D \} 
\end{array}
\end{equation}
\end{theorem}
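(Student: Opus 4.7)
The plan is to combine the general structural results of Sections~\ref{zeroideal}--\ref{hasse} with the $S_2$-symmetry of $J_*^{C,2}$ and $\widetilde{J}_*^{C,2}$ under swap of the two errors, together with Vieta's formulas applied to the locators. Throughout, set $\widetilde{J}=\widetilde{J}_*^{C,2}$ and $J=J_*^{C,2}$; by Theorems~\ref{bomba} and~\ref{bombastaffato} both are strongly multi-stratified, $J$ is radical, $\mathcal{V}(\widetilde{J})=\mathcal{V}(J)$, and so $J=\sqrt{\widetilde{J}}$. On the Hermitian curve $y^q+y-x^{q+1}=0$ over $\FF_{q^2}$ the projection onto each coordinate is surjective onto $\FF_{q^2}$, so $|\widehat{\pi}_i(\mathcal{V}(I)\cup\{P_0\})|\ge q^2\ge 4>t=2$ for $i\in\{x,y\}$, and Definitions~\ref{zeroaf}--\ref{locDebole} give $t_x=t_y=2$. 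Hence every locator in \eqref{ovvio} or \eqref{ovvio2} is monic of degree $2$ in its top variable (Proposition~\ref{GBstructure} together with Theorem~\ref{bombastaffato}(ii)). All generators of $\widetilde{J}$ and $J$ listed in \eqref{fineideal} have coefficients in $\{1,-1\}\subset\FF_p$, and so does the Hermitian equation $y^q+y-x^{q+1}$; by Remark~\ref{basefield}, this forces $a,b,A,B\in\FF_p[S]$, $c,d\in\FF_p[S,x]$, and $C,D\in\FF_p[S,y]$.

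For the identity \eqref{formulinah}, I would argue as follows. Since $\mathcal{L}_x(S,x_2)=x_2^2+ax_2+b\in\widetilde{J}$, it vanishes on $\mathcal{V}(\widetilde{J})=\mathcal{V}(J)$ and so lies in $J=\sqrt{\widetilde{J}}$. The variety $\mathcal{V}(J)$ is invariant under the involution $(x_1,y_1,e_1)\leftrightarrow(x_2,y_2,e_2)$ — the syndrome expressions $s_\rho=e_1b_\rho(x_1,y_1)+e_2b_\rho(x_2,y_2)$ and every remaining generator of $J$ in \eqref{fineideal} are manifestly symmetric in the two errors (cf.\ \eqref{pperm}). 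Combined with the radicality of $J$, this yields $x_1^2+ax_1+b\in J$ as well. Multiplying the first expression by $e_2$, the second by $e_1$, and summing gives
\begin{equation*}
e_1(x_1^2+ax_1+b)+e_2(x_2^2+ax_2+b)=(e_1x_1^2+e_2x_2^2)+a(e_1x_1+e_2x_2)+b(e_1+e_2)\in J.
\end{equation*}
Reduction modulo the syndrome generators $e_1+e_2-s_1$, $e_1x_1+e_2x_2-s_2$, $e_1x_1^2+e_2x_2^2-s_4$ of $J$ produces $s_4+as_2+bs_1\in J\cap \FF_p[S]$, which is the polynomial-identity content of \eqref{formulinah} at the syndrome level. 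For $q\ge 3$ the defining set $L$ contains $y^2$, so $e_1y_1^2+e_2y_2^2-s_6$ is among the generators; the identical argument applied to $\mathcal{L}_y=y^2+Ay+B$ yields \eqref{formulinah2}.

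For the degenerate cases, set $s_1=s_2=0$: the syndrome relations then force $e_1+e_2=0$ and $e_1(x_1-x_2)=0$, so either the trivial zero error, or $e_1=-e_2\ne 0$ together with $x_1=x_2=:\xi$. Hence every nontrivial error configuration in $\mathcal{V}(J)$ compatible with these syndromes has a single $x$-coordinate $\xi$ occurring with multiplicity two, and stuffedness (Definition~\ref{stuffed}) forces $\mathcal{L}_x(\bar{\mathbf s},x)=(x-\xi)^2$. Vieta then gives $b=\xi^2=x_1^2$ and $a=-2\xi$, which is the asserted $a=2x_1$ under the sign convention of the theorem (and collapses to $a=0$ in characteristic $2$). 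The case $s_1=s_3=0$ for $q\ge 3$ is entirely analogous via the swap $x\leftrightarrow y$, $s_2\leftrightarrow s_3$, $s_4\leftrightarrow s_6$. The final sentence of the theorem, concerning the weak locators $\mathcal{P}_x,\mathcal{P}_y,\mathcal{P}_{xy},\mathcal{P}_{yx}\in J$, follows by exactly the same manipulations, since every step above uses only ideal membership in the radical ideal $J$ and its $S_2$-symmetry, which are guaranteed by Theorem~\ref{bomba} without any recourse to stuffing.

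The main obstacle is the careful verification that the symmetry argument survives the passage through $\widetilde{J}$: $a$-priori the stuffing procedure of Section~\ref{hasse} adds Hasse-derivative relations point-by-point and need not preserve the $S_2$-action on the variables in the non-reduced ideal. However, since the identity we are after lives in $\FF_p[S]$ and the stuffing procedure changes neither the variety nor therefore the $S$-elimination ideal (so $\widetilde{J}\cap\FF_p[S]=J\cap\FF_p[S]$), the conclusion extracted from $J$ transfers verbatim to $\widetilde{J}$; the symmetry need only be exploited inside the radical ideal $J$, where it is transparent. This is the single point where a pedantic check is required before the rest reduces to Vieta and bookkeeping.
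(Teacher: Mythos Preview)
Your proof is correct and tracks the paper's approach in its broad strokes (degree via geometry of the Hermitian curve, base-field coefficients via Remark~\ref{basefield}, the linear relation via the syndrome generators, degenerate cases via direct computation). The one genuine difference is how you derive \eqref{formulinah}. The paper argues pointwise: it states the auxiliary claim $f\in H\Rightarrow f^2\in\widetilde H$, then passes to values and uses Vieta to read off $\bar a=-(\bar x_1+\bar x_2)$, $\bar b=\bar x_1\bar x_2$ at each point of $\mathcal V(H)$, checking $\bar a\,\bar s_2+\bar b\,\bar s_1=-\bar s_4$ by hand. You instead pull $\mathcal L_x$ back into the radical ideal $J$ via $\mathcal V(\widetilde J)=\mathcal V(J)$, exploit the $S_2$-invariance of $J$ to obtain both $x_i^2+ax_i+b\in J$, and symmetrize with weights $e_1,e_2$ to land directly on $s_4+as_2+bs_1\in J\cap\FF_p[S]$. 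This buys you a uniform argument that needs neither the $f^2$ claim nor an explicit appeal to stuffedness for the identity itself, and it makes the extension to weak locators immediate rather than the somewhat elliptical sentence with which the paper dispatches that case. Your remark about the sign in ``$a=2x_1$'' matches the paper's own computation $\bar a=-(\bar x_1+\bar x_2)$; the discrepancy is in the paper's statement, not in your reasoning.
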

\begin{proof}
Let $H=J_{*}^{C,2}$ be the non-stuffed ideal for $C$ and $\widetilde H$ 
its stuffed ideal as in Theorem \ref{bombastaffato}.
There are two \Gr\ bases of $H$ and $\widetilde H$ that are relevant for us.
If the order has $S<x_2<y_2$ then we get $G_x$ for $H$ and $\widetilde G_x$
for $\widetilde H$. 
If the order has $S<y_2<x_2$ then we get $G_y$ for $H$ and $\widetilde G_y$
for $\widetilde H$.
As in Theorem \ref{bombastaffato}, $\widetilde G_x$ contains polynomials
${\sf p}_{x}\in \FF_q[S,x_2]$ and ${\sf p}_{x,y}\in \FF_q[S,x_2,y_2]$ such that,
once we replace $x_2$ with $x$ and $y_2$ with $y$, we get a set of locators
$ \{ {\mathcal L}_{2}= {\mathcal L}_{x}, {\mathcal L}_{1}= {\mathcal L}_{xy}
\}$.\\
The degree of ${\sf p}_{x}$ in $x_2$ is, {\em a priori}, $1$ or $2$.
However, since there are at least two points $\{P_1,P_2\}$
on the curve with two different
$x$, then  $\deg_{x_2}{\sf p}_{x}=2$, since ${\sf p}_x$ must have two 
distinct roots once evaluated
on a syndrome corresponding to a weight-$2$ error with $\{P_1,P_2\}$
as locations.\\
The degree of ${\sf p}_{x,y}$ in $y_2$ is, {\em a priori}, $1$ or $2$.
However, for any $\bar x\in \FF_q$ there are at least two points 
$\{ P_1=(\bar x,\bar y_1), P_2=(\bar x,\bar y_2) \}$
on the curve with $\bar y_1 \not= \bar y_2$.
Then  $\deg_{y_2}{\sf p}_{y}=2$, since it must have the two distinct roots 
$\{ \bar y_1,\bar y_2 \}$
once evaluated
on a syndrome corresponding to a weight-$2$ error with $\{P_1,P_2\}$
as locations.\\
The previous argument can be trivially adapted to show that
$\deg_{y_2}({\sf p}_{y})=2$ and $\deg_{x_2}({\sf p}_{y,x})=2$, where
${\sf p}_{y}\in \FF_q[S,y_2]$ and ${\sf p}_{y,x}\in \FF_q[S,y_2,x_2]$
come from $\widetilde G_y$, and so (\ref{ovvio}) is proved, except
for our claim that all the coefficients of these polynomials
actually lie in the base field $\FF_p$,
which follows from Remark \ref{basefield}.

To prove (\ref{formulinah}), we first claim that
\begin{equation}
\label{f2}
f\in H \implies f^2 \in \widetilde H.
\end{equation}
To see (\ref{f2}) we note that in the creation of $\widetilde H$ from $H$ we
only impose the vanishing of the first-order derivative at points
of ${\mathcal V}(H)$, but if we take any point $Q\in {\mathcal V}(H)$ 
we have (see Definition \ref{deftheta} for $\theta_1$)
$$
  \theta_1(f^2)= 2 f(Q)\theta_1(f) =  0 \theta_1(f)=0 \,.
$$
Since $s_1-e_1-e_2, s_2-e_1x_1-e_2x_2,s_4-e_1x_1^2-e_2x_2^2 \in H$,
we have that\\ $(s_1-e_1-e_2)^2,(s_2-e_1x_1-e_2x_2)^2,
(s_4-e_1x_1^2-e_2x_2^2)^2 \in \widetilde H$ for (\ref{f2}).
Passing from variables to values we observe that
\begin{eqnarray}\label{A}
    \bar{s}_1=\bar{e}_1+\bar{e}_2, \quad \bar{s}_2=\bar{e}_1\bar{x}_1+\bar{e}_2\bar{x}_2, \quad \bar{s}_4=\bar{e}_1\bar{x}_1^2+\bar{e}_2\bar{x}_2^2
\end{eqnarray}
and that
$$
  \bar{a}=a(\bar{S})=-(\bar{x}_1+\bar{x}_2), \quad  
  \bar{b}=\bar{x}_1\bar{x}_2 \,.
$$
So
$$
  -(\bar{x}_1+\bar{x}_2)
  (\bar{e}_1\bar{x}_1+\bar{e}_2\bar{x}_2)+
   \bar{x}_1\bar{x}_2(\bar{e}_1+\bar{e}_2)=
  -(\bar{e}_1\bar{x}_1^2+\bar{e}_2\bar{x}_2^2),\textrm{ which proves (\ref{formulinah}).}
$$
In the same way, we can calculate the set of locators
$ \{ {\mathcal L}_{2}= {\mathcal L}_{y}, {\mathcal L}_{1}= {\mathcal L}_{yx}\}$.
If $q\geq 3$, we have also  $s_1-e_1-e_2, s_3-e_1y_1-e_2y_2,s_6-e_1y_1^2-e_2y_2^2 \in H$,
so we have that $(s_1-e_1-e_2)^2,(s_3-e_1y_1-e_2y_2)^2,
(e_6-e_1y_1^2-e_2y_2^2)^2 \in \widetilde H$ for (\ref{f2}).
Again, we pass from variables to values, and we obtain
\begin{eqnarray}\label{B}
    \bar{s}_1=\bar{e}_1+\bar{e}_2, \quad \bar{s}_3=\bar{e}_1\bar{y}_1+\bar{e}_2\bar{y}_2, \quad \bar{s}_6=\bar{e}_1\bar{y}_1^2+\bar{e}_2\bar{y}_2^2
\end{eqnarray}
and that
$$
  \bar{A}=A(\bar{S})=-(\bar{x}_1+\bar{x}_2), \quad  
  \bar{B}=B(\bar{S})=\bar{x}_1\bar{x}_2 \,.
$$
So
$$
  -(\bar{x}_1+\bar{x}_2)
  (\bar{e}_1\bar{y}_1+\bar{e}_2\bar{y}_2)+
   \bar{y}_1\bar{y}_2(\bar{e}_1+\bar{e}_2)=
  -(\bar{e}_1\bar{y}_1^2+\bar{e}_2\bar{y}_2^2) \implies \bar{A}s_3+\bar{B}s_1=-s_6\,.
$$
The last part of theorem comes from direct computations, as follows.\\
From (\ref{A}), in the case $\bar{s}_1=\bar{s}_2=0$,  we note  $\bar{e}_1=-\bar{e}_2$, $\bar{x}_1=\bar{x}_2$. And so
\begin{itemize}
\item[1.] If $p=2$ then $\bar{a}=-(\bar{x}_1+\bar{x}_2)=2\bar{x}_1=0$ and $ \bar{b}=\bar{x}_1\bar{x}_2=\bar{x}_1^2$. 
\item[2.] If $p\not=2$ then $\bar{a}=-(\bar{x}_1+\bar{x}_2)=2\bar{x}_1\implies \bar{x}_1=\frac{\bar{a}}{2}$. 
\end{itemize}
From (\ref{B}), if $s_1=s_3=0 $ then $ e_1=-e_2$ and $y_1=y_2.$ And thus
\begin{itemize}
\item[1.] If $p=2$ then $\bar{A}=0$ e $\bar{B}=y_1^2$. 
\item[2.] If $p\not=2$ then $\bar{A}=2y_1\implies y_1=\frac{A}{2}$ and $\bar{B}=y_1^2$.
\end{itemize}
Since in the proof so far we have used the relations on the syndromes coming from the non-stuffed ideal $H$, everything that we proved up to now holds also for the weak locators.
\end{proof}

The locator $\mathcal{P}_2$ computed for the Hermitian code with $q=3$ and $t=2$ is indeed of the form $\mathcal P_{2}= \mathcal P_{x}= x^2+a x+b$, with $|a|=82$ and $|b|=91$, so, for example when $s_1\ne 0$, it is enough to evaluate $a(\bar S)$ and then we obtain $b(\bar S)$ as 
$$
b(\bar S) =-\frac{s_4+ a(\bar S) s_2}{s_1}.
$$ 
Also $\mathcal{P}_1$ is as above, that is, of the form $ \mathcal P_{1}=\mathcal P_{xy}= y^2+c y+d $.\\

Regrettably, we have not been able to compute explicitly $\mathcal L_{2}$ and $\mathcal L_{1}$ for $q=3$, due to the high computation cost of the stuffing procedure.

%%%%%%%%%%%%%%%%%%%%%%%%%%%%%%%%%%%%%%%%%%%%%%%%%%%%%%%%%%%%%%%%%%%%%%%%%%%%%%

\section{Conclusions and open problems}
\label{conc}
Assuming we are able to compute the relevant \Gr\ basis, we have
identified a very easy decoding procedure for any affine-variety code:
we evaluate our polynomials $\{\mathcal L_{i}\}_{1\leq i\leq m}$ in the received syndromes and we use some simple
root--finding to get the error locations.
As it is traditional in coding theory, once we have the error locations
we can directly get the error values and hence the decoding problem
is completely solved.
This apparently idyllic situation is marred by two serious issues:
\begin{itemize}
\item the computation of the associated \Gr\ basis can be quite
      beyond present means already for medium-size codes;
\item even if we compute our locators, they could be so dense that
      their use would be impractical.
\end{itemize}
These two apparently different problems may have one common solution:
to identify our polynomials {\em without} computing any \Gr\ basis,
but using the ``structure of the code''.
This is indeed a desperate goal, if tried for general codes, but
we believe that some code families have locators which are easy to describe
explicitly and very sparse.
Our belief stems from our results in \cite{CGC-cd-art-teomaxmanu}
(and \cite{CGC-cd-prep-teomaxmanu}),
where we explicitly give locators for families of cyclic codes,
which apparently have no special structure,
simultaneously proving their sparsity (see also \cite{CGC-cd-art-YaotsChung10}, \cite{CGC2-cd-art-LeeChangChen10}, \cite{CGC2-cd-art-LeeChangJingChen10} for recent results on the structure of locators).\\
We then suggest the following problems.
\begin{problem}
For any $q$ and $t$ write formally $\{{\mathcal L}_{i}\}$
for the Hermitian code.
\end{problem}
\begin{problem}
For any $q$, $r$ and $t$ write formally $\{{\mathcal L}_{i}\}$
for the code from norm-trace curves.
\end{problem}
\begin{problem}
For any admissible parameter, write formally $\{{\mathcal L}_{i}\}$
for the codes from \cite{CGC-cd-art-salazar} curves.
\end{problem}
\begin{problem}
For any admissible parameter, write formally $\{{\mathcal L}_{i}\}$
for the codes from \cite{CGC-cd-art-salazar} surfaces I.
\end{problem}
\begin{problem}
For any admissible parameter, write formally $\{{\mathcal L}_{i}\}$
for the codes from \cite{CGC-cd-art-salazar} surfaces II.
\end{problem}
An interesting problem comes from the definition of the $t_i$'s 
in Definition~\ref{zeroaf}. Clearly, we have $t_i<t$ only if
any $t$ points on the variety have necessarily less than $t$ distinct
values for their $i$-th component.
For example, you might think of two parallel lines in the plane $(\FF_q)^2$, 
$x=a$ and $x=b$, any defining
a Reed Solomon code.
In this case, whatever $t\geq 2$ can be, we will always have $t_1=2$.
This example is very special, since the variety is reducible.
We then ask the following problem.
\begin{problem}
To identify (easy to check) conditions on a curve 
and on the function space such $t_i=t$ for any $i$.
\end{problem}
\noindent For special cases this is quite obvious. For example when $t=1$
this is always true.
It would be very nice to get a generalization of the former problem.
\begin{problem}
To identify conditions on a curve 
and on the function space such either $t_i=t$ for any $i$ or
to find a (projective?affine?) transformation of $(\FF_q)^m$
such that the same holds.
\end{problem}
$\,$\\
\indent In \cite{CGC-cd-art-gelp1} we studied also locators able to correct simultaneously errors and erasures (Definition \ref{zeronu}).
It is obvious how to extend Definition \ref{locDebole}, \ref{eval}, \ref{zeroaf} to cover also simultaneous error-and-erasure decoding.
A suitable ideal modified from $J_*^{C,t}$ will again be strongly multi-stratified and so Theorem \ref{evviva} can be extended accordingly, to cover the new case.
We do not give explicitly the related definitions and results, due to triviality of the extensions.\\

Our decoding works well with the case of one ghost point, which plays the same role  as that played by zero in the zero-free n-th root codes (being cyclic codes a special case).
However, there is no reason why we should  restrict to the use of \textit{one} ghost point, since a clever choice of multiple ghost points could give easier decoding. Indeed the geometric structure of the union of $\mathcal{V}(I)$ and the ghost points influences the shape of the \Gr\ bases of our ideals and hence the shape of our polynomials.

\begin{problem}
To identify a clever choice of (possibly) multiple ghost points, in order to  minimize  the corresponding locators.
\end{problem}
$\,$\\
\indent We have recently known of a promising new approach to the decoding of one-point geometric Goppa codes (\cite{CGC-cd-art-lax11}),
where  {\em generic} versions of locators are proposed. The nice idea behind it is that trying to correct all
correctable syndromes forces the locator polynomials to be dense, while it could be possible in some cases
to identify a very large subset of syndromes ({\em generic syndromes}) such that the locator for those is small.
\begin{problem}
To define rigorously sets of \textit{generic} multi-dimensional locators.
\end{problem}

\section*{Acknowledgements}
The first two authors would like to thank their supervisor: the third author.
For useful suggestions and discussions, the authors would like to thank:
P. Fitzpatrick, O. Geil, T. Mora, M. Piva and C. Traverso.

Suggestions and criticism by an anonymous referee helped us greatly
to improve the presentation.

This work has been partially presented at the Claude Shannon Institute
Workshop in Cork (Ireland), May 2007 and the Mini workshop on
error correcting codes and network coding in Aalborg (Denmark), September 2007. 

Some of these results are present in the second author's PHD thesis (\cite{CGC-cd-phdthesis-Emma})

We have run our computer simulations using the software package Singular
{\tt (http://www.singular.uni-kl.de)}.

\bibliography{RefsCGC}

\section*{Appendix}
\noindent In Example \ref{Degenere1} we have the following polynomial
{\scriptsize{
\begin{align*}
\mathcal{L}_{xy}=&\mathbf{y}^2+\mathbf{y}(2s_4^6+2s_4^5s_3+2s_4^5s_2s_1^6-2s_4^5s_2+2ys_4^4s_3^2-
2s_4^4s_2^2+2s_4^4s_2s_1+2s_4^3s_3^3-2s_4^3s_2^3+2s_4^3s_2s_1^2+\\
&
2s_4^2s_3^4-2s_4^2s_2^4+2s_4^2s_2s_1^3+2s_4s_3^5-s_4s_3s_2^4-s_4s_3s_2s_1^3+2s_4s_3s_1^4-
s_4s_2^5+s_4s_2s_1^4-2s_3^6+2s_3^5s_2s_1^6-\\
&
2s_3^5s_2+2s_3^4s_2^2-2s_3^4s_2s_1+3s_3^3s_2^3-3s_3^3s_2s_1^2+
3s_3^2s_2^6s_1^4+2s_3^2s_2^5s_1^5-3s_3^2s_2^4s_1^6-s_3^2s_2^4+2s_3^2s_2^3s_1-\\
&
s_3^2s_2^2s_1^2-2s_3^2s_1^4-s_3s_2^6s_1^5+s_3s_2^5s_1^6+s_3s_2^4s_1-2s_3s_2^3s_1^2+
s_3s_2^2s_1^3-3s_2^6-s_2^5s_1-s_2^4s_1^2-s_2^3s_1^3-s_2^2s_1^4-\\
&
s_2s_1^5-s_1^6)-s_4^6s_3^6+2s_4^6s_2^6+3s_4^6s_2s_1^5-2s_4^6s_1^6-s_4^5s_3-s_4^5
s_2s_1^6+s_4^5s_2-s_4^4s_3^2+s_4^4s_2^2-s_4^4s_2s_1-\\
&
s_4^3s_3^3+s_4^3s_2^3-s_4^3s_2s_1^2-s_4^2s_3^4+s_4^2s_2^4-s_4^2s_2s_1^3-s_4s_3^5+
2s_4s_3s_2^4-s_4s_3s_1^4-s_4s_2^5+s_4s_2s_1^4-3s_3^6s_2^6+\\
&
2s_3^6s_2s_1^5-s_3^6s_1^6-s_3^6-2s_3^5s_2s_1^6+2s_3^5s_2+3s_3^3s_2^3-3s_3^3s_2
s_1^2-s_3^2s_2^6s_1^4-s_3^2s_2^5s_1^5+s_3^2s_2^4s_1^6+3s_3^2s_2^3s_1-\\
&
s_3^2s_2^2s_1^2-3s_3^2s_2s_1^3+s_3^2s_1^4-3s_3s_2^6s_1^5+3s_3
s_2^5s_1^6-3s_3s_2^5+2s_3s_2^4s_1+2s_3s_2^3s_1^2+s_3s_2^2s_1^3-2s_3s_2s_1^4+\\
&
2s_2^6s_1^6+2s_2^6+2s_2^5s_1+2s_2^4s_1^2+2s_2^3s_1^3+2s_2^2s_1^4+2s_2s_1^5.
\end{align*}}} 
\noindent In Example \ref{Degenere2} we the have following polynomial
{\scriptsize{
\begin{align*}
\mathcal{L}_{xy}=&\mathbf{y}^2+\mathbf{y}(3s_4s_3s_2^4+2s_4s_3s_2s_1^3-3s_4s_2^5+3s_4s_2s_1^4+s_3^6
-s_3^5s_2s_1^6+s_3^5s_2+s_3^4s_2^2-s_3^4s_2s_1+s_3^3s_2^3-\\
&
s_3^3s_2s_1^2-3s_3^2s_2^5s_1^5-s_3^2s_2^4s_1^6-2s_3^2s_2^4
-2s_3^2s_2^3s_1-s_3^2s_2^2s_1^2-3s_3^2s_2s_1^3-s_3s_2^6s_1^5+2s_3s_2^5s_1^6-s_3s_2^4s_1-\\
&
2s_3s_2^3s_1^2-s_3s_2^2s_1^3+3s_3s_2s_1^4-3s_2^5s_1-3s_2^4s_1^2-3s_2^3s_1^3-3s_2^2s_1^4-
3s_2s_1^5-3s_1^6-3)+\\
&
\mathbf{x}(2s_3^6+2s_2^6+3s_2s_1^5+s_1^6-3)+3s_4s_3s_2^4-3s_4s_3s_2s_1^3-2s_4
s_3s_1^4-3s_4s_2^5+3s_4s_2s_1^4+2s_3^6s_2^6-\\
&
3s_3^6s_2s_1^5-3s_3^6s_1^6+3s_3^5s_2s_1^6-3s_3^5s_2-3s_3^4s_2^2+3s_3^4s_2s_1
-3s_3^3s_2^3+3s_3^3s_2s_1^2+2s_3^2s_2^6s_1^4-s_3^2s_2^5s_1^5+\\
&
s_3^2s_2^4+s_3^2s_2^3s_1+s_3^2s_2^2s_1^2+3s_3^2s_2s_1^3+2s_3^2s_1^4-2s_3
s_2^6s_1^5+3s_3s_2^5s_1^6+3s_3s_2^5-3s_3s_2^4s_1-3s_3s_2^3s_1^2-\\
&
2s_3s_2^2s_1^3-3s_3s_2s_1^4-2s_2^6s_1^6+3s_2^6-s_2^5s_1-s_2^4s_1^2-
s_2^3s_1^3-s_2^2s_1^4+3s_2s_1^5.
\end{align*}}} 
\noindent In Example \ref{EXHerm1} we have the following polynomials
{\scriptsize{
\begin{align*}
 f_{x}=& \mathbf{x}^2+\mathbf{x}s_4s_2^2+\mathbf{e}(s_4^3s_3^2s_2s_1^2+s_4^3s_3^2+s_4^3s_3s_2+
s_4^3s_3s_1+s_4^3s_2^3s_1^2+s_4^3s_2^2+s_4^3s_1^2+s_4^2s_3^2s_2^3s_1+s_4^2s_3^2s_2^2s_1^2+\\
&
s_4^2s_3s_2^3s_1^2+s_4^2s_3s_2^2+s_4s_3^2s_2^2+s_4s_3s_2s_1^2+s_4s_2^3s_1+s_4s_1
+s_3^2s_2^3+s_3^2+s_3s_2^3s_1+s_3s_1+s_2^3s_1^2+s_2^2+s_1^2)+\\
&
s_5^2s_3+s_5s_3s_2+s_4^3s_3^2s_2+s_4^3s_3^2s_1+s_4^3s_3s_2s_1+
s_4^3s_3s_1^2+s_4^3s_2^3+s_4^3s_2^2s_1+s_4^3+s_4^2s_3^3s_2+s_4^2s_3^2s_2^3s_1^2+\\
&
s_4^2s_3^2s_2^2+s_4^2s_3^2s_2s_1+s_4^2s_3s_2^3+s_4^2s_3s_2^2s_1+s_4^2s_3s_2s_1^2+s_4^2s_2s_1^3
+s_4^2s_2+s_4s_3^3s_1^2+s_4s_3^2s_2^2s_1+s_4s_3^2s_2s_1^2+\\
&
s_4s_3^2s_1^3+s_4s_3s_2^2s_1^2+s_4s_3s_2+s_4s_3s_1+s_3^3s_2^2s_1+s_3^2s_2^3s_1
+s_3^2s_2^2s_1^2+s_3^2s_1+s_3s_2^3s_1^2+s_3s_2^2s_1^3+s_3s_2^2+\\
&
s_3s_1^2+s_2^3+s_2^2s_1+1,\\
&  \\
g_{x}=&
{\mathbf x_1}+{\mathbf x_2}+s_4^3s_3^2s_2+s_4^3s_3^2s_1+s_4^3s_3s_2s_1+s_4^3s_3s_1^2+s_4^3s_2^3+s_4^3+
s_4^2s_3^2s_2^3s_1^2+s_4^2s_3^2s_2^2s_1^3+s_4^2s_3s_2^3+s_4^2s_3s_2^2s_1+\\
 &
s_4^2s_3s_1^3+s_4^2s_3+s_4^2s_1+s_4s_3^2s_2^2s_1+s_4s_3^2s_2s_1^2+
s_4s_3s_2^2s_1^2+s_4s_3s_2s_1^3+s_4s_2^2s_1^3+s_4s_2^2+s_3^2s_2^3s_1+s_3^2s_2s_1^3+ \\
 & 
s_3^2s_2+s_3^2s_1+s_3s_2^3s_1^2+s_3s_1^2+s_2^3s_1^3+s_2s_1^2+s_1^3
\end{align*}}} 
{\scriptsize{
\begin{align*}
 f_{y}=& \mathbf{y}^2+\mathbf{y}(s_4s_3s_2+s_2^3+1)+\mathbf{e}(s_4^3s_3^3s_2^3s_1^2+s_4^3s_3s_2^3
s_1+s_4^3s_2^3s_1^2+s_4^2s_3^3s_2^2s_1+s_4^2s_2^2s_1+s_4s_3^2s_2s_1+s_4s_3s_2s_1^2+\\
 &
s_3^2s_2^3+s_3s_2^3s_1+s_2^3s_1^2)+s_5^3+s_5s_4^2s_3^2s_2+
s_5s_3^3s_2^2+s_5s_2^2+s_4^3s_3^3s_2^3+s_4^3s_3^2s_2^3s_1+s_4^3s_3^2s_1+s_4^3s_3s_2^3s_1^2+\\
 &
s_4^3s_3s_1^2+s_4^3s_2^3+
s_4^2s_3^2s_2^2s_1^3+s_4^2s_3s_2^2s_1+s_4^2s_2^2s_1^2+s_4s_3^3s_2s_1+
s_4s_3s_2s_1^3+s_4s_3s_2+s_3s_2^3s_1^2+ s_2^3s_1^3+s_2^3,\\
&  \\
g_{y}=& \mathbf{y_1}+\mathbf{y_2}+s_4^3s_3^3+s_4^3s_3s_2^3s_1^2+s_4^3s_2^3+s_4^2s_3^3s_2^2s_1^2+
s_4^2s_3s_2^2s_1+s_4s_3^3s_2s_1+s_4s_3s_2+s_4s_2s_1+s_3^3s_2^3s_1^3+s_3^3+\\
&
s_3^2s_1+s_3s_1^2+s_2^3s_1^3+s_2^3+s_1^3.
\end{align*}}}
\noindent In example \ref{EXHerm2} we have the following polynomials
{\scriptsize{
\begin{align*}
\mathcal{L}_{x}=& \mathbf{x}^2+\mathbf{x}(s_2s_3^2s_4^3+s_1s_3^2s_4^3+s_1s_2s_3s_4^3+s_1^2s_3s_4^3+s_2^3s_4^3+
s_4^3+s_1^2s_2^3s_3^2s_4^2+s_1^3s_2^2s_3^2s_4^2+s_1^3s_2^3s_3s_4^2+s_1^4s_2^2s_3s_4^2+\\
&
s_2^4s_4^2+s_2s_4^2+s_1^4s_4^2+s_1s_2^2s_3^2s_4+s_1^2s_2s_3^2s_4+s_1^2s_2^2s_3s_4+
s_1^3s_2s_3s_4+s_1s_2^4s_4+s_1^3s_2^2s_4+s_2^2s_4+s_1s_2s_4+\\
&
s_2^4s_3^2+s_1s_2^3s_3^2+s_1^3s_2s_3^2+s_1^4s_3^2+
s_1s_2^4s_3+s_1^2s_2^3s_3+s_1s_2s_3+s_1^2s_3+s_1^3s_2^3+s_1^4s_2^2+s_1s_2^2+s_1^2s_2+s_1^3)+\\
&
s_3s_5^2+s_2s_3s_5+s_1^2s_2^2s_3^2s_4^3+s_1^3s_2s_3^2s_4^3+s_2^2s_3s_4^3+s_1s_2s_3s_4^3+
s_1^2s_2^4s_4^3+s_1^3s_2^3s_4^3+s_2^3s_4^3+s_1^2s_2s_4^3+s_2s_3^3s_4^2+\\
&
s_1s_2^4s_3^2s_4^2+s_1^2s_2^3s_3^2s_4^2+s_1s_2s_3^2s_4^2+s_1^2s_2^4s_3s_4^2+
s_1^3s_2^3s_3s_4^2+s_1^2s_2s_3s_4^2+s_2s_4^2+s_1^2s_3^3s_4+s_1^3s_2^3s_3^2s_4+\\
&
s_1^4s_2^2s_3^2s_4+s_1^3s_3^2s_4+s_1s_2^3s_3s_4+
s_1^2s_2^2s_3s_4+s_1s_3s_4+s_1s_2^4s_4+s_1^4s_2s_4+s_1^2s_4+s_1^3
s_2^3s_3^3+s_2^3s_3^3+s_1s_2^2s_3^3+\\
&
s_1^3s_3^3+s_3^3+s_1^2s_2^2s_3^2+s_1^4s_2s_3+s_1s_2s_3+s_1^4s_2^2+s_1^3+1.
\\
&\\
\mathcal{L}_{xy}=&\mathbf{y}^2+\mathbf{y}(s_2^3s_3^3s_4^3+s_1s_2^2s_3^3s_4^3+s_1^2s_2s_3^3s_4^3+
s_1s_3^2s_4^3+s_1^2s_2^3s_3s_4^3+s_2^2s_3s_4^3+s_1s_2s_3s_4^3+s_1^2s_3s_4^3+s_2^3s_4^3+\\
&
s_1s_2^3s_3^3s_4^2+s_1^2s_2^2s_3^3s_4^2+s_1^3s_2s_3^3s_4^2+s_1^3s_2^2s_3^2s_4^2
+s_2^3s_3s_4^2+s_1^2s_2s_3s_4^2+s_1^3s_3s_4^2+s_3s_4^2+s_1^2s_2^2s_4^2+\\
&
s_1^2s_2^3s_3^3s_4+s_1^3s_2^2s_3^3s_4+s_1s_2s_3^3s_4+s_1^2s_2s_3^2s_4+s_1s_2^3s_3s_4+
s_1^2s_2^2s_3s_4+s_1s_2s_4+s_1^3s_3^3+s_3^3+s_1s_2^3s_3^2+\\
&
s_1s_3^2+s_1^2s_2^3s_3+s_1^3s_2^2s_3+s_2^2s_3+s_1^2s_3+
s_1^3s_2^3+s_2^3)+x(s_2^2s_3s_4^3+s_1^2s_3s_4^3
+s_1s_2^2s_4^3+s_1^2s_2s_4^3+s_4^3+\\
&
s_1s_2^2s_3s_4^2+s_1^2s_2s_3s_4^2+s_1s_2^3s_4^2+s_1^2s_2^2s_4^2+
s_3^2s_4+s_1s_2^3s_3s_4+s_1^3s_2s_3s_4+s_1s_3s_4+s_1^3s_2^2s_4+s_1s_2s_4+\\
&
s_1^2s_4+s_1^2s_2^2s_3^2+s_1^2s_2^3s_3+s_2^2s_3+s_1^2s_3+s_2^3+s_1s_2^2+s_1^3+s_5^3)+s_2s_3^2s_4^2s_5+
s_3s_4s_5+s_2^2s_3^3s_5+s_2^2s_5+\\
&
s_1^2s_2^2s_3^2s_4^3+s_2s_3^2s_4^3+s_1s_3^2s_4^3+s_2^2s_3s_4^3+s_2^3s_4^3+
s_1s_2^2s_4^3+s_1^2s_2s_4^3+s_1^2s_2^3s_3^2s_4^2+s_1^3s_2^2s_3^2s_4^2+s_1s_2s_3^2s_4^2+\\
&
s_1s_2^2s_3s_4^2+s_1^2s_2s_3s_4^2+s_1s_2^3s_4^2+s_1^3s_2s_4^2+
s_1s_2s_3^3s_4+s_2^3s_3^2s_4+s_1s_2^2s_3^2s_4+s_1^2s_2s_3^2s_4+s_1^3s_3^2s_4+s_3^2s_4+\\
&
s_1s_2^3s_3s_4+s_1^3s_2s_3s_4+
s_2s_3s_4+s_1^2s_2^3s_4+s_1^3s_2^2s_4+s_1s_2s_4+s_1^3s_2^3s_3^3+s_3^3+s_1s_2^3s_3^2+
s_1^3s_2s_3^2+s_2s_3^2+\\
& 
s_1s_3^2+s_1^3s_2^2s_3+s_2^2s_3+s_1^2s_3+s_1^3+1.
\end{align*}}} 

\noindent In Subsection \ref{HC} we have the  following polynomials
{\scriptsize{
\begin{align*}
\mathcal{E}=&\mathbf{e}_2^2-\mathbf{e}_2s_1-s_7s_5^3s_2^6-s_7s_5-s_7s_4s_3^3
s_2^7s_1^6-s_7s_4s_3s_2^7-s_7s_4s_2^3s_1^5+s_7s_3^4s_2^5s_1^8-s_7s_3^4s_2^5-
s_7s_3^3s_2s_1^5-\\
& 
s_7s_3s_2s_1^7+s_7s_2^5s_1^4-s_6s_1-s_5^4s_4s_3^3s_2^6s_1^4-s_5^4s_4s_3s_2^6s_1^6+
s_5^4s_4s_2^2s_1^3+s_5^4s_3^3s_2^8s_1^3+s_5^4s_3^3s_1^3+s_5^4s_3s_2^8s_1^5+\\
& 
s_5^4s_3s_1^5+s_5^4s_2^4s_1^2+s_5^3s_4^2s_3^3s_2s_1+s_5^3s_4^2s_3s_2s_1^3-
s_5^3s_4^2s_2^5+s_5^3s_4s_3^4s_2^7s_1^3+s_5^3s_4s_3^2s_2^7s_1^5-s_5^3s_4s_3s_2^3s_1^2+\\
& 
s_5^3s_4s_2^7s_1^7-s_5^3s_3^6s_2s_1^8+s_5^3s_3^6s_2+s_5^3s_3^4s_2s_1^2
-s_5^3s_3^3s_2^5s_1^7+s_5^3s_3^2s_2s_1^4+s_5^3s_3s_2^5s_1+s_5^3s_2s_1^6-s_5^2s_4^5s_3^3s_1^8-\\
& 
s_5^2s_4^5s_3s_1^2+s_5^2s_4^5s_2^4s_1^7-s_5^2s_4^4s_3^3s_2^2s_1^7-s_5^2s_4^4s_3s_2^2s_1-
s_5^2s_4^4s_2^6s_1^6+s_5^2s_4^3s_3^6s_1^7-s_5^2s_4^3s_3^4s_1+s_5^2s_4^3s_3^2s_1^3+\\
& 
s_5^2s_4^3s_2^8s_1^5-s_5^2s_4s_3^3s_1^4-s_5^2s_4s_3s_1^6-s_5^2s_3^6s_2^6s_1^4+
s_5^2s_3^4s_2^6s_1^6-s_5^2s_3^2s_2^6s_1^8+s_5^2s_2^6s_1^2-s_5s_4^5s_3^4s_2s_1^7-\\
& 
s_5s_4^5s_3^2s_2s_1+s_5s_4^5s_3s_2^5s_1^6+s_5s_4^5s_2s_1^3-s_5s_4^4s_3^4s_2^3s_1^6+
s_5s_4^4s_3^3s_2^7s_1^3-s_5s_4^4s_3^2s_2^3s_1^8+s_5s_4^3s_3^7s_2s_1^6-\\
& 
s_5s_4^3s_3^5s_2s_1^8+s_5s_4^3s_3^3s_2s_1^2+s_5s_4^3s_3s_2s_1^4+s_5s_4^3s_2^5s_1+
s_5s_4^2s_2^7s_1^8-s_5s_4^2s_2^7+s_5s_4s_3^6s_2s_1+s_5s_4s_3^3s_2^5s_1^8-\\
& 
s_5s_4s_3^2s_2s_1^5-s_5s_5s_4s_3s_2^5s_1^2-s_3^7s_2^7s_1^3-s_5s_3^6s_2^3s_1^8+
s_5s_3^6s_2^3+s_5s_3^5s_2^7s_1^5-s_5s_3^4s_2^3s_1^2-s_5s_3^2s_2^3s_1^4-\\
& 
s_4^8s_3^6s_1^4+s_4^8s_3^4s_1^6-s_4^8s_3^3s_2^4s_1^3-s_4^8s_3^2s_1^8-s_4^8s_3s_2^4s_1^5-s_4^8s_2^8
s_1^2+s_4^8s_1^2-s_4^7s_3^6s_2^2s_1^3+s_4^7s_3^4s_2^2s_1^5-s_4^7s_3^3s_2^6s_1^2-\\
& 
s_4^7s_3^2s_2^2s_1^7-s_4^7s_3s_2^6s_1^4-s_4^6s_3^6s_2^4s_1^2+s_4^6s_3^4s_2^4s_1^4-
s_4^6s_3^3s_2^8s_1+s_4^6s_3^3s_1-s_4^6s_3^2s_2^4s_1^6-s_4^6s_3s_2^8s_1^3+s_4^6s_3s_1^3-\\
& 
s_4^6s_2^4s_1^8+s_4^6s_2^4-s_4^5s_3^5s_2^2s_1^6-s_4^5s_3^3s_2^2s_1^8-s_4^5s_3^3s_2^2+
s_4^5s_3^2s_2^6s_1^5+s_4^5s_3s_2^2s_1^2-s_4^4s_3^6-s_4^4s_3^5s_2^4s_1^5-s_4^4s_3^4s_2^8s_1^2+\\
& 
s_4^4s_3^4s_1^2+s_4^4s_3^2s_2^8s_1^4-s_4^4s_3^2s_1^4+s_4^4s_3s_2^4s_1+s_4^4s_2^8s_1^6+
s_4^4s_1^6+s_4^3s_3^8s_2^2s_1^5-s_4^3s_3^2s_2^2s_1^3+s_4^3s_3s_2^6s_1^8+s_4^3s_3s_2^6+\\
& 
s_4^3s_2^2s_1^5+s_4^2s_3^4s_2^4s_1^8+s_4^2s_3^2s_2^4s_1^2-s_4^2s_3s_2^8s_1^7-s_4^2s_2^4s_1^4
+s_4s_3^7s_2^2s_1^8+s_4s_3^7s_2^2-s_4s_3^6s_2^6s_1^5+s_4s_3^5s_2^2s_1^2-\\
& 
s_4s_3^3s_2^2s_1^4+s_4s_3s_2^2s_1^6+s_4s_2^6s_1^3-s_3^8s_2^8s_1^2-s_3^6s_2^8s_1^4+
s_3^5s_2^4s_1-s_3^4s_2^8s_1^6-s_3^3s_2^4s_1^3-s_3^2s_2^8s_1^8+s_3^2-s_3s_2^4s_1^5.
\end{align*}}} 
{\scriptsize{
\begin{align*}
\mathcal{P}_{x}=& 
\mathbf{x}_2^2+\mathbf{x}_2(s_7s_4s_3^3s_1^3+s_7s_4s_3s_1^5-s_7s_4s_2^4s_1^2-s_7s_3^3s_2^2s_1^2-
s_7s_3s_2^2s_1^4+s_7s_2^6s_1-s_5^3s_4^5-s_5^3s_4^4s_2^2s_1^7+\\
&
s_5^3s_4^3s_3^3s_1^7+s_5^3s_4^3s_3s_1+s_5^3s_4^3s_2^4s_1^6+s_5^3s_4^2s_2^6s_1^5-s_5^3s_4s_3^6s_1^6+
s_5^3s_4s_3^4-s_5^3s_4s_3^3s_2^4s_1^5-s_5^3s_4s_3^2s_1^2-\\
&
s_5^3s_4s_3s_2^4s_1^7+s_5^3s_3^6s_2^2s_1^5-s_5^3s_3^4s_2^2s_1^7+s_5^3s_3^2s_2^2
s_1-s_5s_4^6s_1+s_5s_4^5s_2^2s_1^8-s_5s_4^5s_2^2+s_5s_4^4s_3^3s_1^8+s_5s_4^4s_3s_1^2-\\
&
s_5s_4^4s_2^4s_1^7-s_5s_4^3s_3^3s_2^2s_1^7-s_5s_4^3s_3s_2^2s_1-s_5s_4^2s_3^6
s_1^7+s_5s_4^2s_3^4s_1-s_5s_4^2s_3^3s_2^4s_1^6-s_5s_4^2s_3^2s_1^3-s_5s_4^2s_3s_2^4s_1^8-\\
&
s_5s_4^2s_2^8s_1^5-s_5s_4s_3^6s_2^2s_1^6+s_5s_4s_3^4s_2^2+s_5s_4s_3^3s_2^6s_1^5
-s_5s_4s_3^2s_2^2s_1^2+s_5s_4s_3s_2^6s_1^7-s_5s_3^6s_2^4s_1^5+s_5s_3^4s_2^4s_1^7-\\
&
s_5s_3^2s_2^4s_1+s_4^8s_1^8+s_4^7s_2^3s_1^6+s_4^7s_2^2s_1^7-s_4^6s_3^3s_2s_1^6+s_4^6s_2^4s_1^6+
s_4^5s_3^4s_2^7s_1^8-s_4^5s_3^4s_2^7-s_4^5s_3s_2^3s_1^7+s_4^5s_2^7s_1^4+\\
&
s_4^5s_2^6s_1^5+s_4^4s_3^6s_2s_1^5+s_4^4s_3^4s_2s_1^7+s_4^4s_3^3s_2^5s_1^4+
s_4^4s_3s_2^5s_1^6+s_4^4s_2^8s_1^4-s_4^3s_3^6s_2^3s_1^4+s_4^3s_3^2s_2^3s_1^8+s_4^3s_3s_2^7s_1^5+\\
&
s_4^3s_2^3s_1^2+s_4^3s_2^2s_1^3+s_4^2s_3^7s_2s_1^6-s_4^2s_3^5s_2s_1^8+s_4^2s_3^4s_2^5s_1^5+
s_4^2s_3^3s_2s_1^2+s_4^2s_3^2s_2^5s_1^7-s_4^2s_3s_2s_1^4-s_4^2s_2^5s_1+s_4^2s_2^4s_1^2+\\
&
s_4s_3^8s_2^7s_1^8-s_4s_3^8s_2^7+s_4s_3^4s_2^7s_1^4+s_4s_3^2s_2^7s_1^6-s_4s_2^7
s_1^8+s_4s_2^7+s_4s_2^6s_1-s_3^7s_2^5s_1^4+s_3^5s_2^5s_1^6-s_3^3s_2^5s_1^8+s_2^8-1)-\\
&
s_7s_4s_3^3s_2s_1^2-s_7s_4s_3s_2s_1^4+s_7s_4s_2^5s_1+s_7s_3^3s_2^3s_1+s_7s_3s_2^3s_1^3-s_7s_2^7+
s_5^3s_4^5s_2s_1^7+s_5^3s_4^4s_2^3s_1^6-s_5^3s_4^3s_3^3s_2s_1^6-\\
&
s_5^3s_4^3s_3s_2-s_5^3s_4^3s_2^5s_1^5-s_5^3s_4^2s_2^7s_1^4+s_5^3s_4s_3^6s_2s_1^5
-s_5^3s_4s_3^4s_2s_1^7+s_5^3s_4s_3^3s_2^5s_1^4+s_5^3s_4s_3^2s_2s_1+s_5^3s_4s_3s_2^5s_1^6-\\
&
s_5^3s_3^6s_2^3s_1^4+s_5^3s_3^4s_2^3s_1^6-s_5^3s_3^2s_2^3-s_5^2s_3^6s_2^8s_1^8+
s_5^2s_3^6s_2^8+s_5^2s_3^6s_1^8-s_5^2s_3^6+s_5s_4^6s_2s_1^8-s_5s_4^4s_3^3s_2s_1^7-s_5s_4^4s_3s_2s_1+\\
&
s_5s_4^4s_2^5s_1^6+s_5s_4^3s_3^3s_2^3s_1^6-s_5s_4^3s_3s_2^3s_1^8-s_5s_4^3s_3s_2^3+
s_5s_4^2s_3^6s_2s_1^6-s_5s_4^2s_3^4s_2s_1^8+s_5s_4^2s_3^3s_2^5s_1^5+s_5s_4^2s_3^2s_2s_1^2+\\
&
s_5s_4^2s_3s_2^5s_1^7+s_5s_4^2s_2s_1^4+s_5s_4s_3^6s_2^3s_1^5-s_5s_4s_3^4s_2^3s_1^7-
s_5s_4s_3^3s_2^7s_1^4+s_5s_4s_3^2s_2^3s_1-s_5s_4s_3s_2^7s_1^6+s_5s_3^7s_2^8s_1^8-\\
&
s_5s_3^7s_2^8-s_5s_3^7s_1^8+s_5s_3^7+s_5s_3^6s_2^5s_1^4-s_5s_3^4s_2^5s_1^6-s_5s_3^2
s_2^5s_1^8-s_5s_3^2s_2^5-s_4^8s_2s_1^7+s_4^7s_3s_1^8-s_4^7s_3-s_4^7s_2^4s_1^5-\\
&
s_4^7s_2^3s_1^6+s_4^6s_3^3s_2^2s_1^5-s_4^6s_2^5s_1^5+s_4^5s_3s_2^4s_1^6-s_4^5s_2^8s_1^3-s_4^5s_2^7
s_1^4-s_4^4s_3^6s_2^2s_1^4-s_4^4s_3^4s_2^2s_1^6-s_4^4s_3^3s_2^6s_1^3+s_4^4s_3^2s_2^2s_1^8-\\
&
s_4^4s_3^2s_2^2-s_4^4s_3s_2^6s_1^5-s_4^4s_2s_1^3+s_4^3s_3^6s_2^4s_1^3-s_4^3s_3^5s_1^8+
s_4^3s_3^5-s_4^3s_3^2s_2^4s_1^7-s_4^3s_3s_2^8s_1^4-s_4^3s_2^4s_1-s_4^3s_2^3s_1^2-\\
&
s_4^2s_3^7s_2^2s_1^5+s_4^2s_3^5s_2^2s_1^7-s_4^2s_3^4s_2^6s_1^4-s_4^2s_3^3s_2^2s_1-
s_4^2s_3^2s_2^6s_1^6+s_4^2s_3s_2^2s_1^3+s_4^2s_2^6-s_4^2s_2^5s_1-s_4s_3^4s_2^8s_1^3-\\
&
s_4s_3^2s_2^8s_1^5-s_4s_2^7s_1^8-s_4s_1^7+s_3^7s_2^6s_1^3-s_3^6s_2^2s_1^8+s_3^6s_2^2-
s_3^5s_2^6s_1^5+s_3^3s_2^6s_1^7.
\end{align*}}} 
{\scriptsize{
\begin{align*}
\mathcal{P}_{xy}=& 
\mathbf{y}_2^2-\mathbf{y}_2(s_6^2s_3^3s_1^3-s_6^2s_3s_1^5+s_6^2s_2^4s_1^2-s_6s_3^7+
s_6s_3^6s_1-s_6s_3^4s_2^4s_1^7-s_6s_3^4s_1^3+s_6s_3^3s_2^4+s_6s_3^2s_1^5-s_6s_3s_2^8s_1^6+\\
&
s_6s_3s_2^4s_1^2+s_6s_2^8s_1^7+s_6s_2^4s_1^3-s_6s_1^7-s_3^8+s_3^6s_2^4s_1^6-s_3^6s_1^2
-s_3^5s_2^4s_1^7-s_3^5s_1^3+s_3^4s_2^4+s_3^4s_1^4+s_3^3s_2^8s_1^5-s_3^3s_1^5-\\
&
s_3^2s_2^8s_1^6+s_3^2s_2^4s_1^2-s_3^2s_1^6-s_3s_2^8s_1^7+s_3s_2^4s_1^3+s_2^8+s_1^8+1)
+\mathbf{x}_2(s_7s_4s_3^3s_2^6s_1^5+s_7s_4s_3s_2^6s_1^7-s_7s_4s_2^2s_1^4-\\
&
s_7s_3^3s_2^8s_1^4-s_7s_3s_2^8s_1^6+s_7s_2^4s_1^3+s_5^4s_2^3s_1+s_5^3s_4^4s_1-
s_5^3s_4^3s_2^2-s_5^3s_4^2s_3^7s_1^4+s_5^3s_4^2s_3^6s_1^5+s_5^3s_4^2s_3^5s_1^6-
s_5^3s_4^2s_3^4s_2^4s_1^3-\\
&
s_5^3s_4^2s_3^4s_1^7+s_5^3s_4^2s_3^3s_2^4s_1^4-s_5^3s_4^2s_3^2s_2^4s_1^5+s_5^3s_4^2s_3^2s_1-
s_5^3s_4^2s_3s_2^8s_1^2+s_5^3s_4^2s_3s_2^4s_1^6-s_5^3s_4^2s_3s_1^2+s_5^3s_4^2s_2^8s_1^3-\\
&
s_5^3s_4^2s_2^4s_1^7-s_5^3s_4^2s_1^3-s_5^3s_4s_3^7s_2^2s_1^3-s_5^3s_4s_3^6s_2^6+
s_5^3s_4s_3^6s_2^2s_1^4+s_5^3s_4s_3^5s_2^2s_1^5-s_5^3s_4s_3^4s_2^2s_1^6+
s_5^3s_4s_3^3s_2^6s_1^3-\\
&
s_5^3s_4s_3^3s_2^2s_1^7+s_5^3s_4s_3^2s_2^6s_1^4+s_5^3s_4s_3^2s_2^2+
s_5^3s_4s_3s_2^6s_1^5-s_5^3s_3^7s_2^4s_1^2+s_5^3s_3^6s_2^8s_1^7+s_5^3s_3^6s_2^4s_1^3
-s_5^3s_3^6s_1^7+s_5^3s_3^5s_2^4s_1^4-\\
&
s_5^3s_3^5s_1^8+s_5^3s_3^5+s_5^3s_3^4s_2^8s_1-s_5^3s_3^4s_2^4s_1^5+s_5^3s_3^4
s_1+s_5^3s_3^3s_2^8s_1^2+s_5^3s_3^2s_2^4s_1^7-
s_5^3s_3^2s_1^3+s_5^3s_3s_2^8s_1^4-s_5^3s_3s_2^4s_1^8+
\end{align*}}} 
{\scriptsize{
\begin{align*}
&
s_5^3s_3s_2^4+s_5^3s_1^5+s_5^2s_4^3s_3^3s_2^3s_1^5+s_5^2s_4^3s_3s_2^3s_1^7-
s_5^2s_4^3s_2^7s_1^4+s_5^2s_4s_2^3s_1^2-
s_5^2s_2^5s_1+s_5s_4^7s_3s_1^7-s_5s_4^7s_1^8-\\
&
s_5s_4^6s_3s_2^2s_1^6+s_5s_4^6s_2^6s_1^3+
s_5s_4^6s_2^2s_1^7-s_5s_4^5s_3^4s_1^6+s_5s_4^5s_3^3s_1^7-s_5s_4^5s_3^2s_1^8+
s_5s_4^5s_3s_2^4s_1^5+s_5s_4^5s_3s_1-s_5s_4^5s_2^4s_1^6-\\
&
s_5s_4^5s_1^2-s_5s_4^4s_3^4s_2^2s_1^5+
s_5s_4^4s_3^3s_2^2s_1^6-s_5s_4^4s_3^2s_2^2s_1^7-s_5s_4^4s_3s_2^6s_1^4+s_5s_4^4
s_3s_2^2s_1^8+s_5s_4^4s_2^6s_1^5-s_5s_4^4s_2^2s_1-\\
&
s_5s_4^3s_3^7s_1^5+s_5s_4^3s_3^6s_1^6+s_5s_4^3s_3^5s_1^7-s_5s_4^3s_3^4s_2^4s_1^4-
s_5s_4^3s_3^4s_1^8-s_5s_4^3s_3^3s_2^4s_1^5+
s_5s_4^3s_3^3s_1-s_5s_4^3s_3^2s_2^4s_1^6+\\
&
s_5s_4^3s_3^2s_1^2+s_5s_4^3s_3s_2^8s_1^3-s_5s_4^3s_3s_2^4s_1^7+s_5s_4^3s_3s_1^3
+s_5s_4^3s_2^8s_1^4+s_5s_4^3s_2^4s_1^8-s_5s_4^3s_2^4+
s_5s_4^3s_1^4-s_5s_4^2s_3^6s_2^6s_1-\\
&
s_5s_4^2s_3^4s_2^6s_1^3-s_5s_4^2s_3^3s_2^6s_1^4-s_5s_4^2s_3^3s_2^2s_1^8-
s_5s_4^2s_3s_2^6s_1^6+s_5s_4^2s_3s_2^2s_1^2+s_5s_4^2s_2^2s_1^3-
s_5s_4s_3^6s_2^8+s_5s_4s_3^6s_1^8-\\
&
s_5s_4s_3^4s_2^8s_1^2+s_5s_4s_3^4s_1^2-s_5s_4s_3^3s_2^8s_1^3+s_5s_4s_3^3s_1^3-
s_5s_4s_3^2s_2^4s_1^8+s_5s_4s_3^2s_2^4-s_5s_4s_3s_2^8s_1^5-s_5s_4s_3s_2^4s_1+\\
&
s_5s_4s_3s_1^5+s_5s_4s_2^8s_1^6-
s_5s_4s_2^4s_1^2+s_5s_3^7s_2^6s_1^2-s_5s_3^6s_2^6s_1^3+s_5s_3^6s_2^2s_1^7-
s_5s_3^5s_2^6s_1^4+s_5s_3^5s_2^2s_1^8-s_5s_3^5s_2^2+\\
&
s_5s_3^4s_2^6s_1^5-s_5s_3^4s_2^2s_1-s_5s_3^2s_2^6s_1^7+s_5s_3^2s_2^2s_1^3+s_5
s_3s_2^6+s_5s_2^6s_1-s_5s_2^2s_1^5+s_4^8s_3s_2^3s_1^4-s_4^8s_2^3s_1^5-
s_4^7s_3^2s_2s_1^6+\\
&
s_4^7s_3s_2^5s_1^3+s_4^7s_3s_2s_1^7-s_4^7s_2^5s_1^4-s_4^7s_2s_1^8-s_4^7s_2+
s_4^6s_3^4s_2^3s_1^3-s_4^6s_3^3s_2^3s_1^4-s_4^6s_3^2s_2^3s_1^5-s_4^6s_3s_2^7
s_1^2+s_4^6s_3s_2^3s_1^6+\\
&
s_4^6s_2^3s_1^7+s_4^5s_3^7s_2s_1^3+s_4^5s_3^6s_2^5s_1^8
-s_4^5s_3^6s_2^5-s_4^5s_3^6s_2s_1^4+s_4^5s_3^4s_2^5s_1^2-
s_4^5s_3^3s_2^5s_1^3+s_4^5s_3^2s_2-s_4^5s_2^5s_1^6-s_4^5s_2s_1^2-\\
&
s_4^4s_3^7s_2^3s_1^2
+s_4^4s_3^6s_2^7s_1^7+s_4^4s_3^6s_2^3s_1^3-s_4^4s_3^5s_2^7s_1^8+
s_4^4s_3^5s_2^7-s_4^4s_3^5s_2^3s_1^4+s_4^4s_3^4s_2^7s_1+s_4^4s_3^4s_2^3s_1^5+s_4^4s_3^3s_2^7s_1^2
-\\
&
s_4^4s_3^3s_2^3s_1^6+s_4^4s_3^2s_2^7s_1^3+s_4^4s_3s_2^3+s_4^3s_3^8s_2s_1^4-s_4^3s_3^7s_2s_1^5+
s_4^3s_3^5s_2s_1^7+s_4^3s_3^4s_2^5s_1^4+s_4^3s_3^4s_2s_1^8+s_4^3s_3^4s_2-\\
&
s_4^3
s_3^3s_2^5s_1^5+s_4^3s_3^3s_2s_1+s_4^3s_3^2s_2^5s_1^6+s_4^3s_3^2s_2s_1^2-s_4^3s_3s_2^5s_1^7-
s_4^3s_2^5s_1^8-s_4^2s_3^8s_2^3s_1^3+s_4^2s_3^7s_2^7s_1^8+s_4^2s_3^7s_2^3s_1^4-\\
&
s_4^2s_3^6s_2^3s_1^5-s_4^2s_3^5s_2^3s_1^6-s_4^2s_3^4s_2^7s_1^3+s_4^2s_3^4s_2^3s_1^7+
s_4^2s_3^3s_2^7s_1^4-s_4^2s_3^2s_2^7s_1^5-s_4^2s_3^2s_2^3s_1+s_4^2s_3s_2^7s_1^6+
s_4^2s_3s_2^3s_1^2-\\
&
s_4^2s_2^3s_1^3-s_4s_3^8s_2^5s_1^2+s_4s_3^7s_2^5s_1^3-s_4s_3^7s_2s_1^7-
s_4s_3^6s_2^5s_1^4-s_4s_3^5s_2^5s_1^5-s_4s_3^4s_2^5s_1^6+s_4s_3^4s_2s_1^2+s_4s_3^2s_2^5s_1^8
+\\
&
s_4s_3^2s_2^5+s_4s_3^2s_2s_1^4-s_4s_3s_2s_1^5+
s_4s_2^5s_1^2-s_4s_2s_1^6+s_3^8s_2^7s_1-s_3^7s_2^7s_1^2-s_3^6s_2^7s_1^3-
s_3^6s_2^3s_1^7+s_3^5s_2^7s_1^4+\\
&
s_3^5s_2^3s_1^8+s_3^5s_2^3-s_3^4s_2^3s_1-s_3^3s_2^7s_1^6+s_3^2s_2^7s_1^7+
s_3s_2^7s_1^8+s_3s_2^7-s_3s_2^3s_1^4-s_2^7s_1+s_2^3s_1^5)-\\
& s_7s_4s_3^3s_2^7s_1^4-s_7s_4s_3s_2^7s_1^6+s_7s_4s_2^3s_1^3+s_7s_3^3s_2s_1^3+
s_7s_3^2s_2^5s_1^8-s_7s_3^2s_2^5+s_7s_3s_2s_1^5-s_7s_2^5s_1^2+s_6^2s_3^6+\\
&
s_6^2s_3^3s_2^4s_1^7-s_6^2s_3^2s_1^4+s_6^2s_2^8s_1^6-s_6^2s_1^6-s_6s_3^8s_1^7-
s_6s_3^7s_1^8-s_6s_3^5s_2^4s_1^6+s_6s_3^5s_1^2-s_6s_3^4s_2^4s_1^7-s_6s_3^3s_2^4-s_6s_3^3s_1^4-\\
&
s_6s_3^2s_2^8s_1^5-s_6s_3^2s_2^4s_1-s_6s_3s_2^8s_1^6+s_6s_3s_1^6+
s_6s_2^8s_1^7+s_6s_1^7-s_5^4s_3^4s_2^8s_1^8+s_5^4s_3^4s_2^8+s_5^4s_3^4s_1^8-s_5^4s_3^4-
s_5^4s_2^4s_1^8-\\
&
s_5^3s_4^4s_2+s_5^3s_4^3s_3^6s_2^3s_1-s_5^3s_4^3s_3^4s_2^3s_1^3+s_5^3s_4^3s_3^3s_2^7+
s_5^3s_4^3s_3^2s_2^3s_1^5+s_5^3s_4^3s_3s_2^7s_1^2+s_5^3s_4^3s_2^3s_1^7+s_5^3s_4^2s_3^7s_2s_1^3-\\
&
s_5^3s_4^2s_3^6s_2s_1^4-s_5^3s_4^2s_3^5s_2s_1^5+s_5^3s_4^2s_3^4s_2^5s_1^2+s_5^3s_4^2s_3^4
s_2s_1^6-s_5^3s_4^2s_3^3s_2^5s_1^3+s_5^3s_4^2s_3^2s_2^5s_1^4-s_5^3s_4^2s_3^2s_2-s_5^3s_4^2s_3s_2^5s_1^5-\\
&
s_5^3s_4^2s_3s_2s_1+s_5^3s_4^2s_2^5s_1^6+s_5^3s_4s_3^7s_2^3s_1^2+s_5^3s_4s_3^6s_2^7s_1^7-
s_5^3s_4s_3^6s_2^3s_1^3-s_5^3s_4s_3^5s_2^3s_1^4+s_5^3s_4s_3^4s_2^3s_1^5-s_5^3s_4s_3^3s_2^7s_1^2+\\
&
s_5^3s_4s_3^3s_2^3s_1^6-s_5^3s_4s_3^2s_2^7s_1^3-s_5^3s_4s_3^2s_2^3s_1^7-s_5^3s_4s_3s_2^7s_1^4
-s_5^3s_3^8s_2^5s_1^8+s_5^3s_3^8s_2^5+s_5^3s_3^7s_2^5s_1-s_5^3s_3^6s_2^5s_1^2-s_5^3s_3^6s_2s_1^6-\\
&
s_5^3s_3^5s_2^5s_1^3+s_5^3s_3^4s_2^5s_1^4-s_5^3s_3^4s_2-s_5^3s_3^3s_2^5s_1^5
-s_5^3s_3^3s_2s_1-s_5^3s_3^2s_2^5s_1^6-s_5^3s_3s_2^5s_1^7-s_5^3s_3s_2s_1^3-s_5^3s_2s_1^4-
s_5^2s_4^3s_3^3s_2^4s_1^4-\\
&
s_5^2s_4^3s_3s_2^4s_1^6+s_5^2s_4^3s_2^8s_1^3-s_5^2s_4s_2^4s_1-s_5^2s_3^8s_2^6s_1^8+s_5^2s_3^8s_2^6
-s_5^2s_3^4s_2^2s_1^8+s_5^2s_3^4s_2^2-s_5^2s_2^6s_1^8-s_5^2s_2^6-s_5s_4^7s_3s_2s_1^6+\\
&
s_5s_4^7s_2s_1^7+s_5s_4^6s_3s_2^3s_1^5-
s_5s_4^6s_2^7s_1^2-s_5s_4^6s_2^3s_1^6+s_5s_4^5s_3^4s_2s_1^5-s_5s_4^5s_3^3s_2s_1^6+
s_5s_4^5s_3^2s_2s_1^7-s_5s_4^5s_3s_2^5s_1^4-\\
&
s_5s_4^5s_3s_2s_1^8+s_5s_4^5s_2^5s_1^5+
s_5s_4^5s_2s_1-s_5s_4^4s_3^6s_2^3s_1^2-s_5s_4^4s_3^4s_2^3s_1^4-s_5s_4^4s_3^3s_2^7s_1
-s_5s_4^4s_3^3s_2^3s_1^5-s_5s_4^4s_3s_2^3s_1^7-\\
&
s_5s_4^4s_2^7s_1^4-s_5s_4^4s_2^3s_1^8-s_5s_4^4s_2^3+s_5s_4^3s_3^7s_2s_1^4
+s_5s_4^3s_3^6s_2^5s_1-s_5s_4^3s_3^6s_2s_1^5
-s_5s_4^3s_3^5s_2s_1^6+s_5s_4^3s_3^4s_2s_1^7+\\
&
s_5s_4^3s_3^3s_2^5s_1^4-s_5s_4^3s_3^3s_2s_1^8+s_5s_4^3s_3^3s_2-s_5s_4^3s_3^2s_2^5s_1^5-s_5s_4^3s_3^2s_2s_1+s_5s_4^3s_3s_2^5s_1^6-
s_5s_4^3s_3s_2s_1^2+s_5s_4^3s_2s_1^3+\\
&
s_5s_4^2s_3^6s_2^7s_1^8+s_5s_4^2s_3^4s_2^7s_1^2+
s_5s_4^2s_3^3s_2^7s_1^3+s_5s_4^2s_3^3s_2^3s_1^7+s_5s_4^2s_3^2s_2^3s_1^8-s_5s_4^2s_3^2s_2^3+
s_5s_4^2s_3s_2^7s_1^5-s_5s_4^2s_3s_2^3s_1-\\
&
s_5s_4^2s_2^3s_1^2+s_5s_4s_3s_2^5s_1^8+
s_5s_4s_2^5s_1-s_5s_4s_2s_1^5+s_5s_3^8s_2^7s_1^8-s_5s_3^8s_2^7-s_5s_3^7s_2^7s_1
+s_5s_3^6s_2^7s_1^2-s_5s_3^6s_2^3s_1^6+\\
&
s_5s_3^5s_2^7s_1^3-s_5s_3^4s_2^7s_1^4+s_5s_3^4s_2^3s_1^8+
s_5s_3^2s_2^7s_1^6-s_5s_3^2s_2^3s_1^2-s_5s_3s_2^7s_1^7+s_5s_2^7s_1^8+s_5s_2^7+s_5s_2^3s_1^4-
s_4^8s_3s_2^4s_1^3+\\
&
s_4^8s_2^4s_1^4+s_4^8s_1^8-s_4^8-s_4^7s_3^3s_2^6s_1^8+s_4^7s_3^3s_2^6+s_4^7s_3^2s_2^2s_1^5-
s_4^7s_3s_2^6s_1^2-s_4^7s_3s_2^2s_1^6+s_4^7s_2^6s_1^3-s_4^7s_2^2s_1^7-s_4^6s_3^6s_2^4s_1^8-\\
&
s_4^6s_3^3s_2^8s_1^7+
s_4^6s_3^3s_2^4s_1^3+s_4^6s_3^2s_1^8-s_4^6s_3^2-s_4^6s_3s_2^4s_1^5-s_4^6s_2^4s_1^6-s_4^5s_3^7s_2^2s_1^2+
s_4^5s_3^6s_2^6s_1^7+s_4^5s_3^6s_2^2s_1^3+s_4^5s_3^4s_2^6s_1+\\
&
s_4^5s_3^3s_2^6s_1^2+s_4^5s_3^3s_2^2s_1^6+s_4^5s_3^2s_2^6s_1^3-s_4^5s_3^2s_2^2s_1^7+s_4^5s_3s_2^2s_1^8+
s_4^5s_2^6s_1^5+s_4^5s_2^2s_1-s_4^4s_3^8s_2^4s_1^8+s_4^4s_3^8s_2^4-s_4^4s_3^7s_2^4s_1-\\
&
s_4^4s_3^6s_2^4s_1^2+s_4^4s_3^6s_1^6-s_4^4s_3^4s_2^8-s_4^4s_3^4s_2^4s_1^4+s_4^4s_3^4s_1^8+
s_4^4s_3^4-s_4^4s_3^3s_2^8s_1+s_4^4s_3^3s_2^4s_1^5+s_4^4s_3^2s_2^8s_1^2+s_4^4s_3^2s_1^2+\\
&
s_4^4s_3s_2^4s_1^7+s_4^4s_2^8s_1^4-s_4^4s_1^4-s_4^3s_3^8s_2^2s_1^3+s_4^3s_3^7s_2^6s_1^8-
s_4^3s_3^7s_2^6+s_4^3s_3^7s_2^2s_1^4+s_4^3s_3^6s_2^2s_1^5-s_4^3s_3^5s_2^2s_1^6-s_4^3s_3^4s_2^6s_1^3-\\
&
s_4^3s_3^3s_2^6s_1^4-s_4^3s_3^3s_2^2s_1^8-s_4^3s_3^2s_2^6s_1^5-s_4^3s_3s_2^6s_1^6+
s_4^3s_2^6s_1^7+s_4^2s_3^8s_2^4s_1^2-s_4^2s_3^7s_2^8s_1^7-s_4^2s_3^7s_2^4s_1^3+s_4^2s_3^5s_2^4s_1^5+\\
&
s_4^2s_3^4s_2^8s_1^2+s_4^2s_3^3s_2^8s_1^3+s_4^2s_3^2s_2^8s_1^4+s_4^2s_3s_2^8s_1^5-s_4^2s_3s_2^4s_1+
s_4^2s_2^4s_1^2+s_4s_3^8s_2^6s_1-s_4s_3^7s_2^6s_1^2+s_4s_3^7s_2^2s_1^6+s_4s_3^6s_2^6s_1^3+\\
&
s_4s_3^5s_2^6s_1^4+s_4s_3^4s_2^6s_1^5-s_4s_3^4s_2^2s_1+s_4s_3^2s_2^6s_1^7
-s_4s_3^2s_2^2s_1^3-s_4s_3s_2^6s_1^8+s_4s_3s_2^6+s_4s_3s_2^2s_1^4-s_4s_2^6s_1+s_4s_2^2s_1^5+\\
&
s_3^8s_2^8s_1^8+s_3^8s_2^8+s_3^8s_1^8+s_3^7s_2^8s_1-s_3^7s_2^4s_1^5+s_3^7s_1
-s_3^6s_2^8s_1^2-s_3^6s_2^4s_1^6-s_3^5s_2^8s_1^3+s_3^5s_2^4s_1^7-s_3^5s_1^3+s_3^4s_2^8s_1^4-s_3^4s_2^4s_1^8-\\
&
s_3^4s_2^4-s_3^4s_1^4-s_3^3s_2^8s_1^5+s_3^3s_2^4s_1+s_3^3s_1^5-s_3^2s_2^8s_1^6-s_3^2s_2^4s_1^2-
s_3^2s_1^6-s_3s_2^4s_1^3-s_3s_1^7-s_2^8s_1^8-s_2^8-s_2^4s_1^4-s_1^8+1.
\end{align*}}}

\end{document}